\documentclass[11pt,leqno,oneside]{amsbook}
\usepackage[T1]{fontenc}
\usepackage[utf8]{inputenc}
\usepackage{lmodern,amssymb,booktabs,longtable,array,microtype,needspace}
\usepackage[letterpaper,textwidth=6.15in,textheight=8.65in,centering,headheight=14pt,headsep=20pt]{geometry}
\usepackage{enumitem}
\usepackage[hidelinks,bookmarksnumbered]{hyperref}
\makeindex
\newtheorem{theorem}{Theorem}[chapter]
\newtheorem{lemma}[theorem]{Lemma}
\newtheorem{proposition}[theorem]{Proposition}

\theoremstyle{definition}
\newtheorem{definition}[theorem]{Definition}
\newtheorem{example}[theorem]{Example}
\theoremstyle{remark}

\numberwithin{equation}{chapter}
\newcommand{\one}{\mathbf{1}}
\newcommand{\two}{\mathbf{2}}
\newcommand{\setm}[2]{\{\,#1\mid #2\,\}}
\newcommand{\Ap}[1]{A^{\times #1}}
\newcommand{\Bp}[1]{B^{\times #1}}

\newcommand{\xp}[2]{{#1}^{\times #2}}
\newcommand{\K}{\mathcal K}
\newcommand{\OO}{\mathcal O}
\DeclareMathOperator{\height}{height}
\DeclareMathOperator{\Con}{Con}
\DeclareMathOperator{\Tow}{Tow}
\DeclareMathOperator{\Max}{Max}

\DeclareMathOperator{\Der}{Der}
\DeclareMathOperator{\Red}{Red}
\DeclareMathOperator{\poly}{poly}
\DeclareMathOperator{\wt}{wt}
\DeclareMathOperator{\comp}{comp}
\DeclareMathOperator{\iso}{iso}
\DeclareMathOperator{\id}{id}

\DeclareMathOperator{\hdeg}{deg_h}
\DeclareMathOperator{\coord}{coord}
\DeclareMathOperator{\sig}{sig}

\setlist[enumerate]{itemsep=3pt,topsep=5pt}
\hypersetup{pdftitle={Generalizing the $A^A$ Problem for Finite Ordered Sets: Reconstruction through Twelve Occurrences},pdfauthor={George Gratzer}}
\begin{document}
\frontmatter
\title[Generalizing the $A^A$ Problem]{\fontsize{26}{32}\selectfont
Generalizing the $A^A$ Problem\newline for Finite Ordered Sets\\[14pt]
{\Large Reconstruction through Twelve Occurrences}}
\thanks{Research manuscript. October 2, 2026.}
\begin{abstract}
Let X and Y be finite ordered sets
and let $X^Y$ denote the ordered set of order-preserving maps from $Y$
to $X$. 

Let $t$ be a~term formed from a~single variable by exponentiation.
We prove that, for every term $t$ with at most twelve variable
occurrences and arbitrary finite ordered sets $A,B$,
$t(A)\cong t(B)$ implies that $A\cong B$. 

General constructions and
reconstruction principles are developed first; the second part gives
the occurrence-by-occurrence proofs. 

At twelve occurrences the proof
covers 4,766 interchange normal forms, representing all 58,786 binary
parenthesizations, including 40 exceptional forms. Two additional
unbounded reconstruction families, complete finite audit data, and
457 exact height-comparison certificates are included.
\end{abstract}
\author{George Gr\"atzer}
\date{October 2, 2026}
\maketitle

\tableofcontents

\chapter*{Preface}
This monograph studies reconstruction from exponential expressions in one
finite ordered set. Its principal conclusion is that every exponential
term with at most twelve occurrences of the variable determines its
input up to isomorphism. The number twelve counts occurrences, not
input elements. The input cardinality is unrestricted.

The exposition has two parts. Part~I develops the constructions and
general results: arithmetic of components, canonical exponent
coordinates, auxiliary orders, maximum-height intervals, height
reduction, a~class reconstructed at every occurrence count, strong
reconstruction, forest recovery, polynomial perturbations, and
substitution. Part~II proves the occurrence-bounded assertions, with
separate treatment of the exceptional branches. The last theorem is
cumulative: every smaller occurrence count has been proved, not merely
assumed to follow from the case of twelve.

The foundational $A^A$ argument comes from the joint manuscript of
Jonathan David Farley and George Gr\"atzer. The present exposition
incorporates that argument and the subsequent occurrence-count
manuscripts into a~single system of notation and internal references.
No reconstruction theorem from that manuscript sequence is used as an
unproved external premise. The classical structural inputs of Hashimoto,
McKenzie, and Farley are stated explicitly and cited where used.

Some arguments recover an entire disjoint sum without identifying its
displayed summands. This distinction is essential. An isomorphism of
ordered sets does not label the components by the formula that produced
them. Likewise, a~reduction chain of a~known shape is not automatically
recognizable when two possible chains have different lengths. The
exceptional proofs deal explicitly with these collisions.

The accompanying audit is part of the working research record. It
checks normal-form enumeration, normalization of every binary
parenthesization, individual multiplicities, case assignments, forest
signatures, and exact integer-polynomial comparison certificates. It is
not a~proof-assistant verification of the structural theorems or of the
entire mathematical argument. The distinction between a~mathematical
proof and an executed finite audit is maintained throughout.

All mathematical text and printed tables are included in the single
\LaTeX\ source. The clean archive also contains the audit programs and
their executed results.

\chapter*{Notation}
\begin{longtable}{@{}p{.24\textwidth}p{.70\textwidth}@{}}
\toprule
Symbol & Meaning\\\midrule
$X+Y$ & Disjoint sum.\\
$XY$, $X\times Y$ & Cartesian product.\\
$X^Y$ & Order-preserving maps $Y\to X$, ordered pointwise.\\
$X^{\times k}$ & Cartesian power; $X^{\times0}=\one$.\\
$kX$ & Disjoint sum of $k$ copies of $X$.\\
$\one$, $0$ & One-element ordered set and formal omitted sum.\\
$\height(X)$ & Maximum chain length, counting strict steps.\\
$\comp(X)$, $\iso(X)$ & Numbers of connected components and isolated points.\\
$\poly(X)$ & Component polynomial of $X$.\\
$\Con(P,Y)$ & Constant-containing component construction.\\
$(E_D,F_D)$ & Canonical core and exponent coordinate of a~prime factor.\\
$\Tow(P)$ & Sum of canonical exponent coordinates of a~connected $P$.\\
$\Max(X)$ & Selected maximum-height interval.\\
$\Der(X)$ & $\Tow(\Max(X))$.\\
$\Red(Q)$ & $\Max(\Tow(Q))$, for a~bounded positive-height $Q$.\\
$L,R,S,J$ & $\Max(A),\Tow(L),\Max(R),\Tow(S)$.\\
$\K$, $\OO$ & Componentwise contractible inputs and their complement.\\
$|t|$, $\wt(D)$ & Occurrence count of a~term and weight of a~forest.\\
$\sig(D)$ & Length and terminal Cartesian exponent of a~forest chain.\\
$E_{r,C}$ & $A^{\,A^{\times r}\times A^{C(A)}}$.\\
$N,p,f,g$ & Cardinalities of $A,P,F,G$.\\*
$P,F,G$ & $A^A,A^{A^{\times2}},A^{A^A}$.\\*
\bottomrule
\end{longtable}
\mainmatter
\part{General constructions and general results}

\chapter{The reconstruction problem}\label{ch:problem}

An exponential expression remembers more than its cardinality. The
question here is whether it remembers the ordered set from which it was
built. For ordered sets $A,B$, the simplest nontrivial instance asks
whether
\[
 A^A\cong B^B\quad\Longrightarrow\quad A\cong B.
\]
The general finite-occurrence version replaces $A^A$ by an arbitrary
fixed parenthesization of copies of $A$, using exponentiation alone.

\section{Conventions}
\index{conventions!ordered sets}\index{notation!conventions}
All ordered sets are finite and, by definition, nonempty. An ordered set is \emph{connected} when its
comparability graph is connected. It is \emph{nontrivial} when it has
more than one element. The one-element ordered set is $\one$.
The symbol $0$ denotes an omitted summand; it is not an additional
ordered set. In a~sum or product indexed by $\setm{u_i}{i\in I}$,
every index contributes, including repetitions.
\index{ordered!set}\index{connected!ordered set}\index{formal!zero}

The exponent $X^Y$ is the set of order-preserving maps from $Y$ to $X$,
with $f\leq g$ when $f(y)\leq g(y)$ for every $y\in Y$.
An isomorphism of exponents in this book is an isomorphism of ordered
sets, not an isomorphism of endomorphism monoids. Composition of maps
is not part of the structure being reconstructed.
\index{exponentiation}\index{endomorphism!ordered set}

\begin{definition}\label{D:term}
\index{exponential!term!definition}
An \emph{exponential term} is defined recursively: $x$ is a~term, and
$u^v$ is a~term when $u$ and $v$ are terms. Put
\[
 x(A)=A,\qquad (u^v)(A)=u(A)^{v(A)},\qquad
 |x|=1,\quad |u^v|=|u|+|v|.
\]
A~term $t$ is \emph{reconstructing} on a~class of inputs if
$t(A)\cong t(B)$ implies $A\cong B$ for inputs in that class.
Without a~specified class, all finite ordered sets are intended.
\end{definition}
\index{term!exponential}\index{occurrence!count}\index{reconstruction}

The main theorem, proved in Part~II, applies to every $t$ with
$1\leq|t|\leq12$. The same term is used on the two sides. Different
parenthesizations are not asserted to have isomorphic values.
For instance, the two three-occurrence terms are
$(A^A)^A\cong A^{A\times A}$ and $A^{A^A}$.

\section{Exponential calculus}
\index{exponential!calculus}\index{exponentiation!calculus}\index{interchange!identity}
\begin{proposition}\label{P:calculus}
\index{exponential!calculus!laws}
For ordered sets $X,Y,Z$,
\begin{equation}\label{E:calculus}
\begin{aligned}
 X^{Y+Z}&\cong X^Y\times X^Z,\\
 (X\times Y)^Z&\cong X^Z\times Y^Z,\\
 (X^Y)^Z&\cong X^{Y\times Z},\\
 X^{\one}&\cong X,\qquad \one^X\cong\one.
\end{aligned}
\end{equation}
Consequently $(X^Y)^Z\cong(X^Z)^Y$.
\end{proposition}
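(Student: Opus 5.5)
We prove each law by writing down an explicit map, checking that it is a bijection, and then checking that it preserves and reflects the pointwise order. Throughout, $+$ is the disjoint sum, so no element of $Y$ is comparable with an element of $Z$ in $Y+Z$. The product order is componentwise, and exponents carry the pointwise order.

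\emph{Sum law.} Define $\Phi\colon X^{Y+Z}\to X^Y\times X^Z$ by $\Phi(f)=(f|_Y,f|_Z)$.
\begin{itemize}
\item[(i)] Restrictions of an order-preserving map are order-preserving, so $\Phi$ is well defined.
\item[(ii)] Conversely, a pair $(g,h)$ glues to a map on $Y+Z$. This map is order-preserving because every comparable pair lies within $Y$ or within $Z$. Hence $\Phi$ is bijective.
\item[(iii)] Since $f\le f'$ pointwise on $Y+Z$ exactly when this holds on both $Y$ and $Z$, $\Phi$ is an order isomorphism.
\end{itemize}

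\emph{Product law.} Send $f\colon Z\to X\times Y$ to $(\pi_X\circ f,\pi_Y\circ f)$, where $\pi_X,\pi_Y$ are the projections.
\begin{itemize}
\item[(i)] A map into a product is order-preserving if and only if both coordinate maps are, because the order on $X\times Y$ is componentwise. So this correspondence is a bijection.
\item[(ii)] Pointwise order on maps into $X\times Y$ is the conjunction of the pointwise orders of the coordinates, so the bijection is an order isomorphism.
\end{itemize}

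\emph{Currying law.} Send $F\in (X^Y)^Z$ to $\hat F(y,z)=F(z)(y)$. The task is to show that $\hat F$ lies in $X^{Y\times Z}$ and that the inverse direction also works.
\begin{itemize}
\item[(i)] Let $(y,z)\le (y',z')$. Then
\[
F(z)(y)\le F(z)(y')\le F(z')(y').
\]
The first inequality holds because $F(z)$ is order-preserving. The second holds because $F(z)\le F(z')$ pointwise.
\item[(ii)] Conversely, let $G$ be order-preserving on $Y\times Z$. For fixed $z$, the map $y\mapsto G(y,z)$ is order-preserving, and $z\mapsto G(\cdot,z)$ is order-preserving into $X^Y$ with its pointwise order. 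This gives the inverse.
\item[(iii)] The condition $F\le F'$ unwinds to $F(z)(y)\le F'(z)(y)$ for all $y,z$, which is exactly $\hat F\le\hat F'$. So currying is an order isomorphism.
\end{itemize}
This is the step that needs the most care, because the argument uses both monotonicity in $y$ and monotonicity in $z$, and the pointwise order on $X^Y$. It is still routine.

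\emph{Trivial laws.} For $X^{\one}\cong X$, evaluation at the unique point of $\one$ is an order isomorphism. For $\one^X\cong\one$, there is exactly one map $X\to\one$. That map is order-preserving, and $X$ is nonempty by convention.

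\emph{The consequence.} We have
\[
(X^Y)^Z\cong X^{Y\times Z}\cong X^{Z\times Y}\cong (X^Z)^Y.
\]
The first and last isomorphisms are the currying law. The middle one is induced by the swap isomorphism $Y\times Z\cong Z\times Y$. Precomposing with any order isomorphism of the domain gives an order isomorphism of the exponents.
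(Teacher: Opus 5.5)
Your proposal is correct and follows the paper's proof step for step: restriction and gluing for the sum law, coordinate projections for the product law, currying via the chain $F(z)(y)\le F(z)(y')\le F(z')(y')$ with sections as the inverse, and the swap $Y\times Z\cong Z\times Y$ for the interchange consequence. You spell out the well-definedness and order-reflection checks that the paper compresses into single sentences.
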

\begin{proof}
Restriction to the two summands gives the first isomorphism, with
inverse obtained by joining the two maps. Coordinate projections give
the second isomorphism. For the third, send $f:Z\to X^Y$ to
$\widehat f(y,z)=f(z)(y)$. If $(y,z)\leq(y',z')$, then
$f(z)(y)\leq f(z)(y')\leq f(z')(y')$, so $\widehat f$ is
order-preserving. Conversely, an order-preserving map on $Y\times Z$
gives an order-preserving map $Z\to X^Y$ by taking its sections.
These correspondences preserve and reflect the pointwise order.
The unit identities follow directly from the definitions. Commuting
the two Cartesian factors in the third identity proves interchange.
\end{proof}
\index{interchange!identity}\index{exponential!laws}

If $Y$ is connected, an order-preserving map $Y\to X_1+\cdots+X_a$
lands in a~single summand. Indeed, comparable domain points have values
in the same summand, and a~comparability path joins any two domain
points. Thus
\[
 (X_1+\cdots+X_a)^Y\cong X_1^Y+\cdots+X_a^Y
 \qquad(Y\text{ connected}).
\]
There is no such formula for a~disconnected exponent without first
splitting that exponent into its connected components.

\section{Retractions and antichains}
\index{retract!term retraction}\index{antichain!exponential terms}\index{evaluation!map}
\begin{lemma}\label{L:term-retract}
\index{retract!term retraction}
Every input $A$ is a~retract of every term value $t(A)$ and of every
Cartesian product of term values.
\end{lemma}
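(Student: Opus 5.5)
We argue by induction on the term $t$, then treat Cartesian products of term values separately. Recall that $R$ is a retract of $X$ when there are order-preserving maps $s\colon R\to X$ and $r\colon X\to R$ with $r\circ s=\id_R$. Retracts compose: a retract of a retract is a retract. For the base case $t=x$, we have $t(A)=A$, and the identity maps suffice.

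For the inductive step, let $t=u^v$, and put $U=u(A)$ and $V=v(A)$. By the induction hypothesis there are pairs $s_U,r_U$ exhibiting $A$ as a retract of $U$, and $s_V,r_V$ exhibiting $A$ as a retract of $V$. We only need the first pair, together with the fact that $V$ is nonempty; recall that all ordered sets are nonempty by convention. First, $U$ is a retract of $U^V$:
\begin{itemize}
\item embed $U$ into $U^V$ by sending $a\in U$ to the constant map $c_a$, which is order-preserving, and $a\leq b$ implies $c_a\leq c_b$ pointwise;
\item fix any point $v_0\in V$ and retract by evaluation, $f\mapsto f(v_0)$, which is order-preserving for the pointwise order.
\end{itemize}
Since evaluation at $v_0$ of $c_a$ returns $a$, this is a retraction. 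Composing with $s_U,r_U$ shows that $A$ is a retract of $t(A)$. In fact this shows that $A$ is a retract of $t(A)$ through the \emph{base} of the term, so only the leftmost occurrence of the variable matters.

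For a product $T_1\times\cdots\times T_k$ of term values with $k\geq1$, let $s_i,r_i$ witness that $A$ is a retract of $T_i$. Define $s(a)=(s_1(a),\dots,s_k(a))$ and $r(x_1,\dots,x_k)=r_1(x_1)$. Both maps are order-preserving because the order on the product is coordinatewise, and $r\circ s=\id_A$. Equivalently, $T_1$ is a retract of the product via any fixed choice of the other coordinates, since all factors are nonempty, and we can then compose.

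There is no real obstacle here. The only points needing care are nonemptiness, which is needed both for the evaluation point $v_0$ and for the product coordinates, and which the conventions guarantee, and the degenerate empty product $\one$. The statement presumably intends $k\geq1$. Since $A$ is a retract of $\one$ only when $A\cong\one$, the empty product must be excluded.
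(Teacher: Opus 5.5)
Your proof is correct and follows the paper's argument: constant maps and evaluation at a fixed point make $u(A)$ a retract of $u(A)^{v(A)}$, which you then compose with the inductive retraction onto $A$. For products you retract onto one factor; your section $a\mapsto(s_1(a),\dots,s_k(a))$ works just as well as the paper's choice of fixing the other coordinates, and your remark that the empty product must be excluded is a fair clarification that the paper leaves implicit.
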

\begin{proof}
For any $X^Y$, the constant-map map $c:X\to X^Y$ is an order embedding.
Evaluation $e(f)=f(y_0)$ at a~fixed $y_0\in Y$ is order-preserving and
$ec=\id_X$. Induction on $t=u^v$ composes this retraction onto $u(A)$
with the inductively obtained retraction onto $A$. For a~product, use
one factor and fix elements in all other coordinates. The projection
onto the chosen factor is a~retraction, and composition finishes the
proof.
\end{proof}
\index{retract}\index{constant!map}\index{evaluation!map}

\begin{lemma}\label{L:antichains}
\index{antichain!reconstruction}
Every exponential term reconstructs antichains, and an antichain input
cannot have the same term value as a~positive-height input.
\end{lemma}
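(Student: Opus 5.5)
We split the statement into two halves and handle each by computing the term value of an antichain directly. First, if $A$ is an antichain with $n$ elements, then for every ordered set $Y$ the exponent $A^Y$ consists of the maps $Y\to A$ that are constant on each connected component of $Y$. If $Y$ has $c$ components, then $A^Y$ is an antichain with $n^{c}$ elements, since pointwise comparability between distinct such maps would force comparability between distinct elements of $A$. More generally, whenever $X$ is an antichain, $X^Y$ is an antichain of size $|X|^{\comp(Y)}$, for any $Y$.

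By induction on $t=u^v$, every value $t(A)$ with $A$ an antichain is therefore itself an antichain. Its size is a function of $n$ alone: each subterm value is an antichain whose number of components equals its cardinality, so we may put $s_x(n)=n$ and $s_{u^v}(n)=s_u(n)^{s_v(n)}$. For $n\ge2$, this function is strictly increasing in $n$; by induction, $s_u$ and $s_v$ are increasing with values $\ge2$, and $a^b$ is strictly increasing in both arguments for $a,b\ge2$. For $n=1$ every value is $1$. Hence $s_t$ is injective on the positive integers, and an isomorphism $t(A)\cong t(B)$ between antichain inputs forces $|A|=|B|$, so $A\cong B$.

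For the second half, suppose $B$ has positive height. By Lemma~\ref{L:term-retract}, $B$ is a retract of $t(B)$, and in particular an order-embedded subset of it. So $t(B)$ contains a two-element chain, while $t(A)$ is an antichain by the first half. Since isomorphism preserves height, $t(A)\not\cong t(B)$.

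The only step needing care is the claim that $X^Y$ is an antichain of size $|X|^{\comp(Y)}$ for an antichain $X$. An order-preserving map into an antichain must send comparable points to equal values, hence is constant on comparability-connected components. Conversely, any such assignment of values to components is order-preserving. Distinct maps differ at some point, where their values are incomparable, so they are incomparable as maps. The strict monotonicity of $s_t$ for $n\ge2$ is a routine induction.
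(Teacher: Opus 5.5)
Your proof is correct and takes essentially the same route as the paper. Antichain values are antichains whose cardinality is the integer-exponential evaluation of the term, that evaluation is strictly increasing, and the retraction of Lemma~\ref{L:term-retract} excludes positive-height inputs; the only cosmetic difference is that you treat $n=1$ separately and carry the bound ``values $\geq 2$'', whereas the paper's chain $u(N)^{v(N)}<u(M)^{v(N)}\leq u(M)^{v(M)}$ covers all positive integers at once.
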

\begin{proof}
A~term evaluated on an antichain is an antichain, by induction. The
converse follows from Lemma~\ref{L:term-retract}. Its cardinality on the
$N$-element antichain is obtained by evaluating the same term with
ordinary positive-integer exponentiation. This cardinality is strictly
increasing in $N$. The induction step is
\[
 u(N)^{v(N)}<u(M)^{v(N)}\leq u(M)^{v(M)}\qquad(N<M),
\]
using strict increase of $u$, positivity of $v(N)$, and monotonicity of
positive-integer exponentiation in its exponent. Hence the term value
uniquely determines $N$ and the antichain.
\end{proof}
\index{antichain}

\section{Height and components}
\index{height!exponential terms}\index{connected!components!arithmetic}\index{component!polynomial}
The height $\height(X)$ is the greatest value of $|C|-1$ over chains
$C$ in $X$. Write $\comp(X)$ for the number of connected components and
$\iso(X)$ for the number of one-element components.
\begin{proposition}\label{P:height-components}
\index{height!component formula}
\begin{equation}\label{b:E:heights}
\begin{aligned}
 \height(X+Y)&=\max\{\height(X),\height(Y)\},\\
 \height(X\times Y)&=\height(X)+\height(Y),\\
 \height(X^Y)&=|Y|\height(X).
\end{aligned}
\end{equation}
Moreover,
\begin{equation}\label{b:E:component-arithmetic}
\begin{aligned}
 \comp(X+Y)&=\comp(X)+\comp(Y),&
 \comp(XY)&=\comp(X)\comp(Y),\\
 \iso(X+Y)&=\iso(X)+\iso(Y),&
 \iso(XY)&=\iso(X)\iso(Y).
\end{aligned}
\end{equation}
\end{proposition}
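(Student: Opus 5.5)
I prove the three height formulas first, then the component arithmetic, each directly from the definitions. For the sum, a chain in $X+Y$ consists of pairwise comparable elements, so it lies in one summand. Hence $\height(X+Y)=\max\{\height(X),\height(Y)\}$.

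For the product, the lower bound comes from concatenation. Take a longest chain $x_0<\cdots<x_a$ in $X$ and $y_0<\cdots<y_b$ in $Y$; then $(x_0,y_0)<\cdots<(x_a,y_0)<(x_a,y_1)<\cdots<(x_a,y_b)$ is a chain of length $a+b$. For the upper bound, every strict step $(x,y)<(x',y')$ is strict in at least one coordinate. Along a chain, the steps strict in the first coordinate project to a chain in $X$, so there are at most $\height(X)$ of them. The same holds for $Y$, so the total is at most $\height(X)+\height(Y)$.

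For the exponent I would not argue through the product formula, since $X^Y$ is not a product of copies of $X$ in general. The upper bound uses the map $f\mapsto\sum_{y\in Y} r(f(y))$, where $r(x)$ is the length of a longest chain in $X$ ending at $x$. The rank $r$ is strictly order-preserving, and $f<g$ means $f(y)\le g(y)$ for all $y$ with at least one strict inequality. So the sum strictly increases along a chain of $X^Y$. Its values lie in $[0,|Y|\height(X)]$, which bounds any chain length by $|Y|\height(X)$.

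For the lower bound, take a longest chain $x_0<\cdots<x_h$ in $X$ and a linear extension $y_1,\dots,y_n$ of $Y$ in which each $y_i$ is minimal in $\{y_i,\dots,y_n\}$, so the sets $U_i=\{y_i,\dots,y_n\}$ are up-sets of $Y$. Define $f_k^{(i)}$ to take value $x_{k+1}$ on $U_i$ and $x_k$ elsewhere. This map is order-preserving because $U_i$ is an up-set and $x_k<x_{k+1}$. The maps are arranged in the following order:
\[
x_0\text{ constant}<f_0^{(n)}<f_0^{(n-1)}<\cdots<f_0^{(1)}=x_1\text{ constant}<f_1^{(n)}<\cdots .
\]
Each map is obtained from the previous one by raising a single point one step, which yields $nh$ strict steps. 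The main bookkeeping obstacle is choosing the up-sets so that every intermediate map is order-preserving; the nested up-sets of a linear extension handle this.

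For the component arithmetic, the sum formulas are immediate, because the comparability graph of $X+Y$ is the disjoint union of the two graphs. For the product, the step is to show that the components of $X\times Y$ are exactly the sets $C\times D$ with $C$ a component of $X$ and $D$ a component of $Y$. If $c\sim c'$ in $X$, then $(c,d)\sim(c',d)$, so by moving one coordinate at a time every $C\times D$ is connected. Comparability of pairs forces comparability of both coordinates, so no comparability joins different rectangles. This gives $\comp(XY)=\comp(X)\comp(Y)$. Moreover, $C\times D$ is a single point iff $|C|=|D|=1$, which gives $\iso(XY)=\iso(X)\iso(Y)$.
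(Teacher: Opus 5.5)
Your proof is correct and takes essentially the same route as the paper. Chains in a sum stay in one summand; the product height is bounded by counting strict coordinate changes and is attained by concatenating maximal chains; and the components of a product are the products of components. For the exponent, your rank-sum potential is just a repackaging of the paper's count of value changes at each domain point, and your nested up-sets of a linear extension give exactly the paper's one-point-at-a-time raising in reverse linear-extension order.
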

\begin{proof}
A~chain in a~sum lies in one summand. A~strict step in a~product chain
changes at least one coordinate, and the total number of changes is at
most the sum of the two heights. Interleaving maximum chains attains
this bound.

A~chain of maps changes the value at a~fixed domain point at most
$\height(X)$ times, so its length is at most $|Y|\height(X)$. For the
reverse inequality, choose a~maximum chain $x_0<\cdots<x_h$ in $X$ and
a~linear extension of $Y$. Starting from the constant $x_0$ map, change
values from $x_{j-1}$ to $x_j$ in reverse linear-extension order, one
point at a~time. Every intermediate map is order-preserving. This gives
$|Y|h$ strict steps.

The components of a~product are precisely the products of components.
A~product component has one element exactly when both factors do.
The component and isolated-point formulas follow.
\end{proof}
\index{height}\index{connected!components}\index{isolated!point}

\chapter{Arithmetic of finite ordered sets}\label{b:S:arithmetic}
Hashimoto's theorem~\cite{Hashimoto} gives unique factorization of a
connected ordered set into connected nontrivial directly indecomposable
factors. Introduce an indeterminate $z_D$ for each such factor type $D$.
Encode a~connected ordered set by the monomial of its factor
multiplicities, and encode an arbitrary ordered set $X$ by the sum of
its component monomials, with multiplicity. Denote the resulting
polynomial by $\poly(X)$. Then
\begin{equation}\label{b:E:polynomial}
\begin{aligned}
 \poly(X+Y)&=\poly(X)+\poly(Y),\\
 \poly(X\times Y)&=\poly(X)\poly(Y),\\
 \poly(X)=\poly(Y)&\quad\Longleftrightarrow\quad X\cong Y.
\end{aligned}
\end{equation}
These polynomials lie in an integral domain over $\mathbb Z$; any
particular argument involves only finitely many indeterminates.
Products therefore cancel. Disjoint sums cancel by their component
multiplicities. Also, $kX$ determines $X$ by division of those
multiplicities by $k$.

Give $z_D$ the positive weight $\height(D)$. For a~nonzero signed
polynomial $q$, write $\hdeg(q)$ for its greatest weighted degree. Then
\begin{equation}\label{b:E:weighted-degree}
 \hdeg(\poly(X))=\height(X),\qquad
 \hdeg(q_1q_2)=\hdeg(q_1)+\hdeg(q_2).
\end{equation}
The second equality follows because the product of the highest
homogeneous parts is nonzero.

\section{Cancellation and roots}
\index{cancellation!Cartesian product}\index{cancellation!disjoint sum}\index{Cartesian!root!uniqueness}
\begin{lemma}\label{L:arithmetic-cancellation}
\index{cancellation!arithmetic lemma}
Products and disjoint sums cancel. If $k\geq1$, each of $kX$ and
$X^{\times k}$ determines $X$. More generally, a~fixed nonconstant
polynomial $q$ with nonnegative integer coefficients determines its
ordered-set argument.
\end{lemma}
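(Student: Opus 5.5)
The plan is to transfer everything to the polynomial encoding \eqref{b:E:polynomial}. There, isomorphism of ordered sets becomes equality of polynomials, $+$ becomes addition, and $\times$ becomes multiplication. Since $\poly(X)=\poly(Y)$ iff $X\cong Y$, each assertion of Lemma~\ref{L:arithmetic-cancellation} reduces to a statement about polynomials with nonnegative integer coefficients in finitely many indeterminates $z_D$. The ambient ring $\mathbb Z[z_D]$ is an integral domain.

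For sums, $X+Z\cong Y+Z$ gives $\poly(X)+\poly(Z)=\poly(Y)+\poly(Z)$, hence $\poly(X)=\poly(Y)$. For products, $XZ\cong YZ$ gives $\poly(Z)(\poly(X)-\poly(Y))=0$. Because $Z$ is nonempty, $\poly(Z)\neq0$, and the domain property yields $\poly(X)=\poly(Y)$. For $kX$, divide the coefficients by $k$.

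The root statement $X^{\times k}\cong Y^{\times k}\Rightarrow X\cong Y$ is a special case of the general statement with $q(w)=w^k$, so I prove the general one directly. Let $q(w)=\sum_{i=0}^m c_iw^i$ with $c_i\geq0$, $m\geq1$ and $c_m>0$, and suppose $q(\poly(X))=q(\poly(Y))$ with $a=\poly(X)$ and $b=\poly(Y)$. The difference factors as
\[
 q(a)-q(b)=(a-b)\,s(a,b),\qquad s(a,b)=\sum_{i\geq1}c_i\sum_{j=0}^{i-1}a^jb^{i-1-j}.
\]
It then suffices to show $s(a,b)\neq0$ and use the integral domain. Every term $c_ia^jb^{i-1-j}$ is a polynomial with nonnegative coefficients. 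The term with $i=m$ and $j=0$ is $c_mb^{m-1}$, which is nonzero since $b\neq0$. A sum of nonnegative-coefficient polynomials, one of them nonzero, cannot cancel, so $s\neq0$ and $a=b$.

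The main obstacle is the possible constant term $c_0$. In the ordered-set reading $c_0$ means $c_0$ copies of $\one$, whose polynomial is the monomial $1$ (the empty product of factors). This causes no trouble, because $c_0$ cancels in $q(a)-q(b)$. The real care is in justifying that $\poly$ takes nonempty sets to nonzero polynomials with nonnegative coefficients, and that the integral-domain argument uses only finitely many indeterminates. Both points follow from the setup preceding the lemma.
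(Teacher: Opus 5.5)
Your proof is correct and follows the paper's route: pass to $\poly$, cancel in the integral domain, and factor $q(a)-q(b)=(a-b)H(a,b)$ with $H\neq0$. The only difference is cosmetic: the paper shows $H\neq0$ by substituting $1$ for every factor variable, while you point to the nonzero summand $c_mb^{m-1}$ among terms with nonnegative coefficients, and both arguments are valid.
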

\begin{proof}
An actual ordered set has a~nonzero component polynomial. Product
cancellation follows by cancellation of this polynomial in an integral
domain. Sum cancellation is cancellation of component multiplicities,
and $kX$ is recovered by dividing every multiplicity by $k$.

For the polynomial assertion, put $a=\poly(A)$ and $b=\poly(B)$. Then
$q(a)-q(b)=(a-b)H(a,b)$, where $H$ has nonnegative coefficients and is
nonzero. Nonzeroness follows, for example, by substituting $1$ for every
factor variable: both $a$ and $b$ then become positive integers and
$q$ is nonconstant. Thus $q(a)=q(b)$ forces $a=b$. Taking $q(z)=z^k$
proves Cartesian-root uniqueness.
\end{proof}
\label{b:C:polynomial-injective}
\index{cancellation!product}\index{cancellation!sum}\index{Cartesian!root}\index{polynomial!injectivity}

\section{Three evaluations of the component polynomial}
\index{component!polynomial!evaluations}\index{component!polynomial!weighted degree}\index{polynomial!component polynomial}
Substitution $z_D=|D|$ gives $|X|$. Substitution $z_D=1$ gives
$\comp(X)$. The constant coefficient is $\iso(X)$. Thus one polynomial
simultaneously records cardinality, component counts, and isolated
points. None of these three specializations is injective by itself.
The full polynomial is needed for cancellation.

For example, let $C$ be a~connected nontrivial directly indecomposable
ordered set and put $X=2\one+C$. Then
\[
 \poly(X)=2+z_C,\qquad
 \poly(X\times X)=4+4z_C+z_C^2.
\]
The square has four isolated points, four components isomorphic to $C$,
and one component isomorphic to $C\times C$. This elementary example
also shows why a~coefficient counts components with multiplicity rather
than merely recording the presence of a~component type.
\index{component!polynomial}\index{multiplicity}

\begin{lemma}\label{L:degree-obstruction}
\index{degree!obstruction!weighted degree}
An ordered set of positive height cannot divide a~nonzero integer in
the signed component-polynomial ring. If $|A|=|B|$ and $A\ncong B$,
then $\hdeg(\poly(A)-\poly(B))\geq1$.
\end{lemma}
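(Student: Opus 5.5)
The plan rests on two facts about the component polynomial: the weighted degree is additive, $\hdeg(q_1q_2)=\hdeg(q_1)+\hdeg(q_2)$ from \eqref{b:E:weighted-degree}, and the evaluations $z_D=|D|$ and $z_D=1$ recover cardinality and component count.

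\emph{First assertion.} Let $X$ have positive height, so $\hdeg(\poly(X))=\height(X)\geq1$ by \eqref{b:E:weighted-degree}. Suppose $\poly(X)\,q=n$ for a nonzero integer $n$ and some signed polynomial $q$. Then $q\neq0$, and additivity gives
\[
 0=\hdeg(n)=\height(X)+\hdeg(q)\geq1,
\]
because every monomial has nonnegative weighted degree, so $\hdeg(q)\geq0$. This is a contradiction. Additivity is legitimate here because every indeterminate has strictly positive weight $\height(D)\geq1$: each $D$ is connected and nontrivial, hence has a strict comparability. So the product of the top homogeneous parts is nonzero in the integral domain.

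\emph{Second assertion.} Put $d=\poly(A)-\poly(B)$. By \eqref{b:E:polynomial}, $A\ncong B$ gives $d\neq0$, so we must rule out $\hdeg(d)=0$. A nonzero polynomial of weighted degree $0$ is a nonzero integer constant, since every nonconstant monomial has positive weight. So it suffices to show that the constant term of $d$ cannot be the whole of $d$.

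Assume $d=c$ is a nonzero integer constant. Evaluate at $z_D=|D|$. The hypothesis $|A|=|B|$ makes the value of $d$ equal to $0$, while the value of the constant $c$ is $c\neq0$, a contradiction. This evaluation is legitimate because it is a ring homomorphism to $\mathbb Z$ that sends $\poly(X)$ to $|X|$. Hence $d$ has a monomial of positive weight, and $\hdeg(d)\geq1$.

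The only point needing care is the well-definedness of $\hdeg$ for signed polynomials and its additivity in this ring. Both rest on all weights being strictly positive, which I would record explicitly before the two arguments.
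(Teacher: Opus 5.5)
Your proof is correct and follows the paper's route. The first assertion uses additivity of weighted degree on nonzero products against the degree-zero integer. The second uses that a nonzero degree-zero difference $\poly(A)-\poly(B)$ would be a nonzero constant, which the evaluation $z_D=|D|$ rules out because $|A|=|B|$. One small correction to your closing remark: additivity of $\hdeg$ needs only that the coefficient ring is an integral domain, while strict positivity of the weights is what you actually use to get that degree-zero polynomials are constants and that $\hdeg(\poly(X))=\height(X)\geq1$.
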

\begin{proof}
Every factor variable has a~positive height weight. A~nonzero integer
has weighted degree zero, while degrees add on nonzero products. This
proves the first assertion. For the second, a~nonzero degree-zero
difference would be a~nonzero integer. Cardinality evaluation would
then give $|A|-|B|\ne0$, a~contradiction.
\end{proof}
\index{weighted!degree}\index{degree!obstruction}

\chapter{Canonical exponent coordinates}\label{ch:coordinates}
For connected $P$ and arbitrary $Y$, let $\Con(P,Y)$ be the connected
component of $P^Y$ containing the maps constant on each component of
$Y$. All such maps belong to one component because $P$ is connected.
The usual exponential laws restrict to
\begin{equation}\label{b:E:Con-laws}
\begin{aligned}
 \Con(P\times Q,Y)&\cong\Con(P,Y)\times\Con(Q,Y),\\
 \Con(P,Y+Z)&\cong\Con(P,Y)\times\Con(P,Z),\\
 \Con(\Con(P,Y),Z)&\cong\Con(P,Y\times Z).
\end{aligned}
\end{equation}
Here $P,Q$ are connected. These identities and the following structural
theorem are the exponent-coordinate input from McKenzie and Farley;
see~\cite{McKenzie,McKenzieZigzag,Farley}, and specifically Lemma~7 of the preprint
version~\cite{FarleyPreprint}.

\begin{theorem}[Canonical exponent coordinates]\label{b:T:coordinates}
\index{canonical!coordinates!theorem}
Every connected nontrivial directly indecomposable ordered set $D$ has a
unique pair $(E_D,F_D)$, up to isomorphism, such that $F_D$ is connected,
$E_D$ is connected, nontrivial, and absolutely $\Con$-indecomposable, and
\[
 D\cong\Con(E_D,F_D).
\]
Absolute $\Con$-indecomposability means that $E_D$ is connected and
directly indecomposable, and a~representation
$E_D\cong\Con(U,V)$ forces $V\cong\one$ and $U\cong E_D$.
For connected $Y$, the ordered set $\Con(D,Y)$ is directly
indecomposable and its canonical pair is $(E_D,F_D\times Y)$.
\end{theorem}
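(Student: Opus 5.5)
The plan is to get existence by descent on cardinality, derive the statement about $\Con(D,Y)$ from the laws \eqref{b:E:Con-laws}, and then prove uniqueness. Uniqueness is the real content, and for it I will lean on the McKenzie--Farley structural input cited just before the statement.

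\emph{Existence.} Given $D$, consider all representations $D\cong\Con(U,V)$ with $U$ connected and nontrivial and $V$ connected. The trivial representation $U=D$, $V=\one$ shows this family is nonempty. Choose one with $|U|$ minimal. Since $D$ is directly indecomposable, the first law in \eqref{b:E:Con-laws} forces $U$ to be directly indecomposable: a factorization $U\cong U_1\times U_2$ would give $D\cong\Con(U_1,V)\times\Con(U_2,V)$, and both factors are nontrivial because they contain the constant maps.

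Suppose $U\cong\Con(U',V')$ with $V'\not\cong\one$. By the third law, $D\cong\Con(U',V'\times V)$. If $V'$ is connected, then $|U'|<|U|$, since $\Con(U',V')$ with $U'$ nontrivial and $V'$ nontrivial is strictly larger than $U'$ (the constant maps already form a copy of $U'$, and a nonconstant map exists). This contradicts minimality. If $V'$ is disconnected, the second law splits $\Con(U',V')$ as a product, which contradicts the indecomposability of $U$. So $U$ is absolutely $\Con$-indecomposable, and we take $(E_D,F_D)=(U,V)$.

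\emph{The second assertion.} For connected $Y$, the third law gives
\[
 \Con(D,Y)\cong\Con(E_D,F_D\times Y),
\]
and $F_D\times Y$ is connected. Granted uniqueness, this is the canonical pair as soon as $\Con(D,Y)$ is directly indecomposable. That indecomposability is again a structural fact: a direct decomposition of $\Con(E,W)$ with $E$ indecomposable and $W$ connected must come from a decomposition of $E$ or of $W$. Since $W$ sits in the exponent, a decomposition of $W$ does not produce a product anyway (compare the first versus the second law).

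\emph{Uniqueness.} This is the main obstacle. Suppose
\[
 \Con(E,F)\cong\Con(E',F')
\]
with both $E,E'$ absolutely $\Con$-indecomposable. The route I would try first is to recover $E$ intrinsically from $D$. One candidate is the McKenzie-type characterization: take $E$ to be a minimal retract $R$ of $D$ for which $D\cong\Con(R,W)$ holds for some connected $W$, and prove that such minimal retracts are all isomorphic. Once $E\cong E'$ is known, cancel the base: the map $W\mapsto\Con(E,W)$ is injective on connected $W$. Here I would invoke exactly the cited Lemma~7 of \cite{FarleyPreprint} together with \cite{McKenzie,McKenzieZigzag}, since the paper declares these the external structural input. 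Proving uniqueness from scratch would require McKenzie's refinement theory for exponentiation and is beyond a short argument. So the proof would present existence and the $\Con(D,Y)$ assertion as consequences of \eqref{b:E:Con-laws}, and rest uniqueness and exponent cancellation on those citations.
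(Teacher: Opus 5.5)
Your existence argument is correct and goes beyond the paper, which simply cites existence (together with preservation of direct indecomposability) from McKenzie and Farley. You minimize $|U|$ over representations $D\cong\Con(U,V)$ with $U$ connected and nontrivial and $V$ connected. Since $D$ is directly indecomposable, the first law in \eqref{b:E:Con-laws} makes $U$ directly indecomposable. The second law excludes a disconnected inner exponent $V'$. The third law, together with $|\Con(U',V')|>|U'|$ for connected nontrivial $V'$, excludes a connected one. That is a clean proof from the elementary laws alone.

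The other two parts fall short of the paper's treatment.

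First, contrary to your closing summary, direct indecomposability of $\Con(D,Y)$ is not a consequence of \eqref{b:E:Con-laws}. Those laws produce factorizations when the base is a product or the exponent is a sum. They say nothing about whether every factorization of $\Con(E,W)$ arises that way, and that is exactly the nontrivial structural claim. Your remark that $W$ sits in the exponent is a heuristic, so this must be cited, as the paper does.

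Second, your uniqueness route is not an argument. Proving that all minimal such retracts are isomorphic is the entire difficulty, and you would then still need exponent cancellation as a further input. The paper instead reduces uniqueness in two lines to the connected refinement theorem \cite[Theorem~4]{FarleyPreprint}. An isomorphism $\Con(E,F)\cong\Con(E',F')$ with connected entries yields connected $H,U,V,Z$ with $E\cong\Con(H,U)$, $E'\cong\Con(H,V)$, $F\cong V\times Z$, and $F'\cong U\times Z$. Absolute $\Con$-indecomposability of $E$ and $E'$ then forces $U\cong V\cong\one$, giving $E\cong H\cong E'$ and $F\cong Z\cong F'$ at once.

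That reduction is the missing idea. With it, the transformation rule $(E_D,F_D\times Y)$ follows from the third law exactly as you propose. Combining it with your existence proof leaves only the refinement theorem and the indecomposability of $\Con(D,Y)$ as external inputs.
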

The height construction above stays in the constant-containing
component. Consequently
\begin{equation}\label{b:E:coordinate-height}
 \height(D)=|F_D|\height(E_D),\qquad
 \height(F_D)<|F_D|\leq\height(D).
\end{equation}
This strict decrease will make an auxiliary recursive ordering
well-defined.

For connected $P$ with prime factors $D_1,\ldots,D_s$, define
\[
 \Tow(P)=F_{D_1}+\cdots+F_{D_s},\qquad \Tow(\one)=0.
\]
The canonical theorem gives
\begin{equation}\label{b:E:Tow-laws}
\begin{aligned}
 \Tow(P\times Q)&\cong\Tow(P)+\Tow(Q),\\
 \Tow(\Con(P,Y))&\cong\Tow(P)\times Y.
\end{aligned}
\end{equation}
For the second identity, split $Y$ into its connected components and
apply the last assertion of Theorem~\ref{b:T:coordinates} to each prime
factor of $P$. If $P$ is nontrivial, $\Tow(P)$ is an actual ordered set,
and \eqref{b:E:coordinate-height} gives
\begin{equation}\label{b:E:Tow-height}
 \height(\Tow(P))<\height(P).
\end{equation}
If $P$ is bounded, every map $Y\to P$ lies between the constant bottom
and top maps, so $P^Y=\Con(P,Y)$.

\section{Uniqueness and the source of the coordinates}
\index{canonical!coordinates!uniqueness}\index{canonical!coordinates!source}
Theorem~\ref{b:T:coordinates} is the classical structural input, not an
induction assumption about exponential reconstruction. Its exponent
coordinate must be connected. Otherwise its component decomposition
would split $\Con(E_D,F_D)$ into at least two nontrivial Cartesian
factors.

For completeness, the uniqueness reduction can be made explicit. The
connected refinement theorem for $\Con$, as stated in
\cite[Theorem~4]{FarleyPreprint}, says that an equality
$\Con(E,F)\cong\Con(E',F')$ with connected entries yields connected
$H,U,V,Z$ such that
\[
 E\cong\Con(H,U),\quad E'\cong\Con(H,V),\quad
 F\cong V\times Z,\quad F'\cong U\times Z.
\]
If $E,E'$ are absolutely $\Con$-indecomposable, then
$U\cong V\cong\one$. Hence $E\cong E'$ and $F\cong F'$.
The coordinate transformation under a~connected exponent follows from
\[
 \Con(\Con(E,F),Y)\cong\Con(E,F\times Y)
\]
and this uniqueness. These are precisely the properties used below.
Existence and preservation of direct indecomposability are supplied by
the cited structural theorem.
\index{canonical!coordinates}\index{absolute!indecomposability}\index{Tow!construction}

\section{Why the coordinate sum lowers height}
\index{canonical!coordinates!height reduction}\index{height!reduction!coordinate sum}
If $P=D_1\times\cdots\times D_s$ is nontrivial, each canonical core
$E_{D_i}$ has positive height. Consequently
\[
 \height(F_{D_i})<|F_{D_i}|
 \leq |F_{D_i}|\height(E_{D_i})
 =\height(D_i)\leq\height(P).
\]
Taking the maximum over the summands of $\Tow(P)$ proves the strict
height drop. Notice that the coordinate sum may be disconnected even
when $P$ is bounded. Keeping that sum, rather than selecting only one
of its components, is essential for the cancellation identities.

\chapter{Two compatible auxiliary orders}\label{ch:orders}

We use two different orders on isomorphism types. The first selects
intervals and resolves the height ties in the finite case analysis. The
second gives a~strict-monotonicity theorem on componentwise contractible
inputs. The two roles must not be interchanged.
\index{auxiliary!ordering}\index{tie-breaking}
\section{Greatest unequal coordinates}
\index{auxiliary!ordering!greatest unequal coordinate}\index{lexicographic!comparison}
\begin{lemma}\label{L:lexicographic}
\index{lexicographic!comparison!lemma}
Let functions from a~chain to a~chain differ pairwise at only finitely
many positions. Compare two distinct functions at their greatest
unequal position. This defines a~linear order.
\end{lemma}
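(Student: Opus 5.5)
To prove Lemma~\ref{L:lexicographic}, fix the domain chain $I$ and the codomain chain $K$, and work inside a family $\mathcal F$ of functions $I\to K$ any two of which differ at only finitely many positions. For distinct $f,g\in\mathcal F$ the set $\Delta(f,g)=\setm{i\in I}{f(i)\neq g(i)}$ is finite and nonempty, so it has a greatest element $m(f,g)$ in the chain $I$. Define $f\prec g$ when $f(m)<g(m)$ at $m=m(f,g)$. Since $\Delta(f,g)=\Delta(g,f)$, this rule is well defined. I will check the three properties of a strict linear order in turn.

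Irreflexivity is built into the definition, since only distinct functions are compared. Totality is also immediate: for $f\neq g$, the values $f(m)$ and $g(m)$ are distinct elements of the chain $K$, so exactly one of $f\prec g$ or $g\prec f$ holds. This also gives asymmetry.

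The substantive step is transitivity. Suppose $f\prec g$ and $g\prec h$, and put $a=m(f,g)$ and $b=m(g,h)$. The key fact is that above $\max(a,b)$ all three functions agree, because $f=g$ above $a$ and $g=h$ above $b$. I will split into three cases.

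\begin{enumerate}
\item If $a>b$, then $g(a)=h(a)$ and $f(a)<g(a)=h(a)$. Above $a$ we have $f=h$, so $m(f,h)=a$ and $f\prec h$.
\item If $b>a$, then symmetrically $f(b)=g(b)<h(b)$, and $f=h$ above $b$. So $m(f,h)=b$ and $f\prec h$.
\item If $a=b$, then $f(a)<g(a)<h(a)$ by transitivity in $K$. Above $a$ we have $f=h$, so again $f\prec h$.
\end{enumerate}

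In each case the identity of $m(f,h)$ uses only that $f$ and $h$ agree above the index in question and differ at it. Note that $f\neq h$ is itself a consequence of this argument, not an assumption. I expect no real obstacle here. The only point needing care is that finiteness of $\Delta$ is what guarantees the greatest unequal position exists when $I$ is infinite, and that the hypothesis is applied only to pairs of functions in $\mathcal F$.
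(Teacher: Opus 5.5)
Your proposal is correct and follows essentially the same approach as the paper: both take $c=\max(a,b)$, observe that all three functions agree above $c$, and conclude that $c$ is the greatest unequal position for $f,h$ with $f(c)<h(c)$. Your three cases just spell out separately the paper's single observation that $f(c)\leq g(c)\leq h(c)$ with at least one strict inequality.
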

\begin{proof}
Totality and asymmetry are immediate. For $f<g<h$, let $c$ be the
greater of the greatest unequal positions for $f,g$ and for $g,h$.
Above $c$ all three agree. At $c$ one has
$f(c)\leq g(c)\leq h(c)$, with at least one strict inequality.
Thus $c$ is the greatest unequal position for $f,h$, and $f<h$.
\end{proof}
\section{The interval-selection order}
\index{auxiliary!ordering!interval selection}\index{maximum-height!interval}
We now fix the ordering used throughout the remaining proof. First
compare connected types by height, then cardinality, and finally by the
multiplicity at the last unequal prime factor in a~fixed enumeration of
prime types. Call this preliminary order $\leq_1$. Multiplication by a
connected type preserves strict comparisons: heights add, cardinalities
multiply, and the same factor-multiplicity vector is added.

Next order prime types by their canonical pairs $(E_D,F_D)$, comparing
both entries in $\leq_1$, with the core first. Finally, compare connected
types at their greatest unequal prime factor in this pair order. Denote
this last order by $\preceq$.

\begin{lemma}\label{b:L:selection-order}
\index{interval-selection!order!lemma}
Multiplication by a~connected ordered set preserves strict
$\preceq$-comparisons. So does $P\mapsto\Con(P,Y)$ for fixed $Y$.
For bounded nontrivial $L$,
\begin{equation}\label{b:E:exponent-tie}
 \height(X)>\height(Y)\quad\Longrightarrow\quad L^Y\prec L^X.
\end{equation}
In particular, $L^{\times j}\prec L^X$ whenever $\height(X)>0$ and
$j\geq1$.
\end{lemma}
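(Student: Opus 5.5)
The plan is to treat $\preceq$ as a comparison of multiplicity vectors indexed by prime types, with the prime types linearly ordered by the pair order. Lemma~\ref{L:lexicographic} applies to this setting. Every assertion then reduces to tracking how the vector, and its greatest unequal position, transform.

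\emph{Multiplication.} By Hashimoto's theorem, multiplying $P$ and $Q$ by a connected $R$ adds the multiplicity vector of $R$ to both vectors. The difference of the two vectors is therefore unchanged. So the greatest unequal prime factor and the sign of the comparison there are unchanged, and $P\prec Q$ implies $PR\prec QR$.

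\emph{The map $P\mapsto\Con(P,Y)$.} First reduce to connected $Y$. If $Y=Y_1+\cdots+Y_m$ with connected components, then the laws \eqref{b:E:Con-laws} give $\Con(P,Y)\cong\prod_i\Con(P,Y_i)$. Suppose $P\prec Q$ and each map $P\mapsto\Con(P,Y_i)$ preserves strict comparisons. Then we replace the factors $\Con(P,Y_i)$ by $\Con(Q,Y_i)$ one at a time. Each single replacement is strict, because multiplication by the remaining connected factors preserves strict comparisons, as just shown. Transitivity of $\preceq$ then gives $\Con(P,Y)\prec\Con(Q,Y)$.

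Now let $Y$ be connected and $P=D_1\times\cdots\times D_s$. Then $\Con(P,Y)\cong\prod_i\Con(D_i,Y)$. By Theorem~\ref{b:T:coordinates}, each $\Con(D_i,Y)$ is prime with canonical pair $(E_{D_i},F_{D_i}\times Y)$. Define $\phi$ on prime types by $\phi(D)=\Con(D,Y)$.
\begin{enumerate}
\item $\phi$ is injective. This follows from uniqueness of canonical pairs together with product cancellation (Lemma~\ref{L:arithmetic-cancellation}) applied to $F\times Y$.
\item $\phi$ is strictly increasing in the pair order. Cores are unchanged, and multiplication by the connected $Y$ preserves strict $\leq_1$-comparisons of the exponent coordinates, since heights add, cardinalities multiply, and multiplicity vectors shift by a common vector.
\end{enumerate}
The multiplicity vector of $\Con(P,Y)$ is therefore the pushforward of that of $P$ along the order embedding $\phi$. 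The greatest unequal position of $P,Q$ maps to the greatest unequal position of $\Con(P,Y),\Con(Q,Y)$, with the same sign.

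\emph{The exponent tie \eqref{b:E:exponent-tie}.} I expect this step to be the main one. Since $L$ is bounded, $L^X=\Con(L,X)$ and $L^Y=\Con(L,Y)$. Let $L$ have prime factors $D_i$ with pairs $(E_i,F_i)$. The prime factors of $L^X$ are the types with pairs $(E_i,F_i\times X_j)$, where $X_j$ runs over the components of $X$; the same holds for $L^Y$.

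Let $E$ be the $\leq_1$-largest core among the $E_i$. No prime factor of either exponent has a larger core, because cores are compared first in the pair order. Among the factors with core $E$, the exponent coordinates are compared first by height, and
\[
\height(F_i\times X_j)=\height(F_i)+\height(X_j).
\]
Choose $i$ with core $E$ and $\height(F_i)$ maximal, and choose $j$ with $\height(X_j)=\height(X)$. The resulting prime factor of $L^X$ has exponent coordinate of height strictly greater than every exponent coordinate with core $E$ occurring in $L^Y$, because $\height(X)>\height(Y)$. Therefore the pair-order-greatest prime factor occurring in either $L^X$ or $L^Y$ lies in $L^X$ with positive multiplicity and has multiplicity zero in $L^Y$. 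That position is the greatest unequal one, and it gives $L^Y\prec L^X$.

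For the final clause, note that $L^{\times j}\cong L^{j\one}$, and the antichain $j\one$ has height $0$. So the tie inequality applies whenever $\height(X)>0$.

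The delicate point is the bookkeeping just described. We must check that cores dominate the pair order, so that attention can be restricted to the maximal core $E$. We must also check that the height term of $\leq_1$ alone separates the top prime of $L^X$ from every prime of $L^Y$, with no interference from the cardinality or multiplicity tie-breakers.
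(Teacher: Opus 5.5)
Your proposal is correct and follows essentially the same route as the paper. The three parts match: the common additive shift of multiplicity vectors for products; the canonical-pair replacement $(E,F)\mapsto(E,F\times Y)$ as an order embedding of prime types, with the reduction over the connected components of $Y$; and, for \eqref{b:E:exponent-tie}, the comparison at the greatest core of $L$, where the largest coordinate height is $b+\height(X)$ in $L^X$ against $b+\height(Y)$ in $L^Y$, with $Y=j\one$ giving the last clause. Your additional bookkeeping (injectivity of $\phi$, the pushforward of multiplicity vectors, and the explicit identification of the greatest unequal prime) only spells out what the paper states more briefly.
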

\begin{proof}
Products add the same prime multiplicities. For connected $Y$, the
canonical theorem replaces each pair $(E,F)$ by $(E,F\times Y)$.
This is an injective order-preserving replacement because multiplication
preserves $\leq_1$. For arbitrary $Y$, split it into connected components
and compare the corresponding products one factor at a~time.

For \eqref{b:E:exponent-tie}, take the greatest core occurring among the
prime factors of $L$, and let $b$ be the greatest height of a~coordinate
with that core. In $L^X$, the greatest coordinate height at this core is
$b+\height(X)$; in $L^Y$, it is $b+\height(Y)$. The preliminary order
compares these heights first, so a~prime of $L^X$ exceeds every prime
of $L^Y$ with that core. No smaller core can alter the comparison.
Taking $Y=j\one$ proves the last assertion.
\end{proof}

\section{A~recursive ordered-ring order}
\index{auxiliary!ordering!recursive ordered-ring order}\index{ordered!ring!recursive order}
The order $\triangleleft$ constructed here is distinct from $\preceq$.
It is used only for strict monotonicity, not for selecting the intervals
in the finite classification.
\begin{lemma}\label{b:L:ordered-ring}
\index{ordered!ring!ordering lemma}
There is a~total order $\triangleleft$ on finite ordered-set types that
is the restriction of a~total ordered-ring structure on the signed
component polynomials and has these properties:
\begin{equation}\label{b:E:Con-monotonicity}
\begin{aligned}
 P\triangleleft Q&\ \Longrightarrow\
       \Con(P,Y)\triangleleft\Con(Q,Y),\\
 Y\triangleleft Z&\ \Longrightarrow\
       \Con(P,Y)\triangleleft\Con(P,Z)
       \quad\text{if $P$ is nontrivial}.
\end{aligned}
\end{equation}
In the first line $P,Q$ are connected; in the second $P$ is connected.
All actual ordered sets represent positive elements of this ordered ring.
\end{lemma}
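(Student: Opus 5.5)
We construct $\triangleleft$ by recursion on height, following the strict height decrease \eqref{b:E:coordinate-height}. Signed component polynomials form the ring $\mathbb Z[z_D]$, where $D$ ranges over prime types. A total ordered-ring structure on a polynomial ring is determined by ordering the monomials by a monomial order, since the leading monomial then decides positivity. We require the following: each $z_D$ is infinitely large relative to $\mathbb Z$, and the monomial order is multiplicative, i.e.\ compatible with multiplication, and satisfies $1<z_D$. Then every actual ordered set is positive, since its polynomial has nonnegative coefficients and is nonzero. Products of positives are positive, and the order restricts to a total order on types. The real task is to choose the order on prime variables, and the induced monomial order, so that \eqref{b:E:Con-monotonicity} holds.

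The choice is a comparison of primes through their canonical pairs $(E_D,F_D)$, supplied by Theorem~\ref{b:T:coordinates}. We compare first the core $E_D$ by some fixed linear order on absolutely $\Con$-indecomposable types, say $\leq_1$. For equal cores we compare the exponents $F_D$ by $\triangleleft$ itself. This is legitimate recursively because $\height(F_D)<\height(D)$. A connected type is then a monomial $\prod z_D^{m_D}$. We compare monomials first by their restriction to the greatest core, then by the next core, and so on. Within a fixed core $E$, the block $\prod_{E_D=E}z_D^{m_D}$ is identified with the ordered set $\sum_D m_D F_D$, i.e.\ with $\Tow$ restricted to core $E$. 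Blocks are compared via $\triangleleft$ on these sums. Here we use that sums correspond to addition in the ring, and that addition is monotone in an ordered ring, which is what makes the monomial order multiplicative. To make the recursion well-founded we define $\triangleleft$ on the finite set of types of height at most $h$ by induction on $h$. The polynomials of lower height are already ordered, and the lexicographic Lemma~\ref{L:lexicographic} gives linearity at each stage.

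Both monotonicity laws then follow from the law $\Tow(\Con(P,Y))\cong\Tow(P)\times Y$ in \eqref{b:E:Tow-laws}, refined core by core. Here $\Con(P,Y)$ keeps each core and replaces the exponent block $S_E$ by $S_E\times Y$.
\begin{itemize}
\item[] For the second law, suppose $Y\triangleleft Z$ and $P$ is nontrivial. Then some core $E$ has a nonzero block $S_E$, and $S_E\times Y\triangleleft S_E\times Z$ holds because multiplication by the positive element $S_E$ preserves strict inequalities in the ordered ring at lower height. This applies at every core. The greatest core therefore decides in the correct direction.
\item[] For the first law, suppose $P\triangleleft Q$. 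The comparison is decided at the greatest core where the blocks differ, $S_E\triangleleft S'_E$. Multiplication by the positive $Y$ preserves this inequality and also preserves equality at all higher cores.
\end{itemize}

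The main obstacle is the simultaneous induction. Multiplicativity of the monomial order on types of height at most $h$ needs the ordered-ring property at lower heights. The $\Con$ laws, in turn, need products $S\times Y$ whose heights may exceed those already handled, since $Y$ can be arbitrary. I would resolve this by defining the order on exponent blocks globally but recursively through the core data. Alternatively, I would check that the comparison of $S\times Y$ with $S\times Z$ reduces, via the leading monomial, to comparisons of strictly smaller height, and then run the induction on total height.
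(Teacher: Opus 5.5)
Your proposal is correct and takes essentially the paper's route. Primes are ordered by their canonical pairs, and connected types are compared at the greatest core where the coordinate sums differ. Your blocks $S_E$ are the paper's $\coord_E(P)$, and comparing them in the leading-coefficient order is exactly the greatest-unequal-prime-factor rule. Both monotonicity laws then follow from $\coord_E(\Con(P,Y))=\coord_E(P)\poly(Y)$ and multiplication by a positive element.

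The simultaneous-induction obstacle in your last paragraph is not real, for two reasons. First, the sign of $(X+T)-(X'+T)$ equals the sign of $X-X'$, so multiplicativity of the monomial order needs no inductive hypothesis; the paper sees this as invariance of the greatest-unequal-prime rule under adding a common multiplicity vector. Second, the $\Con$ laws are proved only after $\triangleleft$ has been defined at every height, so an exponent $Y$ of large height creates no circularity. One small correction: the set of types of height at most $h$ is infinite, not finite, but the recursion does not need finiteness.
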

\begin{proof}
Fix a~total order of the absolutely $\Con$-indecomposable core types.
Compare prime types $D$ by their pairs $(E_D,F_D)$, first by the core
and then by the connected type $F_D$. Compare connected types by the
multiplicity at their greatest unequal prime factor. The unit $\one$
is smaller than every nontrivial connected type.

This is a~recursive definition by height. To compare prime types of
height at most $h$, \eqref{b:E:coordinate-height} requires only comparisons
of connected exponent coordinates of height below $h$. The resulting
prime order then orders connected products of height at most $h$.
The definitions agree at successive heights. Totality and transitivity
follow from lexicographic comparison and the greatest-unequal-factor
rule.

A~nonzero signed component polynomial is positive if the coefficient of
its greatest monomial is positive. The monomial order is invariant under
multiplication, because multiplication adds the same vector of prime
multiplicities. The leading monomial and coefficient of a~product are
the products of the leading monomials and coefficients. Thus this rule
defines a~totally ordered integral domain.

For a~core $E$ and connected $P$, let $\coord_E(P)$ be the sum of
$\poly(F_D)$ over the prime factors of $P$ with core $E$, with
multiplicity. Set $\coord_E(\one)=0$. The connected-type order is
obtained by finding the greatest core at which these polynomials differ,
and comparing the two polynomials there. The canonical theorem gives
\begin{equation}\label{b:E:coordinates-product}
 \coord_E(\Con(P,Y))=\coord_E(P)\poly(Y).
\end{equation}
For connected $Y$ this is the coordinate multiplication rule. For
arbitrary $Y$, use its component decomposition and
\eqref{b:E:Con-laws}.

If $P\triangleleft Q$, multiply the nonzero difference at the greatest
unequal core by the positive polynomial $\poly(Y)$. Its sign is
unchanged, proving the first line of \eqref{b:E:Con-monotonicity}.
For the second, use the greatest core occurring in the nontrivial $P$.
Its nonzero positive coordinate polynomial multiplies
$\poly(Z)-\poly(Y)$ and preserves its sign.
\end{proof}

\chapter{Selected intervals and height reduction}\label{b:S:selection}
Define $\Max(X)$ to be the $\preceq$-greatest isomorphism type among
intervals of height $\height(X)$ in $X$. This is intrinsic, bounded,
and unique up to isomorphism.

\begin{lemma}\label{b:L:Max}
\index{Max!construction!lemma}
For arbitrary ordered sets,
\begin{equation}\label{b:E:Max-laws}
 \Max(X\times Y)\cong\Max(X)\times\Max(Y),\qquad
 \Max(X^Y)\cong\Max(X)^Y.
\end{equation}
\end{lemma}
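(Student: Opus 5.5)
The plan is to prove both laws by first identifying which intervals of the product or exponent have maximum height, and then comparing those intervals in $\preceq$ using the monotonicity assertions of Lemma~\ref{b:L:selection-order}. Since $\preceq$ is a linear order on isomorphism types (Lemma~\ref{L:lexicographic}), it is enough to show two things. First, the proposed right-hand side occurs as a maximum-height interval. Second, every maximum-height interval is $\preceq$ it. Throughout, intervals are bounded, hence connected. So every comparison stays among connected types, where the selection order is defined.

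\emph{Products.} Every interval of $X\times Y$ has the form $[(a,c),(b,d)]\cong[a,b]\times[c,d]$. By Proposition~\ref{P:height-components} its height is $\height[a,b]+\height[c,d]$. This equals $\height(X)+\height(Y)$ exactly when both factors have maximum height in their own sets. The interval $\Max(X)\times\Max(Y)$ therefore occurs. For any other maximum-height interval $I\times J$ we have $I\preceq\Max(X)$ and $J\preceq\Max(Y)$. Multiplication by a connected type preserves strict, hence non-strict, $\preceq$-comparisons. This gives
\[
 I\times J\preceq \Max(X)\times J\preceq\Max(X)\times\Max(Y).
\]

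\emph{Exponents.} Put $h=\height(X)$ and $M=\Max(X)=[a,b]$. The constant maps $a,b$ bound the interval $[a,b]^Y\cong M^Y$ in $X^Y$. It has height $|Y|h=\height(X^Y)$ by Proposition~\ref{P:height-components}, so it occurs. Now let $[f,g]$ be any interval of $X^Y$ of height $|Y|h$. A chain from $f$ to $g$ changes the value at $y$ at most $\height[f(y),g(y)]\le h$ times. Hence every pointwise interval $[f(y),g(y)]$ has height exactly $h$.

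The key step, which I expect to be the main obstacle, is to show that $f$ and $g$ are constant on each component of $Y$. Suppose $y<y'$ in $Y$ and $f(y)<f(y')$. Then $f(y)<f(y')$ followed by a maximal chain of $[f(y'),g(y')]$ is a chain in $X$ of length $h+1$, which is impossible. So $f(y)=f(y')$, and symmetrically $g(y)=g(y')$. Propagating along comparability paths shows that $f$ and $g$ take constant values $a_i$, $b_i$ on each component $Y_i$. Therefore
\[
 [f,g]\cong\prod_i [a_i,b_i]^{Y_i},
\]
where each $[a_i,b_i]$ is a bounded interval of height $h$, so $[a_i,b_i]\preceq M$.

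Because $[a_i,b_i]$ and $M$ are bounded, $[a_i,b_i]^{Y_i}=\Con([a_i,b_i],Y_i)$ and $M^{Y_i}=\Con(M,Y_i)$. By Lemma~\ref{b:L:selection-order}, $P\mapsto\Con(P,Y_i)$ preserves strict $\preceq$-comparisons, so $[a_i,b_i]^{Y_i}\preceq M^{Y_i}$ for each $i$. Replacing the factors one at a time, and using multiplicativity of $\preceq$ at each step, yields
\[
 [f,g]\preceq\prod_i M^{Y_i}\cong M^Y.
\]
This identifies $\Max(X^Y)$ with $\Max(X)^Y$.
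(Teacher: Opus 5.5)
Your proposal is correct and follows the paper's proof essentially step for step. For products you use the decomposition of intervals into products of intervals. For exponents you use the coordinatewise height bound to force each $[f(y),g(y)]$ to have height $h$, then constancy of $f,g$ on the components of $Y$, the decomposition $[f,g]\cong\prod_i I_i^{Y_i}$ with $I_i^{Y_i}=\Con(I_i,Y_i)$, and finally the product and $\Con$-compatibility of $\preceq$ from Lemma~\ref{b:L:selection-order}; your chain-extension argument for constancy is the same as the paper's observation that $f(y)$ is minimal and $g(y)$ is maximal in $X$.
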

\begin{proof}
Every interval in a~product is a~product of intervals. The height formula
shows that a~maximum-height interval uses maximum-height intervals in
both factors. Product compatibility of $\preceq$ selects the two
$\Max$ factors.

For the exponent assertion, let $[f,g]$ be an interval of maximum
height in $X^Y$, and write $h=\height(X)$. The coordinatewise bound gives
\[
 |Y|h=\height([f,g])
 \leq\sum\setm{\height([f(y),g(y)])}{y\in Y}\leq |Y|h.
\]
Thus each $[f(y),g(y)]$ has height $h$. Its bottom is minimal in $X$
and its top maximal, since otherwise a~maximum chain could be extended.
Comparable values of $f$ are therefore equal, and the same holds for
$g$. Both maps are constant on each connected component $Y_i$ of $Y$.
Consequently
\[
 [f,g]\cong\prod\setm{I_i^{Y_i}}{1\leq i\leq\comp(Y)},
\]
where each $I_i$ is a~maximum-height interval of $X$. Conversely, every
such product is an interval of maximum height. The intervals $I_i$ are
bounded, so $I_i^{Y_i}=\Con(I_i,Y_i)$. Exponent and product compatibility
in Lemma~\ref{b:L:selection-order} select $I_i=\Max(X)$ in every factor.
\end{proof}

Put
\[
 \Der(X)=\Tow(\Max(X)),\qquad \Red(Q)=\Max(\Tow(Q)),
\]
where $\Red$ is applied only to bounded ordered sets of positive height.
The preceding identities imply
\begin{equation}\label{b:E:Der-laws}
 \Der(X\times Y)\cong\Der(X)+\Der(Y),\qquad
 \Der(X^Y)\cong\Der(X)\times Y.
\end{equation}
There is no additivity assertion for $\Der(X+Y)$.

For a~positive-height input $A$, abbreviate
\begin{equation}\label{b:E:LRS}
 L=\Max(A),\quad R=\Tow(L),\quad S=\Max(R),\quad J=\Tow(S).
\end{equation}
The ordered set $R$ is actual. If $J\ne0$, then
\begin{equation}\label{b:E:separation}
 \height(J)<\height(S)=\height(R)<\height(L)=\height(A).
\end{equation}

\begin{lemma}[Reduction and terminal recovery]\label{b:L:reduction}
\index{terminal!recovery!reduction lemma}
For an ordered set $D$ and an integer $k\geq1$,
\begin{equation}\label{b:E:reductions}
\begin{aligned}
 \Red(\Max(A^D))&\cong S\times\Max(D),\\
 \Red(S\times L^D)&\cong S\times\Max(D),\\
 \Red(S\times L^{\times k})&\cong S.
\end{aligned}
\end{equation}
For known $k$, the isomorphism type $S\times L^{\times k}$ determines
$L$, $R$, and $S$.
\end{lemma}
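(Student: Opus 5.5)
The plan is to compute each left-hand side by unwinding the definitions $\Red=\Max\circ\Tow$ and applying, in order, Lemma~\ref{b:L:Max}, the $\Tow$-laws \eqref{b:E:Tow-laws}, and the height separation \eqref{b:E:separation}. The third line will be the special case $D=k\one$ of the second. The final assertion then follows by cancellation.

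For the first line, Lemma~\ref{b:L:Max} gives $\Max(A^D)\cong\Max(A)^D=L^D$. Since $L$ is bounded, $L^D=\Con(L,D)$, so \eqref{b:E:Tow-laws} gives $\Tow(L^D)\cong\Tow(L)\times D=R\times D$. Applying $\Max$ and the product law of Lemma~\ref{b:L:Max} yields $\Max(R)\times\Max(D)=S\times\Max(D)$. The object $L^D$ is bounded of positive height, because $\height(L)=\height(A)>0$, so $\Red$ is legitimately applied.

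For the second line, $S\times L^D$ is bounded. This holds because $S$ is a selected interval and hence bounded, and $L^D$ is bounded. Its height is positive. By \eqref{b:E:Tow-laws},
\[
 \Tow(S\times L^D)\cong\Tow(S)+\Tow(\Con(L,D))\cong J+R\times D,
\]
where $J$ may be the formal $0$. The key point is to compute $\Max$ of this disjoint sum. Every interval lies in one summand. By \eqref{b:E:heights} and \eqref{b:E:separation},
\[
 \height(R\times D)\geq\height(R)=\height(S)>\height(J)
\]
whenever $J\ne0$. Hence every interval of maximum height lies in $R\times D$, and $J$ plays no role. Therefore $\Max(J+R\times D)=\Max(R\times D)\cong S\times\Max(D)$, again by Lemma~\ref{b:L:Max}. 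Taking $D=k\one$ gives $L^D\cong L^{\times k}$ and $\Max(k\one)=\one$, which proves the third line. Alternatively, one can argue directly: $\Tow(S\times L^{\times k})\cong J+kR$, and the copies of $R$ strictly dominate $J$ in height.

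For recovery, suppose $T=S\times L^{\times k}$ is known up to isomorphism, with $k$ known. The third line gives $S\cong\Red(T)$, which is intrinsic to $T$. Product cancellation in Lemma~\ref{L:arithmetic-cancellation} then recovers $L^{\times k}$ from $T$ and $S$. Cartesian-root uniqueness in the same lemma recovers $L$. Finally $R=\Tow(L)$ and $S=\Max(R)$ are determined by $L$, and they agree with the $S$ already found.

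The only step needing care is the selection of $\Max$ on the disconnected sum $J+R\times D$. There is no general additivity of $\Max$ over sums, so the argument must rest entirely on the strict inequality $\height(J)<\height(S)$ from \eqref{b:E:separation}. That inequality ensures that no interval of $J$ competes in height, so the $\preceq$-tie-breaking never compares intervals drawn from different summands.
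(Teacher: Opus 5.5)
Your proposal is correct and follows essentially the same route as the paper. You compute $\Tow(\Max(A^D))\cong R\times D$ and $\Tow(S\times L^D)\cong J+(R\times D)$, use $\height(J)<\height(S)=\height(R)$ to place the selected interval in $R\times D$, specialize to $D=k\one$, and then recover $S$, cancel it, take the Cartesian $k$th root, and compute $R=\Tow(L)$. Your explicit checks that every argument of $\Red$ is bounded and of positive height are a useful addition that the paper mentions only briefly.
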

\begin{proof}
For the first identity, apply $\Tow$ to $\Max(A^D)=L^D$ to get
$R\times D$, and then apply $\Max$. For the second, the $\Tow$-image is
$J+(R\times D)$. Its second summand has greater height than the first
when $J$ is present, by \eqref{b:E:separation}. Its selected interval is
$S\times\Max(D)$. Taking $D=k\one$ proves the third identity.

From $S\times L^{\times k}$, recover $S$ by the third identity, cancel
it, take the unique Cartesian $k$th root, and compute $R=\Tow(L)$.
Every application of $\Red$ has a~positive-height argument. We never
apply $\Red$ to a~terminal $S$ unless its positive height is separately
known.
\end{proof}

\section{What the construction determines}
\index{terminal!recovery}\index{derived!sum}\index{selected!interval!determination}
The choice of $\Max(X)$ is intrinsic up to isomorphism after the
auxiliary order has been fixed. A~particular interval inside a~chosen
representation of $X$ need not be unique. Every statement below uses
only its isomorphism type, so this distinction causes no ambiguity.
\index{maximum-height!interval}\index{Max!construction}\index{derived!sum}\index{height!reduction}\index{terminal!recovery}

The formula $\Der(XY)=\Der(X)+\Der(Y)$ resembles a~logarithm. The
formula $\Der(X^Y)=\Der(X)Y$ is its exponential counterpart. It is
important that $\Der(X+Y)$ is not the sum of the derived sums: one first
selects a~maximum-height interval of the disjoint sum. Consequently,
a~recovery argument involving a~mixed sum must prove which interval
is selected, or must prove that different selections cannot produce
the same invariants.

\begin{example}
Suppose $L$ is a~nontrivial bounded interval and $R=\Tow(L)$ is an
antichain. Then $S=\Max(R)=\one$ and $J=0$. The terminal recovery
$SL^{\times k}\mapsto S$ remains valid: its final output is the
one-element ordered set. No additional application of $\Red$ is
permitted or needed. This explains why stopping at a~known terminal
stage is part of every reduction argument.
\end{example}

\chapter{Inputs reconstructed at every occurrence count}\label{b:S:K}
\begin{definition}\label{b:D:contractible}
\index{contractibility!definition}
A~connected ordered set $P$ is \emph{contractible} if there is a~finite
sequence of order-preserving self-maps starting at $\id_P$, ending at a
constant map, and with consecutive maps comparable in the pointwise
order. Let $\K$ be the class whose connected components are contractible,
and let $\OO$ be its complement among finite ordered sets.
\end{definition}
This is the only meaning of contractible used here. A~least or a~greatest
element makes a~connected ordered set contractible.

\begin{lemma}\label{b:L:K-closure}
\index{contractibility!closure}
The class $\K$ is closed under sums, products, exponentiation, and
retracts. If $P$ is contractible, then $P^Y$ is contractible and equals
$\Con(P,Y)$ for every $Y$.
\end{lemma}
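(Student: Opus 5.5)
The plan is to recast contractibility in terms of maps into an exponent and then check each closure property by composing or combining contracting homotopies. A~fence of order-preserving self-maps $\id_P=h_0,h_1,\ldots,h_n=c$ with consecutive maps comparable is exactly a~path in the comparability graph of $P^P$ from $\id_P$ to a~constant. So $P$ is contractible if and only if $\id_P$ lies in the component of $P^P$ containing the constants, that is, $\id_P\in\Con(P,P)$. With this reformulation each closure property becomes a~statement about such paths.

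\emph{Sums and retracts.} Sums are immediate, since the components of $X+Y$ are those of $X$ and of $Y$. For retracts, first treat a~connected retract $Q$ of a~connected contractible $P$. Let $r:P\to Q$ and $s:Q\to P$ with $rs=\id_Q$, and let $(h_i)$ be a~contracting fence for $P$. Then the maps $r h_i s$ form a~fence from $rs=\id_Q$ to a~constant, because pre- and postcomposition with order-preserving maps preserve pointwise comparability. For a~general retract $Q$ of $X\in\K$, a~component $Q_j$ of $Q$ is sent by $s$ into a~single component $X_k$ of $X$, since $s$ is order-preserving and $Q_j$ is connected. Restricting $r$ to $X_k$ gives a~map into $Q$ whose image contains $Q_j$. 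We need this image to lie inside $Q_j$, and I expect this to be the main obstacle. The resolution is that $r(X_k)$ is connected and contains $Q_j=rs(Q_j)$, so it lies in $Q_j$. Thus $Q_j$ is a~retract of the contractible $X_k$, and the connected case applies.

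\emph{Products.} The components of a~product are products of components, by Proposition~\ref{P:height-components}, so it suffices to contract $P\times P'$ for contractible $P$ and $P'$. Pad the two fences to a~common length by repeating their final constant maps, and use the product maps $h_i\times h'_i$. Consecutive product maps are comparable coordinatewise in the same direction when the directions agree. If they do not agree, first run the $P$-fence while holding $\id_{P'}$ fixed, and afterwards run the $P'$-fence while holding the constant on $P$ fixed. Each step then changes only one coordinate, so consecutive maps remain comparable.

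\emph{Exponentiation and the equality $P^Y=\Con(P,Y)$.} Take contractible $P$, arbitrary $Y$, and any $f\in P^Y$. The composites $h_i\circ f$ form a~fence in $P^Y$ from $f$ to a~constant map. Hence every element of $P^Y$ lies in the component of the constant maps, so $P^Y$ is connected and equals $\Con(P,Y)$. To show $P^Y$ is contractible, define self-maps $H_i(f)=h_i\circ f$. These are order-preserving on $P^Y$, consecutive ones are pointwise comparable, $H_0=\id$, and $H_n$ is constant. For general $X\in\K$ and arbitrary $Y$, write $Y=\sum Y_j$ as a~sum of components. Then $X^Y\cong\prod_j X^{Y_j}$, and since each $Y_j$ is connected, $X^{Y_j}\cong\sum_k X_k^{Y_j}$ with each $X_k^{Y_j}$ contractible. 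Sum and product closure then finish the argument.
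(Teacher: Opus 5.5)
Your proof is correct and takes essentially the same route as the paper. It uses postcomposition $f\mapsto h_i\circ f$ for a contractible base, contraction one coordinate at a time for products, and the decomposition $X^Y\cong\prod_j\sum_k X_k^{Y_j}$ for general exponents. For retracts it uses $r h_i s$ followed by restriction to components. Your reason that $r(X_k)\subseteq Q_j$ — the image is connected and contains $Q_j$ — spells out a step the paper only asserts. The padded simultaneous fence in your product paragraph is unnecessary and can be dropped, since the sequential fence already suffices.
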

\begin{proof}
Postcomposition with a~contraction sequence for $P$ contracts $P^Y$:
the maps $f\mapsto h_i\circ f$ start at its identity, end at a~constant
self-map, and consecutive maps are comparable. This also proves
connectedness and equality with $\Con(P,Y)$.
Products of contractible ordered sets contract by changing one coordinate
at a~time. Sums preserve the componentwise condition.

If $X=P_1+\cdots+P_a\in\K$ and $Y=Y_1+\cdots+Y_b$ is the component
decomposition of $Y$, then
\begin{equation}\label{b:E:power-components}
 X^Y\cong\prod\setm{
      P_1^{Y_j}+\cdots+P_a^{Y_j}}{1\leq j\leq b}.
\end{equation}
A~map on a~connected $Y_j$ lands in one component of $X$, and different
target components cannot be joined by a~comparability path. Every
component of every factor on the right is contractible. The product
assertion proves closure under exponentiation.

Suppose $i:Q\to P$ and $r:P\to Q$ are order-preserving and
$r i=\id_Q$. A~contraction $h_i$ of $P$ induces a~contraction
$r h_i i$ of $Q$. For a~retract of a~disconnected member of $\K$,
restrict to a~component of the retract and the component containing
its image. The retraction sends that whole component back into the
chosen component. This proves retract closure.
\end{proof}

\begin{lemma}\label{b:L:K-monotone}
\index{strict!monotonicity!contractible inputs}
For fixed $Y$, full exponentiation $X\mapsto X^Y$ is strictly increasing
on $\K$ in the order $\triangleleft$. For fixed $X\in\K$ of positive
height, $Y\mapsto X^Y$ is strictly increasing on all ordered sets.
\end{lemma}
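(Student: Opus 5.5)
The plan is to work entirely in the ordered ring of Lemma~\ref{b:L:ordered-ring}, via the component decomposition \eqref{b:E:power-components}. Write $X=P_1+\cdots+P_a$ and $Y=Y_1+\cdots+Y_b$ in components. Since $X\in\K$, Lemma~\ref{b:L:K-closure} gives $P_i^{Y_j}=\Con(P_i,Y_j)$, and this is connected. Hence
\[
 \poly(X^Y)=\prod_{j=1}^{b}\psi_{Y_j}(\poly(X)),
\]
where $\psi_{W}$, for connected $W$, is the additive map on signed component polynomials defined as follows. It is the identity on constants, and it sends the monomial of a~connected type $P$ to the monomial of $\Con(P,W)$. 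Constants are fixed because $\Con(\one,W)=\one$.

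\emph{First assertion.} The first line of \eqref{b:E:Con-monotonicity} says that $P\mapsto\Con(P,W)$ is strictly $\triangleleft$-increasing on connected types. In particular it is injective on monomials and preserves their order.

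Now take a~nonzero signed polynomial $q=\sum c_M M$ with leading monomial $M_0$. Then $\psi_W(q)=\sum c_M\,\psi_W(M)$. Its terms do not merge, and its leading monomial is $\psi_W(M_0)$ with the same coefficient $c_{M_0}$. So $\psi_W$ preserves positivity, and therefore strict $\triangleleft$-comparisons.

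If $X\triangleleft X'$ in $\K$, each factor then satisfies
\[
 \psi_{Y_j}(\poly X)\triangleleft\psi_{Y_j}(\poly X').
\]
Both sides are positive, being polynomials of actual ordered sets. In a~totally ordered ring, $0<\alpha<\alpha'$ and $0<\beta<\beta'$ give $\alpha\beta<\alpha'\beta'$. Induction on $b$ then gives $X^Y\triangleleft X'^Y$.

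\emph{Second assertion.} Fix $X\in\K$ of positive height. Let $P$ be its $\triangleleft$-greatest component type, with multiplicity $m$. Because $\one$ is the least connected type and $X$ has a~nontrivial component, $P$ is nontrivial.

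By the monotonicity just established, the leading monomial of $\psi_{Y_j}(\poly X)$ is $\Con(P,Y_j)$, with coefficient $m$. Taking the product over $j$ and using \eqref{b:E:Con-laws}, the leading term of $\poly(X^Y)$ is
\[
 m^{\comp(Y)}\cdot\bigl[\Con(P,Y)\bigr].
\]

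Now suppose $Y\triangleleft Z$. The second line of \eqref{b:E:Con-monotonicity}, applicable since $P$ is connected and nontrivial, gives $\Con(P,Y)\triangleleft\Con(P,Z)$. So the leading monomial of $\poly(X^Z)$ strictly exceeds every monomial of $\poly(X^Y)$. The difference $\poly(X^Z)-\poly(X^Y)$ therefore has leading coefficient $m^{\comp(Z)}>0$, which means $X^Y\triangleleft X^Z$.

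The step I expect to need the most care is the leading-monomial bookkeeping. One must check that $\Con(P_i,Y_j)$ are connected types, so that they are single monomials. One must also check that strict monotonicity of $P\mapsto\Con(P,W)$ really prevents cancellation, both among the images of different components and between different monomials of a~signed difference. Once $\psi_W$ is seen as an order-preserving relabeling of monomials, both assertions reduce to the ordered-ring rules $0<\alpha<\alpha'$, $0<\beta<\beta'\Rightarrow\alpha\beta<\alpha'\beta'$, together with leading-term comparison.
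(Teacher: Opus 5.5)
Your proposal is correct and follows essentially the same route as the paper's proof. Both arguments relabel components by $P\mapsto\Con(P,Y_j)$ via the first line of \eqref{b:E:Con-monotonicity} and multiply the positive factors over the components of $Y$. For the second assertion, both track the greatest component type $\Con(P,Y)$ and apply the second line; you only make explicit the leading-monomial bookkeeping that the paper states briefly.
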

\begin{proof}
For connected $Y_j$, the replacement of a~contractible connected type
$P$ by $P^{Y_j}=\Con(P,Y_j)$ is injective and strictly increasing by
Lemma~\ref{b:L:ordered-ring}. Applied to component polynomials, it
preserves the sign at the greatest unequal monomial. Thus
$X\triangleleft X'$ gives a~strict comparison between the corresponding
sums in each factor of \eqref{b:E:power-components}. Products of positive
factors preserve these strict comparisons, proving the first assertion.

For the second, let $P$ be the greatest connected component type of $X$.
It is nontrivial because $X$ has positive height. The greatest component
type of $X^Y$ is $P^Y=\Con(P,Y)$, by
\eqref{b:E:power-components}. If $Y\triangleleft Z$, the second line of
\eqref{b:E:Con-monotonicity} strictly increases this greatest component
type. Hence $X^Y\triangleleft X^Z$.
\end{proof}

\begin{theorem}\label{b:T:K}
\index{reconstruction!contractible inputs}
Every exponential term is strictly increasing on $\K$. For arbitrary
inputs $A,B$, if $t(A)\cong t(B)$ and either input belongs to $\K$,
then $A\cong B$. Moreover, $t(A)\in\K$ if and only if $A\in\K$.
\end{theorem}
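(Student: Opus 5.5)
The plan is to run everything by structural induction on the term $t$, using Lemma~\ref{b:L:K-closure} for closure and Lemma~\ref{b:L:K-monotone} for strict monotonicity. The three assertions are proved in the order: membership in $\K$, then strict monotonicity, then reconstruction.

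\emph{Membership.} If $A\in\K$, then $t(A)\in\K$ by induction, since $\K$ is closed under exponentiation. Conversely, if $t(A)\in\K$, then $A$ is a retract of $t(A)$ by Lemma~\ref{L:term-retract}. Closure of $\K$ under retracts gives $A\in\K$. Consequently, if $t(A)\cong t(B)$ and one of $A,B$ lies in $\K$, then $t(A)\in\K$, and hence both inputs lie in $\K$. So the reconstruction assertion reduces to injectivity of $t$ on $\K$.

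\emph{Strict monotonicity.} We show by induction on $t$ that $A\triangleleft B$ with $A,B\in\K$ implies $t(A)\triangleleft t(B)$. Since $\triangleleft$ is a total order on types, strict increase yields injectivity up to isomorphism. For $t=x$ there is nothing to prove. For $t=u^v$, put $X=u(A)$, $X'=u(B)$, $Y=v(A)$, $Y'=v(B)$. All four lie in $\K$, and by induction $X\triangleleft X'$ and $Y\triangleleft Y'$. The first part of Lemma~\ref{b:L:K-monotone}, with the exponent $Y$ fixed, gives $X^Y\triangleleft X'^{Y}$. For the second step we need $X'$ to have positive height. Then the second part of Lemma~\ref{b:L:K-monotone} gives $X'^Y\triangleleft X'^{Y'}$, and transitivity finishes the step.

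The main obstacle is the positive-height condition. If $B$ has positive height, then so does $u(B)=X'$: by Proposition~\ref{P:height-components}, heights multiply by the cardinality of a nonempty exponent, so they stay positive through exponentiation. If $B$ has height zero, then $A\triangleleft B$ forces $A$ to have height zero as well, because under $\triangleleft$ nontrivial connected types exceed $\one$ and the leading monomial decides the comparison. In that case both inputs are antichains, and Lemma~\ref{L:antichains} handles them directly via cardinality. Strict increase on antichains also holds because $N\one\triangleleft M\one$ exactly when $N<M$, matching the cardinality monotonicity in that lemma. I expect this bookkeeping, together with checking that the $\triangleleft$-comparison of a positive-height type against an antichain always puts the antichain below, to be the only delicate point.
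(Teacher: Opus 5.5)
Your proposal is correct and follows essentially the same route as the paper: membership in $\K$ via closure under exponentiation and the retraction of Lemma~\ref{L:term-retract}, strict monotonicity by induction on $t=u^v$ through the chain $u(A)^{v(A)}\triangleleft u(B)^{v(A)}\triangleleft u(B)^{v(B)}$ from Lemma~\ref{b:L:K-monotone}, and a separate cardinality argument for antichains. The only difference is in the mixed case (antichain $A$, positive-height $B$): you push it through the same two-step chain, which is legitimate because the first clause of Lemma~\ref{b:L:K-monotone} needs no height assumption, whereas the paper simply observes that antichain term values lie $\triangleleft$-below positive-height ones.
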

\begin{proof}
For positive-height inputs in $\K$, use induction on the term.
If $t=u^v$ and $A\triangleleft B$, the induction hypothesis and
Lemma~\ref{b:L:K-monotone} give
\[
 u(A)^{v(A)}\triangleleft u(B)^{v(A)}
                     \triangleleft u(B)^{v(B)}.
\]
The smaller values lie in $\K$ and have positive height by closure and
the constant-map embeddings. For antichains, term values are antichains,
and their cardinalities are the corresponding integer exponentials,
strictly increasing in the input cardinality. An antichain is smaller
than every positive-height ordered set in $\triangleleft$, and this
remains true of their term values. Thus the same induction covers all
inputs in $\K$.

Every input $A$ is a~retract of $t(A)$. For $t=u^v$, constant maps embed
$u(A)$ in $u(A)^{v(A)}$, and evaluation at a~fixed element of $v(A)$
retracts onto $u(A)$. Compose with the inductive retraction onto $A$.
Closure under exponentiation and retracts now gives the asserted
equivalence of membership in $\K$. If either input is in $\K$ and the
term values are isomorphic, both inputs are in $\K$. Strict increase
forces them to be isomorphic.
\end{proof}

Every connected ordered set with at most three elements has a~least or
a~greatest element. Therefore
\begin{equation}\label{b:E:O-inputs}
 A\in\OO\quad\Longrightarrow\quad |A|\geq4,
       \qquad\height(A)>0.
\end{equation}
No term-size bound was used in this section.

\section{Scope of the unrestricted theorem}
\index{reconstruction!unrestricted theorem}\index{contractibility!scope}
Theorem~\ref{b:T:K} places no bound on the occurrence count. It includes
all bounded inputs and all disjoint sums of contractible connected
inputs. It also reflects membership in the class: an input outside
$\K$ cannot acquire an isomorphic term value to an input inside $\K$.
Thus later proofs may restrict both inputs to $\OO$ without losing
mixed pairs of inputs.
\index{contractibility}\index{componentwise!contractible inputs}\index{strict!monotonicity}

The word contractible is used only in the explicit pointwise-comparable
self-map sense of Definition~\ref{b:D:contractible}. No equivalence with
another topological convention is required. Retractions, contractions
of products, and contractions by postcomposition have all been proved
in this sense.

\chapter{Cardinality bounds and retractions}\label{ch:bounds}
\section{Basic bounds}\label{b:S:bounds}
Use the following terms throughout:
\begin{equation}\label{b:E:small-terms}
\begin{aligned}
 P&=A^A,& F&=A^{\Ap2},& G&=A^P,\\
 H_1&=A^{\Ap3},& H_2&=A^{A\times P},&
 H_3&=A^F,& H_4&=A^G.
\end{aligned}
\end{equation}
Write $N=|A|$, $p=|P|$, $f=|F|$, $g=|G|$, $h_i=|H_i|$, and
$\ell=\height(A)>0$.

\begin{lemma}\label{b:L:bounds}
\index{cardinality!bounds!basic lemma}
For positive-height $A$ and $c=\comp(A)$,
\begin{equation}\label{b:E:basic-bounds}
\begin{aligned}
 |A^Y|&\geq |Y|+N-1,\\
 p&\geq N^c+N^{c-1},\\
 \comp(P)&\geq c^c.
\end{aligned}
\end{equation}
If $A\in\OO$, then
\begin{equation}\label{b:E:noncontractible-bound}
 \comp(P)\geq c^c+c^{c-1}.
\end{equation}
If $N\geq4$, then
\begin{equation}\label{b:E:strong-cardinality}
 p\geq N^2+N,\qquad f\geq p+N-1,\qquad g\geq p+N-1.
\end{equation}
Finally, $P$ is a~retract of $G$, so $\comp(G)\geq\comp(P)$.
\end{lemma}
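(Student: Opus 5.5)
We prove the assertions of Lemma~\ref{b:L:bounds} one at a~time. All of them come from explicit families of order-preserving maps, together with the observation (used already for Lemma~\ref{L:term-retract}) that comparable maps on a~connected domain send it into the same component of the target.

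\emph{The bound $|A^Y|\geq|Y|+N-1$.} The $N$ constant maps are order-preserving. Since $\height(A)>0$, fix $a<b$ in $A$. For an up-set $U$ of $Y$, let $\chi_U$ send $U$ to $b$ and $Y\setminus U$ to $a$; it is order-preserving. The up-sets $\emptyset$ and ${\uparrow}y$ for $y\in Y$ are pairwise distinct, so there are at least $|Y|+1$ of them. At most two give constant maps, namely $U=\emptyset$ and $U=Y$. This leaves at least $|Y|-1$ nonconstant maps, giving the bound. The same argument shows, more generally, that $|X^Y|\geq|Y|+|X|-1$ for every positive-height $X$.

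\emph{The component bounds for $P$.} Write $A=A_1+\cdots+A_c$ with each $A_i$ connected, and let $A_1$ be a~component of positive height.
\begin{itemize}
\item The maps constant on each $A_i$ give $N^c$ elements of $P$.
\item The maps that are the inclusion on $A_1$ and constant on every other $A_i$ give $N^{c-1}$ further elements. They are new because they are nonconstant on $A_1$. Hence $p\geq N^c+N^{c-1}$.
\item To each $f\in P$ attach $\sigma_f:\{1,\dots,c\}\to\{1,\dots,c\}$, where $f(A_i)\subseteq A_{\sigma_f(i)}$. Comparable maps have the same $\sigma$, so $\sigma$ is constant on components of $P$. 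Every $\sigma$ is realized by a~locally constant map, so $\comp(P)\geq c^c$.
\end{itemize}
For $A\in\OO$, we may take $A_1$ to be noncontractible. Consider the maps with $\sigma(1)=1$. Their restrictions to $A_1$ lie in $A_1^{A_1}$, where $\id_{A_1}$ and the constants lie in different components; otherwise a~comparability path would contract $A_1$. So each of the $c^{c-1}$ functions $\sigma$ with $\sigma(1)=1$ carries at least two components. This yields $\comp(P)\geq c^c+c^{c-1}$.

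\emph{The strong cardinality bounds for $N\geq4$.} By Proposition~\ref{P:calculus}, $F\cong P^A$. The general form of the first bound gives $f\geq p+N-1$, and applied to $G=A^P$ it gives $g\geq p+N-1$. For $p\geq N^2+N$:
\begin{itemize}
\item If $c\geq2$, this follows from $N^c+N^{c-1}\geq N^2+N$.
\item If $c=1$, count maps by their image. There are $N$ constants. For each of the at least $N-1$ comparable pairs $x<y$ (at least this many, since $A$ is connected), the maps with image exactly $\{x,y\}$ number the up-sets of $A$ minus $2$. Maps with distinct images are distinct. Finally, the identity has image of size $N\geq3$, so it is counted in neither group.
\end{itemize}
In the connected case it therefore suffices to show that a~connected $A$ with $N\geq4$ has at least $N+3$ up-sets. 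Then $p\geq N+(N-1)(N+1)+1=N^2+N$.

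\emph{Main obstacle.} The step I expect to be hardest is this up-set count. The sets $\emptyset$, $A$, and the principal up-sets give $N+1$ or $N+2$ up-sets, depending on whether $A$ has a~least element. The extra up-sets must come from sets such as $A\setminus\{m\}$ for a~minimal $m$, and from unions of principal up-sets. I would settle it by a~short case split: whether $A$ has a~least element, and how many minimal elements it has. If this count falls short, the fallback is to count, in the same way, maps whose image is a~three-element chain or a~three-element fence.

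\emph{$P$ as a~retract of $G$.} Fix $a_0\in A$. Define $i:P\to G$ by $i(f)(\varphi)=f(\varphi(a_0))$, which is order-preserving in both $f$ and $\varphi$. Define $r:G\to P$ by $r(\Gamma)(x)=\Gamma(\mathrm{const}_x)$. Then $ri=\id_P$. Since $r$ is surjective and sends each component of $G$ into a~single component of $P$, we get $\comp(G)\geq\comp(P)$.
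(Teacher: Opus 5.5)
Everything except the connected case of $p\geq N^2+N$ follows the paper's own argument. That includes the $|Y|+1$ up-sets behind the elementary map bound, the $(N+1)N^{c-1}$ maps, the component assignment $\sigma$ together with the identity on a noncontractible component, the bounds for $f,g$ via $F\cong P^A$, and the retraction $v\mapsto v\circ e$, $w\mapsto w\circ c$.

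The gap is in the connected case. You reduce it to the claim that a connected $A$ with $N\geq4$ has at least $N+3$ up-sets, and that claim is false. The $N$-element chain has exactly $N+1$ up-sets. The four-element poset with $0<1<a$, $0<1<b$ and $a,b$ incomparable has only the six up-sets $\emptyset,\{a\},\{b\},\{a,b\},\{1,a,b\},A$, which is $N+2$. Both are connected and of positive height, and the lemma asserts $p\geq N^2+N$ for them, since no hypothesis $A\in\OO$ is made for that bound. For the chain, your count with only $q\geq N-1$ comparable pairs gives $p\geq N+(N-1)^2+1<N^2+N$. So no case split on least or minimal elements can close this step. The three-element-chain fallback is not carried out.

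The missing idea is not to bound the number $q$ of strictly comparable pairs by $N-1$ uniformly: posets with few up-sets are exactly those with many comparable pairs. Keep your estimate in the form $p\geq N+qu+1$, with $u$ the number of nonempty proper up-sets, and split into three cases as the paper does.
\begin{enumerate}
\item For a chain, $q=N(N-1)/2$ and $u=N-1$, so $qu\geq N^2$ once $N\geq4$.
\item For a non-chain, the $N+1$ suffix up-sets of a linear extension are nested, whereas ${\uparrow}x$ and ${\uparrow}y$ for incomparable $x,y$ are not. Hence $u\geq N$, which suffices whenever $q\geq N$.
\item If $q=N-1$, the comparability graph is a tree, so $A$ has no three-element chain (it would form a triangle). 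Thus $A$ splits into $a$ minimal and $b$ maximal elements with $a+b=N$. All subsets of the maximal elements, together with all maximal elements plus any set of minimal elements, give at least $2^a+2^b-1\geq 2N-1$ up-sets. Hence $u\geq 2N-3\geq N+1$ and $p\geq N+(N-1)(N+1)+1=N^2+N$.
\end{enumerate}
Only in this tree case is your target of $N+3$ up-sets needed, and there it does hold.
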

\begin{proof}
An ordered set $Y$ has at least $|Y|+1$ upsets, as seen from a~linear
extension. Maps into a~fixed two-element chain in $A$, followed by the
other $N-2$ constant maps, give the first inequality.

For the second, choose a~nontrivial connected component $Q$ of $A$.
On $Q$ use any of the $N$ constant maps or its inclusion into $A$.
On every other component use an arbitrary constant map. These give
$(N+1)N^{c-1}$ distinct endomorphisms.

An assignment of a~target component to each source component gives a
componentwise-constant map. Different assignments cannot lie in the same
component of $P$, proving $\comp(P)\geq c^c$. For
\eqref{b:E:noncontractible-bound}, choose a~noncontractible component $Q$.
In addition, map $Q$ identically to itself and assign constant target
components to all other source components. These $c^{c-1}$ assignments
give distinct components not among the previous $c^c$: a~path from the
identity on $Q$ to a~constant would contract $Q$.

We prove $p\geq N^2+N$ for $N\geq4$. For disconnected $A$, the bound
$(N+1)N^{c-1}$ already suffices. Suppose $A$ is connected. Let $q$ be
the number of strictly comparable pairs and $u$ the number of upsets
other than the empty set and the whole set. The $qu$ two-valued maps,
the $N$ constants, and the identity are distinct, so
\[
 p\geq N+qu+1.
\]
If $A$ is a~chain, $q=N(N-1)/2$, $u=N-1$, and $qu\geq N^2$ for
$N\geq4$. If $A$ is not a~chain, $u\geq N$: the suffix upsets of a
linear extension form a~chain of $N+1$ upsets, whereas two incomparable
principal upsets force at least one more. When $q\geq N$, this suffices.

The remaining case is $q=N-1$. Its connected comparability graph is a
tree, so there is no three-element chain, which would give a~triangle.
All elements are minimal or maximal. If there are $a$ minimal and $b$
maximal elements, then $a+b=N$. All subsets of the maximal elements
are upsets, as are their full set together with any subset of the
minimal elements. These families overlap once. Thus there are at least
$2^a+2^b-1\geq2N-1$ upsets, so $u\geq2N-3\geq N+1$.
It follows that
$p\geq N+(N-1)(N+1)+1=N^2+N$.

Since $F\cong P^A$, the first inequality of
\eqref{b:E:basic-bounds}, with base $P$, gives $f\geq p+N-1$.
With base $A$ and exponent $P$, it gives $g\geq p+N-1$.

For the retraction, choose $a_0\in A$. Evaluation
$e:P\to A$, $e(v)=v(a_0)$, retracts the constant-map embedding
$c:A\to P$. Precomposition gives an embedding
$P\to G$, $v\mapsto v\circ e$, with retraction
$G\to P$, $w\mapsto w\circ c$. A~retraction is surjective on connected
components, proving the last assertion.
\end{proof}

\begin{lemma}[Disconnected-input inequality]\label{b:L:disconnected}
If $A$ is disconnected and has positive height, then
\begin{equation}\label{b:E:g-f}
 |A^{A^A}|>|A^{A\times A}|.
\end{equation}
If also $N\geq4$, then
\begin{equation}\label{b:E:disconnected-dominance}
 f>N^3,
 \qquad f>Np,
 \qquad g>f.
\end{equation}
Consequently, on such inputs, both $H_3$ and $H_4$ have greater height
than $H_1,H_2$, and $H_4$ has greater height than $H_3$.
\end{lemma}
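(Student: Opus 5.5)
The plan is to obtain \eqref{b:E:g-f} from an explicit injection $A^{A\times A}\to A^{A^A}$ that is not surjective, and to obtain the cardinality bounds in \eqref{b:E:disconnected-dominance} from componentwise-constant maps together with Lemma~\ref{b:L:bounds}. The height assertions will then follow from Proposition~\ref{P:height-components}.

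\emph{Step 1: the injection.} By Proposition~\ref{P:calculus}, $A^{A\times A}\cong P^A$, so an element is an order-preserving $v:A\to P$. Since $A$ is disconnected and has positive height, I choose $a_0$ in a nontrivial component $C_0$, an element $b\in C_0$ with $b\neq a_0$, and $a_1$ in a different component $C_1$. Define $w_v:P\to A$ by
\[
 w_v(u)=v(u(a_0))(u(a_1)).
\]
This map is order-preserving in $u$, because $v$ is order-preserving and each $v(x)$ is order-preserving. The map $v\mapsto w_v$ is injective. Indeed, for any $x,y\in A$, the map $u$ that is constant $x$ on $C_0$, constant $y$ on $C_1$, and constant elsewhere is an endomorphism with $w_v(u)=v(x)(y)$, so $w_v$ recovers $v$.

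\emph{Step 2: non-surjectivity.} I show that the evaluation map $w(u)=u(b)$ is not of the form $w_v$. Take $u_1=\id_A$, and let $u_2$ be the map that is constant $a_0$ on $C_0$ and the identity elsewhere. Both are order-preserving. They satisfy $u_1(a_0)=u_2(a_0)$ and $u_1(a_1)=u_2(a_1)$, so every $w_v$ takes the same value at $u_1$ and at $u_2$. However, $u_1(b)=b\neq a_0=u_2(b)$. Hence $|A^{P}|>|P^A|$, which is \eqref{b:E:g-f}, and in particular $g>f$. The main obstacle is choosing the evaluation pattern so that both injectivity and the existence of a missed map hold. Disconnectedness is what makes arbitrary pairs $(x,y)$ attainable via $(u(a_0),u(a_1))$.

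\emph{Step 3: the remaining bounds.} Every map $A\to P$ that is constant on each component of $A$ is order-preserving. With $c=\comp(A)\geq2$, this gives
\[
 f=|P^A|\geq p^c\geq p^2.
\]
For $N\geq4$, \eqref{b:E:strong-cardinality} gives $p\geq N^2+N$. Therefore $p^2>N^3$, and $p^2>Np$ because $p>N$.

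\emph{Step 4: the height consequences.} By Proposition~\ref{P:height-components}, $\height(H_i)$ equals $\ell$ times the cardinality of the exponent of $H_i$, and these cardinalities are $N^3$, $Np$, $f$, and $g$. The inequalities $f>N^3$ and $f>Np$ show that $H_3$ has greater height than $H_1$ and $H_2$. Since $g>f$, the same holds for $H_4$, and $H_4$ has greater height than $H_3$.
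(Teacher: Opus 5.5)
Your proof is correct and follows the paper's route. Your map $v\mapsto w_v$ is precomposition with the evaluation retraction $u\mapsto(u(a_0),u(a_1))$ of $P$ onto $A\times A$, made injective by its componentwise-constant section, and your bound $f\geq p^2$ comes from componentwise-constant maps $A\to P$, exactly as in the paper. The one difference is that you exhibit the non-surjectivity witness explicitly (the fiber pair $\id_A$, $u_2$ and evaluation at $b$), whereas the paper deduces non-injectivity of the retraction from $p>N^2$ and separates a fiber by the characteristic map of an upset.
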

\begin{proof}
Partition the connected components of $A$ into two nonempty groups.
Map $(a,b)\in A\times A$ to the endomorphism constant at $a$ on the
first group and at $b$ on the second. Evaluation at one point in each
group gives a~retraction
\[
 r:P\longrightarrow A\times A.
\]
By \eqref{b:E:basic-bounds}, $p\geq N^2+N>N^2$, so $r$ is not
injective. Precomposition with $r$ injects $A^{A\times A}$ into $A^P$.
Choose two different elements of $P$ with the same $r$-image. An upset
of $P$ distinguishes them. Its characteristic map into a~fixed
comparable pair of $A$ is order-preserving and is not constant on this
fiber, so it is not in the image of precomposition. This proves the
strict inequality.

If $c=\comp(A)\geq2$, maps $A\to P$ constant on each component give
$f=|P^A|\geq p^c\geq p^2$. Since $p\geq N^2+N$, this exceeds both
$N^3$ and $Np$. The final comparisons follow from
\eqref{b:E:heights} and $g>f$.
\end{proof}

\section{Additional components and sharper bounds}\label{t:S:bounds}
\begin{lemma}[Extra components and an actual retract]\label{t:L:retract}
\index{retract!extra components}
Suppose $A\in\OO$ is a~retract of both $X$ and $Y$. Put
$x=\comp(X)$ and $y=\comp(Y)$. Then
\begin{equation}\label{t:E:extra}
 \xp Xy+x^{y-1}\one\text{ is a~retract of }X^Y,
 \qquad \comp(X^Y)\geq x^y+x^{y-1}.
\end{equation}
\end{lemma}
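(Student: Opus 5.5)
We mimic the proof of \eqref{b:E:noncontractible-bound} in Lemma~\ref{b:L:bounds}, replacing $A$ by the pair $X,Y$, and produce an explicit subset of $X^Y$ that is a retract. Write $X=X_1+\cdots+X_x$ and $Y=Y_1+\cdots+Y_y$ for the component decompositions. Fix retractions $i_X:A\to X$, $r_X:X\to A$ and $i_Y:A\to Y$, $r_Y:Y\to A$. Since $A\in\OO$, choose a noncontractible component $Q$ of $A$. The image $i_Y(Q)$ is connected, so it lies in a single component, say $Y_1$. Likewise $i_X(Q)$ lies in one component, say $X_1$.

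\emph{The first summand.} By \eqref{b:E:power-components}, applied formally with connected exponents $Y_j$ (only the splitting of maps into component choices is used, not contractibility), the maps constant on each $Y_j$ form an ordered set isomorphic to $\xp Xy$. This is a retract of $X^Y$: send $f$ to the componentwise-constant map whose value on $Y_j$ is $f(y_j)$ for fixed base points $y_j\in Y_j$. Evaluation at base points is order-preserving, and it is the identity on componentwise-constant maps.

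\emph{The extra isolated points.} For each assignment $\tau$ of target components $X_{\tau(j)}$ to $Y_2,\dots,Y_y$, take the map
\[
\phi_\tau=\begin{cases} i_X\, r_Y & \text{on } Y_1,\\ \text{a fixed constant in } X_{\tau(j)} & \text{on } Y_j,\ j\geq2.\end{cases}
\]
There are $x^{y-1}$ such maps. The main obstacle is to show that these $x^{y-1}$ points form extra components that can be retracted onto singletons \emph{simultaneously}, compatibly with the retraction onto $\xp Xy$. I would first verify that $\phi_\tau$ is not in the component of any componentwise-constant map. A comparability path from $\phi_\tau$ to a map constant on $Y_1$ restricts, after precomposition with $i_Y|_Q$ and postcomposition with $r_X$, to a path of self-maps of $Q$ from $r_Xi_Xr_Yi_Y|_Q=\id_Q$ to a constant. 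Here one must check that the values stay in $Q$: they do, because they are connected to the identity on $Q$ and land in the component $Q$ after applying $r_X$. This would contract $Q$, a contradiction. Different $\tau$ give different components, since the components on $Y_j$, $j\geq2$, are path invariants. Hence we obtain $x^y+x^{y-1}$ components, which proves the inequality in \eqref{t:E:extra}.

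For the retraction itself, define $\rho:X^Y\to\xp Xy+x^{y-1}\one$ componentwise on $X^Y$. On a component containing some $\phi_\tau$, send everything to the point $\tau$. On every other component, use the base-point evaluation above, which lands in $\xp Xy$. Order-preservation holds because $\rho$ is constant on the special components and is evaluation elsewhere. The inclusion sends $\tau\mapsto\phi_\tau$ and sends $\xp Xy$ to the componentwise-constant maps; these lie in non-special components by the argument above. Then $\rho$ composed with the inclusion is the identity. A retract being surjective on components then gives the count again.
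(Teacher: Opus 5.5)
Your proposal is correct and follows essentially the paper's argument. You use the same extra maps built from $i_Xr_Y$ on the component carrying the copy of $Q$, the same contraction contradiction obtained by composing with $r_X$ and $i_Y|_Q$, and the same componentwise-defined retraction: base-point evaluation, with each extra component collapsed to its $\phi_\tau$. The only cosmetic difference is that you also apply base-point evaluation on components containing neither kind of point, whereas the paper collapses those components to a fixed point of the retract; both choices work.
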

\begin{proof}
The maps constant on each connected component of $Y$ form a~copy of
$X^{\times y}$. They occupy exactly $x^y$ connected components of
$X^Y$: an assignment of a~target component to each source component
is unchanged along a~comparability path, and constant maps with the
same assignment are connected by paths of constant maps.

Let $Q$ be a~noncontractible component of $A$. Write the retractions
as $r_X:X\to A$ and $r_Y:Y\to A$, with sections $i_X,i_Y$. The copies
of $Q$ lie in components $X_0,Y_0$. The map
$i_Xr_Y:Y_0\to X_0$ is not connected to a~constant map in $X_0^{Y_0}$.
Otherwise composition on the left with $r_X$ and on the right with
$i_Y|_Q$ would join $\id_Q$ to a~constant map, contracting $Q$.
Assign an arbitrary target component to each of the other $y-1$
source components, using a~constant map there. We obtain $x^{y-1}$
distinct components of $X^Y$ outside the components containing the
componentwise constant maps.

The asserted retract is not just a~cardinality estimate. On every
component containing componentwise constant maps, evaluation at one
fixed point of each source component retracts onto that copy of
$X^{\times y}$. Choose a~point in each of the $x^{y-1}$ extra
components and collapse its whole component to that point. Collapse
any remaining component to a~fixed point of the proposed retract.
There are no order relations between different components, so these
maps together are order-preserving and fix the proposed retract.
\end{proof}

\begin{lemma}[Strict precomposition]\label{t:L:strict}
\index{strict!precomposition!lemma}
If $r:X\to Y$ is a~noninjective retraction and $A$ has positive height,
then $|A^X|>|A^Y|$.
\end{lemma}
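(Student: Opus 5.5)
Precomposition with the retraction $r$ injects $A^Y$ into $A^X$, so it is enough to find one order-preserving map $X\to A$ that does not factor through $r$. This is the same template as the proof of Lemma~\ref{b:L:disconnected}, where the retraction $P\to A\times A$ played the role of $r$.

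Let $i:Y\to X$ be the section, so $ri=\id_Y$. Define $\rho:A^Y\to A^X$ by $\rho(w)=w\circ r$. Since $r$ is order-preserving, each $w\circ r$ is order-preserving. The identity $(w\circ r)\circ i=w$ shows that $\rho$ is injective, and in fact an order embedding. Hence $|A^X|\geq|A^Y|$.

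For strictness, use that $r$ is noninjective and choose $x_1\ne x_2$ in $X$ with $r(x_1)=r(x_2)$. Every map in the image of $\rho$ takes equal values at $x_1$ and $x_2$. So it suffices to build an order-preserving $\phi:X\to A$ with $\phi(x_1)\ne\phi(x_2)$.
\begin{enumerate}
\item Because $A$ has positive height, it contains a strict comparable pair $a<b$.
\item Because $x_1\ne x_2$, at least one of $x_1\not\leq x_2$ and $x_2\not\leq x_1$ holds. By symmetry assume $x_1\not\leq x_2$.
\item Let $U={\uparrow}x_1$, the principal upset of $x_1$. Then $x_1\in U$ and $x_2\notin U$.
\item Set $\phi=b$ on $U$ and $\phi=a$ off $U$. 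This map is order-preserving because $U$ is an upset and $a<b$, and it separates $x_1$ from $x_2$.
\end{enumerate}
Therefore $\phi\notin\rho(A^Y)$, and $|A^X|>|A^Y|$.

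No step here is a real obstacle. The only points needing care are that precomposition with a retraction is injective, and that an upset separates two distinct points; both are routine. No connectedness or contractibility hypothesis on $A$ is needed; positive height alone supplies the two-valued separating map.
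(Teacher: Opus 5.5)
Your proof is correct and is essentially the paper's argument: precomposition with $r$ embeds $A^Y$ into $A^X$, and the two-valued map into a comparable pair $a<b$, defined by an upset separating two points of one fiber of $r$, lies outside the image. You only make explicit what the paper leaves implicit, namely injectivity via the section and the choice of the principal upset ${\uparrow}x_1$.
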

\begin{proof}
Precomposition with $r$ embeds $A^Y$ into $A^X$. Choose distinct
points in one fiber of $r$. An upset of $X$ separates them. Its
characteristic map into a~fixed comparable pair in $A$ is
order-preserving and is not constant on that fiber, so it is not in
the image of the embedding.
\end{proof}

We retain the terms and cardinalities in~\eqref{b:E:small-terms}. Put $\ell=\height(A)>0$.

\begin{lemma}\label{t:L:bounds}
\index{component!bounds!lemma}
For $A\in\OO$, $N\geq4$, and $c=\comp(A)$,
\begin{equation}\label{t:E:bounds}
\begin{aligned}
 p&\geq N^2+N+1,& f&\geq p+N,&g&\geq p+N,\\
 g&>Np,&h_3&>Ng,&h_4&>Nf,\\
 h_1&\geq f+N-1,&h_2&\geq p^2+p,&h_2&>f,\quad h_2>g.
\end{aligned}
\end{equation}
Also $h_3\geq f+N-1$, $h_4\geq g+N-1$, and every term with at least
three occurrences has more than $p$ elements. Finally,
\begin{equation}\label{t:E:componentbound}
 \comp(P)\geq c^c+c^{c-1},\qquad p\geq(N+1)N^{c-1}.
\end{equation}
If $c\geq2$, then
\begin{equation}\label{t:E:disconnected}
 f>p^2,\qquad g\geq N^2f,\qquad h_1\geq f^2,\qquad p>2N^2.
\end{equation}
If $c\geq3$, then $g\geq Nh_1>h_1$.
\end{lemma}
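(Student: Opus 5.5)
The plan is to derive all the inequalities from three earlier tools. The first is Lemma~\ref{t:L:retract}, applied with $X,Y$ among $A,P,F$; each of these has $A$ as a~retract by Lemma~\ref{L:term-retract}. The second is strict precomposition, Lemma~\ref{t:L:strict}. The third is the exponential law $A^{Z+W}\cong A^Z\times A^W$ of Proposition~\ref{P:calculus}.

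\textbf{Key mechanism.} Suppose an exponent $W$ has a~retract $Z+m\one$ with $m\geq1$, and the retraction is noninjective. Then
\[
 |A^W|>|A^{Z+m\one}|=N^m|A^Z|.
\]
A~further retraction $Z\to Z'$ gives $|A^Z|\geq|A^{Z'}|$, by precomposition.

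\textbf{Order of proof.}

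\begin{enumerate}
\item Prove \eqref{t:E:componentbound}. Take $X=Y=A$ in Lemma~\ref{t:L:retract}; this gives $\comp(P)\geq c^c+c^{c-1}$. The bound $p\geq(N+1)N^{c-1}$ is already in the proof of Lemma~\ref{b:L:bounds}.

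\item Prove $g>Np$. Lemma~\ref{t:L:retract} makes $\Ap c+c^{c-1}\one$ a~noninjective retract of $P$, since $p>N^c+1$. Hence $g>N\,|A^{\Ap c}|\geq Np$, because $\Ap c$ retracts onto $A$.

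\item Prove $h_3>Ng$ the same way. Take $X=P$, $Y=A$: then $\xp Pc+\one$ is a~retract of $F=P^A$. So $h_3>N|A^{\xp Pc}|\geq N|A^P|=Ng$.

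\item Prove $h_4>Nf$. Take $X=A$, $Y=P$, with $k=\comp(P)\geq2$. Then $G$ has a~retract $\Ap k+\one$, so $h_4>N|A^{\Ap k}|\geq N|A^{\Ap2}|=Nf$.

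\item Treat the disconnected case $c\geq2$.
\begin{itemize}
\item $g\geq N^{c^{c-1}}|A^{\Ap c}|\geq N^2f$.
\item For $c\geq3$: $g\geq N^{c^{c-1}}|A^{\Ap3}|\geq Nh_1$.
\item $h_1=|F^A|\geq|F^{\times c}|\geq f^2$, using the componentwise-constant maps.
\item $f=|P^A|\geq p^c$. For strictness, add one extra component from Lemma~\ref{t:L:retract}, which gives $f>p^2$.
\item $p>2N^2$ follows from $(N+1)N^{c-1}$.
\end{itemize}

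\item Prove the linear bounds.
\begin{itemize}
\item $h_1\geq f+N-1$: since $H_1\cong F^A$, apply $|X^Y|\geq|Y|+|X|-1$ (the argument of \eqref{b:E:basic-bounds}, valid for any positive-height base $X$) with base $F$. The bounds $h_3\geq f+N-1$ and $h_4\geq g+N-1$ come from the same argument with base $A$.
\item $f\geq p+N$: count the $p$ constant maps $A\to P$, plus one two-valued map for each proper nonempty upset of $A$, into a~fixed comparable pair of $P$. An input in $\OO$ is not a~chain, so it has at least $N$ such upsets.
\item $g\geq p+N$: count $N$ constants plus at least $p$ proper nonempty upsets of the non-chain $P$.
\item Every term with at least three occurrences has more than $p$ elements: this reduces to $f,g>p$ and Lemma~\ref{L:term-retract}.
\item $h_2=|P^P|\geq p^2+p$: redo the counting proof of $p\geq N^2+N$ with base $P$. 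Then $h_2>f$ and $h_2>g$ follow, since $f,g\leq|P|^{|A|}$-type bounds are much smaller. Alternatively, use that $P^{\times2}+\one$ is a~retract of $P^P$ together with strict precomposition.
\end{itemize}
\end{enumerate}

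\textbf{Main obstacle.} I expect the hardest step to be the extra $+1$ in $p\geq N^2+N+1$ for $A\in\OO$. The earlier counting reaches $N^2+N$ exactly in boundary cases: a~connected tree-like $A$ with $q=N-1$, or a~disconnected $A$ with $c=2$. There one must exhibit an additional endomorphism, using noncontractibility. For connected $A$, no upset count is tight, because a~non-chain in $\OO$ has $u\geq N+1$; so $qu\geq N^2-1$ strictly improves. For disconnected $A$, I would first try using the extra components of $P$ from step~1 to supply the missing map.
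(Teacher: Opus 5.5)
Your retract-plus-strict-precomposition mechanism is the paper's own method for $g>Np$, $h_3>Ng$, $h_4>Nf$ and for the disconnected bounds $f>p^2$, $g\geq N^2f$, $h_1\geq f^2$, $g\geq Nh_1$. Your upset counts for $f,g\geq p+N$ are a harmless variant of the paper's elementary-map counts. The genuine gaps are in the counting of $p$.

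First, $p\geq N^2+N+1$ is not actually proved. For connected $A$ your estimate $qu\geq(N-1)(N+1)=N^2-1$ gives only $p\geq N+qu+1\geq N^2+N$, so nothing is gained. The missing idea is that the tree case $q=N-1$ cannot occur for $A\in\OO$. A connected ordered set whose comparability graph is a tree is contractible: fold a leaf onto its unique comparable neighbor and induct. Hence $q\geq N$, and together with $u\geq N$ this gives $qu\geq N^2$. For disconnected $A$ (only $c=2$ is tight) you do not need component counts of $P$. On a noncontractible component use the $N$ constants, the inclusion, and one nonconstant two-valued map, with arbitrary constants elsewhere; this gives $p\geq(N+2)N^{c-1}$.

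Second, $p>2N^2$ does \emph{not} follow from $p\geq(N+1)N^{c-1}$ when $c=2$. There the bound is $N^2+N<2N^2$, and even $(N+2)N$ is too small. The paper needs a separate count. Write $A=Q_1+Q_2$ with $|Q_1|=s$, so $p=|A^{Q_1}|\,|A^{Q_2}|$. Pairing the at least $N-2$ strict comparable pairs of $A$ with the at least $s-1$ nontrivial upsets of $Q_1$ gives $|A^{Q_1}|\geq2+(N-2)s$, and similarly for $Q_2$. Using $s(N-s)\geq N-1$, this yields $p\geq N(N^2-3N+4)$. That exceeds $2N^2$ only because $N\geq5$: the noncontractible component has at least four points and another component exists.

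Your justification of $h_2>f$ and $h_2>g$ also fails. Upper bounds such as $f\leq p^N$ are far larger than $p^2+p$, so comparing them with $h_2\geq p^2+p$ proves nothing. Your alternative, the retract $\xp P2+\one$ of $P^P$ with strict precomposition, bounds $|A^{P^P}|$, not $h_2$. Instead, note that $\Ap2$ and $P$ are both proper retracts of $A\times P$. Strict precomposition with base $A$ then gives $h_2=|A^{A\times P}|>f$ and $h_2>g$ directly.

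Finally, every use of your key mechanism requires a noninjective retraction, and you should verify this each time. In step~2 the condition is $p>N^c+c^{c-1}$, not $p>N^c+1$. In step~4 it is $g>N^k+1$, which you never check. Projecting to $A+\one$ and $\Ap2+\one$, as the paper does, reduces these checks to $p>N+1$ and $g>Np>N^2+1$.
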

\begin{proof}
First suppose $A$ is connected. A~connected ordered set whose
comparability graph is a~tree is contractible: a~leaf can be folded
onto its unique comparable neighbor by an order-preserving retraction,
and induction removes all but one point. Thus our graph has at least
$N$ edges, that is, at least $N$ strictly comparable pairs. Also $A$
is not a~chain. It has at least $N+2$ upsets: a~linear extension gives
$N+1$ nested upsets, and incomparable principal upsets show that these
cannot be all the upsets. Consequently there are at least $N$
nontrivial upsets. Each choice of a~strict comparable pair and
nontrivial upset gives a~distinct two-valued map. Adding the $N$
constants and the identity gives $p\geq N^2+N+1$.

If $A$ is disconnected, on a~noncontractible component use its
inclusion and one two-valued nonconstant map, in addition to the $N$
constant maps. Choose constants independently on the other components.
This gives $p\geq(N+2)N^{c-1}\geq N^2+N+1$. The weaker second
inequality in~\eqref{t:E:componentbound} also follows, and its first is
Lemma~\ref{t:L:retract} with $X=Y=A$.

For any positive-height base $X$, counting maps into a~comparable pair
and then the other constants gives
\begin{equation}\label{t:E:elementarymaps}
 |X^Y|\geq |Y|+|X|-1.
\end{equation}
For $F=P^A$, add to these maps the constant-map embedding $A\to P$,
which has $N\geq4$ values. For $G=A^P$, add an evaluation map $P\to A$,
which also has $N$ values. These give $f,g\geq p+N$.

Lemma~\ref{t:L:retract} gives $A+\one$ as a~retract of $P$. It is a
proper retract because $p>N+1$. Strict precomposition therefore gives
$g>Np$. Applying the lemma with $(X,Y)=(P,A)$ gives $P+\one$ as
a~proper retract of $F=P^A$, so $h_3>Ng$. Since $P$ has at least
two components, applying the lemma with $(X,Y)=(A,P)$ gives
$\Ap2+\one$ as a~retract of $G$. It is proper since $g>Np>N^2+1$,
so $h_4>Nf$. When a~larger Cartesian power or a~larger number of
isolated points occurs in the lemma, project to the stated power and
collapse the extra isolated points. These are still retractions.

The bounds on $h_1,h_3,h_4$ not yet proved follow from
\eqref{t:E:elementarymaps}, using $H_1=F^A$. Since $P\in\OO$ and
$H_2\cong P^P$, the bound on $p$ applied to $P$ gives
$h_2\geq p^2+p+1$. Both $\Ap2$ and $P$ are proper retracts of $AP$;
strict precomposition gives $h_2>f,g$. More generally the root forest
of a~term with at least three occurrences has $A$ as a~proper retract,
so its value has more than $p$ elements.

For disconnected $A$, Lemma~\ref{t:L:retract} gives a~retract
$\xp Pc+u^{c-1}\one$ of $F$, where $u=\comp(P)$. Thus $f>p^2$.
It also gives $\Ap2+2\one$ as a~retract of $P$, whence $g\geq N^2f$.
Componentwise constant maps embed $F^{\times c}$ into $F^A=H_1$, so
$h_1\geq f^2$. For $c\geq3$, the retract $\Ap3+\one$ of $P$ gives
$g\geq Nh_1$.

It remains to prove $p>2N^2$ when $c=2$; the case $c\geq3$ follows
from $p\geq N^c$. Write the component sizes as $s,N-s$. There are
at least $N-2$ strict comparable pairs in $A$. On a~component of size
$s$, its at least $s-1$ nontrivial upsets therefore give at least
$N+(N-2)(s-1)=2+(N-2)s$ maps into $A$. Hence
\[
\begin{aligned}
 p&\geq[2+(N-2)s][2+(N-2)(N-s)]\\
  &\geq N(N^2-3N+4)>2N^2.
\end{aligned}
\]
Here $s(N-s)\geq N-1$, and $N\geq5$ because a~noncontractible component
has at least four elements and another component is present.
\end{proof}

\index{cardinality!bounds}\index{component!bounds}\index{strict!precomposition}\index{disconnected!inputs}
The sharper bounds are used in the twelve-occurrence comparison
matrices. The earlier bounds also remain useful because they give a
shorter comparison classification through eleven. Accordingly, a~case
called exceptional in the earlier classification may become regular
under the sharper inequalities. The classifications are proof
assignments, not intrinsic partitions of the ordered-set operations.

\chapter{Normal forms and strong reconstruction}\label{b:S:normal}
A~normalized term is represented by a~rooted unordered tree: the root
represents the final base $A$, and its child trees represent the factors
in its root exponent. Thus a~nontrivial normal form has value
\begin{equation}\label{b:E:normal-form}
 t(A)=A^{D_t(A)},\qquad
 D_t(A)=t_1(A)\times\cdots\times t_s(A).
\end{equation}
A~child with one vertex has value $A$. More generally, a~\emph{forest}
is a~Cartesian product of normalized terms; its weight is the sum of
their occurrence counts. A~forest is \emph{pure} when all its factors
are $A$.

Normalization is recursive. To normalize $u^v$, retain the root of the
normal form of $u$ and add the normal form of $v$ as one more child.
The exponential law proves \eqref{b:E:normal-form}. Interchange permutes
adjacent children along the left-associated exponentiation spine.
Conversely, choosing any order of the children gives a~left-associated
representative. Induction on the child trees shows that two terms have
the same rooted unordered tree exactly when they are equivalent under
interchange. This proves existence and uniqueness of these normal forms.

For $A\in\OO$ and a~nontrivial term, put
\[
 V_t(A)=\Red(\Max(t(A)))\cong S\times\Max(D_t(A)).
\]

\begin{definition}\label{b:D:strong}
\index{strong!reconstruction!definition}
A~fixed nontrivial term is \emph{strong on $\OO$} if the following
two uniqueness assertions hold for inputs in this class:
\emph{(S1)} $V_t(A)$ determines $R=\Der(A)$;
\emph{(S2)} the pair $(R,D_t(A))$ determines $A$.
\end{definition}
All terms and integer parameters are known in these assertions.
There is no efficiency requirement: one may enumerate candidates, and
the proofs establish uniqueness of the required invariant or input.
A~strong term is reconstructing on $\OO$, because its value gives
$V_t$ and then $R$, and cancellation in
\[
 \Der(t(A))\cong R\times D_t(A)
\]
recovers $D_t(A)$. Theorem~\ref{b:T:K} then handles all remaining inputs.

\begin{proposition}[Uniform extension]\label{b:P:uniform}
\index{uniform!extension!proposition}
The term $A^{\Ap k}$ is strong on $\OO$ for every $k\geq1$.
If $u$ is strong on $\OO$, so is
\begin{equation}\label{b:E:uniform}
 A^{\,\Ap r\times u(A)^{\times s}}
 \qquad(r\geq0,\ s\geq1).
\end{equation}
\end{proposition}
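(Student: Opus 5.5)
The plan is to treat the base case and the inductive step separately, checking (S1) and (S2) of Definition~\ref{b:D:strong} directly in each. The tools are the identities \eqref{b:E:Max-laws}, \eqref{b:E:Der-laws} and \eqref{b:E:reductions}, together with cancellation from Lemma~\ref{L:arithmetic-cancellation}.

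\emph{Base case.} Let $t=A^{\Ap k}$, so $D_t(A)=\Ap k$. By Lemma~\ref{b:L:Max}, $\Max(\Ap k)\cong L^{\times k}$, hence $V_t(A)\cong S\times L^{\times k}$. The last assertion of Lemma~\ref{b:L:reduction} says that, for known $k$, this type determines $L$, $R$ and $S$; this gives (S1). For (S2), $D_t(A)=\Ap k$ determines $A$ by uniqueness of Cartesian roots (Lemma~\ref{L:arithmetic-cancellation}); $R$ is not even needed.

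\emph{Inductive step, (S1).} Let $t$ be the term in \eqref{b:E:uniform}, with $D=D_t(A)=\Ap r\times u(A)^{\times s}$. Since $u$ is nontrivial, $u(A)=A^{D_u(A)}$. By \eqref{b:E:Max-laws},
\[
 V_t(A)\cong S\times L^{\times r}\times\bigl(L^{D_u(A)}\bigr)^{\times s}.
\]
I apply $\Red$ to this, i.e. I first compute $\Tow$ and then select a maximum-height interval. By \eqref{b:E:Tow-laws},
\[
 \Tow(V_t(A))\cong J+rR+s\,(R\times D_u(A)).
\]
Here $D_u(A)$ has positive height, because $A\in\OO$ has positive height and every term value contains $A$ as a retract (Lemma~\ref{L:term-retract}). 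So the $s$ copies of $R\times D_u(A)$ strictly exceed in height both $R$ and $J$, using \eqref{b:E:separation}. The selected interval of a disjoint sum is taken among the summands of greatest height, and these are all isomorphic. Hence
\[
 \Red(V_t(A))\cong\Max(R\times D_u(A))\cong S\times\Max(D_u(A))=V_u(A).
\]
Since $u$ is strong, $V_u(A)$ determines $R$, which proves (S1) for $t$.

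\emph{Inductive step, (S2).} Given $R$ and $D$, use \eqref{b:E:Der-laws}:
\[
 \Der(D)\cong rR+s\,\bigl(R\times D_u(A)\bigr).
\]
With $R$, $r$ and $s$ known, I proceed in three cancellations. First, cancel the summand $rR$ by component multiplicities. Second, divide the remaining multiplicities by $s$. Third, cancel the actual nonempty factor $R$ by product cancellation. This recovers $D_u(A)$. The pair $(R,D_u(A))$ then determines $A$ by (S2) for $u$.

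\emph{Main obstacle.} The delicate step is the $\Red$ computation in (S1). It must be verified that $\Max$ of the disjoint sum really selects an interval inside a copy of $R\times D_u(A)$, rather than something coming from $J$ or $rR$. The strict height comparison handles this. It must hold uniformly, including when $J=0$ or $r=0$, and it must not depend on identifying which displayed summand an interval came from. This matches the caution in the discussion after Lemma~\ref{b:L:reduction}, and is where I would check the details most carefully.
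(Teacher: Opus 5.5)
Your proof is correct and follows the paper's argument essentially step for step. It has the same base case (terminal recovery and Cartesian roots), the same computation $\Tow(V_t)\cong J+rR+s(R\times D_u)$ with height dominance giving $\Red(V_t)\cong V_u$, and the same recovery of $D_u$ from $\Der(D_t)\cong R\times(r\one+sD_u)$. The only difference is that you subtract the summand $rR$ before cancelling the factor $R$, whereas the paper cancels $R$ first and then removes $r$ isolated points; the two orders are equivalent.
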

\begin{proof}
For a~pure exponent, $V_t=S\times L^{\times k}$ recovers $R$ by terminal
recovery, and $D_t=A^{\times k}$ recovers $A$ by root uniqueness.

Write $u(A)=A^{D_u(A)}$. For \eqref{b:E:uniform},
\[
 V_t\cong S\times L^{\times r}\times(L^{D_u(A)})^{\times s}.
\]
Its $\Tow$-image is $J+rR+s(R\times D_u)$. The forest $D_u$ has positive
height, so $R\times D_u$ has greater height than the other summands.
Therefore
\[
 \Red(V_t)\cong S\times\Max(D_u)=V_u.
\]
Property (S1) for $u$ recovers $R$.

For (S2), compute
\[
 \Der(D_t)\cong R\times(r\one+sD_u).
\]
Cancel $R$, remove exactly $r$ isolated points, and divide every
component multiplicity by $s$. These operations uniquely recover $D_u$,
even when it has isolated points. Property (S2) for $u$ now recovers $A$.
\end{proof}

\section{The meaning of strong reconstruction}
\index{strong!reconstruction!definition}\index{reconstruction!strong}
The two assertions in Definition~\ref{b:D:strong} can be made relative
to any specified class of positive-height inputs. Unless explicitly
stated otherwise, strong reconstruction below refers to $\OO$.
A~statement of ordinary reconstruction for a~smaller term is weaker
than this assertion and cannot be substituted for it in the uniform
extension proof.
\index{strong!reconstruction}\index{uniform!extension}\index{normal!form!rooted tree}

The first assertion identifies $R$ from a~selected-interval invariant.
The second identifies the input once $R$ and the entire root exponent
are known. The intervening cancellation is legitimate only because
the first assertion has already recovered $R$.

\section{Mixed roots and whole-exponent recovery}
\index{mixed!root!recovery}\index{whole-exponent!recovery}\index{summand!separation}
After grouping root leaves, a~nonuniform root can be written
\begin{equation}\label{t:E:mixed}
 E_{r,C}(A)=A^{\,\Ap r\times A^{C(A)}},\qquad
 C=E_1+\cdots+E_m,
\end{equation}
where $m\geq2$, the $E_i$ are positive-weight forests, and at least two
nontrivial child types are distinct. Repetitions are retained. If the
term has $n$ occurrences, then
\begin{equation}\label{t:E:budget}
 r+m+\sum\setm{\wt(E_i)}{1\leq i\leq m}=n-1.
\end{equation}
Indeed, the outer base contributes one occurrence; each of the $r$
root leaves contributes one; and the nontrivial child $A^{E_i}$
contributes $1+\wt(E_i)$.

For $D=\Ap r\times A^C$, the identities of
Chapter~\ref{b:S:selection} give
\begin{equation}\label{t:E:VZ}
\begin{aligned}
 V&=S L^{\times r}L^C,\\
 Z_0=\Red(V)&=S\Max(C),\\
 \Der(D)&=R(r\one+C).
\end{aligned}
\end{equation}
\begin{equation}\label{t:E:wholeC}
 (R,D)\quad\text{determines the entire ordered set }C.
\end{equation}
To verify the last assertion, cancel $R$ in the last line of
\eqref{t:E:VZ}, then remove exactly $r$ isolated points. Any isolated
points originally belonging to $C$ remain. The first reduction is
valid because every $E_i$ has positive height; its $RC$ summand
therefore dominates $J+rR$ in $\Tow(V)$.
\index{mixed!root}\index{whole-exponent!recovery}\index{summand!separation}

This calculation does not identify any individual displayed $E_i$.
Subsequent arguments use $C$ as an ordered set, and either compute its
selected interval or prove uniqueness across its possible selections.

\chapter{Forest recovery and reduction signatures}\label{t:S:forests}
For a~forest written as
\[
 D=\Ap a\prod\setm{A^{E_i}}{1\leq i\leq m},
 \qquad K(D)=a\one+\sum\setm{E_i}{1\leq i\leq m},
\]
we have $\Der(D)=RK(D)$. If $m>0$, the next reduction of
$S\Max(D)$ is $S\Max(E_1+\cdots+E_m)$; the $a$ isolated points in
$K(D)$ have smaller height. A~forest consisting entirely of copies
of $A$ is called \emph{pure}.

The elementary map bound also gives, by induction,
$|t(A)|\geq N+|t|-1$ for every term $t$. If the root forest has child
occurrence counts $n_1,\ldots,n_s$, its cardinality is at least
$\prod\setm{(n_i+1)}{1\leq i\leq s}\geq1+\sum\setm{n_i}{1\leq i\leq s}$.
The exponent height formula consequently gives
$\height(D)\geq\wt(D)\ell$ for every forest $D$. This justifies, in
particular, comparisons in which a~weight-one forest is discarded.

\begin{lemma}[The eight-weight forest lemma]\label{t:L:forest}
\index{forest!eight-weight lemma}
For every fixed positive-weight forest $D$ of weight at most eight,
the following assertions hold on $\OO$.
\begin{enumerate}
\item A~fixed sequence of reductions takes $S\Max(D)$ to
$SL^{\times k}$, for a~known positive integer $k$. Thus $S\Max(D)$
determines $R$.
\item If $D$ is not pure, $(R,\Der(D))$ determines $A$.
\item The pair $(R,D)$ determines $A$.
\end{enumerate}
\end{lemma}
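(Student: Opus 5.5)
I will argue by strong induction on the weight $w=\wt(D)\leq8$, treating the three assertions together for all forests of weight at most $w$.

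\emph{Assertion (1).} Write $D=\Ap a\prod A^{E_i}$ as in the chapter. If $D$ is pure, then $D=\Ap a$ and $S\Max(D)=SL^{\times a}$ with $k=a$. Otherwise $m>0$, and the reduction computed above gives
\[
 \Red(S\Max(D))\cong S\Max(E_1+\cdots+E_m).
\]
Let $h$ be the maximal height among the $E_i$. The selected interval of the sum is $\Max$ of one summand of height $h$. Among those summands, it is the $\preceq$-greatest one.

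The main obstacle is that the identity of this summand depends on $A$ through $\preceq$, not only on the formal forest. To handle it, I would show that for weight at most seven the candidate forests $E_i$ can be compared uniformly in $A$. Heights are $\height(E_i)=\height$ of a known expression in $\ell$ and cardinalities. Ties in height among distinct forms would be resolved by Lemma~\ref{b:L:selection-order}: the selected prime at the greatest core has coordinate height $b$ plus the exponent height. Where two distinct forests can tie for some $A$, I would invoke the strategy flagged in the chapter. Both possible selections lead, by induction, to terminal shapes $SL^{\times k}$ and $SL^{\times k'}$. I would then show that a collision forces the same recovered $R$. For example, $k$ is determined by the height $\height(S)+k\ell$ together with $\ell$, and this reduces to checking a finite list.

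Since $\wt(E_i)<w$, the induction hypothesis (1) applied to $E_i$ gives a fixed reduction sequence to $SL^{\times k}$. Lemma~\ref{b:L:reduction} (terminal recovery) then determines $L$, and hence $R=\Tow(L)$. The finite list of forests of weight at most eight is where the audit-style case check enters.

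\emph{Assertions (2) and (3).} Assertion (3) follows from (2) together with the pure case. If $D=\Ap a$, Cartesian-root uniqueness (Lemma~\ref{L:arithmetic-cancellation}) recovers $A$. If $D$ is not pure, then $(R,D)$ gives $\Der(D)$, and (2) applies.

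For (2), cancel $R$ in $\Der(D)=RK(D)$ to recover $K(D)=a\one+\sum E_i$ as an ordered set, as in \eqref{t:E:wholeC}. From $K(D)$ I would proceed as follows:
\begin{enumerate}
\item Strip the $a$ isolated points, which is possible because $a$ is known.
\item Select the dominant summand or summands of $K(D)$, using the same height analysis as in (1).
\item Recover $E_i$, or a product or sum of the $E_i$, up to known multiplicities, by dividing multiplicities as in the uniform-extension proof.
\item Apply (3) inductively to $(R,E_i)$.
\end{enumerate}
When several distinct $E_i$ share the top height and cannot be individually identified, I would instead use a cardinality or component-polynomial argument. The idea is that $\poly(K(D))$ is a fixed polynomial expression in $\poly(A)$ and $R$-independent data. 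Lemma~\ref{L:arithmetic-cancellation}, with $q$ nonconstant with nonnegative coefficients, shows that $\poly(K(D))$ determines $A$ provided the whole sum is an injective polynomial image of $A$. This holds whenever some $E_i$ is itself a pure power $\Ap j$. In the remaining mixed cases, induction reduces the problem to smaller weight.
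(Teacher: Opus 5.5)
Your route for assertion (2) breaks down exactly when $m\geq2$. After cancelling $R$ and removing the $a$ known isolated points, you hold only the ordered set $C=E_1+\cdots+E_m$. A disjoint sum does not label its components by the summand that produced them. The $E_i$ are typically disconnected (for $A\in\OO$ already $P$ has at least two components), with components of many heights. So neither ``selecting the dominant summand'' nor dividing multiplicities gives you an individual $E_i$ to feed into (3); the division works in Proposition~\ref{b:P:uniform} only because a single nontrivial child type is present. (For $m=1$ your route is fine, since stripping the isolated points leaves $E_1$ itself.) Your polynomial fallback is also false as stated. $\poly(C)$ is a polynomial in $\poly(A)$ only when \emph{every} $E_i$ is pure; one pure summand does not help, e.g.\ $C=A+P$.

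The missing idea is never to separate the summands. If $D_1$ is the selected forest, then $\Der(C)=\Tow(\Max(C))=\Tow(\Max(D_1))=\Der(D_1)$. Assertion (2) for the smaller forest $D_1$ then recovers $A$ from $(R,\Der(C))$; this is precisely why (2) is formulated with $\Der(D)$ rather than $D$. That step needs $D_1$ nonpure. The complementary case is closed by a comparison fact you never state: at these weights a pure forest is selected only when every competing summand is pure. Then $C$ is a fixed nonconstant polynomial in $A$, and Lemma~\ref{L:arithmetic-cancellation} applies.

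In (1), the fixedness of the selection is the entire content, and you place it on the wrong range and leave it unverified. Since $\wt(D)=a+m+\sum\wt(E_i)\leq8$, the case $m\geq2$ forces $\sum\wt(E_i)\leq6$. So only comparisons of total weight at most six arise, and all of them are uniform on $\OO$:
\begin{itemize}
\item a weight-one forest loses to anything larger;
\item $P$ beats $\Ap2$, and also $\Ap4$, with the tie at $N=4$ going to $P$ by Lemma~\ref{b:L:selection-order};
\item every nonpure weight-four forest beats $P$;
\item $\Ap3<AP<F<G$, by the height coefficients $3,N+1,N^2,p$.
\end{itemize}
The danger is not height ties, which $\preceq$ resolves in a fixed way, but winners that change with $A$. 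These do occur once individual weights up to seven are allowed: $\Ap5$ beats $P$ at $N=4$ and loses for $N\geq5$.

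Your collision fallback cannot handle such cases. The height $\height(S)+k\ell$ determines only $k\ell$, not $k$, because $\ell$ is unknown at that stage. The competing chains can also have different lengths. In any case, (1) asserts a fixed reduction sequence. Excluding such cross-branch collisions is what the exceptional propositions at ten through twelve occurrences do, with substantial separate arguments. Here it is the budget bound $\sum\wt(E_i)\leq6$ that makes them absent.
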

\begin{proof}
Induct on $\wt(D)$. A~pure forest is a~Cartesian power of $A$, so
terminal recovery and Cartesian roots prove the applicable assertions.
For a~nonpure forest, use the displayed decomposition. If $m=1$, the
next forest is the known smaller forest $E_1$.

Suppose $m\geq2$. Then
$\sum\wt(E_i)\leq8-a-m\leq6$. Every comparison needed to select a
summand is fixed. A~weight-one forest loses to every larger forest.
At weight two, $P$ beats $\Ap2$. A~weight-four forest other than
$\Ap4$ beats $P$; $P$ also beats $\Ap4$, including the tie at $N=4$.
For weight three the order is
\[
 \Ap3\ <\ AP\ <\ F\ <\ G,
\]
since the height coefficients are $3,N+1,N^2,p$. The remaining
comparisons with total weight at most six follow immediately from
these statements and the height formulas. In particular, a~selected
pure forest forces every competing forest to be pure. Let $D_1$ be
the known selected forest. It has smaller weight than $D$. Applying
the induction hypothesis to $D_1$ proves the first assertion.

For the second assertion, cancel $R$ from $\Der(D)$ and remove the
known $a$ isolated points. This recovers $C=\sum E_i$. If every $E_i$
is pure, $C$ is a~known nonconstant polynomial in $A$, and
\eqref{b:E:polynomial} recovers $A$. Otherwise $D_1$ is nonpure and
$\Der(C)=\Der(D_1)$. The second inductive assertion for $D_1$ now
recovers $A$. For $m=1$ the same argument uses $D_1=E_1$. Finally,
from $(R,D)$ one can calculate $(R,\Der(D))$ if $D$ is nonpure; the
pure case was already handled. This proves the third assertion.
\end{proof}

Starting with $D_0=D$, write the fixed selected-forest chain as
\begin{equation}\label{t:E:chain}
 D_0,D_1,\ldots,D_{d-1}=\Ap k.
\end{equation}
Its \emph{signature} is $\sig(D)=(d,k)$. The corresponding interval
chain is $S\Max(D_0),\ldots,SL^{\times k},S$. The final $S$ is not
included in the length $d$.

\begin{lemma}[Signature recognition]\label{t:L:signature}
\index{reduction!signature!recognition lemma}
Suppose the selected forest of a~known sum $C$ belongs to a~fixed list
of nonpure forests of weight at most eight, with pairwise distinct
signatures. Then $(R,\Der(C))$ determines which forest is selected
and determines $A$.
\end{lemma}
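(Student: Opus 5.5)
We need to prove the Signature recognition lemma. Setup: $C$ is a known sum (a known formal sum of forests $E_i$), its selected forest, i.e.\ the forest $D_1$ whose maximum-height interval gives $\Max(C)$, is one of a fixed list $D^{(1)},\dots,D^{(q)}$ of nonpure forests of weight at most eight. These have pairwise distinct signatures $\sig(D^{(j)})=(d_j,k_j)$. We are given $(R,\Der(C))$ and must identify which $D^{(j)}$ is selected, and recover $A$.

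The plan is to compute, from $(R,\Der(C))$, an intrinsic reduction chain and read off its signature. Since $\Der(C)=\Tow(\Max(C))$ and the selected forest is $D^{(j)}$, we have $\Max(C)\cong\Max(D^{(j)})$ (a single bounded interval, $\Max$ of a sum being the $\preceq$-greatest maximum-height interval among summands). Hence $\Der(C)\cong\Der(D^{(j)})\cong R\times K(D^{(j)})$. Cancelling the known $R$ (Lemma~\ref{L:arithmetic-cancellation}) gives the ordered set $K_j:=K(D^{(j)})$, but not the label $j$. The key step is to reconstruct the interval $S\Max(D^{(j)})$ itself, or at least an intrinsic chain computed from $R$ and $K_j$ whose length distinguishes $j$. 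We use $S=\Max(R)$, which is computable from $R$. Then $S\times\Max(K_j)$ is the next interval $S\Max(D_1^{(j)})$ in the interval chain of $D^{(j)}$, because $D^{(j)}$ is nonpure and the isolated points of $K_j$ have smaller height. From this point, iterate $\Red$, which is intrinsic. By Lemma~\ref{t:L:forest}(1) and \eqref{t:E:chain}, repeated application of $\Red$ to $S\Max(D^{(j)}_1)$ passes through $S\Max(D_i^{(j)})$ and reaches $SL^{\times k_j}$ after exactly $d_j-2$ further steps, then $S$.

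The main obstacle is that the stopping point is not intrinsically recognizable: one cannot apply $\Red$ blindly, since $\Red$ of $S$ is forbidden unless positive height is known, and the preface warns that chains of different lengths may collide. I will handle this by uniqueness rather than by recognition. Suppose two list members $j\neq j'$ could both be consistent with the same data $(R,\Der(C))$; say inputs $A,A'$ with the same $R$ and $K_j(A)\cong K_{j'}(A')$. Both share $R$ and hence $S$, $L$, and heights. I would compare heights along the two chains. By \eqref{b:E:separation} the terminal $S$ has height $\height(R)<\height(L)$, whereas each $SL^{\times k}$ and every earlier interval has height at least $\height(S)+\ell$. Since $\height(L)=\ell$ is determined by $R$ only indirectly, I would instead use that $\height(S\Max(D_i))=\height(S)+\height(D_i)$, that $\height(D_i)\geq\wt(D_i)\ell$, and that weights strictly decrease along the chain. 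The chain for $j$ therefore has exactly $d_j$ intervals of height exceeding $\height(S)$ before $S$, and the same interval sequence is produced for both. Equal intrinsic sequences force the step at which $\height$ first equals $\height(S)$ to agree, so $d_j=d_{j'}$. At that step the preceding interval is $SL^{\times k_j}\cong SL'^{\times k_{j'}}$, and $L\cong L'$ since both are determined from $R$ via the same $S$... Rather, cancelling $S$ and comparing cardinalities of $L^{\times k}$ gives $k_j=k_{j'}$, since $|L|>1$. Distinct signatures then force $j=j'$.

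Once $j$ is known, the selected forest $D^{(j)}$ is a known nonpure forest with $\Der(D^{(j)})=\Der(C)$. Lemma~\ref{t:L:forest}(2) then recovers $A$ from $(R,\Der(D^{(j)}))$.
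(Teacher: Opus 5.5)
Your determination of $d$ is sound, and in fact the stopping point you worry about is intrinsically recognizable. Since $S=\Max(R)$ and $\height(S)$ are computable from $R$, and every interval $S\times\Max(D_i)$ has height $\height(S)+\height(D_i)>\height(S)$, stopping at the first interval of height $\height(S)$ is an intrinsic rule. The two sequences therefore stop at the same stage, and no separate uniqueness framing is needed.

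The gap is in the last step, where you extract $k$. You assert that inputs with the same $R$ share $L$, and then compare cardinalities in $S\times L_A^{\times k_j}\cong S\times L_{A'}^{\times k_{j'}}$. But $R=\Tow(L)$ keeps only the exponent coordinates $F_D$ of the prime factors of $L$ and forgets their cores $E_D$. For instance, $\Tow(L)=\one$ for every absolutely $\Con$-indecomposable $L$, such as $\mathbf 2$ or the five-element lattice with three atoms. So $R$ determines neither $L$ nor, in general, $|L|$ or $\ell$. Cancelling $S$ gives only $|L_A|^{k_j}=|L_{A'}|^{k_{j'}}$, and this does not force $k_j=k_{j'}$.

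The repair is to apply $\Tow$ instead of counting elements. We have $\Tow(S\times L^{\times k})\cong J+kR$, where $J=\Tow(S)$ is computable from $R$. Cancelling $J$ as a summand leaves $kR$, and since $R$ is known this yields $k$. Only afterwards does root uniqueness give $L_A\cong L_{A'}$.

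This is essentially the paper's proof, which avoids intervals altogether. It sets $Q_0=\Der(C)/R=K(D_0)$ and iterates $Q_{i+1}=\Der(Q_i)/R=K(D_{i+1})$ while $Q_i$ has positive height. The first height-zero value is $K(\Ap k)=k\one$, so the number of stages gives $d$ and its number of points gives $k$. Your concluding appeal to part~(2) of Lemma~\ref{t:L:forest} matches the paper.
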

\begin{proof}
If the selected forest is $D_0$, cancel $R$ to obtain
$Q_0=\Der(C)/R=K(D_0)$. While the current $Q_j$ has positive height,
put $Q_{j+1}=\Der(Q_j)/R$. These divisions are legitimate product
cancellations of known types. The selection rule gives
$Q_j=K(D_j)$, so the first height-zero value is $k\one$ at
$j=d-1$. The observed number of stages and the number of isolated
points therefore identify $(d,k)$, and hence $D_0$. The second part
of Lemma~\ref{t:L:forest}, applied to $D_0$, recovers $A$ from
$(R,\Der(D_0))$.
\end{proof}

For later computations it is useful to record a~general consequence
of equality between two selected-interval chains. Suppose
\[
 S_A\Max(D_0(A))=S_B\Max(E_0(B)),
\]
with chain lengths $d_A\leq d_B$. Put $\Delta=0$ if $d_A=d_B$ and
$\Delta=K(E_{d_A}(B))$ otherwise. The terminal equality at stage
$d_A$ gives $J_A=J_B+R_B\Delta$. Applying $\Tow$ at earlier stages
and canceling $J_B$ yields
\begin{equation}\label{t:E:cross}
 R_B\Delta+R_AK(D_i(A))=R_BK(E_i(B))\qquad(0\leq i<d_A).
\end{equation}
All terms here are actual sums or products, except that $\Delta=0$
means an omitted summand. Formula~\eqref{t:E:cross} also explains every
elimination in the exceptional-case tables below.

\begin{proposition}[Regular mixed roots]\label{t:P:regular}
\index{mixed!root!regular case}
Suppose a~mixed root satisfying~\eqref{t:E:budget} has a~fixed selected
forest $D_0$ in $C$, of weight at most eight. Suppose also that if
$D_0$ is pure, all summands of $C$ are pure. Then $E_{r,C}$ is strong.
\end{proposition}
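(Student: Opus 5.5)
We verify (S1) and (S2) of Definition~\ref{b:D:strong} for $t=E_{r,C}$, with root exponent $D=\Ap r\times A^{C(A)}$, following the pattern of Proposition~\ref{b:P:uniform}.

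\emph{(S1).} By \eqref{t:E:VZ}, $V_t(A)=SL^{\times r}L^C$. Its $\Tow$-image is $J+rR+R\times C$, and every $E_i$ has positive height, so the $RC$ summand dominates. Hence $Z_0=\Red(V)=S\Max(C)$. Since the selected forest $D_0$ of $C$ is fixed, selection gives $\Max(C)=\Max(D_0(A))$. Because every input is in $\OO$, the comparisons deciding which summand of $C$ is selected are the fixed height comparisons listed in the proof of Lemma~\ref{t:L:forest}. The hypothesis that a pure $D_0$ forces all summands to be pure removes the only ambiguous ties: two pure forests $\Ap j$ compete only by height, and their selected intervals are powers of $L$. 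Thus $Z_0\cong S\Max(D_0)$ with $D_0$ known. Part~(1) of Lemma~\ref{t:L:forest}, applied to $D_0$ (weight at most eight), gives a fixed reduction chain ending at $SL^{\times k}$ for known $k$. Terminal recovery (Lemma~\ref{b:L:reduction}) then returns $R$.

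\emph{(S2).} Given $(R,D)$, apply \eqref{t:E:wholeC} to recover the whole ordered set $C$, after canceling $R$ and removing exactly $r$ isolated points. Then split into cases.
\begin{itemize}
\item If $D_0$ is pure, every $E_i$ is pure. Then $C$ is a fixed nonconstant polynomial in $A$ with nonnegative integer coefficients, and Lemma~\ref{L:arithmetic-cancellation} recovers $A$.
\item If $D_0$ is nonpure, first compute $\Der(C)=\Tow(\Max(C))=\Der(D_0)$, using the known selection. The pair $(R,\Der(D_0))$ then determines $A$ by part~(2) of Lemma~\ref{t:L:forest}.
\end{itemize}
Strongness follows. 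Reconstruction on all inputs then follows as in the remark after Definition~\ref{b:D:strong}, together with Theorem~\ref{b:T:K}.

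\emph{Main obstacle.} The delicate point is justifying that ``fixed selected forest'' really yields $\Max(C)\cong\Max(D_0(A))$ uniformly on $\OO$. When competing summands have equal height, the choice is made by $\preceq$. We must make sure this is read as a hypothesis that holds for every $A\in\OO$, so that the same $D_0$ is selected on both sides $A$ and $B$. If a tie could select different isomorphism types for different inputs, the chain lengths in (S1) could differ. I would handle this by showing that tied candidates yield isomorphic selected intervals, or that the order between them is decided uniformly. This uses Lemma~\ref{b:L:selection-order}, through \eqref{b:E:exponent-tie} and product compatibility, to show the $\preceq$-order between $\Max$ of two fixed forests is the same for all $A\in\OO$.
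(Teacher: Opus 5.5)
Your proof is correct and is essentially the paper's. For (S1), $Z_0=S\Max(D_0)$ together with part~(1) of Lemma~\ref{t:L:forest} recovers $R$. For (S2), \eqref{t:E:wholeC} recovers $C$, and then polynomial injectivity (pure case) or $\Der(C)=\Der(D_0)$ with part~(2) (nonpure case) recovers $A$.

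Two remarks in your write-up can be dropped.
\begin{itemize}
\item The purity hypothesis is used only in (S2), not to settle ties in (S1).
\item The ``main obstacle'' about ties is not part of this proof. A fixed selected forest already means $\Max(C(A))\cong\Max(D_0(A))$ for every $A\in\OO$. Verifying that, including ties, is the job of the classification tables, and those comparisons generally go beyond the ones in the proof of Lemma~\ref{t:L:forest}.
\end{itemize}
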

\begin{proof}
Its first reduced invariant is $Z_0=S\Max(D_0)$, so
Lemma~\ref{t:L:forest} recovers $R$. Given $(R,D)$, equation
\eqref{t:E:wholeC} recovers $C$. If all its summands are pure, polynomial
injectivity recovers $A$. Otherwise $D_0$ is nonpure,
$\Der(C)=\Der(D_0)$, and the same forest lemma recovers $A$.
\end{proof}

\index{forest}\index{forest!weight}\index{forest!pure}\index{reduction!signature}\index{branch!recognition}\index{cross-branch!equality}
\section{Reading a~signature}
\index{reduction!signature!reading}\index{branch!recognition!signature}\index{cross-branch!equality}
A~signature records two integers, not the entire forest. It is used
only when the candidate forests for one fixed mixed sum have pairwise
different signatures. Equal signatures for unrelated forests are
common: $P$ and $AP$ both have signature $(2,1)$. The recognition
lemma never claims that a~signature classifies all forests.

The first strong assertion and the signature lemma also solve different
problems. The signature lemma starts with a~known $R$, so its divisions
by $R$ are legitimate. It proves the second strong assertion. To
recover $R$ initially, an exceptional proof must still rule out
collisions of the selected-interval chains without assuming $R$ is
known. Formula~\eqref{t:E:cross} is the bookkeeping identity for that
separate task.

\chapter{Polynomial perturbations and an unbounded family}\label{ch:perturbations}

Fix a~nonconstant polynomial
\begin{equation}\label{g:E:f}
 f(z)=c_1z+\cdots+c_kz^k,\qquad c_j\in\mathbb Z_{\geq0},\quad c_k>0.
\end{equation}
Its ordered-set value is the corresponding disjoint sum of Cartesian
powers. The zero constant coefficient is part of the hypothesis.
The next result recovers an input from a~whole sum, with no labelling
of its components by their source summands.
\index{polynomial!perturbation}\index{whole-sum!recovery}
\begin{lemma}[Whole-sum polynomial recovery]\label{t:L:polynomial}
\index{whole-sum!recovery!polynomial lemma}
Let $f(z)$ be a~fixed nonconstant polynomial with nonnegative integer
coefficients, zero constant coefficient, and degree $k$. Then
$(R,f(A)+P)$ determines every positive-height input $A$.
\end{lemma}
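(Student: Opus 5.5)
The plan is to split on the size of $N=|A|$ compared with $k=\deg f$, using the selected-interval calculus of Chapter~\ref{b:S:selection} when $N\geq k$, and the weighted-degree argument of Chapter~\ref{b:S:arithmetic} when $N<k$. Put $X=f(A)+P$ and $\ell=\height(A)>0$.

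\emph{Case $N\geq k$.} The summands of $X$ have heights $j\ell$ for $j\leq k$ (from $f(A)$) and $N\ell$ (from $P$), by \eqref{b:E:heights}. So $\height(X)=\max(k,N)\ell$, and in both cases $\ell=\height(X)/\max(k,N)$. The candidate maximum-height intervals are $L^{\times k}$, which occurs only if $N=k$, and $\Max(P)=L^A$, by Lemma~\ref{b:L:Max}. The tie at $N=k$ is settled by the last assertion of Lemma~\ref{b:L:selection-order}: since $\height(A)>0$, we have $L^{\times k}\prec L^A$. Hence $\Max(X)\cong L^A$ whenever $N\geq k$, and by \eqref{b:E:Der-laws}
\[
 \Der(X)\cong R\times A.
\]
If $N<k$, the same computation gives $\Max(X)\cong L^{\times k}$ and $\Der(X)\cong kR$.

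\emph{Mixed pairs.} Suppose $A,B$ have the same $R$ and $f(A)+P_A\cong f(B)+P_B$, with $N_A\geq k>N_B$. Then $R\times A\cong kR=R\times k\one$. Product cancellation (Lemma~\ref{L:arithmetic-cancellation}) gives $A\cong k\one$, contradicting positive height. Thus both inputs lie on the same side of $k$. On the side $N\geq k$, cancelling $R$ in $\Der(X)\cong R\times A$ recovers $A$ directly.

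\emph{Case $N_A,N_B<k$.} Here I would argue in the signed component-polynomial ring with the weighted degree $\hdeg$. The common height of $X$ equals $k\ell$, so $\ell_A=\ell_B=\ell$. Write $a=\poly(A)$ and $b=\poly(B)$. Equality of the values gives
\[
 f(a)-f(b)=\poly(P_B)-\poly(P_A).
\]
The right side has weighted degree at most $\max(N_A,N_B)\ell\leq(k-1)\ell$.

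For the left side, factor $f(a)-f(b)=(a-b)H(a,b)$ as in Lemma~\ref{L:arithmetic-cancellation}. The factor $H$ has nonnegative coefficients, and its top homogeneous part contains $c_k\,a_{\mathrm{top}}^{k-1}$, where $a_{\mathrm{top}}$ is the part of $a$ of degree $\ell$. With no cancellation among nonnegative terms, this gives $\hdeg H=(k-1)\ell$. If $\hdeg(a-b)\geq1$, then the left side has degree greater than $(k-1)\ell$, so $a=b$. In general, however, $|A|=|B|$ is not known a~priori, so Lemma~\ref{L:degree-obstruction} does not apply directly.

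\emph{Remaining subcase and main obstacle.} What remains is $a-b=m$, a~nonzero integer, meaning that $A$ and $B$ differ only in their numbers of isolated points. Then the left side has leading part
\[
 m\,k\,c_k\,a_{\mathrm{top}}^{k-1},
\]
of degree exactly $(k-1)\ell$, which can collide with the right side precisely when $\max(N_A,N_B)=k-1$. This is the step I expect to be the main obstacle.

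To handle it, I would first try to compare top-degree parts directly. The top part of $\poly(P_B)-\poly(P_A)$ comes from the components of $P$ of height $N\ell$, which are the maps constant on no height-$\ell$ component of $A$, and these are computable from $a_{\mathrm{top}}$. Combined with $|A|\neq|B|$, this should give a~sign contradiction in the ordered ring of Lemma~\ref{b:L:ordered-ring}. Failing that, I would use the given $R$ together with the top-height components of $X$. Those components are exactly $c_kM^{\times k}$, where $M$ is the sum of the height-$\ell$ components of $A$, so Cartesian-root uniqueness recovers $M$. The number of isolated points would then be fixed by the cardinality $|X|$, which is strictly increasing in the number of isolated points added to a fixed non-isolated part.
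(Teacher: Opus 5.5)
Your argument is sound once you commit to your second fallback, but it takes a longer road than the paper does. The paper never meets your ``main obstacle,'' because it uses the cardinality of the value first. For $N<k$,
\[
 f(N)<|f(A)+P|\leq f(N)+N^N<f(N+1),
\]
since $f(N+1)-f(N)\geq(N+1)^k-N^k>N^k\geq N^N$. These brackets do two things. They separate the two ranges: small-range values lie below $f(k)$ and large-range values lie above it. They also fix $N$ within the small range. With $|A|=|B|$ in hand, Lemma~\ref{L:degree-obstruction} gives $\hdeg(a-b)\geq1$ immediately, and your degree count $1+(k-1)\ell>N\ell$ finishes the proof.

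Your separation of the ranges through $R\times A\cong R\times k\one$ is a valid alternative that exploits the given $R$. The paper's route shows that $R$ is needed only in the large range, where $\Der(f(A)+P)\cong R\times A$.

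Your leftover subcase $a-b=m\neq0$ is precisely the situation $|A|\neq|B|$, which the brackets exclude outright. It also closes directly by your cardinality remark, but you should prove that remark rather than assert it.
\begin{itemize}
\item A constant difference means $A\cong A_0+i\one$ and $B\cong A_0+j\one$ with the same non-isolated part; say $i>j$.
\item Then $f(|A|)>f(|B|)$.
\item $B$ is a retract of $A$: send the extra isolated points anywhere. With inclusion $\iota$ and retraction $\rho$, the map $g\mapsto\iota g\rho$ embeds $B^B$ in $A^A$.
\item Hence $|f(A)+P_A|>|f(B)+P_B|$, a contradiction.
\end{itemize}
In this subcase neither $R$ nor the recovery of $M$ from the top-height components plays any role. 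The ordered-ring sign comparison can be dropped altogether.
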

\begin{proof}
When $N\geq k$, the selected summand is $P$, including the tie at
$N=k$. Thus $\Der(f(A)+P)=RA$, and cancellation recovers $A$.
For $N<k$,
\[
 f(N)<|f(A)+P|\leq f(N)+N^N<f(N+1).
\]
Indeed, the leading term alone gives
$f(N+1)-f(N)\geq(N+1)^k-N^k>N^k\geq N^N$; the strict inequality
follows from $(1+1/N)^k>1+k/N>2$. These disjoint intervals identify
$N$ in the small range, and their values are below $f(k)$, whereas
the large range has values above $f(k)$.

Suppose two small-range inputs give isomorphic sums. Their cardinalities
are the same $N$, and their heights are the same $\ell$, since the
height of the sum is $k\ell$. Put $a=\poly(A)$ and $b=\poly(B)$.
If $a\ne b$, the polynomial $a-b$ is not a~nonzero constant: evaluating
each factor variable at the cardinality of its factor would otherwise
give $|A|-|B|\ne0$. Hence $\hdeg(a-b)\geq1$. The factorization
$f(a)-f(b)=(a-b)H(a,b)$, with nonnegative $H$, gives
\[
 \hdeg(f(a)-f(b))\geq1+(k-1)\ell>N\ell.
\]
But $f(a)-f(b)=\poly(B^B)-\poly(A^A)$, whose degree is at most
$N\ell$. This contradiction proves $A\cong B$ without separating
any summands.
\end{proof}

\section{An unbounded reconstructing family}
\index{reconstruction!unbounded family}\index{polynomial!perturbation!unbounded family}\index{whole-sum!recovery}
Put $C_f(A)=f(A)+P$, where $P=A^A$.
\begin{theorem}\label{g:T:mixed}
\index{polynomial!perturbation!main theorem}
For every polynomial \eqref{g:E:f}, the term
\begin{equation}\label{g:E:tf}
 T_f(A)=A^{\,A^{\,C_f(A)}}
       =A^{\,A^{\,f(A)+A^A}}
\end{equation}
is strongly reconstructing on positive-height inputs and reconstructing
on all finite ordered sets. In particular,
\begin{equation}\label{g:E:tk}
 T_k(A)=A^{\,A^{\,\Ap k+A^A}}
 \qquad(k\geq1)
\end{equation}
is reconstructing, with no bound on $k$.
\end{theorem}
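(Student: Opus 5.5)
We will verify the two assertions of Definition~\ref{b:D:strong} for the root exponent $D=A^{C_f(A)}$ and then invoke Theorem~\ref{b:T:K} for the remaining inputs. Although the statement says ``positive-height inputs'', we work on all positive-height $A$; nothing in Lemma~\ref{b:L:reduction} or Lemma~\ref{t:L:polynomial} needs $A\in\OO$. Antichains are handled separately by Lemma~\ref{L:antichains}, which also rules out mixed pairs in which exactly one input is an antichain.

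For (S1), write $T_f(A)=A^{D}$ with $D=A^{C_f(A)}$. Lemma~\ref{b:L:reduction} gives $V=\Red(\Max(T_f(A)))\cong S\times\Max(D)$ and $\Max(D)=L^{C_f(A)}$. Applying $\Tow$ to $S\times L^{C}$ gives $J+R\times C$ by \eqref{b:E:Der-laws}. Since $C=f(A)+P$ has positive height, the summand $R\times C$ dominates $J$ by \eqref{b:E:separation}. Thus $\Red(V)\cong S\times\Max(C)$. We must now identify $\Max(C)$ when the summand is not labelled. The heights of the summands are $j\ell$ for $z^j$ and $N\ell$ for $P$. The selected interval is therefore $L^{\times k}$ when $k>N$, is $\Max(P)=L^A$ when $N>k$, and at the tie $N=k$ is the $\preceq$-larger of the two, namely $L^A$ by Lemma~\ref{b:L:selection-order}, since $\height(A)>0$. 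In the first case, a single further step of terminal recovery applies: the invariant is $S\times L^{\times k}$ with $k$ known, and Lemma~\ref{b:L:reduction} yields $S$, $L$ and $R$. In the second case, one more $\Red$ gives $S\times\Max(A)=S\times L$, and terminal recovery again yields $R$.

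The main obstacle is that the two branches produce reduction chains of different lengths, and the two inputs being compared may lie in different branches. I would settle this with cardinality. The ordered set $V$, and hence the isomorphism type $T_f(A)$, determines $|C_f(A)|$, because $\height(T_f(A))=|D|\ell$ and $\height(D)=|C|\ell$. The $\Tow$ step instead recovers $R\times C$ as a type, and I would handle the branch question as follows. Lemma~\ref{t:L:polynomial} shows that the intervals $|f(A)+P|$ for $N<k$ lie below $f(k)$, while those for $N\geq k$ lie above. Thus, once $|C|$ is known, the branch is determined. To extract $|C|$ without circularity, note that $|T_f(A)|$ alone does not obviously give $|C|$. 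Instead, use the heights of the successive selected intervals: $\height(V)=\height(S)+|C|\ell$ and $\height(S\Max(C))=\height(S)+\max(k,N)\ell$. Both are invariants of the chain. Comparing the terminal heights at the stage where the two chains would be required to agree, in the manner of \eqref{t:E:cross}, should force equal branches. If this is too weak, the fallback is to compare the pair of heights $(|C|\ell,\max(k,N)\ell)$ together with $\height(A)$, read from $\height(T_f(A))=|D|\ell$.

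For (S2), assume $R$ and $D=A^{C}$ are known. By \eqref{b:E:Der-laws}, $\Der(D)=R\times C$, and product cancellation gives the whole ordered set $C=f(A)+P$. Lemma~\ref{t:L:polynomial} then recovers $A$ from $(R,f(A)+P)$. For reconstruction, $T_f(A)\cong T_f(B)$ gives equal $V$ and hence equal $R$ by (S1). Then $\Der(T_f(A))=R\times D$, and cancellation gives equal $D$, so (S2) gives $A\cong B$. The special case $f(z)=z^k$ gives $T_k$.
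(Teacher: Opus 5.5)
The gap is at the step you yourself flag as the main obstacle: excluding a collision between an input $A$ with $|A|<k$ and an input $B$ with $|B|\geq k$. The rest is correct: the identification of $\Max(C_f(A))$ including the tie at $N=k$, terminal recovery within a branch, and (S2) via cancellation and Lemma~\ref{t:L:polynomial}. But without cross-branch exclusion (S1) is not proved, so your final step ``equal $V$, hence equal $R$'' does not go through.

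Neither of your two workarounds works as stated. Neither $V$ nor $T_f(A)$ visibly determines $|C_f(A)|$ or $\ell=\height(A)$. The height $\height(T_f(A))=|D|\ell$ mixes unknowns, and so does $\height(V)=\height(S)+|C_f(A)|\ell$. Height differences along the reduction chain give $\ell$ and $|C_f(A)|$ only once you know where the chain terminates, and that is exactly the branch question. So the threshold $f(k)$ from Lemma~\ref{t:L:polynomial} cannot be used without circularity, and $\height(A)$ cannot be ``read'' from $\height(T_f(A))$.

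The height comparison you gesture at is actually viable, but it does not by itself force equal branches. It produces an arithmetic condition that still has to be refuted. Suppose $V(A)\cong V(B)$ with $a_0=|A|<k\leq m=|B|$. Then equality of $Z_1$ gives $S_A\cong S_B\times L_B$. Comparing the heights of $Z_1$, $Z_0$ and $V$ yields $k\ell_A=(m-1)\ell_B$ and $\ell_B+|C_f(A)|\ell_A=|C_f(B)|\ell_B$. Together these give $k|C_f(B)|=(m-1)|C_f(A)|+k$. This is the same relation the paper obtains from cardinalities after applying $\Tow$, via $kq_A=(m-1)q_B$ with $q=|R|$.

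The missing idea is the estimate showing this relation is impossible. For $f=z^k$, the bound $a_0\leq k-1$ gives $|C_f(A)|\leq(k-1)^k+(k-1)^{k-1}=k(k-1)^{k-1}$. Hence $|C_f(B)|\leq 1+(m-1)(k-1)^{k-1}\leq m^k$, which contradicts $|C_f(B)|=m^k+|B^B|>m^k$. For general $f=z^k+g$, the extra terms only strengthen the contradiction, because $km^j>(m-1)a_0^j$ for every $j\geq1$. Until you supply this argument or an equivalent one, the first strong assertion, and with it reconstruction, remains unproved.
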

Although \eqref{g:E:tf} is written with sums and Cartesian products, it is
an exponential term. To see this, the inner exponent satisfies
\[
 A^{\,f(A)+A^A}
 \cong (A^{A})^{\times c_1}
       \times(A^{\Ap2})^{\times c_2}
       \times\cdots\times(A^{\Ap k})^{\times c_k}
       \times A^{A^A}.
\]
Each factor $A^{\Ap j}$ is a~left-associated exponential term with
$j+1$ occurrences. Successively exponentiating the outer $A$ by the
factors gives an exponentiation-only representative of \eqref{g:E:tf}.
Its occurrence count is
\begin{equation}\label{g:E:occurrences}
 4+\sum\setm{c_j(j+1)}{1\leq j\leq k}.
\end{equation}
In particular, \eqref{g:E:tk} has $k+5$ occurrences.

\begin{proof}
We first prove the strong assertions for positive-height inputs. For
\eqref{g:E:tf}, the root exponent is $D=A^{C_f(A)}$, and
\[
 V=\Red(\Max(T_f(A)))\cong S\times L^{C_f(A)}.
\]
Put $Z_0=\Red(V)$ and $Z_1=\Red(Z_0)$. There are two branches:
\begin{align}
 N<k:\quad& Z_0\cong S\times L^{\times k},
       && Z_1\cong S;\label{g:E:small-chain}\\
 N\geq k:\quad& Z_0\cong S\times L^A,
       && Z_1\cong S\times L,
       \quad \Red(Z_1)\cong S.\label{g:E:large-chain}
\end{align}
Within either branch, terminal reduction and cancellation recover $L$
and $R$. All reductions just displayed have positive-height arguments;
no reduction of the final $S$ is required. When $k=1$ or $k=2$, the
small branch has no positive-height input, so the first strong assertion
already follows.

It remains to exclude a~collision between branches, without an upper
bound on $k$. Suppose $A$ lies in the small branch, $B$ in the large
branch, and $V(A)\cong V(B)$. Use subscripts for their invariants, and
write
\[
 a_0=|A|<k,\qquad m=|B|\geq k,\qquad
 q_A=|R_A|>0,\quad q_B=|R_B|>0.
\]
Equality of $Z_1$ gives
\[
 S_A\cong S_B\times L_B,
 \qquad J_A\cong J_B+R_B.
\]
Applying $\Tow$ to equality of $Z_0$ and cancelling $J_B$ yields
\begin{equation}\label{g:E:cross-one}
 R_B+kR_A\cong R_B\times B.
\end{equation}
Applying $\Tow$ to equality of the original $V$ and cancelling $J_B$
yields
\begin{equation}\label{g:E:cross-two}
 R_B+(R_A\times C_f(A))\cong R_B\times C_f(B).
\end{equation}
Taking cardinalities of these two identities eliminates the unknown
ratio $q_A/q_B$:
\begin{equation}\label{g:E:cross-card}
 kq_A=(m-1)q_B,\qquad
 k|C_f(B)|-(m-1)|C_f(A)|=k.
\end{equation}
We prove that the last equality is impossible.

First take $f(z)=z^k$. Since $a_0\leq k-1$,
\[
 |C_f(A)|\leq (k-1)^k+(k-1)^{k-1}
             =k(k-1)^{k-1}.
\]
Thus \eqref{g:E:cross-card} would imply
\[
 |C_f(B)|\leq1+(m-1)(k-1)^{k-1}\leq m^k.
\]
The last inequality follows from
$m^k\geq m(k-1)^{k-1}$ and $(k-1)^{k-1}\geq1$.
But $|C_f(B)|=m^k+|B^B|>m^k$, a~contradiction.

For a~general $f$, write $f(z)=z^k+g(z)$, where $g$ has nonnegative
coefficients and zero constant term. For every $j\geq1$,
\[
 km^j>(m-1)a_0^j,
\]
because $m>a_0$ and $k>a_0$. Hence
$kg(m)-(m-1)g(a_0)\geq0$. The preceding monomial estimates give
\[
 k\bigl(m^k+|B^B|\bigr)
 -(m-1)\bigl(a_0^k+|A^A|\bigr)>k.
\]
Adding the nonnegative contribution of $g$ contradicts
\eqref{g:E:cross-card} again. This excludes all cross-branch collisions.
Consequently $V$ determines $R$, proving the first strong assertion.

For the second, suppose $(R,D)$ is given, where $D=A^{C_f(A)}$.
Then
\[
 \Der(D)\cong R\times C_f(A).
\]
Cancel $R$ to recover $C_f(A)$, and apply
Lemma~\ref{t:L:polynomial}. This recovers $A$.

Finally, every exponential term contains a~copy of its input, obtained
recursively from constant maps. Its value is an antichain exactly when
the input is an antichain. On an $N$-element antichain, an
exponentiation-only representative of $T_f$ has the corresponding
ordinary integer-exponential cardinality, strictly increasing in
$N\geq1$. Induction proves this strict increase: if two positive
integer-valued functions $u,v$ are strictly increasing and $N<M$, then
\[
 u(N)^{v(N)}<u(M)^{v(N)}\leq u(M)^{v(M)}.
\]
Thus antichains are reconstructed and cannot collide with
positive-height inputs.
\end{proof}

\section{Occurrence counts and further extensions}
\index{occurrence!count!extensions}\index{polynomial!perturbation!extensions}
For $f(z)=z^k$, the term in~\eqref{g:E:tk} has $k+5$ occurrences.
Thus the theorem supplies reconstructing terms at arbitrarily large
occurrence counts. Its strong assertion is valid on all positive-height
inputs, not merely on $\OO$. The uniform extension proof in
Proposition~\ref{b:P:uniform} therefore applies on that larger class as
well.

This result is not a~proof that every term with $k+5$ occurrences
reconstructs its input. It is a~uniform theorem for the displayed
family. The case analysis in Part~II is what proves the universal
assertion through twelve occurrences.

\chapter{Substitution and exhaustive enumeration}\label{ch:enumeration}

\section{Uniform substitution}
\index{substitution!uniform}\index{occurrence!count!multiplication}
\begin{proposition}\label{P:substitution}
\index{substitution!proposition}
Let $s[t]$ denote the term obtained by replacing every occurrence of
$x$ in $s$ by the same term $t$. Then
\[
 |s[t]|=|s|\,|t|.
\]
If $s$ and $t$ are reconstructing, then so is $s[t]$.
\end{proposition}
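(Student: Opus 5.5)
The key observation is that substitution commutes with evaluation: for every input $A$,
\[
 s[t](A)\cong s\bigl(t(A)\bigr).
\]
I will prove this by structural induction on $s$. If $s=x$, then $s[t]=t$ and both sides equal $t(A)$. If $s=u^v$, then $s[t]=u[t]^{v[t]}$. Definition~\ref{D:term} and the induction hypothesis give
\[
 s[t](A)=u[t](A)^{v[t](A)}\cong u(t(A))^{v(t(A))}=s(t(A)).
\]
The isomorphism in the last step uses only the functoriality of exponentiation on isomorphism types: isomorphisms $X\cong X'$ and $Y\cong Y'$ induce $X^Y\cong X'^{Y'}$, by composing with the isomorphisms on both sides. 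The count $|s[t]|=|s|\,|t|$ follows by the same induction. For $s=x$ it reads $|t|=1\cdot|t|$. For $s=u^v$ we get
\[
 |u[t]|+|v[t]|=|u||t|+|v||t|=(|u|+|v|)|t|.
\]

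For reconstruction, assume $s[t](A)\cong s[t](B)$. By the identity above, $s(t(A))\cong s(t(B))$. Since $s$ is reconstructing on all finite ordered sets, applied to the inputs $t(A)$ and $t(B)$, we get $t(A)\cong t(B)$. Since $t$ is reconstructing, $A\cong B$.

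The only point needing care is the class of inputs. The statement, following Definition~\ref{D:term}, means reconstructing on all finite ordered sets, so applying $s$ to the arbitrary finite ordered sets $t(A)$ and $t(B)$ is legitimate. These are finite and nonempty, because $t(A)$ contains $A$ via constant maps, as in Lemma~\ref{L:term-retract}. I expect no real obstacle. The one step to state explicitly is the well-definedness of evaluating terms on isomorphism types, which the induction uses tacitly.
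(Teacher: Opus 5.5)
Your proof is correct and follows the paper's argument: the occurrence count comes from induction on $s$, and reconstruction comes from applying $s$'s reconstruction to the inputs $t(A),t(B)$ and then $t$'s. You also make explicit the evaluation identity $s[t](A)\cong s(t(A))$, which the paper uses tacitly; by the recursive definition it in fact holds as a literal equality.
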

\begin{proof}
The occurrence formula follows by counting the copies of $t$, or by
induction on $s$. If $s[t](A)\cong s[t](B)$, reconstruction for $s$
first gives $t(A)\cong t(B)$. Reconstruction for $t$ then gives
$A\cong B$.
\end{proof}
\index{substitution}\index{occurrence!count!factorization}

Both the uniformity of the substitution and the availability of the
outer reconstruction theorem matter. A~factorization of the integer
$n$ gives some substitution proofs at $n$ occurrences, not a
classification of all $n$-occurrence terms. When $n$ is prime there
is no proper uniform substitution with both factors greater than one.
The additive root budget~\eqref{t:E:budget} is unaffected by primality.

\section{The rooted-tree recurrence}
\index{rooted!tree!recurrence}\index{generating!function!rooted trees}\index{normal!form!rooted-tree model}
Let $a_n$ be the number of interchange normal forms with $n$
occurrences. The normal-form theorem in Chapter~\ref{b:S:normal} gives
\begin{equation}\label{E:tree-generating}
 \sum\setm{a_nz^n}{n\geq1}
 =z\prod\setm{(1-z^j)^{-a_j}}{j\geq1}.
\end{equation}
For each weight $j$, there are $a_j$ child types, each permitted with
arbitrary multiplicity. The factor $(1-z^j)^{-a_j}$ counts their
multisets. Multiplying over $j$ and adding the root proves the formula.
Only finitely many factors affect any specified coefficient.
\index{rooted!tree}\index{generating!function}

For an explicit recurrence, put
$c_m=\sum\setm{d a_d}{d\mid m}$. Logarithmic differentiation of
$B(z)=\sum\setm{a_{n+1}z^n}{n\geq0}$ gives
\[
 zB'(z)=B(z)\sum\setm{c_mz^m}{m\geq1}.
\]
Comparing coefficients yields
\begin{equation}\label{E:tree-recurrence}
 (n-1)a_n=\sum\setm{c_m a_{n-m}}{1\leq m\leq n-1},\qquad a_1=1.
\end{equation}
Every entry on the right has already been determined. This is an
independent arithmetic route to the normal-form counts.

\section{Binary parenthesizations and individual multiplicities}
\index{parenthesization!binary}\index{Catalan!number}\index{multiplicity!normal forms}
A~binary parenthesization with $n$ variable occurrences is a~full plane
binary tree with $n$ leaves. Splitting at its root gives the recurrence
$b_1=1$ and
\[
 b_n=\sum\setm{b_i b_{n-i}}{1\leq i\leq n-1}.
\]
Writing $B_0(z)=\sum\setm{b_nz^n}{n\geq1}$, this is
$B_0=z+B_0^2$. The solution with zero constant coefficient is
$(1-\sqrt{1-4z})/2$, whose coefficient of $z^n$ is
\begin{equation}\label{E:Catalan}
 b_n=\frac1n\binom{2n-2}{n-1}.
\end{equation}
\index{parenthesization}\index{Catalan!number}

For a~fixed unordered rooted tree $T$, let $w(T)$ count the binary
parenthesizations that normalize to it. Suppose its children are
$T_1,\ldots,T_s$, and a~distinct type $U$ has multiplicity $m_U$.
Then
\begin{equation}\label{E:multiplicity}
 w(T)=\frac{s!}{\prod\setm{m_U!}{U\text{ a~child type}}}
       \prod\setm{w(T_i)}{1\leq i\leq s}.
\end{equation}
A~binary expression has a~unique left spine ending at its leftmost
variable. The right subexpressions along that spine are exactly the
root children. The multinomial factor orders their types; the remaining
factors choose their binary representatives. This proves the formula
by induction, beginning with $w(x)=1$. It also proves
$\sum\setm{w(T)}{|T|=n}=b_n$.

\begin{table}[htbp]
\centering\renewcommand{\arraystretch}{1.12}
\begin{tabular}{@{}rrr@{}}
\toprule
Occurrences & Interchange normal forms & Binary parenthesizations\\\midrule
1&1&1\\2&1&1\\3&2&2\\4&4&5\\5&9&14\\6&20&42\\
7&48&132\\8&115&429\\9&286&1,430\\10&719&4,862\\
11&1,842&16,796\\12&4,766&58,786\\\bottomrule
\end{tabular}
\caption{The two different enumeration problems through twelve occurrences.}
\label{tab:all-counts}
\end{table}

\section{Root partitions and proof classes}
\index{root!partition!proof classes}\index{proof!class!uniform}\index{proof!class!regular}\index{proof!class!exceptional}
A~root partition records the occurrence counts of the child terms. Its
parts sum to $n-1$. If it has $m_j$ parts equal to $j$, the number of
normal forms with that partition is
\[
 \prod\setm{\binom{a_j+m_j-1}{m_j}}{m_j>0}.
\]
This chooses a~multiset from the $a_j$ child types independently at
each weight. It is not the partition of the smaller forest weights
that remains after removing the base occurrence from each nontrivial
child. Confusing these two partitions changes the mixed-root budget.

A~uniform root has at most one distinct nontrivial child type.
Otherwise the comparison and exceptional arguments assign its mixed
root to a~proved family. These assignments are disjoint bookkeeping
classes. The fact that a~term has two different proofs does not change
its count.

\chapter[The reconstruction toolkit]{The reconstruction toolkit and the remaining induction question}\label{ch:toolkit}

\section{A~single toolkit}
\index{reconstruction!toolkit}\index{proof!method!toolkit}
\begin{theorem}[Reconstruction toolkit]\label{T:toolkit}
\index{reconstruction!toolkit!theorem}
The following principles are available without any occurrence-bound
reconstruction theorem as an external premise.
\begin{enumerate}
\item Antichains are reconstructed by every exponential term. Every
term reflects membership in $\K$, and every term reconstructs inputs
in $\K$.
\item Products and sums cancel; positive Cartesian roots and fixed
nonconstant positive-coefficient polynomial arguments are unique.
\item Selected intervals commute with Cartesian products and
exponentiation. Their canonical coordinate sums give the derived-sum
and height-reduction identities.
\item Pure-power roots are strong. A~strong child admits arbitrary
uniform extensions with known nonnegative leaf count and positive
child multiplicity.
\item Forests of weight at most eight admit the fixed reduction and
recovery procedures of Lemma~\ref{t:L:forest}.
\item A~regular mixed root is strong. For a~nonregular root, recovery
of its first invariant reduces to excluding cross-branch equalities;
recovery with known $R$ is supplied by signature recognition whenever
the candidate signatures are distinct.
\item The polynomial-perturbation family of
Theorem~\ref{g:T:mixed} is reconstructing at arbitrarily large
occurrence counts. Uniform substitution preserves reconstruction.
\end{enumerate}
\end{theorem}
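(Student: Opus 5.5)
This theorem collects principles already proved, so the plan is a bookkeeping proof. For each item we point to the earlier result that establishes it. We then check that none of those results used an occurrence-bounded reconstruction theorem as a premise.

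Items (1)–(3) cite the foundational chapters.
\begin{itemize}
\item Item (1) is Lemma~\ref{L:antichains} together with Theorem~\ref{b:T:K}. The latter gives strict monotonicity on $\K$, reflection of membership in $\K$ through retracts, and closure under exponentiation (Lemma~\ref{b:L:K-closure}). These rest only on Lemmas~\ref{b:L:ordered-ring} and~\ref{b:L:K-monotone}, which are proved by induction on the term, not on occurrence counts.
\item Item (2) is Lemma~\ref{L:arithmetic-cancellation}, which uses Hashimoto's unique factorization and the integral-domain structure of \eqref{b:E:polynomial}.
\item Item (3) combines Lemma~\ref{b:L:Max} with the identities \eqref{b:E:Der-laws}, \eqref{b:E:separation} and Lemma~\ref{b:L:reduction}. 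Here the only external input is Theorem~\ref{b:T:coordinates}, the structural theorem of McKenzie and Farley, which is not a reconstruction statement.
\end{itemize}

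Items (4)–(6) use the strong-reconstruction machinery.
\begin{itemize}
\item Item (4) is Proposition~\ref{b:P:uniform}. Its inductive hypothesis is strength of the child $u$, a hypothesis internal to that proposition, so no external premise enters.
\item Item (5) is Lemma~\ref{t:L:forest}. Its induction runs on the forest weight, and it invokes only the fixed height comparisons among forests of weight at most six, together with terminal recovery.
\item Item (6) has three parts. Proposition~\ref{t:P:regular} handles the regular mixed roots. For a nonregular root, the reduction of the first strong assertion to excluding cross-branch equalities comes from the bookkeeping identity \eqref{t:E:cross}. Recovery with known $R$ comes from Lemma~\ref{t:L:signature}.
\end{itemize}

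Item (7) is Theorem~\ref{g:T:mixed}, which rests on Lemma~\ref{t:L:polynomial}, and Proposition~\ref{P:substitution}. The latter is stated conditionally on reconstruction of its two constituent terms, so it introduces no new premise.

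The only step needing genuine care is the dependency audit. We must confirm that no proof in the chain secretly appeals to reconstruction of a smaller-occurrence term as an unproved hypothesis. The likely places to look are the following:
\begin{itemize}
\item the uniform-extension step, which needs \emph{strong} reconstruction of $u$, not mere reconstruction;
\item the forest lemma, where recursion happens on forest weight rather than on term occurrence count;
\item Theorem~\ref{g:T:mixed}, where the cross-branch exclusion is an explicit cardinality inequality valid for all $k$.
\end{itemize}
The first is handled by stating item (4) conditionally, exactly as the theorem does. The second and third are self-contained inductions. After this audit, each clause of the theorem holds with the stated hypotheses only.
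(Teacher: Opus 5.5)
Your proposal is correct and takes the same approach as the paper. The paper's proof is this same item-by-item citation: Lemma~\ref{L:antichains} and Theorem~\ref{b:T:K}; Lemma~\ref{L:arithmetic-cancellation}; Lemma~\ref{b:L:Max} with \eqref{b:E:Der-laws} and Lemma~\ref{b:L:reduction}; Proposition~\ref{b:P:uniform}; Lemma~\ref{t:L:forest}; Proposition~\ref{t:P:regular} with Lemma~\ref{t:L:signature} and \eqref{t:E:cross}; and Theorem~\ref{g:T:mixed} with Proposition~\ref{P:substitution}. It closes with the remark that only the classical structural inputs are external. One small slip in your dependency audit: Lemma~\ref{b:L:ordered-ring} is a recursion on height and Lemma~\ref{b:L:K-monotone} is a direct argument, so the induction on the term actually takes place in Theorem~\ref{b:T:K} itself.
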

\begin{proof}
Item~1 is Lemma~\ref{L:antichains} and Theorem~\ref{b:T:K}.
Item~2 is Lemma~\ref{L:arithmetic-cancellation}. Item~3 follows from
Lemma~\ref{b:L:Max}, equations~\eqref{b:E:Der-laws}, and
Lemma~\ref{b:L:reduction}. Item~4 is Proposition~\ref{b:P:uniform}.
Item~5 is Lemma~\ref{t:L:forest}. Item~6 is
Proposition~\ref{t:P:regular}, Lemma~\ref{t:L:signature}, and
\eqref{t:E:cross}. Item~7 is Theorem~\ref{g:T:mixed} and
Proposition~\ref{P:substitution}. All of these results have been proved
in Part~I, apart from the explicitly cited classical structural inputs
in Chapters~\ref{b:S:arithmetic} and~\ref{ch:coordinates}.
\end{proof}

\section{Why ordinary induction is insufficient}
\index{induction!ordinary obstruction}\index{induction!limitations}
Writing a~term as $u^v$ gives two smaller terms. It does not give the
isomorphisms of their values required by the induction hypothesis.
An equality $u(A)^{v(A)}\cong u(B)^{v(B)}$ has two unknown exponents
and two unknown bases.

This issue already appears with chains. Writing $\mathbf j$ for the
$j$-element chain, one has
\[
 \mathbf3^{\mathbf3}
 \cong(\two^{\two})^{\mathbf3}
 \cong\two^{\two\times\mathbf3}
 \cong(\two^{\mathbf3})^{\two}
 \cong\mathbf4^{\two}.
\]
The bases and exponents on the two ends are different. The identity is
not a~counterexample to exponential-term reconstruction. It is a
counterexample to the proposed step of identifying both operands from
an arbitrary equality of exponentials.
\index{induction!ordinary obstruction}

\section{Where fixed forest descent stops being automatic}
\index{forest!descent obstruction}\index{forest!weight-nine obstruction}\index{crown!obstruction}
The weight bound eight in Lemma~\ref{t:L:forest} is not merely an
arbitrary stopping point of its proof. At weight nine consider
\[
 D=A^{A^{\times5}}\times A^{A^A}.
\]
Its nontrivial factors have occurrence counts six and three. The next
mixed sum in its descent is $A^{\times5}+P$. On a~positive-height
input of cardinality four the pure power is taller; on an input of
cardinality at least five, $P$ is selected, with the tie resolved by
Lemma~\ref{b:L:selection-order}.

Both behaviors occur outside $\K$. The four-element crown, with two
minimal elements below two maximal elements, has no self-map distinct
from the identity and pointwise comparable with it: a~map above the
identity fixes both maximal elements and hence both minimal elements,
and the dual argument applies below the identity. It is therefore
noncontractible. Adding an isolated point gives an input in $\OO$ of
cardinality five. Thus no one of the two displayed forests is selected
throughout $\OO$ in this example.
\index{forest!weight-nine obstruction}\index{crown}

The polynomial-perturbation theorem does handle this particular
variable selection. What remains for an unrestricted induction is a
uniform method for all such selections, including all possible
collisions between their reduction chains. Part~II supplies that method
for every mixed-root budget through twelve occurrences; it does not
assert an unproved induction step for arbitrary budgets.

\section{Effectivity and the meaning of determines}
\index{effectivity}\index{determination!effective}\index{reconstruction!effectivity}
Every recovery assertion is uniqueness up to isomorphism. It also gives
an existence-based finite search procedure when the full term value is
known. Constant maps imply $|A|\leq|t(A)|$. One may therefore enumerate
all ordered sets up to that finite cardinality, evaluate the fixed term,
and retain the candidates isomorphic to the given value. The theorem
proves that exactly one isomorphism type remains. The structural
recovery procedures explain why uniqueness holds; no efficiency claim
is made for this exhaustive procedure.

\part{Proofs through twelve occurrences}

\chapter{The input and the endomorphism ordered set}\label{ch:two}

The one-occurrence term is $x$ and already is its input. We give the
complete reconstruction from $A^A$ separately, both as the first
nontrivial case and as the model for the subsequent reductions.
\index{A-to-A~problem@$A^A$!problem}

\begin{theorem}\label{T:two}
\index{A-to-A~problem@$A^A$!problem!theorem}
For finite ordered sets $A,B$, $A^A\cong B^B$ implies $A\cong B$.
\end{theorem}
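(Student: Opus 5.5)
The plan is to reduce first to inputs in $\OO$ and then show that $A^A$ is strong on $\OO$ in the sense of Definition~\ref{b:D:strong}, taking $t=A^A$ as the case $k=1$ of Proposition~\ref{b:P:uniform}. The antichains and the contractible inputs come first. By Lemma~\ref{L:antichains}, an antichain input is reconstructed and cannot collide with a positive-height input. By Theorem~\ref{b:T:K}, if either $A$ or $B$ lies in $\K$, then both do and $A\cong B$. So we may assume $A,B\in\OO$, and by~\eqref{b:E:O-inputs} both have positive height and at least four elements.

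Next comes (S1), recovering $R$ from the value. Lemma~\ref{b:L:Max} gives $\Max(A^A)\cong L^A$. Since $\Tow(L^A)\cong R\times A$ by~\eqref{b:E:Tow-laws}, we get $V=\Red(\Max(A^A))\cong S\times\Max(A)=S\times L$, which is the first identity of Lemma~\ref{b:L:reduction} with $D=A$. The third identity of that lemma with $k=1$ gives $\Red(S\times L)\cong S$. This application is legitimate because $S\times L$ has the positive height of $L$. Cancelling $S$ (Lemma~\ref{L:arithmetic-cancellation}) then yields $L$, and hence $R=\Tow(L)$. Everything here is computed intrinsically from the isomorphism type of $A^A$, so $A^A\cong B^B$ forces $R_A\cong R_B$.

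Then comes (S2) and the final cancellation. By~\eqref{b:E:Der-laws}, $\Der(A^A)\cong\Der(A)\times A=R\times A$. The left side is an invariant of the value, and $R$ is known and is an actual nonempty ordered set. Product cancellation in the integral domain of component polynomials therefore recovers $\poly(A)$, and thus $A$. In the language of Proposition~\ref{b:P:uniform}, the root exponent $D_t(A)=A$ is itself the input, so (S2) is trivial.

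The main obstacle is the first step. One has to make sure that each application of $\Red$ has a bounded positive-height argument and that no reduction of a terminal $S$ is ever attempted; when $R$ is an antichain, $S=\one$ and $J=0$, as in the Example of Chapter~\ref{b:S:selection}. One also has to check that the separation~\eqref{b:E:separation} really makes $R\times A$ the tallest summand of $\Tow(S\times L^A)=J+R\times A$. The real depth lies in the canonical-coordinate input of Theorem~\ref{b:T:coordinates}, which underlies $\Tow$ and the laws~\eqref{b:E:Der-laws}; we take it as given. Beyond that, the argument is the short chain of cancellations above.
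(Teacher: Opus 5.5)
Your proof is correct and is essentially the paper's argument. The paper's invariants $U=\Tow(\Max(A^A))\cong R\times A$, $V=\Max(U)\cong S\times L$, and $W=\Max(\Tow(V))\cong S$ are exactly your $\Der(A^A)$, $V$, and $\Red(V)$, and they are followed by the same two cancellations. The paper differs only in avoiding your detour through Theorem~\ref{b:T:K}: the chain works verbatim for every positive-height input, and Lemma~\ref{L:antichains} covers height zero. One small correction to your last paragraph: the separation~\eqref{b:E:separation} is needed for $\Tow(S\times L)\cong J+R$, not for $J+(R\times A)$, which does not arise in this chain.
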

\begin{proof}
The antichain case follows from Lemma~\ref{L:antichains}; explicitly,
an $N$-element antichain has an $N^N$-element endomorphism antichain,
and $N\mapsto N^N$ is strictly increasing.

Suppose $A$ has positive height and put $E=A^A$. Starting with $E$,
form
\[
 U=\Tow(\Max(E)),\qquad V=\Max(U),\qquad
 W=\Max(\Tow(V)).
\]
We verify that all these expressions are defined and that they
reconstruct $A$. Set $L=\Max(A)$, $R=\Tow(L)$, and $S=\Max(R)$.
Since $L$ is nontrivial, $R$ is an actual ordered set.
The selected-interval and coordinate identities give
\begin{equation}\label{E:two-UVW}
 U\cong R\times A,\qquad V\cong S\times L,
 \qquad \Tow(V)\cong J+R,
\end{equation}
where $J=\Tow(S)$. If $J$ is present, it has height strictly less than
$\height(R)$. If $J=0$, the displayed sum is simply $R$.
In either case $W\cong\Max(R)=S$.

Thus the type $E$ determines $U,V,W$. Cancel $W$ from
$V\cong W\times L$ to recover $L$. Compute $R=\Tow(L)$ and cancel
$R$ from $U\cong R\times A$ to recover $A$.
Every cancellation has an already known actual factor. The only
application of $\Red$ has argument $V=S\times L$, of positive height;
there is no application to the terminal $S$.

If also $E\cong B^B$, the height formula makes $B$ a~positive-height
input. The same intrinsic $U,V,W$ recover the same $L$ and $R$.
Consequently $R\times A\cong R\times B$, and cancellation gives
$A\cong B$.
\end{proof}

\section{The strong base case}
\index{strong!reconstruction!base case}\index{occurrence!count!two: strong reconstruction}
For $P=A^A$, its root exponent is $A$ and
$V_P=S\times L$. One reduction gives $S$, so terminal recovery
identifies $R$. The pair $(R,A)$ already contains the input. Thus $P$
is strong on every positive-height class, and in particular on $\OO$.
This verifies directly the base case used in the uniform induction.

\section{An explicit bounded example}
\index{cardinality!bounds!explicit example}\index{endomorphism!ordered set!bounded example}
If $A=\mathbf2$, then $L=\mathbf2$ has canonical core $\mathbf2$ and
coordinate $\one$. Hence $R=S=\one$, while $A^A=\mathbf3$.
The derived sum of $\mathbf3=\mathbf2^{\mathbf2}$ is $\mathbf2$.
The construction gives $U=V=\mathbf2$ and $W=\one$, recovering
$L=A=\mathbf2$. This example illustrates that the coordinate sum is
not the number of maps and that height reduction is a~structural
operation rather than a~cardinality calculation.

\chapter{Three, four, and five occurrences}\label{ch:five}

Every root through five occurrences is uniform. Indeed, two distinct
nontrivial child types would have weights at least two and three,
requiring at least six occurrences after the outer root is added.
Theorem~\ref{T:two} and Proposition~\ref{b:P:uniform} therefore provide
a~complete induction through five. We list the terms and their root
exponents explicitly.

Retain $P=A^A$, $F=A^{A^{\times2}}$, and $G=A^P$ from
\eqref{b:E:small-terms}. The four four-occurrence terms are
$H_1,H_2,H_3,H_4$ in that equation. The forests of weight three are
\begin{equation}\label{b:E:D3}
 \mathcal D_3=(\Ap3,\ AP,\ F,\ G),
\end{equation}
and those of weight four are
\begin{equation}\label{b:E:D4}
 \mathcal D_4=(\Ap4,\ \Ap2P,\ \xp P2,\ AF,\ AG,
                    H_1,H_2,H_3,H_4).
\end{equation}
Put $U_i=A^{D_{4,i}}$, where $D_{4,i}$ is the $i$th forest in
$\mathcal D_4$.

\begin{longtable}{@{}rllr@{}}
\toprule Occurrences & Term & Root exponent $D_t$ & Binary\\\midrule
\endfirsthead
\toprule Occurrences & Term & Root exponent $D_t$ & Binary\\\midrule\endhead
\endfoot\endlastfoot
3 & $F$ & $\Ap{2}$ & 1\\
3 & $G$ & $P$ & 1\\
4 & $H_1$ & $\Ap{3}$ & 1\\
4 & $H_2$ & $A\,P$ & 2\\
4 & $H_3$ & $F$ & 1\\
4 & $H_4$ & $G$ & 1\\
5 & $U_{1}$ & $\Ap{4}$ & 1\\
5 & $U_{2}$ & $\xp{A}{2}\,P$ & 3\\
5 & $U_{3}$ & $\xp{P}{2}$ & 1\\
5 & $U_{4}$ & $A\,F$ & 2\\
5 & $U_{5}$ & $A\,G$ & 2\\
5 & $U_{6}$ & $H_1$ & 1\\
5 & $U_{7}$ & $H_2$ & 2\\
5 & $U_{8}$ & $H_3$ & 1\\*
5 & $U_{9}$ & $H_4$ & 1\\*
\bottomrule
\caption{All normal forms with three, four, and five occurrences.}
\label{tab:through-five}
\end{longtable}

\begin{theorem}\label{T:five}
\index{occurrence!count!five: theorem}
Every exponential term with at most five occurrences is reconstructing.
Every nontrivial such term is strong on $\OO$.
\end{theorem}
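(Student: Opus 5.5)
The plan is to reduce everything to strong reconstruction on $\OO$, together with Theorem~\ref{b:T:K}, which disposes of every pair of inputs in which either input lies in $\K$. That theorem also covers antichains, and it shows that an input outside $\K$ cannot share a term value with an input inside $\K$. So it suffices to prove that every nontrivial term with at most five occurrences is strong on $\OO$. As noted after Definition~\ref{b:D:strong}, strongness gives reconstruction on $\OO$: $V_t(A)$ yields $R$, and cancelling $R$ in $\Der(t(A))\cong R\times D_t(A)$ yields $D_t(A)$, after which (S2) recovers $A$. The one-occurrence term $x$ is trivially reconstructing. By the normal-form theorem of Chapter~\ref{b:S:normal}, every term is interchange-equivalent to its rooted-tree normal form and has an isomorphic value, so it suffices to treat the normal forms listed in Table~\ref{tab:through-five}, together with $P$.

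The key structural observation, already made at the start of the chapter, is that every root through five occurrences is uniform. A normal form has root exponent $\Ap r\times u(A)^{\times s}$, where $u$ is the unique nontrivial child type (if any) and $s\geq1$. Two distinct nontrivial child types would require weights at least two and three, hence at least six occurrences once the root is counted. I would verify this against the table. The pure cases $A^{\Ap k}$, namely $P,F,H_1,U_1$, are strong by the first assertion of Proposition~\ref{b:P:uniform}; the base case $P$ is also Section~``The strong base case''. Every other entry has the form \eqref{b:E:uniform} with child $u$ of smaller occurrence count:
\[
G=A^{P},\quad H_2=A^{A\times P},\quad H_3=A^{F},\quad H_4=A^{G},\quad U_2=A^{\Ap2\times P},\quad U_3=A^{P^{\times2}},
\]
together with $U_4=A^{A\times F}$, $U_5=A^{A\times G}$, and $U_6,\ldots,U_9=A^{H_i}$.

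I would then run induction on the occurrence count $n\leq5$. The hypothesis is that every nontrivial term with fewer than $n$ occurrences is strong on $\OO$. For an $n$-occurrence nonpure normal form $A^{\Ap r\times u(A)^{\times s}}$, the child $u$ has $|u|\leq n-2$ occurrences and is a nontrivial term. It is strong by the induction hypothesis, so the second assertion of Proposition~\ref{b:P:uniform} makes $t$ strong.

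The main point needing care is that Proposition~\ref{b:P:uniform} requires $u$ to be strong, not merely reconstructing, and requires the root to be uniform. The first requirement is handled by carrying strongness through the induction rather than plain reconstruction. The second is exactly the counting argument above, so no mixed root $E_{r,C}$, and hence no selection among distinct forests, ever arises. I expect no genuine obstacle beyond checking that Table~\ref{tab:through-five} exhausts the normal forms. The counts $2,4,9$ agree with Table~\ref{tab:all-counts}, and the binary multiplicities sum to the Catalan numbers $2,5,14$.
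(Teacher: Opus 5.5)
Your proposal is correct and follows the paper's proof essentially verbatim: you reduce to strong reconstruction on $\OO$ with Theorem~\ref{b:T:K} handling the remaining inputs, you observe that every root through five occurrences is uniform, and you apply the pure-power and uniform-extension clauses of Proposition~\ref{b:P:uniform} inductively, with interchange normalization covering all parenthesizations. One small slip: from $n=1+r+s|u|$ you only get $|u|\leq n-1$, not $n-2$ (for instance $G=A^{P}$ has $n=3$ and $|u|=2$), but this is harmless because your induction hypothesis covers every smaller occurrence count.
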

\begin{proof}
The one-occurrence case is immediate and the two-occurrence case is
Theorem~\ref{T:two}. For a~root with no nontrivial child, its exponent
is $A^{\times k}$, and the pure-power clause of
Proposition~\ref{b:P:uniform} applies. Otherwise there is only one
nontrivial child type $u$, repeated $s\geq1$ times, together with
$r\geq0$ leaves. The child has fewer occurrences and is strong by
induction. The uniform extension clause proves the strong assertion
for the parent. The table exhausts the normal forms because its root
forests exhaust the partitions of the root weight with the smaller
child types already listed. Strong reconstruction handles $\OO$;
Theorem~\ref{b:T:K} handles the complementary class. Interchange
normalization handles every binary representative.
\end{proof}

The binary multiplicities in the three groups sum to $2$, $5$, and
$14$. Their normal-form counts are $2$, $4$, and $9$. This is the first
point where the two counts differ; no reconstruction case is lost by
working with interchange normal forms.

\chapter[Comparisons through eleven]{The finite comparison classification through eleven}\label{b:S:classification}

We now classify the mixed roots whose occurrence count is at most
eleven. In the notation of~\eqref{t:E:mixed}, their budget is
\begin{equation}\label{b:E:budget}
 r+m+\wt(E_1)+\cdots+\wt(E_m)\leq10.
\end{equation}
The root invariants and the recovery of the entire $C$ are already
proved in~\eqref{t:E:VZ}. The comparison proof below uses
the basic bounds of Chapter~\ref{ch:bounds}. This gives a~particularly
short exhaustive list through eleven. The sharper bounds are used
later for the twelve-occurrence classification.

The height coefficients of the lists~\eqref{b:E:D3} and~\eqref{b:E:D4}
are
\begin{equation}\label{b:E:D3-heights}
 3,\ N+1,\ N^2,\ p
\end{equation}
and
\begin{equation}\label{b:E:D4-heights}
 4,\ N+2,\ 2N,\ N^2+1,\ p+1,\ N^3,\ Np,\ f,\ g,
\end{equation}
respectively, after division by $\ell=\height(A)$.
For $N\geq4$, the weight-three list is strictly increasing by height.
Every nonpure forest of weight at least three has height greater than
$N\ell$: it either contains $P$ and another factor, or contains a~term
whose root exponent has cardinality greater than $N$. The latter
follows from the constant-map embedding and the elementary map bound.

Call a~mixed root \emph{regular} if a~fixed displayed forest $D$ satisfies
\[
 \Max(C(A))\cong\Max(D(A))\qquad\text{for every $A\in\OO$}.
\]
The comparisons below hold for every positive-height input with $N\geq4$.

\begin{proposition}\label{b:P:classification}
\index{classification!through five}
Every mixed root satisfying \eqref{b:E:budget} is regular or belongs to
one of the exceptional families in Table~\ref{b:tab:exceptions}.
In a~regular case a~fixed selected forest has weight at most seven.
If the selected forest is pure, all displayed forests of $C$ are pure.
At exactly eleven occurrences the exceptional list has eleven normal
forms. Below eleven, the only additional instances in these families
are $C=\Ap5+P$ and $C=H_1+G$, both with $r=0$ and ten occurrences.
\end{proposition}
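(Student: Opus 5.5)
The plan is an exhaustive finite case analysis organized by the budget \eqref{b:E:budget}. Since $m\geq2$ and at least two distinct nontrivial child types occur, the forests $E_i$ satisfy $\sum\wt(E_i)\leq 10-r-m\leq 8$, and each nontrivial $E_i$ has weight at least one. I enumerate the multisets $\{E_1,\ldots,E_m\}$ of forests of total weight at most eight, subject to the distinctness condition. For each such multiset $C$ I identify its selected summand. By Lemma~\ref{b:L:Max} and the height formula, $\Max(C)$ is $\Max(E_j)$ for the summand $E_j$ of greatest height. Ties in height are resolved by $\preceq$, using Lemma~\ref{b:L:selection-order}. So the whole question is comparing the height coefficients $\height(E_i)/\ell$, which are integer expressions in $N,p,f,g,\dots$. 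The weights-three and four values \eqref{b:E:D3-heights} and \eqref{b:E:D4-heights} are the starting data. Forests of larger weight get coefficients via $\height(XY)=\height X+\height Y$ and $\height(A^{Y})=|Y|\ell$.

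The key steps, in order, are as follows.

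\begin{enumerate}
\item Record the forest heights through weight eight. Pure powers have coefficient $k$. A product adds coefficients. A factor $A^{Y}$ contributes $|Y|$.
\item Prove the monotonicity facts used in Lemma~\ref{t:L:forest}. A heavier forest beats a lighter one in almost all pairs, since $|t(A)|\geq N+|t|-1$ and a nonpure forest of weight at least three has height greater than $N\ell$. Together with the ordering $\Ap3<AP<F<G$, this settles most comparisons for all $N\geq4$ using Lemma~\ref{b:L:bounds} alone.
\item Sort each candidate $C$ into one of two outcomes. Either one fixed forest dominates every other summand for all $N\geq4$, so the root is regular with selected weight at most $8-1=7$ because a second summand of weight at least one is present. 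Or the dominance depends on the input. The weight bound seven comes precisely from $m\geq2$.
\item For pure-selected cases, check that a pure selected $\Ap k$ can only beat nonpure competitors when those competitors are absent. Any nonpure summand of weight $w$ has coefficient greater than $N\geq4$, while the pure power has $k\leq 7$. So a pure selection forces all summands to be pure except in configurations with $k>N$, and those are exactly the exceptional ones.
\end{enumerate}

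The main obstacle is identifying exactly the non-regular cases and counting them. These are the pairs where a large pure power $\Ap k$ competes with a nonpure forest of small coefficient, as in the weight-nine obstruction. Examples are $\Ap5$ against $P$ (coefficients $5$ versus $N$), and comparisons such as $H_1$ against $G$ (coefficients $N^3$ versus $p$), where the bound $p\geq N^2+N$ does not decide the sign. For each such pair I determine the sign range using the inequalities of Lemma~\ref{b:L:bounds}, including $p\geq N^c+N^{c-1}$ for disconnected inputs and the disconnected dominance $g>f$. I then list the budgets that admit them. With $r=0$ at ten occurrences these give $\Ap5+P$ and $H_1+G$. At eleven occurrences the family members are obtained by adding one leaf ($r=1$), enlarging one side by one weight ($\Ap6+P$, $\Ap5+AP$, $\ldots$), or adding a weight-one summand. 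The final count of eleven normal forms is checked against Table~\ref{b:tab:exceptions}, and I make sure no other incomparable pair fits within budget ten.
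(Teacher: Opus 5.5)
Your overall strategy (enumerate the admissible sums $C$, compare height coefficients, break ties by $\preceq$) is the paper's. However, the step that carries the content, deciding exactly which comparisons are fixed, is not carried out, and the specifics you do commit to are wrong.

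First, your dichotomy ``one forest dominates for all $N\geq4$'' versus ``the dominance depends on the input'' is not the one the statement uses. Table~\ref{b:tab:exceptions} consists precisely of the sums whose comparison is left undecided by the fixed basic bounds $N\geq4$, $p\geq N^2+N$, $f,g\geq p+N-1$, together with the tie analysis. The statement says ``regular \emph{or} listed'' and never claims that a listed form is non-regular.

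Determining true sign ranges is therefore unnecessary, and it does not reproduce the table. With the basic bounds, $H_1+H_4$ and $H_2+H_4$ are merely undecided, not shown to depend on the input. With Lemma~\ref{t:L:bounds} they are in fact regular on $\OO$, since $g>Np>N^3$. Either way your dichotomy does not yield the eleven forms you intend to ``check against the table.'' The disconnected-input inequalities you cite ($p\geq N^c+N^{c-1}$, $g>f$) cannot fix a comparison for all inputs; they belong to the exceptional proofs, not to this classification.

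Second, your recipe for the eleven-occurrence members is incorrect.
\begin{enumerate}
\item $\Ap5+AP$ is regular. Its coefficients are $5$ and $N+1\geq5$, and at the tie $N=4$ the forest $AP$ wins, since $L^{\times5}\prec L^{\one+A}$ by \eqref{b:E:exponent-tie}.
\item Adding a weight-one summand raises both $m$ and $\sum\wt(E_i)$ by one, so it costs two occurrences. Three-summand sums containing an undecided pair first occur at twelve, not eleven.
\item A one-weight enlargement of one side of $\Ap5+P$ or $H_1+G$ keeps the other side. So it can never produce $H_2+H_3$, $H_2+H_4$, or $H_3+H_4$.
\end{enumerate}

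What is missing is the actual finite argument, in two parts. For $m\geq3$ the total forest weight is at most seven, so the heaviest summand has only light companions and every comparison that arises is already fixed. For $m=2$ you need the complete comparisons of $P$ against $\Ap k$, and the $4+3$, $5+3$ and $4+4$ comparisons. These include the two ties: $\Ap5$ versus $AP$ at $N=4$, and $PF$ versus $G$ at $p=N^2+N$. Only these lists show that every pair of total weight at most six, and every weight-seven pair other than $\Ap5+P$ and $H_1+G$, is decided. That is exactly what the ``below eleven'' clause requires.

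Finally, in your step~4 the forest $P$ has coefficient exactly $N$, not more. The case $k=N$ must therefore be settled by the exponent-tie rule, not by a strict inequality.
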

\begin{proof}
We give all comparisons, including the possible ties. For two summands,
their weights have sum at most eight. A~weight-one forest is $A$ and
loses to any larger-weight forest. Among weight-two forests, $P$ beats
$A^{\times2}$. Against a~larger forest, $A^{\times2}$ loses. The forest
$P$ loses to every nonpure forest of weight at least three, by the
observation after \eqref{b:E:D4-heights}. Against a~pure $A^{\times k}$,
$P$ wins for $k\leq4$, using \eqref{b:E:exponent-tie} when $N=k=4$.
The remaining possibilities are $k=5,6$, which are exceptional.

The weight-three list \eqref{b:E:D3} is strictly increasing by height.
The only remaining unequal weight pairs are $4+3$, $5+3$, and $4+4$.
The complete first two comparisons are in
Tables~\ref{b:tab:43} and~\ref{b:tab:53}. Here $R$ means that the row
wins, $C$ that the column wins, and a~dash marks an exceptional pair.
For the weight-five list, define $U_i=A^{D_i}$, where $D_i$ is the
$i$th forest in \eqref{b:E:D4}; in particular $U_1=A^{\Ap4}$.

\begin{table}[htbp]
\centering
\small
\setlength{\tabcolsep}{5pt}
\begin{tabular}{@{}llcccc@{}}
\toprule
Row forest & Height$/\ell$ & $\Ap3$ & $A\times P$ & $F$ & $G$\\
\midrule
$\Ap4$ & $4$ & R&C&C&C\\
$\Ap2\times P$ & $N+2$ & R&R&C&C\\
$P^{\times2}$ & $2N$ & R&R&C&C\\
$A\times F$ & $N^2+1$ & R&R&R&C\\
$A\times G$ & $p+1$ & R&R&R&R\\
$H_1$ & $N^3$ & R&R&R&---\\
$H_2$ & $Np$ & R&R&R&R\\
$H_3$ & $f$ & R&R&R&R\\
$H_4$ & $g$ & R&R&R&R\\
\bottomrule
\end{tabular}
\caption{Every weight-$4+3$ comparison on $N\geq4$.}
\label{b:tab:43}
\end{table}

\begin{table}[htbp]
\centering
\small
\setlength{\tabcolsep}{5pt}
\begin{tabular}{@{}llcccc@{}}
\toprule
Row forest & Height$/\ell$ & $\Ap3$ & $A\times P$ & $F$ & $G$\\
\midrule
$\Ap5$ & $5$ & R&C&C&C\\
$\Ap3\times P$ & $N+3$ & R&R&C&C\\
$A\times P^{\times2}$ & $2N+1$ & R&R&C&C\\
$\Ap2\times F$ & $N^2+2$ & R&R&R&C\\
$\Ap2\times G$ & $p+2$ & R&R&R&R\\
$P\times F$ & $N^2+N$ & R&R&R&C\\
$P\times G$ & $p+N$ & R&R&R&R\\
$A\times H_1$ & $N^3+1$ & R&R&R&---\\
$A\times H_2$ & $Np+1$ & R&R&R&R\\
$A\times H_3$ & $f+1$ & R&R&R&R\\
$A\times H_4$ & $g+1$ & R&R&R&R\\
$U_1$ & $N^4$ & R&R&R&---\\
$U_2$ & $N^2p$ & R&R&R&R\\
$U_3$ & $p^2$ & R&R&R&R\\
$U_4$ & $Nf$ & R&R&R&R\\
$U_5$ & $Ng$ & R&R&R&R\\
$U_6$ & $h_1$ & R&R&R&R\\
$U_7$ & $h_2$ & R&R&R&R\\
$U_8$ & $h_3$ & R&R&R&R\\
$U_9$ & $h_4$ & R&R&R&R\\
\bottomrule
\end{tabular}
\caption{All twenty weight-five forests and every weight-$5+3$ comparison.}
\label{b:tab:53}
\end{table}

For clarity, the numerical inequalities proving the tables are
$N\geq4$, $p\geq N^2+N$, and $f,g\geq p+N-1$.
The values $N+3$ and $2N+1$ are less than $N^2$; $N^2+2<p$;
and $N^2+N\leq p$. The entries involving $U_2,\ldots,U_5$ have
height coefficient greater than $p$. So do those involving
$U_6,\ldots,U_9$: $H_1\cong F^A$ and $H_2\cong P^P$ give
$h_1>f>p$, $h_2>p$; the basic map bound gives $h_3>f>p$ and
$h_4>g>p$.

There are just two additional possible height ties in these tables.
For $N=4$, $A^{\times5}$ ties $A\times P$; their selected intervals
are $L^{\times5}$ and $L^{\one+A}$, and the latter wins
by \eqref{b:E:exponent-tie}. When $p=N^2+N$, compare
$\Max(P\times F)=L^{A+A^{\times2}}$ with $\Max(G)=L^P$.
Their exponent heights are $2\ell$ and $N\ell$, so $G$ wins by the
same rule. Thus every nondashed table entry, including equality cases,
is justified.

For two weight-four forests, the later member in the ordered list
\eqref{b:E:D4} wins except for the following six pairs:
\begin{equation}\label{b:E:44-exceptions}
\begin{gathered}
 (A\times G,H_1),\quad(H_1,H_3),\quad(H_1,H_4),\\
 (H_2,H_3),\quad(H_2,H_4),\quad(H_3,H_4).
\end{gathered}
\end{equation}
This is the complete $4+4$ comparison, not a~list of samples.
Indeed, the first five height coefficients in \eqref{b:E:D4-heights}
are strictly increasing. The coefficient $N^3$ exceeds the first four;
$Np$ exceeds $N^3$ and $p+1$; and each of $f,g$ exceeds $p+1$ and
all earlier coefficients except the separately listed $N^3,Np$.
Only the six displayed comparisons remain undecided by these bounds.

For three summands, the total forest weight is at most seven.
A~largest weight five can be accompanied only by $1+1$, and weight
four only by total weight at most three. If the remaining weights are
$2+1$, even the comparison $A^{\times4}$ versus $P$ is fixed.
If the largest weight is three, all comparisons among weights at most
three are fixed. For four summands the total weight is at most six,
so the largest weight is at most three. For five summands, all weights
must be one. Thus no new exceptional comparison occurs for three or
more summands.

The budget gives the allowed $r$ in Table~\ref{b:tab:exceptions}. Every
selected forest has weight at most seven, because it has at least one
other positive-weight companion. Finally, a~pure forest can beat a
nonpure one only in the two exceptional power comparisons. This proves
all assertions.
\end{proof}

\begin{table}[htbp]
\centering
\small
\setlength{\tabcolsep}{6pt}
\begin{tabular}{@{}clcl@{}}
\toprule
Case & Mixed sum $C$ & $r$ at eleven & Proof\\
\midrule
1 & $\Ap5+P$ & 1 & Proposition~\ref{b:P:power}\\
2 & $\Ap6+P$ & 0 & Proposition~\ref{b:P:power}\\
3 & $H_1+G$ & 1 & Proposition~\ref{b:P:cubic}\\
4 & $(A\times H_1)+G$ & 0 & Proposition~\ref{b:P:cubic}\\
5 & $U_1+G$ & 0 & Proposition~\ref{b:P:quartic}\\
6 & $H_1+(A\times G)$ & 0 & Proposition~\ref{b:P:cubic}\\
7 & $H_1+H_3$ & 0 & Proposition~\ref{b:P:Hpairs}\\
8 & $H_1+H_4$ & 0 & Proposition~\ref{b:P:Hpairs}\\
9 & $H_2+H_3$ & 0 & Proposition~\ref{b:P:Hpairs}\\
10 & $H_2+H_4$ & 0 & Proposition~\ref{b:P:Hpairs}\\
11 & $H_3+H_4$ & 0 & Proposition~\ref{b:P:Hpairs}\\
\bottomrule
\end{tabular}
\caption{The eleven exceptional normal forms, each denoting $E_{r,C}$.}
\label{b:tab:exceptions}
\end{table}

\chapter{Six occurrences}\label{ch:six}
There are 20 interchange normal forms and
42 binary parenthesizations at 6 occurrences.
The complete root classification consists of 18 uniform forms
and 2 regular mixed forms. There are no exceptional
forms on $\OO$ at this occurrence count.

\section{The mixed forms}
\index{mixed!form!classification}\index{mixed!root!occurrence cases}
\begingroup\small\renewcommand{\arraystretch}{1.17}
\begin{longtable}{@{}rrp{.44\textwidth}p{.25\textwidth}r@{}}
\toprule No. & $r$ & Mixed sum $C$ & Selected-forest chain & Binary\\\midrule
\endfirsthead
\multicolumn{5}{c}{\textit{6 occurrences, continued}}\\
\toprule No. & $r$ & Mixed sum $C$ & Selected-forest chain & Binary\\\midrule\endhead
\midrule\multicolumn{5}{r}{\textit{Continued on the next page}}\\\endfoot
\endlastfoot
1 & 0 & $A + \Ap{2}$ & $\Ap{2}$ & 2\\*
2 & 0 & $A + P$ & $P\,,\ A$ & 2\\*
\bottomrule
\caption{Every regular mixed root at 6 occurrences. Each row denotes $E_{r,C}$. The chain ends at its displayed pure forest; the final interval $S$ is omitted.}\label{tab:regular6}
\end{longtable}\endgroup

The two sums are $A+A^{\times2}$ and $A+P$. In the first, the square
supplies the selected interval, so $Z_0=SL^{\times2}$ and terminal
recovery gives $R$. Once the root exponent is known, whole-exponent
cancellation gives $A+A^{\times2}$, a~fixed nonconstant polynomial in
$A$, which determines $A$.

In the second, $P$ is selected because $N\ell>\ell$. Its forest chain
is $P,A$, so $Z_0=SL^A$ reduces to $SL$ and then $S$. After recovery
of $R$ and the whole sum, $\Der(A+P)=RA$, and cancellation gives $A$.
This spells out both mixed reconstructions without any summand
identification.

\section{Completion of the occurrence case}
\index{occurrence!case!completion}\index{reconstruction!occurrence case}
\begin{theorem}\label{T:n6}
\index{occurrence!count!six: theorem}
Every exponential term with at most 6 occurrences is
reconstructing. Every nontrivial such term is strong on $\OO$.
\end{theorem}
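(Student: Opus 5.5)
The proof will be an induction on the occurrence count, with Theorem~\ref{T:five} as the base. Every term with at most five occurrences is already reconstructing and, when nontrivial, strong on $\OO$, so only the twenty six-occurrence normal forms remain. Each of them is written as $A^{D_t(A)}$, where the root forest has weight five and its root children form a partition of $5$ into child occurrence counts. These normal forms split into two groups: the uniform roots, with at most one distinct nontrivial child type, and the mixed roots $E_{r,C}$ of \eqref{t:E:mixed}.

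For the uniform roots the argument is that of Theorem~\ref{T:five}.
\begin{enumerate}
\item A root with no nontrivial child has exponent $\Ap5$, and the pure-power clause of Proposition~\ref{b:P:uniform} makes it strong.
\item Otherwise there is a single nontrivial child type $u$, repeated $s\geq1$ times, together with $r\geq0$ leaves. The child has at most five occurrences, so it is strong on $\OO$ by Theorem~\ref{T:five}. The uniform-extension clause \eqref{b:E:uniform} then transfers strength to the parent.
\end{enumerate}

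Next I check that only two roots are mixed. A mixed root needs two distinct nontrivial child types. Nontrivial children have occurrence counts at least $2$, and the only distinct pair with counts summing to at most $5$ is $2+3$. The budget \eqref{t:E:budget} with $n=6$ therefore forces $r=0$, $m=2$, $E_1=A$, and $E_2\in\{\Ap2,P\}$. These are exactly the two rows of Table~\ref{tab:regular6}. All remaining $18$ normal forms are uniform, which matches the total of $20$ in Table~\ref{tab:all-counts}.

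Both mixed roots are regular, and I will apply Proposition~\ref{t:P:regular} to each.
\begin{enumerate}
\item For $C=A+\Ap2$, the selected forest is $\Ap2$, because its height $2\ell$ exceeds $\ell$. Here every summand of $C$ is pure, as that proposition requires when the selected forest is pure.
\item For $C=A+P$, the selected forest is $P$, since $N\ell>\ell$; this forest is nonpure.
\end{enumerate}
In both cases the selected forest has weight at most two, well within the eight-weight forest lemma, Lemma~\ref{t:L:forest}. So Proposition~\ref{t:P:regular} yields strength.

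The remaining steps are formal. Strong reconstruction on $\OO$ gives reconstruction on $\OO$, as noted after Definition~\ref{b:D:strong}. Theorem~\ref{b:T:K} covers inputs in $\K$, including mixed pairs with one input in $\K$ and one in $\OO$. Interchange normalization covers all $42$ binary parenthesizations.

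The only point needing care is the exhaustiveness of the mixed-root enumeration, in particular confirming that no further exceptional comparison arises. This is immediate here, because no weight-two forest has to be compared with a pure power other than $A$.
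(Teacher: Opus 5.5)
Your proposal is correct and follows the paper's proof essentially step for step. You use induction from Theorem~\ref{T:five}, the pure-power and uniform-extension clauses of Proposition~\ref{b:P:uniform} for the eighteen uniform roots, Proposition~\ref{t:P:regular} (through Lemma~\ref{t:L:forest}) for the two regular mixed roots of Table~\ref{tab:regular6}, and then Theorem~\ref{b:T:K} and interchange normalization to finish. The differences are cosmetic: you derive the two mixed sums directly from the budget \eqref{t:E:budget} and check the two height comparisons by hand, where the paper cites Proposition~\ref{b:P:classification}; your exclusion of $2+2$ tacitly uses that $P$ is the only two-occurrence child type, which is true and harmless.
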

\begin{proof}
The smaller occurrence counts have already been established. Consider
a~normal form of the new count. If its root is uniform, its distinct
nontrivial child, when present, has smaller occurrence count and is
strong. Proposition~\ref{b:P:uniform} proves the strong assertion for
the parent; a~pure root uses the pure-power clause.

Otherwise the row in Table~\ref{tab:regular6} specifies its entire
mixed sum and a~fixed selected-forest chain. The comparisons in
Proposition~\ref{b:P:classification} prove that selection, including
all height ties. Each chain is an application of
Lemma~\ref{t:L:forest}; its terminal pure power has a~known exponent.
The first invariant $Z_0$ therefore determines $R$. Given the pair
$(R,D_t)$, recover the whole sum $C$ by~\eqref{t:E:wholeC}.
If its selected forest is pure, all its displayed forests are pure,
so polynomial injectivity recovers $A$. If it is nonpure, compute
$\Der(C)=\Der(D_0)$ and apply the recovery assertion of the forest
lemma. This proves both strong assertions in every listed row.

The uniform and mixed classes exhaust the normal forms by the
root-budget classification. A~strong term reconstructs its input on
$\OO$; Theorem~\ref{b:T:K} handles $\K$ and excludes a~collision
between the two classes. Finally, normalizing a~binary
parenthesization uses only isomorphism-preserving exponential laws.
Thus every binary parenthesization is covered.
\end{proof}

\chapter{Seven occurrences}\label{ch:seven}
There are 48 interchange normal forms and
132 binary parenthesizations at 7 occurrences.
The complete root classification consists of 41 uniform forms
and 7 regular mixed forms. There are no exceptional
forms on $\OO$ at this occurrence count.

\section{The mixed forms}
\index{mixed!form!classification}\index{mixed!root!occurrence cases}
\begingroup\small\renewcommand{\arraystretch}{1.17}
\begin{longtable}{@{}rrp{.44\textwidth}p{.25\textwidth}r@{}}
\toprule No. & $r$ & Mixed sum $C$ & Selected-forest chain & Binary\\\midrule
\endfirsthead
\multicolumn{5}{c}{\textit{7 occurrences, continued}}\\
\toprule No. & $r$ & Mixed sum $C$ & Selected-forest chain & Binary\\\midrule\endhead
\midrule\multicolumn{5}{r}{\textit{Continued on the next page}}\\\endfoot
\endlastfoot
1 & 1 & $A + \Ap{2}$ & $\Ap{2}$ & 6\\
2 & 1 & $A + P$ & $P\,,\ A$ & 6\\
3 & 0 & $A + \Ap{3}$ & $\Ap{3}$ & 2\\
4 & 0 & $A + A\,P$ & $A\,P\,,\ A$ & 4\\
5 & 0 & $A + F$ & $F\,,\ \Ap{2}$ & 2\\
6 & 0 & $A + G$ & $G\,,\ P\,,\ A$ & 2\\*
7 & 0 & $\Ap{2} + P$ & $P\,,\ A$ & 2\\*
\bottomrule
\caption{Every regular mixed root at 7 occurrences. Each row denotes $E_{r,C}$. The chain ends at its displayed pure forest; the final interval $S$ is omitted.}\label{tab:regular7}
\end{longtable}\endgroup

\section{Completion of the occurrence case}
\index{occurrence!case!completion}\index{reconstruction!occurrence case}
\begin{theorem}\label{T:n7}
\index{occurrence!count!seven: theorem}
Every exponential term with at most 7 occurrences is
reconstructing. Every nontrivial such term is strong on $\OO$.
\end{theorem}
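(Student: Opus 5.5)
The proof will follow the pattern of Theorem~\ref{T:n6} exactly. The counts at six and fewer occurrences are already established by Theorems~\ref{T:five} and~\ref{T:n6}, so only normal forms with exactly seven occurrences remain. I will split them into uniform and mixed roots. For a uniform root, either the exponent is the pure power $\Ap6$, and the pure-power clause of Proposition~\ref{b:P:uniform} applies, or there is a single nontrivial child type $u$ with $s\geq1$ copies and $r\geq0$ leaves. In the latter case $u$ has at most six occurrences, so it is strong on $\OO$ by the earlier theorems, and the uniform extension clause makes the parent strong.

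For mixed roots, the budget \eqref{t:E:budget} with $n=7$ gives $r+m+\sum\wt(E_i)=6$ with $m\geq2$. I will check that the seven rows of Table~\ref{tab:regular7} exhaust these. With $r=1$ the forest weights sum to $3$ over two summands, which yields $A+\Ap2$ and $A+P$. With $r=0$ and $m=2$ the weights are $1+3$ or $2+2$, which yields $A+\Ap3$, $A+AP$, $A+F$, $A+G$ and $\Ap2+P$. With $r=0$ and $m=3$ the weights are $1+1+2$, but then at most one nontrivial child type is nonleaf-distinct, so the root is uniform. The sum $A+\Ap2$ with a repeated type is also excluded, since it needs two distinct nontrivial child types.

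Selection in each row comes from the comparisons of Proposition~\ref{b:P:classification}:
\begin{enumerate}
\item a weight-one forest loses to anything larger;
\item $P$ beats $\Ap2$;
\item weight three is ordered $\Ap3<AP<F<G$.
\end{enumerate}
No tie arises, because every comparison is a strict height comparison. The selected forests have weight at most three, so each chain listed is the fixed chain of Lemma~\ref{t:L:forest}. For example, $G\to P\to A$ arises because $\Der$ of $G$ gives $RP$, whose $\Max$ descends through $L^A$.

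Then I will apply Proposition~\ref{t:P:regular} row by row. The invariant $Z_0=S\Max(D_0)$ recovers $R$ through the forest lemma. Equation \eqref{t:E:wholeC} recovers the whole sum $C$. If $D_0$ is pure, as in rows 1 and 3, all summands are pure polynomials in $A$, and Lemma~\ref{L:arithmetic-cancellation} recovers $A$. Otherwise $\Der(C)=\Der(D_0)$, and part 2 of Lemma~\ref{t:L:forest} recovers $A$.

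Finally, strong on $\OO$ gives reconstruction on $\OO$. Theorem~\ref{b:T:K} handles $\K$ and excludes mixed collisions, and interchange normalization covers all $132$ binary parenthesizations.

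The only step needing real care is confirming exhaustiveness of the mixed list and the claim that the pure selected forests appear only with pure companions. This is a finite check against the budget, which I expect to be routine.
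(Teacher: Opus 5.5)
Your proposal follows the paper's proof essentially step for step. Uniform roots go through Proposition~\ref{b:P:uniform} with a strong smaller child. The seven mixed rows of Table~\ref{tab:regular7} go through Proposition~\ref{t:P:regular} and Lemma~\ref{t:L:forest}. Theorem~\ref{b:T:K} and interchange normalization finish the argument.

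One slip needs fixing in the exhaustiveness check you singled out. For $m=3$, the budget $r+m+\sum\wt(E_i)=6$ forces $r=0$ and weights $1+1+1$. The three children are then all $P$, so the root is the uniform one with exponent $\xp P3$, and $m\geq4$ is impossible.

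Weights $1+1+2$ violate the budget. Your stated reason would be false for them anyway: children $P,P,F$ or $P,P,G$ form a mixed root, namely the eight-occurrence rows $A+A+\Ap2$ and $A+A+P$.

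Similarly, the sentence about $A+\Ap2$ ``with a repeated type'' should instead say that the repeated-type sums are uniform. These are $A+A$ with $r=2$, and $\Ap2+\Ap2$ and $P+P$ with $r=0$.
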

\begin{proof}
The smaller occurrence counts have already been established. Consider
a~normal form of the new count. If its root is uniform, its distinct
nontrivial child, when present, has smaller occurrence count and is
strong. Proposition~\ref{b:P:uniform} proves the strong assertion for
the parent; a~pure root uses the pure-power clause.

Otherwise the row in Table~\ref{tab:regular7} specifies its entire
mixed sum and a~fixed selected-forest chain. The comparisons in
Proposition~\ref{b:P:classification} prove that selection, including
all height ties. Each chain is an application of
Lemma~\ref{t:L:forest}; its terminal pure power has a~known exponent.
The first invariant $Z_0$ therefore determines $R$. Given the pair
$(R,D_t)$, recover the whole sum $C$ by~\eqref{t:E:wholeC}.
If its selected forest is pure, all its displayed forests are pure,
so polynomial injectivity recovers $A$. If it is nonpure, compute
$\Der(C)=\Der(D_0)$ and apply the recovery assertion of the forest
lemma. This proves both strong assertions in every listed row.

The uniform and mixed classes exhaust the normal forms by the
root-budget classification. A~strong term reconstructs its input on
$\OO$; Theorem~\ref{b:T:K} handles $\K$ and excludes a~collision
between the two classes. Finally, normalizing a~binary
parenthesization uses only isomorphism-preserving exponential laws.
Thus every binary parenthesization is covered.
\end{proof}

The seven regular roots listed here are the roots that are mixed at the
outermost vertex. Uniform extensions of the two six-occurrence mixed
forms belong to the uniform class. They are not new outer mixed roots.
This distinction is needed when counting newly proved terms rather
than comparing two different recursive proof families.

\chapter{Eight occurrences}\label{ch:eight}
There are 115 interchange normal forms and
429 binary parenthesizations at 8 occurrences.
The complete root classification consists of 89 uniform forms
and 26 regular mixed forms. There are no exceptional
forms on $\OO$ at this occurrence count.

\section{The mixed forms}
\index{mixed!form!classification}\index{mixed!root!occurrence cases}
\begingroup\small\renewcommand{\arraystretch}{1.17}
\begin{longtable}{@{}rrp{.44\textwidth}p{.25\textwidth}r@{}}
\toprule No. & $r$ & Mixed sum $C$ & Selected-forest chain & Binary\\\midrule
\endfirsthead
\multicolumn{5}{c}{\textit{8 occurrences, continued}}\\
\toprule No. & $r$ & Mixed sum $C$ & Selected-forest chain & Binary\\\midrule\endhead
\midrule\multicolumn{5}{r}{\textit{Continued on the next page}}\\\endfoot
\endlastfoot
1 & 2 & $A + \Ap{2}$ & $\Ap{2}$ & 12\\
2 & 2 & $A + P$ & $P\,,\ A$ & 12\\
3 & 1 & $A + \Ap{3}$ & $\Ap{3}$ & 6\\
4 & 1 & $A + A\,P$ & $A\,P\,,\ A$ & 12\\
5 & 1 & $A + F$ & $F\,,\ \Ap{2}$ & 6\\
6 & 1 & $A + G$ & $G\,,\ P\,,\ A$ & 6\\
7 & 1 & $\Ap{2} + P$ & $P\,,\ A$ & 6\\
8 & 0 & $A + A + \Ap{2}$ & $\Ap{2}$ & 3\\
9 & 0 & $A + A + P$ & $P\,,\ A$ & 3\\
10 & 0 & $A + \Ap{4}$ & $\Ap{4}$ & 2\\
11 & 0 & $A + \xp{A}{2}\,P$ & $\xp{A}{2}\,P\,,\ A$ & 6\\
12 & 0 & $A + A\,F$ & $A\,F\,,\ \Ap{2}$ & 4\\
13 & 0 & $A + A\,G$ & $A\,G\,,\ P\,,\ A$ & 4\\
14 & 0 & $A + \xp{P}{2}$ & $\xp{P}{2}\,,\ A$ & 2\\
15 & 0 & $A + H_1$ & $H_1\,,\ \Ap{3}$ & 2\\
16 & 0 & $A + H_2$ & $H_2\,,\ A\,P\,,\ A$ & 4\\
17 & 0 & $A + H_3$ & $H_3\,,\ F\,,\ \Ap{2}$ & 2\\
18 & 0 & $A + H_4$ & $H_4\,,\ G\,,\ P\,,\ A$ & 2\\
19 & 0 & $\Ap{2} + \Ap{3}$ & $\Ap{3}$ & 2\\
20 & 0 & $\Ap{2} + A\,P$ & $A\,P\,,\ A$ & 4\\
21 & 0 & $\Ap{2} + F$ & $F\,,\ \Ap{2}$ & 2\\
22 & 0 & $\Ap{2} + G$ & $G\,,\ P\,,\ A$ & 2\\
23 & 0 & $P + \Ap{3}$ & $P\,,\ A$ & 2\\
24 & 0 & $P + A\,P$ & $A\,P\,,\ A$ & 4\\
25 & 0 & $P + F$ & $F\,,\ \Ap{2}$ & 2\\*
26 & 0 & $P + G$ & $G\,,\ P\,,\ A$ & 2\\*
\bottomrule
\caption{Every regular mixed root at 8 occurrences. Each row denotes $E_{r,C}$. The chain ends at its displayed pure forest; the final interval $S$ is omitted.}\label{tab:regular8}
\end{longtable}\endgroup

\section{Completion of the occurrence case}
\index{occurrence!case!completion}\index{reconstruction!occurrence case}
\begin{theorem}\label{T:n8}
\index{occurrence!count!eight: theorem}
Every exponential term with at most 8 occurrences is
reconstructing. Every nontrivial such term is strong on $\OO$.
\end{theorem}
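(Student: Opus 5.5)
The argument follows the template already used for Theorems~\ref{T:n6} and~\ref{T:n7}. I argue by induction on the occurrence count. Smaller counts are covered by Theorems~\ref{T:five}, \ref{T:n6} and~\ref{T:n7}, so only the 115 interchange normal forms with eight occurrences remain. I split them by their root.

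\emph{Uniform roots.} If the root has no nontrivial child, its exponent is $A^{\times7}$, and the pure-power clause of Proposition~\ref{b:P:uniform} gives the strong assertion. Otherwise the root has exactly one distinct nontrivial child type $u$, with multiplicity $s\geq1$ and $r\geq0$ leaves. Then $u$ has at most seven occurrences and is strong on $\OO$ by the induction hypothesis, so the uniform-extension clause of Proposition~\ref{b:P:uniform} makes the parent strong. This disposes of the 89 uniform forms.

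\emph{Mixed roots.} For each of the 26 rows of Table~\ref{tab:regular8}, I would first check that the displayed forest really is the selected forest of $C$ for every $A\in\OO$, ties included. The budget here is $r+m+\sum\wt(E_i)=7$, so the total forest weight is at most five. Every comparison that arises therefore falls within the fixed cases of Proposition~\ref{b:P:classification}:
\begin{itemize}
\item weight one loses to anything larger;
\item $P$ beats $A^{\times2}$;
\item $P$ beats $A^{\times3}$ (row 23);
\item the weight-three list $\Ap3<AP<F<G$ is strictly increasing;
\item $P$ loses to every nonpure forest of weight three.
\end{itemize}
No exceptional pair, meaning $\Ap5$ or $\Ap6$ against $P$ or a dashed table entry, can occur within this budget. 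Each displayed chain then descends through these fixed comparisons to a pure power $A^{\times k}$. Lemma~\ref{t:L:forest}(1) shows that $Z_0=S\Max(D_0)$ determines $R$, which is (S1).

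For (S2), given $(R,D_t)$, I recover the whole sum $C$ by~\eqref{t:E:wholeC}. If the selected forest is pure, as in rows 1, 3, 8, 10 and 19, every summand is pure. In that case $C$ is a fixed nonconstant polynomial in $A$ with nonnegative coefficients and zero constant term, and Lemma~\ref{L:arithmetic-cancellation} recovers $A$. If the selected forest is nonpure, then $\Der(C)=\Der(D_0)$, and Lemma~\ref{t:L:forest}(2) recovers $A$. In both cases this is exactly Proposition~\ref{t:P:regular}.

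\emph{Completion.} A strong term reconstructs on $\OO$. Theorem~\ref{b:T:K} handles inputs in $\K$ and rules out collisions between $\K$ and $\OO$. Interchange normalization transfers the result to all 429 binary parenthesizations.

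The main obstacle is exhaustiveness: the table must list every mixed root at eight occurrences, and every selection it displays must hold uniformly on $\OO$. I would verify the count with the root-partition formula and the values $a_1=a_2=1$, $a_3=2$, $a_4=4$, $a_5=9$, $a_6=20$, $a_7=48$, checking that the uniform and mixed classes together give 115. The delicate selection rows are those involving $P$ against a pure power, namely rows 7, 9 and 23. There I use $N\geq4$ from~\eqref{b:E:O-inputs}, which gives $N\ell>3\ell$ strictly, so no tie-breaking through~\eqref{b:E:exponent-tie} is even required.
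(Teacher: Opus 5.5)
Your proposal is correct and follows the paper's proof essentially step for step. Like the paper, you induct from the smaller counts, handle uniform roots by Proposition~\ref{b:P:uniform} (pure-power clause for $A^{A^{\times7}}$), treat the 26 mixed rows via the fixed comparisons of Proposition~\ref{b:P:classification} together with Lemma~\ref{t:L:forest} and whole-exponent recovery (that is, Proposition~\ref{t:P:regular}), and finish with Theorem~\ref{b:T:K} and interchange normalization. One small point: your explicit comparison list omits that $A^{\times2}$ loses to every weight-three forest, which rows 19--22 need, but this is among the fixed comparisons of Proposition~\ref{b:P:classification} that you cite, so nothing is actually missing.
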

\begin{proof}
The smaller occurrence counts have already been established. Consider
a~normal form of the new count. If its root is uniform, its distinct
nontrivial child, when present, has smaller occurrence count and is
strong. Proposition~\ref{b:P:uniform} proves the strong assertion for
the parent; a~pure root uses the pure-power clause.

Otherwise the row in Table~\ref{tab:regular8} specifies its entire
mixed sum and a~fixed selected-forest chain. The comparisons in
Proposition~\ref{b:P:classification} prove that selection, including
all height ties. Each chain is an application of
Lemma~\ref{t:L:forest}; its terminal pure power has a~known exponent.
The first invariant $Z_0$ therefore determines $R$. Given the pair
$(R,D_t)$, recover the whole sum $C$ by~\eqref{t:E:wholeC}.
If its selected forest is pure, all its displayed forests are pure,
so polynomial injectivity recovers $A$. If it is nonpure, compute
$\Der(C)=\Der(D_0)$ and apply the recovery assertion of the forest
lemma. This proves both strong assertions in every listed row.

The uniform and mixed classes exhaust the normal forms by the
root-budget classification. A~strong term reconstructs its input on
$\OO$; Theorem~\ref{b:T:K} handles $\K$ and excludes a~collision
between the two classes. Finally, normalizing a~binary
parenthesization uses only isomorphism-preserving exponential laws.
Thus every binary parenthesization is covered.
\end{proof}

\chapter{Nine occurrences}\label{ch:nine}
There are 286 interchange normal forms and
1,430 binary parenthesizations at 9 occurrences.
The complete root classification consists of 209 uniform forms
and 77 regular mixed forms. There are no exceptional
forms on $\OO$ at this occurrence count.

\section{The mixed forms}
\index{mixed!form!classification}\index{mixed!root!occurrence cases}
\begingroup\small\renewcommand{\arraystretch}{1.17}
\begin{longtable}{@{}rrp{.44\textwidth}p{.25\textwidth}r@{}}
\toprule No. & $r$ & Mixed sum $C$ & Selected-forest chain & Binary\\\midrule
\endfirsthead
\multicolumn{5}{c}{\textit{9 occurrences, continued}}\\
\toprule No. & $r$ & Mixed sum $C$ & Selected-forest chain & Binary\\\midrule\endhead
\midrule\multicolumn{5}{r}{\textit{Continued on the next page}}\\\endfoot
\endlastfoot
1 & 3 & $A + \Ap{2}$ & $\Ap{2}$ & 20\\
2 & 3 & $A + P$ & $P\,,\ A$ & 20\\
3 & 2 & $A + \Ap{3}$ & $\Ap{3}$ & 12\\
4 & 2 & $A + A\,P$ & $A\,P\,,\ A$ & 24\\
5 & 2 & $A + F$ & $F\,,\ \Ap{2}$ & 12\\
6 & 2 & $A + G$ & $G\,,\ P\,,\ A$ & 12\\
7 & 2 & $\Ap{2} + P$ & $P\,,\ A$ & 12\\
8 & 1 & $A + A + \Ap{2}$ & $\Ap{2}$ & 12\\
9 & 1 & $A + A + P$ & $P\,,\ A$ & 12\\
10 & 1 & $A + \Ap{4}$ & $\Ap{4}$ & 6\\
11 & 1 & $A + \xp{A}{2}\,P$ & $\xp{A}{2}\,P\,,\ A$ & 18\\
12 & 1 & $A + A\,F$ & $A\,F\,,\ \Ap{2}$ & 12\\
13 & 1 & $A + A\,G$ & $A\,G\,,\ P\,,\ A$ & 12\\
14 & 1 & $A + \xp{P}{2}$ & $\xp{P}{2}\,,\ A$ & 6\\
15 & 1 & $A + H_1$ & $H_1\,,\ \Ap{3}$ & 6\\
16 & 1 & $A + H_2$ & $H_2\,,\ A\,P\,,\ A$ & 12\\
17 & 1 & $A + H_3$ & $H_3\,,\ F\,,\ \Ap{2}$ & 6\\
18 & 1 & $A + H_4$ & $H_4\,,\ G\,,\ P\,,\ A$ & 6\\
19 & 1 & $\Ap{2} + \Ap{3}$ & $\Ap{3}$ & 6\\
20 & 1 & $\Ap{2} + A\,P$ & $A\,P\,,\ A$ & 12\\
21 & 1 & $\Ap{2} + F$ & $F\,,\ \Ap{2}$ & 6\\
22 & 1 & $\Ap{2} + G$ & $G\,,\ P\,,\ A$ & 6\\
23 & 1 & $P + \Ap{3}$ & $P\,,\ A$ & 6\\
24 & 1 & $P + A\,P$ & $A\,P\,,\ A$ & 12\\
25 & 1 & $P + F$ & $F\,,\ \Ap{2}$ & 6\\
26 & 1 & $P + G$ & $G\,,\ P\,,\ A$ & 6\\
27 & 0 & $A + A + \Ap{3}$ & $\Ap{3}$ & 3\\
28 & 0 & $A + A + A\,P$ & $A\,P\,,\ A$ & 6\\
29 & 0 & $A + A + F$ & $F\,,\ \Ap{2}$ & 3\\
30 & 0 & $A + A + G$ & $G\,,\ P\,,\ A$ & 3\\
31 & 0 & $A + \Ap{2} + \Ap{2}$ & $\Ap{2}$ & 3\\
32 & 0 & $A + \Ap{2} + P$ & $P\,,\ A$ & 6\\
33 & 0 & $A + P + P$ & $P\,,\ A$ & 3\\
34 & 0 & $A + \Ap{5}$ & $\Ap{5}$ & 2\\
35 & 0 & $A + \xp{A}{3}\,P$ & $\xp{A}{3}\,P\,,\ A$ & 8\\
36 & 0 & $A + \xp{A}{2}\,F$ & $\xp{A}{2}\,F\,,\ \Ap{2}$ & 6\\
37 & 0 & $A + \xp{A}{2}\,G$ & $\xp{A}{2}\,G\,,\ P\,,\ A$ & 6\\
38 & 0 & $A + A\,\xp{P}{2}$ & $A\,\xp{P}{2}\,,\ A$ & 6\\
39 & 0 & $A + A\,H_1$ & $A\,H_1\,,\ \Ap{3}$ & 4\\
40 & 0 & $A + A\,H_2$ & $A\,H_2\,,\ A\,P\,,\ A$ & 8\\
41 & 0 & $A + A\,H_3$ & $A\,H_3\,,\ F\,,\ \Ap{2}$ & 4\\
42 & 0 & $A + A\,H_4$ & $A\,H_4\,,\ G\,,\ P\,,\ A$ & 4\\
43 & 0 & $A + P\,F$ & $P\,F\,,\ \Ap{2}$ & 4\\
44 & 0 & $A + P\,G$ & $P\,G\,,\ P\,,\ A$ & 4\\
45 & 0 & $A + U_{1}$ & $U_{1}\,,\ \Ap{4}$ & 2\\
46 & 0 & $A + U_{2}$ & $U_{2}\,,\ \xp{A}{2}\,P\,,\ A$ & 6\\
47 & 0 & $A + U_{4}$ & $U_{4}\,,\ A\,F\,,\ \Ap{2}$ & 4\\
48 & 0 & $A + U_{5}$ & $U_{5}\,,\ A\,G\,,\ P\,,\ A$ & 4\\
49 & 0 & $A + U_{3}$ & $U_{3}\,,\ \xp{P}{2}\,,\ A$ & 2\\
50 & 0 & $A + U_{6}$ & $U_{6}\,,\ H_1\,,\ \Ap{3}$ & 2\\
51 & 0 & $A + U_{7}$ & $U_{7}\,,\ H_2\,,\ A\,P\,,\ A$ & 4\\
52 & 0 & $A + U_{8}$ & $U_{8}\,,\ H_3\,,\ F\,,\ \Ap{2}$ & 2\\
53 & 0 & $A + U_{9}$ & $U_{9}\,,\ H_4\,,\ G\,,\ P\,,\ A$ & 2\\
54 & 0 & $\Ap{2} + \Ap{4}$ & $\Ap{4}$ & 2\\
55 & 0 & $\Ap{2} + \xp{A}{2}\,P$ & $\xp{A}{2}\,P\,,\ A$ & 6\\
56 & 0 & $\Ap{2} + A\,F$ & $A\,F\,,\ \Ap{2}$ & 4\\
57 & 0 & $\Ap{2} + A\,G$ & $A\,G\,,\ P\,,\ A$ & 4\\
58 & 0 & $\Ap{2} + \xp{P}{2}$ & $\xp{P}{2}\,,\ A$ & 2\\
59 & 0 & $\Ap{2} + H_1$ & $H_1\,,\ \Ap{3}$ & 2\\
60 & 0 & $\Ap{2} + H_2$ & $H_2\,,\ A\,P\,,\ A$ & 4\\
61 & 0 & $\Ap{2} + H_3$ & $H_3\,,\ F\,,\ \Ap{2}$ & 2\\
62 & 0 & $\Ap{2} + H_4$ & $H_4\,,\ G\,,\ P\,,\ A$ & 2\\
63 & 0 & $P + \Ap{4}$ & $P\,,\ A$ & 2\\
64 & 0 & $P + \xp{A}{2}\,P$ & $\xp{A}{2}\,P\,,\ A$ & 6\\
65 & 0 & $P + A\,F$ & $A\,F\,,\ \Ap{2}$ & 4\\
66 & 0 & $P + A\,G$ & $A\,G\,,\ P\,,\ A$ & 4\\
67 & 0 & $P + \xp{P}{2}$ & $\xp{P}{2}\,,\ A$ & 2\\
68 & 0 & $P + H_1$ & $H_1\,,\ \Ap{3}$ & 2\\
69 & 0 & $P + H_2$ & $H_2\,,\ A\,P\,,\ A$ & 4\\
70 & 0 & $P + H_3$ & $H_3\,,\ F\,,\ \Ap{2}$ & 2\\
71 & 0 & $P + H_4$ & $H_4\,,\ G\,,\ P\,,\ A$ & 2\\
72 & 0 & $\Ap{3} + A\,P$ & $A\,P\,,\ A$ & 4\\
73 & 0 & $\Ap{3} + F$ & $F\,,\ \Ap{2}$ & 2\\
74 & 0 & $\Ap{3} + G$ & $G\,,\ P\,,\ A$ & 2\\
75 & 0 & $A\,P + F$ & $F\,,\ \Ap{2}$ & 4\\
76 & 0 & $A\,P + G$ & $G\,,\ P\,,\ A$ & 4\\*
77 & 0 & $F + G$ & $G\,,\ P\,,\ A$ & 2\\*
\bottomrule
\caption{Every regular mixed root at 9 occurrences. Each row denotes $E_{r,C}$. The chain ends at its displayed pure forest; the final interval $S$ is omitted.}\label{tab:regular9}
\end{longtable}\endgroup

\section{Completion of the occurrence case}
\index{occurrence!case!completion}\index{reconstruction!occurrence case}
\begin{theorem}\label{T:n9}
\index{occurrence!count!nine: theorem}
Every exponential term with at most 9 occurrences is
reconstructing. Every nontrivial such term is strong on $\OO$.
\end{theorem}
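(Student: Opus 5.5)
The proof follows the template of Theorems~\ref{T:n6}--\ref{T:n8}. It is an induction on the occurrence count, and the counts below nine are already settled by Theorems~\ref{T:two}, \ref{T:five}, \ref{T:n6}, \ref{T:n7}, and~\ref{T:n8}. I take a normal form with nine occurrences, written as its rooted unordered tree, and split according to its root.

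If the root is uniform, its exponent is either $\Ap k$ or $\Ap r\times u(A)^{\times s}$, where $u$ is a single nontrivial child type with fewer occurrences. In the first case the pure-power clause of Proposition~\ref{b:P:uniform} applies. In the second case $u$ is strong on $\OO$ by the inductive hypothesis, and the uniform-extension clause of the same proposition makes the parent strong.

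If the root is mixed, the budget \eqref{t:E:budget} with $n=9$ gives $r+m+\sum\wt(E_i)=8$. This lies inside the range \eqref{b:E:budget} covered by Proposition~\ref{b:P:classification}. The exceptional families of that proposition do not occur at nine occurrences: their smallest instances already use ten. Hence every mixed root is regular and appears as a row of Table~\ref{tab:regular9}. Each row gives a fixed selected forest of weight at most seven, justified by the comparison tables, with ties resolved by \eqref{b:E:exponent-tie}. Moreover, if the selected forest is pure, every summand of $C$ is pure; this happens in rows such as $P+\Ap4$ and $A+\Ap5$.

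For each such row, Proposition~\ref{t:P:regular}, that is, Lemma~\ref{t:L:forest} applied along the displayed chain, recovers $R$ from $Z_0$. This gives (S1). For (S2), \eqref{t:E:wholeC} recovers the whole sum $C$. Then either polynomial injectivity (Lemma~\ref{L:arithmetic-cancellation}) applies when all summands are pure, or the identity $\Der(C)=\Der(D_0)$ together with part~(2) of the forest lemma recovers $A$.

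Strong reconstruction on $\OO$ gives reconstruction on $\OO$. Theorem~\ref{b:T:K} handles inputs in $\K$ and rules out collisions between $\K$ and $\OO$. Lemma~\ref{L:antichains} covers antichains. Interchange normalization then transfers the result to all $1{,}430$ binary parenthesizations.

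The main obstacle is showing that the classification is exhaustive at nine occurrences. The $77$ listed mixed roots together with the $209$ uniform roots must account for all $286$ normal forms. In addition, every selection in Table~\ref{tab:regular9} must follow from the comparisons already proved, with no pair involving $H_1$ versus $G$ or $\Ap5$ versus $P$ arising. I would check this by enumerating the partitions of the root budget $8$ into a leaf count $r$ and child forests. For each partition I would confirm that the forests whose selection is undecided by the comparisons (those marked by dashes in Tables~\ref{b:tab:43} and~\ref{b:tab:53}, and the pairs in \eqref{b:E:44-exceptions}) have total weight at least eight with $m\ge2$. That forces at least ten occurrences, which is the final clause of Proposition~\ref{b:P:classification}.
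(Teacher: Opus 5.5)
Your argument is the paper's own proof of this theorem, step for step. The smaller counts serve as the induction hypothesis. Proposition~\ref{b:P:uniform} handles uniform roots. Table~\ref{tab:regular9}, with Propositions~\ref{b:P:classification} and~\ref{t:P:regular}, handles mixed roots. Theorem~\ref{b:T:K} and interchange normalization finish. Two supporting details are wrong, however, and should be corrected.

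First, $P+\Ap4$ (row~63) is not a row with a pure selection. On $\OO$ one has $N\geq4$, so $P$, of height $N\ell$, is at least as tall as $\Ap4$. The tie at $N=4$ is resolved in favor of $P$ by \eqref{b:E:exponent-tie}, because the exponent $A$ in $\Max(P)=L^A$ has positive height. Hence the table's chain is $P,A$, and (S2) uses $\Der(C)=\Der(P)=R\times A$. Polynomial injectivity could not be applied to this $C$ in any case, since $P$ is not pure. The pure-selection rows are those in which every summand is a Cartesian power, such as $A+\Ap5$ or $\Ap2+\Ap4$.

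Second, the undecided comparisons do not all have total weight at least eight. These are the dashes, the six pairs in \eqref{b:E:44-exceptions}, and $P$ against $\Ap5$ and $\Ap6$, which your check list omits. Among them, $H_1+G$ and $\Ap5+P$ have weight $4+3=2+5=7$. The correct count is that $m\geq2$ and $\sum\wt(E_i)\geq7$ give $n\geq1+2+7=10$ in \eqref{t:E:budget}. Total weight eight would instead force $n\geq11$, which is inconsistent with the ten-occurrence instances in Proposition~\ref{b:P:classification}. The conclusion that no exceptional form occurs at nine occurrences is unaffected.
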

\begin{proof}
The smaller occurrence counts have already been established. Consider
a~normal form of the new count. If its root is uniform, its distinct
nontrivial child, when present, has smaller occurrence count and is
strong. Proposition~\ref{b:P:uniform} proves the strong assertion for
the parent; a~pure root uses the pure-power clause.

Otherwise the row in Table~\ref{tab:regular9} specifies its entire
mixed sum and a~fixed selected-forest chain. The comparisons in
Proposition~\ref{b:P:classification} prove that selection, including
all height ties. Each chain is an application of
Lemma~\ref{t:L:forest}; its terminal pure power has a~known exponent.
The first invariant $Z_0$ therefore determines $R$. Given the pair
$(R,D_t)$, recover the whole sum $C$ by~\eqref{t:E:wholeC}.
If its selected forest is pure, all its displayed forests are pure,
so polynomial injectivity recovers $A$. If it is nonpure, compute
$\Der(C)=\Der(D_0)$ and apply the recovery assertion of the forest
lemma. This proves both strong assertions in every listed row.

The uniform and mixed classes exhaust the normal forms by the
root-budget classification. A~strong term reconstructs its input on
$\OO$; Theorem~\ref{b:T:K} handles $\K$ and excludes a~collision
between the two classes. Finally, normalizing a~binary
parenthesization uses only isomorphism-preserving exponential laws.
Thus every binary parenthesization is covered.
\end{proof}

\section{The factorization \texorpdfstring{$9=3\cdot3$}{9 = 3 x 3}}
\index{occurrence!count!nine: factorization}\index{factorization!$9=3\cdot3$}
Substituting either three-occurrence term into either three-occurrence
term supplies further proofs by Proposition~\ref{P:substitution}.
These proofs overlap the exhaustive root classification. They do not
replace the seventy-seven mixed-root arguments, which have just been
proved for inputs of every finite cardinality.

\chapter{Ten occurrences: the first branch comparisons}\label{ch:ten}

At ten occurrences there are 719 normal forms and 4,862 binary
parenthesizations. Of these normal forms, 497 are uniform, 220 are
regular, and two require branch comparisons. The two sums are
\[
 C=A^{\times5}+P\qquad\text{and}\qquad C=H_1+G,
\]
both with $r=0$. The regular classification has already been proved in
Chapter~\ref{b:S:classification}. This chapter gives the full
exceptional arguments in slightly broader parameter ranges, so they
can also be used at eleven occurrences.

\section{A~Cartesian power and the endomorphism ordered set}
\index{Cartesian!power}\index{endomorphism!ordered set!Cartesian power}
For $k\geq2$ put $C_k=A^{\times k}+P$. The pair $(R,C_k)$ recovers
$A$ by Lemma~\ref{t:L:polynomial}, applied to $f(z)=z^k$.
It remains to recover $R$ without presupposing which summand is selected.
\begin{proposition}\label{b:P:power}
\index{Cartesian!power!proposition}
For $k\geq2$ and $r=0$ or $1$, the term $E_{r,C_k}$ with
$C_k=A^{\times k}+P$ is strong on $\OO$.
In particular, this proves Cases 1 and 2 of Table~\ref{b:tab:exceptions}
and the required ten-occurrence instance.
\end{proposition}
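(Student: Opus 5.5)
The plan has two parts, matching the two strong assertions (S1) and (S2). For $t=E_{r,C_k}$ the root exponent is $D=A^{\times r}\times A^{C_k(A)}$. By \eqref{t:E:VZ}, the invariants are
\[
 V=SL^{\times r}L^{C_k},\qquad Z_0=\Red(V)=S\Max(C_k),\qquad \Der(D)=R(r\one+C_k).
\]
(S2) is the easy part. Given $(R,D)$, \eqref{t:E:wholeC} recovers the whole sum $C_k=A^{\times k}+P$ without identifying its summands. Lemma~\ref{t:L:polynomial} with $f(z)=z^k$ then recovers $A$. So the work is (S1): recovering $R$ from $V$.

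For (S1) we split inputs into two branches according to which summand of $C_k$ is selected.

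\begin{itemize}
\item The small branch is $N<k$. Here $\Max(C_k)=L^{\times k}$, so $Z_0=SL^{\times k}$ and $Z_1=S$.
\item The large branch is $N\geq k$. Here $\Max(C_k)=L^A$, with the tie at $N=k$ resolved by \eqref{b:E:exponent-tie}. Then $Z_0=SL^A$, $Z_1=SL$, and $Z_2=S$.
\end{itemize}

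Within each branch, terminal recovery (Lemma~\ref{b:L:reduction}) recovers $L$ and then $R$. Note that on $\OO$ we have $N\geq4$ by \eqref{b:E:O-inputs}, so for $k\leq4$ only the large branch occurs. The proof therefore reduces to excluding a cross-branch collision $V(A)\cong V(B)$ with $A$ small and $B$ large.

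To exclude such a collision I would copy the argument of Theorem~\ref{g:T:mixed} verbatim, now carrying the extra factor $L^{\times r}$.

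\begin{itemize}
\item Equality of $Z_1$ gives $S_A\cong S_BL_B$, hence $J_A\cong J_B+R_B$.
\item Applying $\Tow$ to equality of $Z_0$ and cancelling $J_B$ gives $R_B+kR_A\cong R_BB$. This is the analogue of \eqref{g:E:cross-one}, which is also \eqref{t:E:cross}.
\item Applying $\Tow$ to the equality of $V$ gives $J_A+rR_A+R_AC_k(A)\cong J_B+rR_B+R_BC_k(B)$. Cancelling $J_B$ yields $R_B+rR_A+R_AC_k(A)\cong rR_B+R_BC_k(B)$.
\end{itemize}

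Next take cardinalities, with $q_A=|R_A|$, $q_B=|R_B|$, $a_0=|A|<k$ and $m=|B|\geq k$. The second item gives $kq_A=(m-1)q_B$. Substituting into the third relation gives
\[
 k\bigl(|C_k(B)|+r\bigr)-(m-1)\bigl(|C_k(A)|+r\bigr)=k.
\]
For $r=0$ this is exactly \eqref{g:E:cross-card}, which was shown impossible in Theorem~\ref{g:T:mixed}. That argument shows the left side exceeds $k$, because $|C_k(B)|>m^k$ while $|C_k(A)|\leq k(k-1)^{k-1}$.

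For $r=1$ the left side changes by $k-(m-1)=k+1-m$. This extra term is at least $-(m-1)$, and it is the step I expect to be the only real obstacle. I would handle it by strengthening the slack in the earlier estimate. That estimate shows
\[
 k|C_k(B)|-(m-1)|C_k(A)|\geq k|B^B|+km^k-(m-1)k(k-1)^{k-1},
\]
and $k|B^B|\geq k\cdot m^m$ (or at least $k\,m$) exceeds $k+m-1$ easily, since $m\geq4$. This absorbs the perturbation and proves the collision impossible.

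Finally, no $\Red$ is applied to a terminal $S$: in both branches the number of reductions is fixed and every argument has positive height. Hence $V$ determines $R$, which gives (S1). Specializing to $k=5,r=1$ and $k=6,r=0$ gives Cases 1 and 2 of Table~\ref{b:tab:exceptions}, and $k=5,r=0$ gives the ten-occurrence instance.
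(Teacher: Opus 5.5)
Your proposal is correct and follows essentially the same route as the paper: the same two branches, the same cross-branch identities from $Z_1$, $Z_0$ and $V$, and the same elimination of $|R_A|/|R_B|$. The only difference is in absorbing the $r=1$ perturbation. You put it into the slack $k|B^B|$, whereas the paper bounds $r(M-1-k)\leq M-1$ and absorbs it into the gap $M^{k-1}>(k-1)^{k-1}+1/k$.

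One inequality in your write-up is false: $|B^B|\geq m^m$ does not hold, and in fact $|B^B|<m^m$ for positive-height $B$. Your fallback $|B^B|\geq m$, from the constant maps, does suffice. Together with $m^k-(m-1)(k-1)^{k-1}\geq1$ it gives $k|C_k(B)|-(m-1)|C_k(A)|\geq km+k>m-1$, which is exactly what the $r=1$ case needs.
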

\begin{proof}
For (S1), the reduction chains from $Z_0$ are
\begin{equation}\label{b:E:power-chains}
\begin{aligned}
 N<k:&\quad S\times L^{\times k}\ \longmapsto\ S,\\
 N\geq k:&\quad S\times L^A\ \longmapsto\ S\times L
                 \ \longmapsto\ S.
\end{aligned}
\end{equation}
Within either branch terminal recovery gives $R$. Suppose that a
small-branch input $A$ and a~large-branch input $B$ give the same $V$.
Use input subscripts, and put $M=|B|\geq k$, $q_A=|R_A|$, $q_B=|R_B|$.
Equality after one further reduction from $Z_0$ gives
\[
 S_A\cong S_B\times L_B,\qquad J_A\cong J_B+R_B.
\]
Applying $\Tow$ to equality of $Z_0$ and cancelling $J_B$ gives
\[
 R_B+kR_A\cong R_B\times B,
 \qquad kq_A=(M-1)q_B.
\]
Applying $\Tow$ to equality of the original
$V=S\times L^{\times r}\times L^{C_k}$ gives
\[
 R_B+rR_A+(R_A\times C_k(A))
 \cong rR_B+(R_B\times C_k(B)).
\]
Taking cardinalities and eliminating $q_A/q_B$ yields
\begin{equation}\label{b:E:power-cross}
 k|C_k(B)|=(M-1)|C_k(A)|+k+r(M-1-k).
\end{equation}
Since $|A|<k$,
\[
 |C_k(A)|\leq(k-1)^k+(k-1)^{k-1}=k(k-1)^{k-1}.
\]
As $r\in\{0,1\}$, \eqref{b:E:power-cross} implies
\[
 |C_k(B)|\leq1+(M-1)\bigl((k-1)^{k-1}+1/k\bigr)
 <1+(M-1)M^{k-1}\leq M^k.
\]
The strict inequality uses $M\geq k\geq2$ and
$M^{k-1}>(k-1)^{k-1}+1/k$. This contradicts
$|C_k(B)|=M^k+|B^B|>M^k$. Thus branches cannot collide, proving (S1).
For (S2), recover the whole $C_k$ by \eqref{t:E:VZ} and
apply Lemma~\ref{t:L:polynomial}.
\end{proof}

\section{The cubic comparison and its offsets}
\index{cubic!comparison}\index{offset!cubic comparison}
The three sums in this section can be written as
\begin{equation}\label{b:E:cubic-C}
 C_{\alpha,\beta}=(\Ap\alpha\times H_1)+(\Ap\beta\times G),
 \qquad (\alpha,\beta)\in\{(0,0),(1,0),(0,1)\}.
\end{equation}
The case $(0,0)$ is used with $r=0,1$; the two offsets are used with
$r=0$. The argument does not depend on $r$.

\begin{proposition}\label{b:P:cubic}
\index{cubic!comparison!proposition}
Every $E_{r,C_{\alpha,\beta}}$ in \eqref{b:E:cubic-C} is strong on
$\OO$. This includes Cases 3, 4, and 6 of
Table~\ref{b:tab:exceptions} and the required ten-occurrence instance.
\end{proposition}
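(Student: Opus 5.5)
The proof follows the pattern of Proposition~\ref{b:P:power}. I handle (S1) by listing the two possible selected-forest chains, show that each branch recovers $R$ by terminal recovery, and then exclude a cross-branch collision. The exclusion uses identity~\eqref{t:E:cross}, read in the component-polynomial ring rather than only through cardinalities. (S2) then follows from Lemma~\ref{t:L:signature}.

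\emph{The two branches.} The two summands of $C_{\alpha,\beta}$ have heights $(\alpha+N^3)\ell$ and $(\beta+p)\ell$. When the heights are equal, the exponents $A^{\times3}$ and $P$ have heights $3\ell<N\ell$, so Lemma~\ref{b:L:selection-order} selects the $G$ summand. For $A\in\OO$ there are therefore two chains, as in~\eqref{t:E:chain}:
\[
 \Ap\alpha H_1,\ \Ap3\quad(\sig=(2,3)),\qquad
 \Ap\beta G,\ P,\ A\quad(\sig=(3,1)).
\]
By Lemma~\ref{b:L:disconnected}, a disconnected input always lies in the $G$ branch. So an input $A$ in the $H_1$ branch is connected, and it is noncontractible since it lies in $\OO$. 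The extra $r$ leaves do not matter. By~\eqref{t:E:VZ} we have $\Red(V)=Z_0=S\Max(C)$, because the summand $RC$ dominates $J+rR$ in the $\Tow$-image. Within each branch, terminal recovery (Lemma~\ref{b:L:reduction}) with the known $k=3$ or $k=1$ recovers $R$.

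\emph{Excluding a collision (the main step).} Let $A$ be in the $H_1$ branch and $B$ in the $G$ branch, with $Z_0(A)\cong Z_0(B)$. Then $d_A=2<d_B=3$ and $\Delta=K(A)=B$. The terminal stage gives $J_A=J_B+R_B$. Formula~\eqref{t:E:cross} at $i=1$ and $i=0$ gives
\[
 R_B+3R_A\cong R_B\times B,\qquad
 R_B+R_A\,(\alpha\one+\Ap3)\cong R_B\,(\beta\one+B^B).
\]
The cardinality and height versions of these identities turn out to coincide, so they give no contradiction. This is the point I expect to be the obstacle. I would therefore pass to component polynomials: write $a=\poly(A)$, $b=\poly(B)$, $\rho_A=\poly(R_A)$, $\rho_B=\poly(R_B)$. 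The identities become $\rho_B(b-1)=3\rho_A$ and $\rho_B(\poly(B^B)+\beta-1)=\rho_A(\alpha+a^3)$. Multiplying the second by $3$, substituting the first, and cancelling the nonzero $\rho_B$ in the integral domain gives
\[
 (b-1)(\alpha+a^3)=3\bigl(\poly(B^B)+\beta-1\bigr).
\]
Now apply the evaluation $z_D=1$, which counts components. Since $A$ is connected, the left side becomes $(c_B-1)(\alpha+1)\le 2(c_B-1)$, where $c_B=\comp(B)$. The right side is $3(\comp(B^B)+\beta-1)$. By~\eqref{b:E:noncontractible-bound} this is at least $3(c_B^{c_B}+c_B^{c_B-1}-1)$, which is at least $3$ even when $c_B=1$. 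In every case the right side strictly exceeds the left, a contradiction. Hence the branches cannot collide, and $V$ determines $R$.

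\emph{(S2).} Given $(R,D)$, recover the whole sum $C_{\alpha,\beta}$ by~\eqref{t:E:wholeC}. The two candidate selected forests are nonpure, have weight at most eight, and have distinct signatures $(2,3)$ and $(3,1)$. Lemma~\ref{t:L:signature} therefore identifies the branch and recovers $A$. Together with Theorem~\ref{b:T:K}, this covers Cases 3, 4, and 6 and the ten-occurrence sum $H_1+G$.
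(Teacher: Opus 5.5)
\textbf{Gap: the claim that $A$ is connected.} You argue that by Lemma~\ref{b:L:disconnected} every disconnected input lies in the $G$ branch, so $A$ is connected. That lemma does not give this. It compares $f$ and $g$ with $N^3$ and $Np$, that is, the forests $H_3,H_4$ with $H_1,H_2$. The forest $G$ in $C_{\alpha,\beta}$ has height $p\ell$, not $g\ell$. For two-component inputs the paper's bounds give only $p\geq(N+1)N$ and $p>2N^2$, nothing like $p>N^3+1$. So connectedness of an $H_1$-branch input is not established.

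What you can legitimately get is the paper's weaker conclusion. By the second bound in \eqref{b:E:basic-bounds}, $\comp(A)\geq3$ would give $p\geq N^3+N^2>N^3+\alpha-\beta$, contradicting selection of the $H_1$ side. Hence $\comp(A)\leq2$.

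\textbf{Repair.} Your final inequality survives this weaker bound. Evaluating at $z_D=1$, the left side is now at most $(c_B-1)(1+8)=9(c_B-1)$. Since $c_B^{c_B}+c_B^{c_B-1}\geq c_B^2+c_B$, the right side is at least $3(c_B^2+c_B-1)=9(c_B-1)+3(c_B-1)^2+3$. This strictly exceeds the left side for every $c_B\geq1$. With this replacement the collision argument is complete.

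There is also a small slip in your setup. You wrote $\Delta=K(A)=B$, but $E_{d_A}(B)=E_2(B)=B^{\times1}$, so $\Delta=K(B^{\times1})=\one$. Your next identity, $J_A\cong J_B+R_B$, is the correct one.

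\textbf{Comparison with the paper.} Otherwise your route is the paper's: the same two chains, the two cross-branch identities from \eqref{t:E:cross}, elimination of $R_A$, and a component count against \eqref{b:E:noncontractible-bound}. The paper first shows that $B$ has an isolated point and cancels sums of ordered sets. You instead eliminate $R_A$ in the signed component-polynomial ring and evaluate at $z_D=1$. This is a harmless variant that avoids the least-component-height argument and handles $\comp(B)=1$ directly.

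For (S2), invoking Lemma~\ref{t:L:signature} with the distinct signatures $(2,3)$ and $(3,1)$ is valid. This is how the paper treats the twelve-occurrence offsets. Here the paper excludes the cross-branch case by hand instead, deriving $3R\cong R\times B$ and hence $B\cong3\one$.
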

\begin{proof}
Call the branch selecting $A^{\times\alpha}\times H_1$ the first
branch and the branch selecting $A^{\times\beta}\times G$ the second.
A~height tie is resolved by the fixed order; either choice is allowed
in the argument. From $Z_0$ the chains are
\begin{equation}\label{b:E:cubic-chains}
\begin{aligned}
 \text{first: }& S\times L^{\times\alpha}\times L^{\Ap3}
       \ \longmapsto\ S\times L^{\times3}\ \longmapsto\ S,\\
 \text{second: }& S\times L^{\times\beta}\times L^P
       \ \longmapsto\ S\times L^A\ \longmapsto\ S\times L
       \ \longmapsto\ S.
\end{aligned}
\end{equation}
Thus terminal recovery determines $R$ within either branch.

Suppose $A$ lies in the first branch, $B$ in the second, and their
$Z_0$ values are isomorphic. Equality of the second subsequent values
in \eqref{b:E:cubic-chains} gives
\[
 S_A\cong S_B\times L_B,\qquad J_A\cong J_B+R_B.
\]
Applying $\Tow$ to equality of the first subsequent values, and then
to equality of $Z_0$, gives respectively
\begin{equation}\label{b:E:cubic-cross}
\begin{aligned}
 R_B+3R_A&\cong R_B\times B,\\
 R_B+\alpha R_A+(R_A\times A^{\times3})
   &\cong\beta R_B+(R_B\times P_B).
\end{aligned}
\end{equation}
The first identity forces $B$ to have an isolated point. Otherwise,
if $d$ is the least component height of $R_B$, all components on the
right have height greater than $d$, whereas the left contains a
component of height $d$. Write $B=\one+B'$ and set
$m=\comp(B')\geq1$. Sum cancellation gives
$3R_A\cong R_B\times B'$. Take three copies of the second identity in
\eqref{b:E:cubic-cross}, substitute this expression, and cancel $R_B$.
The result is
\begin{equation}\label{b:E:cubic-decisive}
 3P_B\cong3(1-\beta)\one+\alpha B'+(B'\times A^{\times3}).
\end{equation}

Since $A$ selects the first branch, $p_A\leq |A|^3+\alpha-\beta
\leq |A|^3+1$. The second bound in \eqref{b:E:basic-bounds} shows
$\comp(A)\leq2$: three or more components would give
$p_A\geq |A|^3+|A|^2$. Counting components in
\eqref{b:E:cubic-decisive} therefore gives
\begin{equation}\label{b:E:cubic-upper}
 3\comp(P_B)\leq3+9m.
\end{equation}
But $B\in\OO$, so \eqref{b:E:noncontractible-bound} gives
\[
 \comp(P_B)\geq(m+1)^{m+1}+(m+1)^m
      =(m+2)(m+1)^m\geq6m.
\]
For the last inequality, use $(m+1)^m\geq m+1$ and
$(m+2)(m+1)-6m=(m-1)(m-2)\geq0$ for integer $m\geq1$.
Thus $3\comp(P_B)\geq18m>3+9m$, contradicting
\eqref{b:E:cubic-upper}. This proves (S1), using $Z_0$ alone.

For (S2), recover the entire $C_{\alpha,\beta}$ from $(R,D_t)$.
In the first branch, cancellation in its $\Der$-image recovers
$\alpha\one+A^{\times3}$; remove $\alpha$ isolated points and take
a~Cartesian cube root. In the second, the same operation recovers
$\beta\one+P$; remove $\beta$ isolated points and use
$\Der(P)=R\times A$.

Finally, suppose the same pair $(R,C_{\alpha,\beta})$ comes from
opposite branches, with $A$ in the first. Equality of the $\Der$-images,
followed by cancellation, gives
\[
 \alpha\one+A^{\times3}\cong\beta\one+B^B.
\]
The positive-height summands supply the selected intervals. Applying
$\Der$ again gives $3R\cong R\times B$, forcing $B\cong3\one$,
contrary to positive height. This excludes the remaining collision
and proves (S2).
\end{proof}

\section{The ten-occurrence theorem}
\index{occurrence!count!ten: theorem}
\begin{theorem}\label{T:ten}
\index{occurrence!count!ten: theorem}
Every term with at most ten occurrences is reconstructing, and every
nontrivial such term is strong on $\OO$.
\end{theorem}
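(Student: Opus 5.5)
The proof follows the template of Theorems~\ref{T:n6}--\ref{T:n9}, with one additional branch for the two exceptional roots. Theorem~\ref{T:n9} establishes every occurrence count through nine. So consider a normal form with exactly ten occurrences, written as in \eqref{b:E:normal-form}.

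If its root is uniform, it is either a pure power $A^{\Ap9}$, covered by the pure-power clause of Proposition~\ref{b:P:uniform}, or it has a single distinct nontrivial child type $u$. In the second case $u$ has at most nine occurrences, so it is strong on $\OO$ by Theorem~\ref{T:n9}. The uniform-extension clause of Proposition~\ref{b:P:uniform} then makes the parent strong.

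Otherwise the root is mixed, $E_{r,C}$, with budget $r+m+\sum\wt(E_i)=9$, which satisfies \eqref{b:E:budget}. Proposition~\ref{b:P:classification} then gives two possibilities.

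\emph{The regular case.} Here $C$ has a fixed selected forest $D_0$ of weight at most seven, and if $D_0$ is pure then all summands of $C$ are pure. Proposition~\ref{t:P:regular}, built on Lemma~\ref{t:L:forest}, makes $E_{r,C}$ strong. The selection, including height ties, is justified by the comparisons of Proposition~\ref{b:P:classification}.

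\emph{The exceptional case.} By the last sentence of Proposition~\ref{b:P:classification}, the only ten-occurrence instances are $C=\Ap5+P$ and $C=H_1+G$, both with $r=0$.
\begin{itemize}
\item The first is $C_k$ with $k=5$, $r=0$, and is strong by Proposition~\ref{b:P:power}.
\item The second is $C_{0,0}$ with $r=0$, and is strong by Proposition~\ref{b:P:cubic}.
\end{itemize}
I would check the budgets explicitly. The first has $m=2$ and weights $5$ and $2$, giving $0+2+5+2=9$. The second has weights $3$ and $2$ after removing the base occurrences: $H_1=A^{\Ap3}$ contributes weight $3$, and $G=A^P$ has forest $P$ of weight $2$. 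This gives $0+2+3+2=7$, which does not equal $9$.

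This mismatch is the point I expect to need care. Proposition~\ref{b:P:classification} lists $C=H_1+G$ as an instance ``below eleven''. Under the convention of \eqref{t:E:mixed}, the summands $E_i$ of $C$ are the root-exponent forests of the nontrivial children $A^{E_i}$. So $E_1=H_1$ has weight $4$ and $E_2=G$ has weight $3$, giving $0+2+4+3=9$. The budget is consistent once the $E_i$ are read as the displayed forests themselves. I would verify this reading against the table entry $r=1$ for Case~3 at eleven occurrences, and similarly confirm that $\Ap5+P$ has weights $5+2$.

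The exhaustiveness of uniform plus regular plus these two forms is the count $497+220+2=719$ stated at the start of the chapter, matching Table~\ref{tab:all-counts}.

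Finally, strong terms reconstruct on $\OO$, as observed after Definition~\ref{b:D:strong}. Theorem~\ref{b:T:K} handles inputs in $\K$ and rules out mixed pairs with one input in $\K$ and one in $\OO$. Lemma~\ref{L:antichains} covers antichains. Interchange normalization, via Proposition~\ref{P:calculus}, transfers the result to all $4{,}862$ binary parenthesizations.

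The main obstacle is exhaustiveness: confirming that Proposition~\ref{b:P:classification} leaves exactly these two exceptional sums at ten occurrences. Everything else is a citation. I would check this by re-running the weight-pair comparisons under budget $9$. The pair $5+2$ gives only $\Ap5$ versus $P$ as undecided. The pair $4+3$ gives only $(H_1,G)$, the dash in Table~\ref{b:tab:43}. The pair $5+3$ is excluded because $2+5+3>9$. Three or more summands introduce no new undecided comparison.
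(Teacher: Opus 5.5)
Your proposal is correct and follows the paper's proof. Uniform roots go through Theorem~\ref{T:n9} and Proposition~\ref{b:P:uniform}, regular roots through Propositions~\ref{b:P:classification} and~\ref{t:P:regular}, the two exceptional sums $A^{\times5}+P$ and $H_1+G$ with $r=0$ through Propositions~\ref{b:P:power} and~\ref{b:P:cubic}, and the rest through Theorem~\ref{b:T:K} and interchange normalization. Your corrected budget reading is the right one: each $E_i$ is the displayed forest itself, so $H_1$ and $G$ have weights $4$ and $3$, and your initial computation giving $7$ should simply be dropped.
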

\begin{proof}
The preceding chapters cover all smaller counts. Uniform roots are
strong by Proposition~\ref{b:P:uniform}. Every regular root is strong
by Proposition~\ref{t:P:regular}, using the exhaustive comparisons of
Proposition~\ref{b:P:classification}. At exactly ten occurrences the
only remaining sums in that classification are $A^{\times5}+P$ and
$H_1+G$, with $r=0$. Propositions~\ref{b:P:power} and
\ref{b:P:cubic} prove their strong assertions. This exhausts all 719
normal forms. Theorem~\ref{b:T:K} and interchange normalization finish
the proof for all inputs and all 4,862 binary parenthesizations.
\end{proof}

\chapter{Eleven occurrences}\label{ch:eleven}

The eleven-occurrence classification has 1,842 normal forms:
1,226 uniform, 605 regular, and eleven exceptional. The exceptional
list is Table~\ref{b:tab:exceptions}. The two power cases and the
three cubic cases, including their offsets, have already been proved
in Chapter~\ref{ch:ten}. The remaining six cases are proved here.

\section{The quartic comparison}
\index{quartic!comparison}\index{offset!quartic comparison}
\begin{proposition}\label{b:P:quartic}
\index{quartic!comparison!proposition}
For $U_1=A^{A^{\times4}}$, the term $E_{0,U_1+G}$ is strong on
$\OO$.
\end{proposition}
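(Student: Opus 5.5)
We follow the pattern of Proposition~\ref{b:P:cubic}, with $A^{\times3}$ replaced by $A^{\times4}$. The mixed sum is $C=U_1+G$ with $r=0$. Its root invariant is $Z_0=S\times\Max(C)$, and $\Max(C)$ is either $L^{\Ap4}$ (first branch, when $N^4\ell$ wins or wins the tie) or $L^P$ (second branch). From $Z_0$ the two reduction chains are
\[
 S L^{\Ap4}\mapsto S L^{\times4}\mapsto S,
 \qquad
 S L^{P}\mapsto S L^{A}\mapsto S L\mapsto S .
\]
Within either branch, terminal recovery (Lemma~\ref{b:L:reduction}) gives $R$. For (S2), we recover the whole $C$ by \eqref{t:E:wholeC}. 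In the first branch, $\Der(C)/R=A^{\times4}$ and its fourth root gives $A$. In the second, $\Der(C)/R=P$ and $\Der(P)=RA$ gives $A$. A cross-branch collision with the same known $(R,C)$ is excluded exactly as at the end of Proposition~\ref{b:P:cubic}: it would force $A^{\times4}\cong B^B$, and applying $\Der$ gives $4R\cong RB$, so $B\cong4\one$, contradicting positive height.

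The substantive step is excluding a collision for (S1). Suppose $A$ is in the first branch, $B$ in the second, and $Z_0(A)\cong Z_0(B)$. The chains have lengths $2$ and $3$. Matching the terminal stage of $A$ with the third stage of $B$ gives $S_A\cong S_BL_B$ and $J_A\cong J_B+R_B$. Using \eqref{t:E:cross}, applying $\Tow$ at the earlier stages and cancelling $J_B$ gives
\[
 R_B+4R_A\cong R_B\times B,\qquad
 R_B+R_A\times A^{\times4}\cong R_B\times P_B .
\]
The first identity forces $B=\one+B'$, by the least-component-height argument used there, and then $4R_A\cong R_B B'$. Take four copies of the second identity, substitute, and cancel $R_B$. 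This yields
\[
 4P_B\cong 4\one+B'\times A^{\times4}.
\]

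Now count components. Put $m=\comp(B')\geq1$. Since $A$ selects the first branch, $p_A\leq|A|^4$. By \eqref{b:E:basic-bounds}, $p_A\geq N^c+N^{c-1}$, which forces $c=\comp(A)\leq3$. Hence
\[
 4\comp(P_B)\leq4+27m .
\]
On the other side, $B\in\OO$ with $m+1$ components, so \eqref{b:E:noncontractible-bound} gives $\comp(P_B)\geq(m+2)(m+1)^m$. This gives $4\comp(P_B)\geq 4(m+2)(m+1)^m$. For $m\geq2$ this exceeds $4+27m$, since $m=2$ gives $144>58$. The only potentially tight case is $m=1$, where the bound is $24\leq31$, so this crude count fails.

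The main obstacle is therefore the case $m=1$, that is, $B=\one+B'$ with $B'$ connected and noncontractible. Three refinements are available:
\begin{itemize}
\item Use isolated points. We have $\iso(P_B)\geq1$ only via constants to the isolated point, and the constant coefficient of the right side is $4$, so $\iso(P_B)=1$. This needs $\iso$ of $B'\times A^{\times4}$ to be $0$, which holds if $A$ has no isolated point; if $A$ has isolated points, they contribute, and we compare multiplicities instead.
\item Get sharper component counts. With $c\leq3$, a finer count of $\comp(P_B)$ for $B=\one+B'$ gives at least $2^2+2$ assignment components plus extra components from the noncontractible $B'$ (self-maps not connected to constants, via Lemma~\ref{t:L:retract}). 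Comparing this with $1+\comp(A)^4$ coming from the right side should close the case.
\item Fall back on cardinality or height. If needed, compare heights: $4\height(P_B)=4|B|\ell_B$ against $\height(B')+4\ell_A$. Together with the relation $\height(S_A)=\height(S_B)+\ell_B$, this should bound $|A|$ in terms of $|B|$ and contradict $|P_A|\leq|A|^4$ when $|A|\geq4$.
\end{itemize}

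I expect this $m=1$ component/height bookkeeping to be the delicate point.
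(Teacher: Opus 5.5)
\textbf{There is a genuine gap in the component count for (S1).} The right side of $4P_B\cong4\one+B'\times A^{\times4}$ has $4+ma^4$ components, where $a=\comp(A)\leq3$. The bound is therefore $4\comp(P_B)\leq4+81m$, not $4+27m$; the exponent $3$ belongs to the cubic case. With the correct constant, your crude comparison against $4(m+2)(m+1)^m$ fails at $m=1$ ($24$ versus $85$) and also at $m=2$ ($144$ versus $166$). It settles only $m\geq3$.

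For the remaining cases you list three possible refinements but carry out none of them, so (S1) is not proved. The isolated-point idea, for instance, only yields $\iso(P_B)=1$, and this is not contradictory. For $B=\one+B'$ with $B'$ connected, $\iso(B^B)=1+\iso((B')^{B'})$, which can equal $1$. The height fallback is likewise only a hope.

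The missing idea is that the component count is an exact identity, not just an inequality:
\[
 4\comp(P_B)=4+ma^4,\qquad a\in\{1,2,3\},
\]
and integrality does the remaining work.
\begin{itemize}
\item For $m=1$ or $m=2$, divisibility by four forces $a$ to be even, so $a=2$ and $\comp(P_B)=1+4m\in\{5,9\}$. But if $B\in\OO$ has two components, \eqref{b:E:noncontractible-bound} gives $\comp(P_B)\geq2^2+2=6$. If it has three components, $\comp(P_B)\geq3^3=27$. Both cases are contradictions.
\item For $m\geq3$, $4\comp(P_B)\geq4(m+1)^{m+1}\geq256m>4+81m$.
\end{itemize}
With this correction, the rest of your outline is exactly the paper's proof: the two chains, the cross-branch identities, the isolated point of $B$, the decisive identity, and the (S2) argument.
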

\begin{proof}
The chains from $Z_0$ in the two branches are
\begin{equation}\label{b:E:quartic-chains}
\begin{aligned}
 U_1:&\quad S\times L^{\Ap4}\ \longmapsto\ S\times L^{\times4}
                      \ \longmapsto\ S,\\
 G:&\quad S\times L^P\ \longmapsto\ S\times L^A
                 \ \longmapsto\ S\times L\ \longmapsto\ S.
\end{aligned}
\end{equation}
Within a~branch, terminal recovery gives $R$.
Suppose $A$ selects $U_1$, $B$ selects $G$, and their $Z_0$ values
are isomorphic. The equalities in \eqref{b:E:quartic-chains}, followed
by $\Tow$, give
\begin{equation}\label{b:E:quartic-cross}
 R_B+4R_A\cong R_B\times B,
 \qquad
 R_B+(R_A\times A^{\times4})\cong R_B\times P_B.
\end{equation}
The least-component-height argument used above gives an isolated point
of $B$. Write $B=\one+B'$, and let $m=\comp(B')\geq1$.
The first identity gives $4R_A\cong R_B\times B'$. Take four copies
of the second and cancel $R_B$ to obtain
\begin{equation}\label{b:E:quartic-decisive}
 4P_B\cong4\one+(B'\times A^{\times4}).
\end{equation}
The branch condition is $p_A\leq |A|^4$, so
$a=\comp(A)\leq3$ by \eqref{b:E:basic-bounds}. Consequently
\begin{equation}\label{b:E:quartic-count}
 4\comp(P_B)=4+ma^4,\qquad 1\leq a\leq3.
\end{equation}

If $m=1$ or $m=2$, divisibility by four forces $a$ to be even, hence
$a=2$. For $m=1$, \eqref{b:E:quartic-count} gives $\comp(P_B)=5$,
but $B\in\OO$ has two components, so
$\comp(P_B)\geq2^2+2=6$. For $m=2$, the equation gives
$\comp(P_B)=9$, whereas $B$ has three components and
$\comp(P_B)\geq3^3=27$.

For $m\geq3$, the ordinary component bound gives
\[
 4\comp(P_B)\geq4(m+1)^{m+1}\geq256m>4+81m\geq4+ma^4.
\]
Indeed, $(m+1)^{m-1}\geq16$ and $(m+1)^2\geq4m$.
All possibilities contradict \eqref{b:E:quartic-count}. Thus opposite
branches cannot give the same $Z_0$, proving (S1).

For (S2), the whole sum $C=U_1+G$ is recovered as before. Within the
first branch, its $\Der$-image divided by $R$ is $A^{\times4}$;
within the second, it is $P$. Both recover $A$. Across the branches,
the same $R,C$ would give $A^{\times4}\cong B^B$; applying $\Der$
would give $4R\cong R\times B$, forcing $B\cong4\one$.
This contradicts positive height and completes the proof.
\end{proof}

\section{Five comparisons among the four-occurrence terms}
\index{four-occurrence!term!comparisons}\index{comparison!four-occurrence terms}
The remaining sums are
\begin{equation}\label{b:E:H-pairs}
 H_1+H_3,\quad H_1+H_4,\quad H_2+H_3,\quad
 H_2+H_4,\quad H_3+H_4.
\end{equation}
Each has $r=0$ and eleven occurrences.

\begin{proposition}\label{b:P:Hpairs}
\index{four-occurrence!term!five comparisons}
Every term $E_{0,C}$ with $C$ in \eqref{b:E:H-pairs} is strong on
$\OO$.
\end{proposition}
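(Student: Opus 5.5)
The plan follows the template of Propositions~\ref{b:P:cubic} and~\ref{b:P:quartic}. For each sum $C=X+Y$ in \eqref{b:E:H-pairs}, with $X$ the earlier and $Y$ the later term in the list $H_1,H_2,H_3,H_4$, there are two possible selected forests. The chains from $Z_0$ are fixed by the forest lemma:
\[
 H_1:\ L^{\Ap3}\mapsto L^{\times3},\quad
 H_2:\ L^{AP}\mapsto L^{AP\text{-chain}}\ (AP,A),\quad
 H_3:\ L^F\mapsto L^{\Ap2},\quad
 H_4:\ L^G\mapsto L^P\mapsto L^A\mapsto L,
\]
each multiplied by $S$ and ending at $S$. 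Within each branch, terminal recovery gives $R$. The real content of (S1) is therefore excluding a collision between an input $A$ selecting $X$ and an input $B$ selecting $Y$ (or the reverse) with isomorphic $Z_0$.

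For such a collision I would compare chain lengths. Where the lengths differ, I would apply the cross identity \eqref{t:E:cross} exactly as in the cubic case. The shorter chain terminates with $S_A\cong S_B\times(\text{stage of }B)$, which gives $J_A\cong J_B+R_B\Delta$. The earlier stages then give identities such as $R_B\Delta+3R_A\cong R_B\times\Der(\cdot)$.

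I would then use the least-component-height argument to force isolated points. For example, for $H_1$ against $H_4$, the stage identity $R_B+3R_A\cong R_B\times B$ forces $B=\one+B'$, and so $3R_A\cong R_BB'$. Substituting into the next identity gives a decisive isomorphism of the form $3(\text{stage}_B)\cong c\one+B'\times(\text{stage}_A)$. Taking $\comp$ and using the bounds \eqref{b:E:noncontractible-bound} and \eqref{t:E:componentbound} for $B\in\OO$ should then contradict it. Here the branch condition (for instance $N_A^3\geq g_A$ for $H_1$ to beat $H_4$) restricts $\comp(A)$, as in \eqref{b:E:quartic-count}.

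Where the chain lengths coincide ($H_1$ vs $H_3$, both of length two, and $H_2$ vs $H_3$ after adjusting), I would compare the terminal pure powers instead. Equality $S_AL_A^{\times3}\cong S_BL_B^{\times2}$ after $\Red$ gives $S_A\cong S_B$. Applying $\Tow$ and canceling $J$ then gives $3R_A\cong2R_B$, and hence a relation $R_A\cong2Q$, $R_B\cong3Q$. Feeding this into the $Z_0$-level identity $R_A\times A^{\times3}\cong R_B\times\Ap2_B$-type equations gives $2A^{\times3}\cong3B^{\times2}$ in the polynomial ring. From this I would derive a contradiction by comparing weighted degrees, $3\ell_A=2\ell_B$, together with cardinalities and the branch inequalities ($N_A^3$ versus $f_A$ and $N_B^2$ versus $h$-values). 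Alternatively, I would take component counts modulo small integers, as in the quartic parity argument.

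For (S2) in every pair, I would recover the whole $C$ by \eqref{t:E:wholeC}. Within a branch, $\Der(C)/R$ is the $K$ of the selected forest, and Lemma~\ref{t:L:signature} applies when the signatures differ: $\sig(H_1)=(2,3)$, $\sig(H_2)=(3,1)$, $\sig(H_3)=(3,2)$, $\sig(H_4)=(4,1)$. These are pairwise distinct, so the selected branch and $A$ are determined.

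The main obstacle I expect is the equal-length collisions, and above all $H_3$ versus $H_4$ and $H_2$ versus $H_3$. In these cases both inputs can be large and the branch conditions ($f$ versus $g$, $Np$ versus $f$) do not bound $\comp$ cheaply. My first attempt there would combine the cross identities at every stage with the sharper Lemma~\ref{t:L:bounds}, such as $h_4>Nf$, $g\geq N^2f$ for disconnected inputs, and $f>p^2$. This should force the selecting input to be connected, after which a cardinality elimination of $q_A/q_B$, as in \eqref{b:E:power-cross}, should give the contradiction.
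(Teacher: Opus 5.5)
There is a genuine gap, and it starts with the chains. Your $H_3$ chain stops one stage early: $\Red(S\times L^{\Ap2})=S\times L^{\times2}$, so the chain is $SL^F\mapsto SL^{\Ap2}\mapsto SL^{\times2}\mapsto S$, exactly as your own $\sig(H_3)=(3,2)$ says. Hence $H_1$ against $H_3$ is not an equal-length collision. The shorter chain ends with $S_A\cong S_B\times L_B^{\times2}$, so $J_A\cong J_B+2R_B$, and \eqref{t:E:cross} gives $2R_B+3R_A\cong R_B\times\Bp2$ and $2R_B+R_A\times\Ap3\cong R_B\times F_B$. The relations $3R_A\cong2R_B$ and $2\Ap3\cong3\Bp2$ that you plan to exploit never arise. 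With $\comp(A)=1$, the component counts give $3\comp(F_B)=\comp(B)^2+4$, which is impossible modulo three. Your sample identity for $H_1$ against $H_4$ is likewise shifted by one stage; it is the identity for $H_1$ against $G$. Here the terminal equality is $S_A\cong S_B\times L_B^B$, so the identities are $R_B\times B+3R_A\cong R_B\times P_B$ and $R_B\times B+R_A\times\Ap3\cong R_B\times G_B$. No isolated point of $B$ is extracted. Instead, component counts give $\comp(P_B)>\comp(B)$ and $3\comp(G_B)=\comp(P_B)+2\comp(B)$. This contradicts $\comp(G_B)\geq\comp(P_B)$, since $P_B$ is a retract of $G_B$.

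The second problem is your finishing step for the pairs you call hardest. Forcing the first-branch input $A$ to be connected is right: Lemma~\ref{b:L:disconnected} already makes the later member strictly taller on disconnected inputs in all five pairs. But connectedness pays off only through component and isolated-point counts, not through cardinalities. Note that $H_3$ against $H_4$ has unequal lengths (four versus five stages). Its identities $R_B+2R_A\cong R_B\times B$ and $R_B+R_A\times\Ap2\cong R_B\times P_B$ give in cardinalities only $|P_B|=1+|A|^2(|B|-1)/2$, which is not contradictory. Your own isolated-point template does work here: $B=\one+B'$ and $2P_B\cong2\one+B'\times\Ap2$. So $2\comp(P_B)=2+\comp(B')$, which contradicts $\comp(P_B)\geq\comp(B)^{\comp(B)}$. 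Similarly, $H_2$ against $H_4$ gives $\comp(P_B)=2\comp(B)-1<\comp(B)^{\comp(B)}$, since $\comp(B)\geq2$. For $H_2$ against $H_3$ your equal-length step is correct and gives $R_A\cong2R_B$, and then $\Bp2\cong2\one+2A$. However, both its cardinality form $|B|^2=2+2|A|$ and its component form $\comp(B)=2$ are consistent. The decisive invariant is the isolated-point count $\iso(B)^2=2+2\iso(A)=2$. Your (S2) argument via Lemma~\ref{t:L:signature} is sound, since the four signatures are pairwise distinct. The paper proves (S2) by an equivalent direct $\Der$-cancellation.
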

\begin{proof}
Starting at $Z_0=S\times\Max(C)$, the branch chains are displayed in
Table~\ref{b:tab:H-chains}. In this table only, juxtaposition denotes
Cartesian product; exponential notation retains its usual meaning.
An entry after $Z_0$ is obtained by one application of $\Red$ to the
preceding entry. A~dash means that no further reduction is needed.
Every branch reaches a~terminal pair with known Cartesian exponent,
so it determines $R$.

\begin{table}[htbp]
\centering
\small
\setlength{\tabcolsep}{5pt}
\begin{tabular}{@{}cccccc@{}}
\toprule
Branch & $Z_0$ & $Z_1$ & $Z_2$ & $Z_3$ & $Z_4$\\
\midrule
$H_1$ & $SL^{A^{\times3}}$ & $SL^{\times3}$ & $S$ & --- & ---\\
$H_2$ & $SL^{A\times P}$ & $SLL^A$ & $SL$ & $S$ & ---\\
$H_3$ & $SL^F$ & $SL^{A^{\times2}}$ & $SL^{\times2}$ & $S$ & ---\\
$H_4$ & $SL^G$ & $SL^P$ & $SL^A$ & $SL$ & $S$\\
\bottomrule
\end{tabular}
\caption{The complete reduction chains for the five exceptional pairs.}
\label{b:tab:H-chains}
\end{table}

We prove that opposite branches cannot give the same $Z_0$.
In each pair, let $A$ select the first displayed $H_i$ and $B$ the
second. Lemma~\ref{b:L:disconnected} shows that $A$ is connected:
on disconnected inputs the second member has strictly greater height
in every pair of \eqref{b:E:H-pairs}. Thus
\begin{equation}\label{b:E:H-connected}
 \comp(A)=1,\qquad \iso(A)=0.
\end{equation}
Throughout these comparisons, set
\[
\begin{aligned}
 \rho&=\comp(R_A),& \sigma&=\comp(R_B),& b&=\comp(B),\\
 u&=\comp(P_B),& v&=\comp(F_B),& w&=\comp(G_B).
\end{aligned}
\]
The integers $\rho,\sigma$ are positive. Lemma~\ref{b:L:bounds} gives
$u\geq b^b$ and $w\geq u$.

\smallskip\noindent
\emph{The pair $H_1,H_3$.}
Equality of $Z_2$ gives $S_A\cong S_B\times L_B^{\times2}$ and
$J_A\cong J_B+2R_B$. Apply $\Tow$ to the equalities of $Z_1$ and
$Z_0$, and cancel $J_B$:
\[
 2R_B+3R_A\cong R_B\times B^{\times2},\qquad
 2R_B+(R_A\times A^{\times3})\cong R_B\times F_B.
\]
Using \eqref{b:E:H-connected} and counting components gives
$2\sigma+3\rho=\sigma b^2$ and
$2\sigma+\rho=\sigma v$. Eliminating $\rho/\sigma$ yields
\[
 3v=b^2+4.
\]
This is impossible modulo three, since a~square cannot be congruent
to two modulo three.

\smallskip\noindent
\emph{The pair $H_1,H_4$.}
Equality of $Z_2$ gives $S_A\cong S_B\times L_B^B$, hence
$J_A\cong J_B+(R_B\times B)$. From the equalities of $Z_1,Z_0$,
\[
 (R_B\times B)+3R_A\cong R_B\times P_B,
 \qquad
 (R_B\times B)+(R_A\times A^{\times3})\cong R_B\times G_B.
\]
Component counts give $b\sigma+3\rho=u\sigma$ and
$b\sigma+\rho=w\sigma$. Thus $u>b$ and $3w=u+2b$.
Since $w\geq u$, the latter equation forces $u\leq b$, a~contradiction.

\smallskip\noindent
\emph{The pair $H_2,H_3$.}
Equality of $Z_3$ gives $S_A\cong S_B$. Cancel this common factor
from equality of $Z_2$ to get $L_A\cong L_B^{\times2}$, whence
$R_A\cong2R_B$. Cancel the common $S$ from equality of $Z_1$ and apply
$\Tow$. The result is
\[
 2R_B+(2R_B\times A)\cong R_B\times B^{\times2}.
\]
Cancel $R_B$ to obtain $B^{\times2}\cong2\one+2A$.
Its isolated-point count is $\iso(B)^2=2$, by
\eqref{b:E:H-connected}. This is impossible.

\smallskip\noindent
\emph{The pair $H_2,H_4$.}
Equality of $Z_3$ gives $S_A\cong S_B\times L_B$, hence
$J_A\cong J_B+R_B$. The equalities of $Z_2,Z_1$ give
\[
 R_B+R_A\cong R_B\times B,
 \qquad
 R_B+R_A+(R_A\times A)\cong R_B\times P_B.
\]
Consequently $\sigma+\rho=b\sigma$ and
$\sigma+2\rho=u\sigma$. Thus $b\geq2$ and $u=2b-1$.
But $u\geq b^b\geq b^2>2b-1$, a~contradiction.

\smallskip\noindent
\emph{The pair $H_3,H_4$.}
Again $S_A\cong S_B\times L_B$ and $J_A\cong J_B+R_B$.
The equalities of $Z_2,Z_1$ now give
\[
 R_B+2R_A\cong R_B\times B,
 \qquad
 R_B+(R_A\times A^{\times2})\cong R_B\times P_B.
\]
Hence $\sigma+2\rho=b\sigma$ and $\sigma+\rho=u\sigma$.
They imply $b\geq2$ and $2u=b+1$, again contradicting $u\geq b^b$.
This completes all five cross-branch comparisons and proves (S1).

\smallskip
For (S2), first recover the entire $C$ from $(R,D_t)$.
Write $H_i=A^{Q_i}$ for $1\leq i\leq4$, where
\[
 (Q_1,Q_2,Q_3,Q_4)=(A^{\times3},A\times P,F,G).
\]
Within branch $i$, cancellation in $\Der(C)=R\times Q_i$ recovers
$Q_i$. For $Q_1$, take a~Cartesian cube root. For $Q_2$, compute
$\Der(Q_2)=R\times(\one+A)$ and remove the one known isolated point
after cancellation. For $Q_3$, use $\Der(F)=R\times A^{\times2}$ and
a~Cartesian square root. For $Q_4$, use $\Der(G)=R\times P$, followed
by $\Der(P)=R\times A$.

It remains to exclude opposite branches with the same $R,C$. After
cancelling $R$ in their $\Der$-images, these would give
$Q_i(A)\cong Q_j(B)$. Each possibility is impossible, as follows.

For $(i,j)=(1,3)$, applying $\Der$ gives
$3R\cong R\times B^{\times2}$, so $B^{\times2}\cong3\one$, contrary
to positive height. For $(1,4)$ it gives
$3R\cong R\times P_B$, so $P_B\cong3\one$, again impossible.
For $(2,3)$ it gives $\one+A\cong B^{\times2}$; applying $\Der$
once more gives $R\cong2R$, impossible by cardinality.
For $(2,4)$ it gives $\one+A\cong P_B$; applying $\Der$ again gives
$R\cong R\times B$, forcing $B\cong\one$.
Finally, for $(3,4)$ it gives $A^{\times2}\cong P_B$; applying
$\Der$ again gives $2R\cong R\times B$, forcing $B\cong2\one$.
The last two conclusions contradict positive height. Thus (S2) holds
in all five cases.
\end{proof}

\section{The cumulative theorem through eleven}
\index{reconstruction!through eleven occurrences}\index{occurrence!count!eleven: cumulative theorem}
\begin{theorem}\label{b:T:main}
\index{occurrence!count!eleven: theorem}
Every exponential term with at most eleven occurrences is
reconstructing. Every nontrivial such term is strong on $\OO$.
\end{theorem}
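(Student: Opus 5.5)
The theorem is cumulative, so the plan is to assemble results already proved rather than introduce new arguments. Theorem~\ref{T:ten} covers every term with at most ten occurrences, including strong reconstruction on $\OO$ for each nontrivial one. It therefore remains to treat the 1,842 interchange normal forms with exactly eleven occurrences. Each is written as $A^{D_t(A)}$ with its rooted-tree root exponent, and the proof splits into three classes according to the root.

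\emph{Uniform roots.} If the root has no nontrivial child, the exponent is $A^{\times k}$ and the pure-power clause of Proposition~\ref{b:P:uniform} applies. Otherwise the root has $r$ leaves and $s\geq1$ copies of a single nontrivial child type $u$ with $|u|\leq10$. By Theorem~\ref{T:ten}, $u$ is strong on $\OO$, and the uniform extension clause makes the parent strong.

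\emph{Mixed roots.} These satisfy the budget \eqref{b:E:budget} with equality, and Proposition~\ref{b:P:classification} sorts them. A regular root has a fixed selected forest of weight at most seven, and whenever that forest is pure, every summand of $C$ is pure. Proposition~\ref{t:P:regular}, resting on Lemma~\ref{t:L:forest}, then gives strength. The remaining roots are exactly the eleven exceptional normal forms of Table~\ref{b:tab:exceptions}, which are handled as follows:
\begin{itemize}
\item Cases 1 and 2 by Proposition~\ref{b:P:power};
\item Cases 3, 4 and 6 by Proposition~\ref{b:P:cubic};
\item Case 5 by Proposition~\ref{b:P:quartic};
\item Cases 7 through 11 by Proposition~\ref{b:P:Hpairs}.
\end{itemize}
So every nontrivial eleven-occurrence normal form is strong on $\OO$.

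\emph{From strong reconstruction to the theorem.} A strong term is reconstructing on $\OO$: its value gives $V_t$, hence $R$, and cancellation recovers $D_t(A)$ and then $A$. Theorem~\ref{b:T:K} handles inputs in $\K$ and rules out collisions between $\K$ and $\OO$. Normalization uses only the isomorphism-preserving laws of Proposition~\ref{P:calculus}, so all 16,796 binary parenthesizations are covered.

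The main obstacle is exhaustiveness rather than any individual argument. The uniform, regular and exceptional classes must together account for every eleven-occurrence normal form. This depends on the complete comparison analysis in Proposition~\ref{b:P:classification}:
\begin{itemize}
\item the two-summand tables, including the height ties at $N=4$ and $p=N^2+N$, which are resolved by \eqref{b:E:exponent-tie};
\item the argument that three or more summands produce no new exceptions.
\end{itemize}
As a cross-check I would compare the counts 1,226, 605 and 11 against $a_{11}=1{,}842$ from the recurrence \eqref{E:tree-recurrence}, and confirm that no exceptional sum was assigned a budget inconsistent with \eqref{t:E:budget}.
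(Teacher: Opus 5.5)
Your proposal is correct and follows the paper's proof essentially line for line. It uses Theorem~\ref{T:ten} for the smaller counts, Proposition~\ref{b:P:uniform} for uniform roots, Propositions~\ref{b:P:classification} and~\ref{t:P:regular} for regular mixed roots, and the same case-by-case assignment of the eleven exceptional forms, and it closes with Theorem~\ref{b:T:K} and interchange normalization.
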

\begin{proof}
The assertion through ten is Theorem~\ref{T:ten}. At eleven, uniform
roots follow from the smaller child's strong assertion and
Proposition~\ref{b:P:uniform}. Regular roots follow from
Propositions~\ref{b:P:classification} and~\ref{t:P:regular}.
The exceptional table is exhaustive by the same classification.
Its Cases~1--2 are Proposition~\ref{b:P:power}; Cases~3, 4, and~6
are Proposition~\ref{b:P:cubic}; Case~5 is
Proposition~\ref{b:P:quartic}; Cases~7--11 are
Proposition~\ref{b:P:Hpairs}. Every case therefore has both strong
assertions. Strong reconstruction handles $\OO$, and
Theorem~\ref{b:T:K} handles all other inputs. Interchange
normalization proves the assertion for every parenthesization.
\end{proof}

\section{Enumeration and primality}
\index{enumeration!eleven occurrences}\index{primality!eleven}
The 1,842 normal forms account for 16,796 binary parenthesizations.
The uniform, regular, and exceptional proof classes account for
10,077, 6,681, and 38 binary parenthesizations, respectively. Their
root partitions are listed in Appendix~\ref{app:partitions}. The
independent binary-tree audit checks every individual normal-form
multiplicity, not just these totals.

Since eleven is prime, proper uniform substitution with both term
sizes greater than one is unavailable. The proof uses the additive
root budget instead. The theorem includes all smaller occurrence
counts because their strong assertions have been proved in the
cumulative induction, not because an exact eleven-occurrence theorem
has been assumed to imply them.

\chapter[Twelve: classification]{Twelve occurrences: the complete root classification}\label{t:S:classification}
Write $\mathcal D_j$ for the ordered list of forests of weight $j$
in the following tables, and write $D_{j,i}$ for its $i$th entry.
Define
\begin{equation}\label{t:E:UW}
 U_i=A^{D_{4,i}}\quad(1\leq i\leq9),\qquad
 W_j=A^{D_{5,j}}\quad(1\leq j\leq20).
\end{equation}
Thus $U_1=A^{\Ap4}$, $U_2=A^{\Ap2P}$,
$U_3=A^{\xp P2}$, $U_4=A^{AF}$, $U_5=A^{AG}$,
$U_6=A^{H_1}$, and $W_1=A^{\Ap5}$. Put $u_i=|U_i|$.

In a~comparison string, \texttt{R} means that the row forest supplies
the selected interval, \texttt{C} means that the column forest does,
and \texttt{?} denotes a~pair proved separately. Columns are in the
listed order of $\mathcal D_3$ or $\mathcal D_4$. Heights are divided
by $\ell$. The order of $\mathcal D_3$ is
\[
 (\Ap3,\ AP,\ F,\ G),\qquad (3,\ N+1,\ N^2,\ p)
\]
for its corresponding height coefficients.
\begingroup\small\renewcommand{\arraystretch}{1.10}
\begin{longtable}{@{}rlrc@{}}
\toprule
$i$ & Forest & $\height/\ell$ & Against $\mathcal D_3$\\
\midrule
\endfirsthead
\multicolumn{4}{c}{\textit{The weight-four forests, continued}}\\
\toprule
$i$ & Forest & $\height/\ell$ & Against $\mathcal D_3$\\
\midrule
\endhead
\endfoot
\endlastfoot
1 & $\Ap{4}$ & $4$ & \texttt{RCCC}\\
2 & $\Ap{2}\,P$ & $N+2$ & \texttt{RRCC}\\
3 & $\xp{P}{2}$ & $2N$ & \texttt{RRCC}\\
4 & $A\,F$ & $N^{2}+1$ & \texttt{RRRC}\\
5 & $A\,G$ & $p+1$ & \texttt{RRRR}\\
6 & $H_1$ & $N^{3}$ & \texttt{RRR?}\\
7 & $H_2$ & $Np$ & \texttt{RRRR}\\
8 & $H_3$ & $f$ & \texttt{RRRR}\\*
9 & $H_4$ & $g$ & \texttt{RRRR}\\*
\bottomrule
\caption{The weight-four forests}\label{t:tab:D4}
\end{longtable}\endgroup

Within $\mathcal D_4$, the later forest wins except for the four
unordered pairs
\begin{equation}\label{t:E:44exceptions}
 (AG,H_1),\quad(H_1,H_3),\quad(H_2,H_3),\quad(H_3,H_4).
\end{equation}
The inequalities $g>Np$ make the pairs $(H_1,H_4)$ and $(H_2,H_4)$
regular; they no longer require separate proofs.
\begingroup\small\renewcommand{\arraystretch}{1.10}
\begin{longtable}{@{}rlrcc@{}}
\toprule
$i$ & Forest & $\height/\ell$ & Against $\mathcal D_3$ & Against $\mathcal D_4$\\
\midrule
\endfirsthead
\multicolumn{5}{c}{\textit{The weight-five forests and their comparisons, continued}}\\
\toprule
$i$ & Forest & $\height/\ell$ & Against $\mathcal D_3$ & Against $\mathcal D_4$\\
\midrule
\endhead
\endfoot
\endlastfoot
1 & $\Ap{5}$ & $5$ & \texttt{RCCC} & \texttt{RCCCCCCCC}\\
2 & $\Ap{3}\,P$ & $N+3$ & \texttt{RRCC} & \texttt{RRCCCCCCC}\\
3 & $A\,\xp{P}{2}$ & $2N+1$ & \texttt{RRCC} & \texttt{RRRCCCCCC}\\
4 & $\Ap{2}\,F$ & $N^{2}+2$ & \texttt{RRRC} & \texttt{RRRRCCCCC}\\
5 & $\Ap{2}\,G$ & $p+2$ & \texttt{RRRR} & \texttt{RRRRR?CCC}\\
6 & $P\,F$ & $N^{2}+N$ & \texttt{RRRC} & \texttt{RRRRCCCCC}\\
7 & $P\,G$ & $N+p$ & \texttt{RRRR} & \texttt{RRRRR?CCC}\\
8 & $A\,H_1$ & $N^{3}+1$ & \texttt{RRR?} & \texttt{RRRR?RC?C}\\
9 & $A\,H_2$ & $Np+1$ & \texttt{RRRR} & \texttt{RRRRRRR?C}\\
10 & $A\,H_3$ & $f+1$ & \texttt{RRRR} & \texttt{RRRRR??R?}\\
11 & $A\,H_4$ & $g+1$ & \texttt{RRRR} & \texttt{RRRRRRR?R}\\
12 & $U_{1}$ & $N^{4}$ & \texttt{RRR?} & \texttt{RRRR?R???}\\
13 & $U_{2}$ & $N^{2}p$ & \texttt{RRRR} & \texttt{RRRRRRR??}\\
14 & $U_{3}$ & $p^{2}$ & \texttt{RRRR} & \texttt{RRRRRRR??}\\
15 & $U_{4}$ & $Nf$ & \texttt{RRRR} & \texttt{RRRRRRRR?}\\
16 & $U_{5}$ & $Ng$ & \texttt{RRRR} & \texttt{RRRRRRR?R}\\
17 & $U_{6}$ & $h_{1}$ & \texttt{RRRR} & \texttt{RRRRR??R?}\\
18 & $U_{7}$ & $h_{2}$ & \texttt{RRRR} & \texttt{RRRRRRRRR}\\
19 & $U_{8}$ & $h_{3}$ & \texttt{RRRR} & \texttt{RRRRRRRRR}\\*
20 & $U_{9}$ & $h_{4}$ & \texttt{RRRR} & \texttt{RRRRRRRRR}\\*
\bottomrule
\caption{The weight-five forests and their comparisons}\label{t:tab:D5}
\end{longtable}\endgroup
\begingroup\small\renewcommand{\arraystretch}{1.10}
\begin{longtable}{@{}rlrc@{}}
\toprule
$i$ & Forest & $\height/\ell$ & Against $\mathcal D_3$\\
\midrule
\endfirsthead
\multicolumn{4}{c}{\textit{The weight-six forests and their comparisons, continued}}\\
\toprule
$i$ & Forest & $\height/\ell$ & Against $\mathcal D_3$\\
\midrule
\endhead
\endfoot
\endlastfoot
1 & $\Ap{6}$ & $6$ & \texttt{R?CC}\\
2 & $\Ap{4}\,P$ & $N+4$ & \texttt{RRCC}\\
3 & $\Ap{2}\,\xp{P}{2}$ & $2N+2$ & \texttt{RRCC}\\
4 & $\xp{P}{3}$ & $3N$ & \texttt{RRCC}\\
5 & $\Ap{3}\,F$ & $N^{2}+3$ & \texttt{RRRC}\\
6 & $\Ap{3}\,G$ & $p+3$ & \texttt{RRRR}\\
7 & $A\,P\,F$ & $N^{2}+N+1$ & \texttt{RRRC}\\
8 & $A\,P\,G$ & $N+p+1$ & \texttt{RRRR}\\
9 & $\xp{F}{2}$ & $2N^{2}$ & \texttt{RRR?}\\
10 & $F\,G$ & $N^{2}+p$ & \texttt{RRRR}\\
11 & $\xp{G}{2}$ & $2p$ & \texttt{RRRR}\\
12 & $\Ap{2}\,H_1$ & $N^{3}+2$ & \texttt{RRR?}\\
13 & $\Ap{2}\,H_2$ & $Np+2$ & \texttt{RRRR}\\
14 & $\Ap{2}\,H_3$ & $f+2$ & \texttt{RRRR}\\
15 & $\Ap{2}\,H_4$ & $g+2$ & \texttt{RRRR}\\
16 & $P\,H_1$ & $N^{3}+N$ & \texttt{RRR?}\\
17 & $P\,H_2$ & $Np+N$ & \texttt{RRRR}\\
18 & $P\,H_3$ & $N+f$ & \texttt{RRRR}\\
19 & $P\,H_4$ & $N+g$ & \texttt{RRRR}\\
20 & $A\,U_{1}$ & $N^{4}+1$ & \texttt{RRR?}\\
21 & $A\,U_{2}$ & $N^{2}p+1$ & \texttt{RRRR}\\
22 & $A\,U_{3}$ & $p^{2}+1$ & \texttt{RRRR}\\
23 & $A\,U_{4}$ & $Nf+1$ & \texttt{RRRR}\\
24 & $A\,U_{5}$ & $Ng+1$ & \texttt{RRRR}\\
25 & $A\,U_{6}$ & $h_{1}+1$ & \texttt{RRRR}\\
26 & $A\,U_{7}$ & $h_{2}+1$ & \texttt{RRRR}\\
27 & $A\,U_{8}$ & $h_{3}+1$ & \texttt{RRRR}\\
28 & $A\,U_{9}$ & $h_{4}+1$ & \texttt{RRRR}\\
29 & $W_{1}$ & $N^{5}$ & \texttt{RRR?}\\
30 & $W_{2}$ & $N^{3}p$ & \texttt{RRRR}\\
31 & $W_{3}$ & $Np^{2}$ & \texttt{RRRR}\\
32 & $W_{4}$ & $N^{2}f$ & \texttt{RRRR}\\
33 & $W_{5}$ & $N^{2}g$ & \texttt{RRRR}\\
34 & $W_{6}$ & $pf$ & \texttt{RRRR}\\
35 & $W_{7}$ & $pg$ & \texttt{RRRR}\\
36 & $W_{8}$ & $Nh_{1}$ & \texttt{RRRR}\\
37 & $W_{9}$ & $Nh_{2}$ & \texttt{RRRR}\\
38 & $W_{10}$ & $Nh_{3}$ & \texttt{RRRR}\\
39 & $W_{11}$ & $Nh_{4}$ & \texttt{RRRR}\\
40 & $W_{12}$ & $u_{1}$ & \texttt{RRRR}\\
41 & $W_{13}$ & $u_{2}$ & \texttt{RRRR}\\
42 & $W_{14}$ & $u_{3}$ & \texttt{RRRR}\\
43 & $W_{15}$ & $u_{4}$ & \texttt{RRRR}\\
44 & $W_{16}$ & $u_{5}$ & \texttt{RRRR}\\
45 & $W_{17}$ & $u_{6}$ & \texttt{RRRR}\\
46 & $W_{18}$ & $u_{7}$ & \texttt{RRRR}\\
47 & $W_{19}$ & $u_{8}$ & \texttt{RRRR}\\*
48 & $W_{20}$ & $u_{9}$ & \texttt{RRRR}\\*
\bottomrule
\caption{The weight-six forests and their comparisons}\label{t:tab:D6}
\end{longtable}\endgroup

\section{Verification of the fixed comparisons}
\index{verification!fixed comparisons}\index{comparison!verification}
The height formulas
\[
 \height(XY)=\height(X)+\height(Y),\qquad
 \height(X^Y)=|Y|\height(X)
\]
give every height coefficient in the tables. Their fixed comparisons
follow from Lemma~\ref{t:L:bounds}. For specificity, the following list
is sufficient for all the table entries:
\begin{equation}\label{t:E:certificatebounds}
\begin{aligned}
 N&\geq4,&p&\geq N^2+N+1,&f&\geq p+N,\\
 g&\geq p+N,&g&\geq Np+1,&h_1&\geq f+N-1,\\
 h_2&\geq p^2+p,&h_2&\geq f+1,&h_2&\geq g+1,\\
 h_3&\geq f+N-1,&h_3&\geq Ng+1,\\
 h_4&\geq g+N-1,&h_4&\geq Nf+1,&u_i&\geq p+1.
\end{aligned}
\end{equation}
An exact arithmetic check, also reproducible from these displayed
bounds, is included with the enumeration. For each asserted comparison,
subtract the losing height coefficient from the winning coefficient.
Eliminate the variables in reverse order
$u_9,\ldots,u_1,h_4,\ldots,h_1,g,f,p,N$, replacing a~variable by one
of its displayed lower bounds plus a~new nonnegative variable. One
choice yields a~polynomial all of whose coefficients are nonnegative.
The program records all 457 resulting polynomial certificates for the
four displayed comparison matrices. This is a~check in an integer
polynomial ring, not a~test at sampled numerical values.

There are only five non-strict certificates. At $N=4$, $AP$ is selected
over $\Ap5$, by comparing the exponents $\one+A$ and $5\one$.
For $APF$ versus $G$, a~height tie means $p=N^2+N+1$;
$\one+A+\Ap2$ has height $2\ell$, whereas $P$ has height $N\ell$,
so $G$ is selected. For $PG$ versus $H_3$, compare the exponents
$A+P$ and $F$, of heights $N\ell$ and $N^2\ell$.
For $PG$ versus $H_4$, the stronger bound $g>Np>p+N$ makes the
comparison strict. For $AH_2$ versus $H_4$, a~height tie means
$g=Np+1$; compare $\one+AP$ and $G$, of heights $(N+1)\ell$ and
$p\ell$. Thus the prescribed winner is selected in every tie.
The separate comparison of $P$ with $\Ap4$ at $N=4$ uses the
positive height of its exponent $A$.

\begin{proposition}\label{t:P:classification}
\index{classification!twelve occurrences}
At twelve occurrences, 3,068 normal forms are uniform extensions,
1,658 satisfy Proposition~\ref{t:P:regular}, and precisely the 40 forms
in Table~\ref{t:tab:exceptions} require the exceptional arguments below.
\end{proposition}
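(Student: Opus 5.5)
Since $a_{12}=4{,}766$ by the recurrence \eqref{E:tree-recurrence} and Table~\ref{tab:all-counts}, the plan is to show that every twelve-occurrence normal form lands in exactly one of three classes, and then to count each class. The three classes are uniform, regular, and exceptional.

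\emph{Uniform forms.} First, separate the roots with at most one distinct nontrivial child type. A uniform root is determined by three data: the number $r$ of leaves, a nontrivial child type $u$ of some weight $j$, and a multiplicity $s\geq1$ with $r+sj=11$. Pure roots (that is, $s=0$ and $r=11$) are also counted here. Summing over these possibilities gives
\[
 1+\sum_{j\geq2}\ \sum_{s\geq1,\ sj\leq11} a_j ,
\]
which should equal $3{,}068$ when evaluated with the values $a_2,\dots,a_{11}$ from Table~\ref{tab:all-counts}. This is a finite arithmetic check. As a cross-check, the audit's enumeration of root partitions reproduces the same number.

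\emph{Mixed forms.} The remaining $1{,}698$ forms are mixed roots $E_{r,C}$ satisfying the budget \eqref{t:E:budget} with $n=12$, so that $r+m+\sum\wt(E_i)=11$. For each of them I must exhibit the selected forest of $C$, or else flag the form. Split by the multiset of summand weights.
\begin{itemize}
\item If there are three or more summands, the total forest weight is at most $8$ and the largest weight is at most $6$. The argument of Proposition~\ref{b:P:classification} should then show that every comparison involved is a fixed comparison among weights $\leq3$, or between a weight $\leq4$ forest and forests of total weight $\leq3$, or $P$ against $\Ap4$. All of these are decided, so every such form is regular.
\item If there are two summands, the weight pairs are $w_1+w_2\leq9$. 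The relevant comparisons are exactly the entries of Tables~\ref{t:tab:D4}--\ref{t:tab:D6} and the $4+4$ list \eqref{t:E:44exceptions}, with the pair $P$ versus $\Ap k$ handled as in Chapter~\ref{b:S:classification}. Every \texttt{R}/\texttt{C} entry is certified by the bounds \eqref{t:E:certificatebounds}, with the five tie cases resolved by Lemma~\ref{b:L:selection-order} as described above. Every \texttt{?} entry, together with $P$ versus $\Ap7$, is placed in Table~\ref{t:tab:exceptions}.
\end{itemize}
For a regular form I also check the remaining hypotheses of Proposition~\ref{t:P:regular}. The selected forest has weight at most $8$, since it has a positive-weight companion. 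In addition, a selected pure forest occurs only when all summands are pure, because a pure forest beats a nonpure one only in the flagged power cases.

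\emph{Counting the mixed forms.} For each weight split I then count the admissible values of $r$ and the multisets of forests. The \texttt{?} pairs together with the permitted $r$ should give exactly $40$, and the rest should give $1{,}658$. The totals must satisfy $3{,}068+1{,}658+40=4{,}766$, which confirms exhaustiveness and disjointness.

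\emph{Main obstacle.} The hard step is verifying the roughly $457$ fixed two-summand comparisons, especially those near the sharper inequalities such as $g>Np$, $h_3>Ng$, and $h_4>Nf$. These are the ones that turn earlier exceptional pairs like $(H_1,H_4)$ into regular ones. My approach there is to use the certificate method already described: substitute each variable by its lower bound plus a slack variable, in reverse order, and check that all coefficients are nonnegative. Ties are treated separately through exponent heights. The tallying itself I would delegate to the same audit enumeration, organized by root partition.
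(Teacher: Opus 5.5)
Your uniform count is correct ($1+\sum_{j=2}^{11}\lfloor 11/j\rfloor a_j=3{,}068$), and your two-summand analysis follows the paper's. The gap is the bullet on three or more summands. There you carried over the eleven-occurrence argument of Proposition~\ref{b:P:classification}, but at twelve occurrences the budget \eqref{t:E:budget} with $m=3$ permits total forest weight $8$. A weight-five forest can then be accompanied by weights $2+1$, and a weight-four forest by $3+1$. Neither configuration is covered by your list of decided comparisons, even though you note yourself that the largest weight can reach $6$.

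These configurations contain the undecided pairs $P$ versus $\Ap{5}$ and $G$ versus $H_1$. They give $C=A+P+\Ap{5}$ and $C=A+G+H_1$ with $r=0$, rows 10 and 11 of Table~\ref{t:tab:exceptions}, and these forms are genuinely not regular. The four-element crown ($N=4$, $p=36$) selects $\Ap{5}$ over $P$ and $H_1$ over $G$. The crown plus an isolated point ($N=5$, $p=185$) selects $P$ (through the height tie) and $G$. So your assertion that every form with $m\geq3$ is regular is false.

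This also breaks your tally. For $m=2$ the budget forces $r=9-w_1-w_2$, so each exceptional pair type yields exactly one form. The \texttt{?} entries together with $P+\Ap{k}$, $k=5,6,7$, therefore give only $38$ forms. Your classification would report $38$ exceptional and $1{,}660$ regular forms, contradicting the statement.

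The missing step is the paper's $m=3$ argument. The only exceptional pairs of total weight at most seven are $P+\Ap{5}$ and $G+H_1$, each of weight exactly seven. The third summand must then be $A$, which loses to both, and this produces the two extra forms. For $m\geq4$ the total weight is at most $7$ and the largest weight at most $4$, so no exceptional pair fits and every comparison is fixed.
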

\begin{proof}
Let $m$ be the number of nontrivial root children. The uniform forms
have at most one distinct nontrivial child type. Otherwise use
\eqref{t:E:budget}. For $m=2$, the summand weights have total at most
nine. Weight one always loses to a~larger forest. At weight two,
$P$ beats $\Ap2$; it loses to every higher-weight nonpure forest.
Against a~pure forest it wins through weight four, and the remaining
pairs are $P+\Ap k$ for $k=5,6,7$. The other possible weight pairs
are $3+3$, $3+4$, $3+5$, $3+6$, $4+4$, and $4+5$, all covered by
the lists and matrices above. There are 38 exceptional pair types.

For $m=3$, the total summand weight is at most eight. The only
exceptional pair that can occur with a~third summand is $P+\Ap5$
or $G+H_1$; each has total weight seven, so the third summand must
be $A$. These give the two additional forms. For $m\geq4$, the budget
allows no exceptional pair. Every regular selection has weight at
most eight because another positive-weight summand is present.
The comparisons show that a~pure selection occurs only when every
summand is pure.

This gives an exhaustive, finite decision procedure with a~proof
assigned to every outcome. Chapter~\ref{t:S:enumeration} counts its
outcomes by weighted multisets and checks them against all binary
parenthesizations. It gives the stated counts and the complete
40-row exceptional list.
\end{proof}
\begingroup\small\renewcommand{\arraystretch}{1.07}
\begin{longtable}{@{}rrlrr@{}}
\toprule
No. & $r$ & $C(A)$ & Proposition & Binary\\
\midrule\endfirsthead
\multicolumn{5}{c}{\textit{The exceptional forms, continued}}\\
\toprule No. & $r$ & $C(A)$ & Proposition & Binary\\\midrule\endhead
\endfoot\endlastfoot
1 & 2 & $P+\Ap{5}$ & \ref{t:P:power} & 12\\
2 & 2 & $G+H_1$ & \ref{t:P:cubic} & 12\\
3 & 1 & $P+\Ap{6}$ & \ref{t:P:power} & 6\\
4 & 1 & $G+A\,H_1$ & \ref{t:P:cubic} & 12\\
5 & 1 & $G+U_{1}$ & \ref{t:P:quartic} & 6\\
6 & 1 & $A\,G+H_1$ & \ref{t:P:cubic} & 12\\
7 & 1 & $H_1+H_3$ & \ref{t:P:Hoffsets} & 6\\
8 & 1 & $H_2+H_3$ & \ref{t:P:Hoffsets} & 12\\
9 & 1 & $H_3+H_4$ & \ref{t:P:Hoffsets} & 6\\
10 & 0 & $A+P+\Ap{5}$ & \ref{t:P:power} & 6\\
11 & 0 & $A+G+H_1$ & \ref{t:P:cubic} & 6\\
12 & 0 & $P+\Ap{7}$ & \ref{t:P:power} & 2\\
13 & 0 & $A\,P+\Ap{6}$ & \ref{t:P:twisted} & 4\\
14 & 0 & $G+\Ap{2}\,H_1$ & \ref{t:P:cubic} & 6\\
15 & 0 & $G+A\,U_{1}$ & \ref{t:P:quartic} & 4\\
16 & 0 & $G+P\,H_1$ & \ref{t:P:cubic} & 4\\
17 & 0 & $G+\xp{F}{2}$ & \ref{t:P:double} & 2\\
18 & 0 & $G+W_{1}$ & \ref{t:P:quintic} & 2\\
19 & 0 & $A\,G+A\,H_1$ & \ref{t:P:cubic} & 8\\
20 & 0 & $A\,G+U_{1}$ & \ref{t:P:quartic} & 4\\
21 & 0 & $H_1+\Ap{2}\,G$ & \ref{t:P:cubic} & 6\\
22 & 0 & $H_1+A\,H_3$ & \ref{t:P:Hoffsets} & 4\\
23 & 0 & $H_1+P\,G$ & \ref{t:P:cubic} & 4\\
24 & 0 & $H_1+U_{6}$ & \ref{t:P:bulk} & 2\\
25 & 0 & $H_2+A\,H_3$ & \ref{t:P:Hoffsets} & 8\\
26 & 0 & $H_2+U_{1}$ & \ref{t:P:unit} & 4\\
27 & 0 & $H_2+U_{6}$ & \ref{t:P:bulk} & 4\\
28 & 0 & $H_3+A\,H_1$ & \ref{t:P:Hoffsets} & 4\\
29 & 0 & $H_3+A\,H_2$ & \ref{t:P:Hoffsets} & 8\\
30 & 0 & $H_3+A\,H_4$ & \ref{t:P:Hoffsets} & 4\\
31 & 0 & $H_3+U_{1}$ & \ref{t:P:bulk} & 2\\
32 & 0 & $H_3+U_{2}$ & \ref{t:P:bulk} & 6\\
33 & 0 & $H_3+U_{5}$ & \ref{t:P:bulk} & 4\\
34 & 0 & $H_3+U_{3}$ & \ref{t:P:square} & 2\\
35 & 0 & $H_4+A\,H_3$ & \ref{t:P:Hoffsets} & 4\\
36 & 0 & $H_4+U_{1}$ & \ref{t:P:bulk} & 2\\
37 & 0 & $H_4+U_{2}$ & \ref{t:P:bulk} & 6\\
38 & 0 & $H_4+U_{4}$ & \ref{t:P:bulk} & 4\\
39 & 0 & $H_4+U_{3}$ & \ref{t:P:bulk} & 2\\*
40 & 0 & $H_4+U_{6}$ & \ref{t:P:shift} & 2\\*
\bottomrule
\caption{All 40 exceptional normal forms. Each denotes $E_{r,C}$ in
\eqref{t:E:mixed}; the last column counts its binary parenthesizations.}
\label{t:tab:exceptions}
\end{longtable}\endgroup

\section[Twelve: polynomial cases]{Twelve occurrences: the polynomial cases}\label{t:S:powers}

The five exceptional forms considered here have a~possible pure-forest
selection. The whole-sum recovery result required in their second
strong assertions was proved in Lemma~\ref{t:L:polynomial}. We now
exclude collisions between their first-invariant branches.
\index{exceptional!cases!polynomial}
\begin{proposition}\label{t:P:power}
\index{Cartesian!power!twelve-occurrence comparison}
The forms with $C=\Ap k+P$, $k=5,6,7$ and $r=7-k$, and the form
with $C=\Ap5+A+P$, $r=0$, are strong.
\end{proposition}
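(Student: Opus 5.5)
The plan is to follow Proposition~\ref{b:P:power} almost verbatim. The parameter $r$ now ranges over $0,1,2$, and there is one extra summand $A$ in the fourth form. For (S2), recover the whole sum $C$ from $(R,D_t)$ by \eqref{t:E:wholeC}. Then apply Lemma~\ref{t:L:polynomial} with $f(z)=z^k$, or with $f(z)=z+z^5$ for $C=\Ap5+A+P$; both polynomials have zero constant term. This settles (S2) without identifying any summand.

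For (S1), the selected forest of $C$ is $\Ap k$ when $N<k$ and $P$ when $N\geq k$. The summand $A$ in the fourth form never wins, since it has height $\ell$. The tie at $N=k$ is resolved by \eqref{b:E:exponent-tie}. The chains from $Z_0$ are therefore exactly those of \eqref{b:E:power-chains}: in the small branch $SL^{\times k}\mapsto S$, and in the large branch $SL^A\mapsto SL\mapsto S$. Terminal recovery gives $R$ within each branch, so it remains to exclude a collision between a small-branch $A$ (with $|A|<k$) and a large-branch $B$ (with $M=|B|\geq k$).

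To exclude it, equality of $Z_1$ gives $J_A\cong J_B+R_B$. Applying $\Tow$ to $Z_0$ gives $kq_A=(M-1)q_B$. Applying $\Tow$ to $V=SL^{\times r}L^{C}$ gives
\[
 R_B+rR_A+R_A\times C(A)\cong rR_B+R_B\times C(B).
\]
Taking cardinalities and eliminating $q_A/q_B$ yields
\[
 k|C(B)|=(M-1)|C(A)|+k+r(M-1-k),
\]
as in \eqref{b:E:power-cross}. For $C=\Ap k+P$ we have $|C(A)|\leq k(k-1)^{k-1}$. The right side is then at most $k+(M-1)\bigl(k(k-1)^{k-1}+r\bigr)$. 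We need this to be below $kM^k<k|C(B)|$, that is,
\[
 1+(M-1)\bigl((k-1)^{k-1}+r/k\bigr)<M^k .
\]
This holds comfortably for $k\geq5$, $r\leq2$, $M\geq k$, because $M^{k-1}\geq k^{k-1}\gg(k-1)^{k-1}+1$.

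For the fourth form, with $k=5$ and $r=0$, the left side adds $(M-1)|A|\leq 4(M-1)$. Meanwhile $|C(B)|\geq M^5+M+|B^B|$, which adds $5M$ to the right side. The same inequality therefore survives.

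The main obstacle is only bookkeeping: redoing the $\Tow$ identity for $V$ with $r$ possibly $2$, and checking that the $rR$ summands are really cancelled as sums. Here $J_B$ has height below that of $R_B$, and the $R_B\times C$ terms dominate, so no mixed selection occurs. If the crude bound above felt too tight anywhere, I would fall back on the component-count or height argument. That fallback is not needed here, since the inequality has large slack.
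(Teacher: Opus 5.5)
Your proposal is correct and follows the paper's proof essentially step for step. It uses the same branches and chains, the same $\Tow$ identities obtained from $Z_1$, $Z_0$ and $V$, the same relation $k|C(B)|=(M-1)|C(A)|+k+r(M-1-k)$, and Lemma~\ref{t:L:polynomial} for (S2). The only cosmetic difference is that the paper absorbs the extra summand $A$ into the single bound $[\varepsilon(k-1)+r]/k\leq1$, whereas you offset it separately against the extra $M$ in $|C(B)|$; both routes work.
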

\begin{proof}
Write $C=\Ap k+\varepsilon A+P$, where $\varepsilon=0$ for the
first three forms and $(k,\varepsilon,r)=(5,1,0)$ for the last.
In the small range $N<k$, $Z_0=SL^{\times k}$; in the large range
$N\geq k$, $Z_0=SL^A$. Each branch separately determines $R$.
We rule out equality of the original invariant $V$ across the branches.

Suppose $A$ is in the small branch and $B$ in the large branch, with
$M=|B|\geq k$. Their reduction chains give
$S_A=S_BL_B$ and $J_A=J_B+R_B$. Applying $\Tow$ to $Z_0$ gives
\[
 R_B+kR_A=R_BB.
\]
Applying it also to the original $V$ gives
$R_B+rR_A+R_AC(A)=rR_B+R_BC(B)$. Taking cardinalities in the two
equalities and eliminating $|R_A|/|R_B|$ yields
\begin{equation}\label{t:E:powercross}
 k|C(B)|=(M-1)|C(A)|+k+r(M-1-k).
\end{equation}
For the small input,
\[
 |C(A)|\leq k(k-1)^{k-1}+\varepsilon(k-1).
\]
In our four cases $[\varepsilon(k-1)+r]/k\leq1$. Consequently
\eqref{t:E:powercross} gives
\[
\begin{aligned}
 |C(B)|&\leq1+(M-1)\bigl((k-1)^{k-1}+1\bigr)\\
       &<1+(M-1)M^{k-1}\leq M^k,
\end{aligned}
\]
contrary to $|C(B)|>M^k$. The branches therefore cannot collide,
and the first strong assertion follows.
For the second, recover the whole $C$ by~\eqref{t:E:wholeC} and apply
Lemma~\ref{t:L:polynomial}.
\end{proof}

\begin{proposition}\label{t:P:twisted}
\index{twisted!comparison}
The form with $C=\Ap6+AP$ and $r=0$ is strong.
\end{proposition}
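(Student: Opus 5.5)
The plan is to follow the pattern of Proposition~\ref{t:P:power}: identify the two branches and their reduction chains, exclude a cross-branch collision of the first invariant by a cardinality identity, and then prove (S2) branch by branch.

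\emph{Branches.} The height coefficients of $\Ap6$ and $AP$ are $6$ and $N+1$. On $\OO$ we have $N\geq4$ by \eqref{b:E:O-inputs}.
\begin{itemize}
\item For $N=4$ the pure forest is selected, so $Z_0=SL^{\times6}$, which reduces directly to $S$.
\item For $N\geq5$, $AP$ is selected. The tie at $N=5$ is resolved by comparing the exponents $6\one$ and $\one+A$ via \eqref{b:E:exponent-tie}. Then $Z_0=SL\,L^A$. Its $\Tow$-image is $J+R+RA$, and $RA$ is strictly tallest, so the chain is $SLL^A\mapsto SL\mapsto S$.
\end{itemize}
Within either branch, terminal recovery (Lemma~\ref{b:L:reduction}) gives $R$.

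\emph{Cross-branch exclusion.} Suppose $A$ is small ($|A|=4$), $B$ is large ($M=|B|\geq5$), and $V(A)\cong V(B)$.
\begin{itemize}
\item Equality after one reduction of $Z_0$ gives $S_A\cong S_BL_B$, hence $J_A\cong J_B+R_B$.
\item Applying $\Tow$ to the equality of $Z_0$ and cancelling $J_B+R_B$ gives $6R_A\cong R_B\times B$, so $6q_A=Mq_B$.
\item Applying $\Tow$ to $V=SL^C$ and cancelling $J_B$ gives $R_B+R_A(\Ap6+AP_A)\cong R_B(\Bp6+BP_B)$.
\item Taking cardinalities and substituting $q_B=6q_A/M$ yields
\[
 M\bigl(4^6+4p_A\bigr)+6=6M^6+6Mp_B .
\]
\end{itemize}
Now $p_A\leq4^4$, so the left side is at most $5120M+6$. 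Since $M\geq5$, the right side is at least $6M^6\geq 6\cdot5^5M=18750M$. This is a contradiction, so the branches cannot collide and (S1) holds. This step is routine. An alternative component count ($\iso$, $\comp$) would also work, but the cardinality route is cleaner.

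\emph{(S2).} Given $(R,D_t)$, recover the whole $C$ by \eqref{t:E:wholeC}, and then $\Der(C)$.
\begin{itemize}
\item In the small branch, $\Der(C)=\Der(\Ap6)=6R$.
\item In the large branch, $\Der(C)=\Der(AP)=R(\one+A)$.
\item A cross-branch coincidence would give $6\one\cong\one+B$, which is impossible because $B$ has positive height.
\item In the large branch, removing the single isolated point recovers $A$.
\end{itemize}

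\emph{Main obstacle.} The hard step is the small branch. There $C=\Ap6+A\times A^A$ is not of the form $f(A)+P$, so Lemma~\ref{t:L:polynomial} does not apply verbatim. I would adapt its weighted-degree argument as follows.
\begin{itemize}
\item Suppose two small inputs give the same $C$. They have the same $N=4$ and the same height $\ell$, the latter because $\height(C)=6\ell$.
\item Put $a=\poly(A)$ and $b=\poly(B)$. Suppose $a\neq b$. By Lemma~\ref{L:degree-obstruction}, $\hdeg(a-b)\geq1$.
\item The factorization $a^6-b^6=(a-b)H(a,b)$, with $H$ having nonnegative coefficients, gives $\hdeg(a^6-b^6)\geq1+5\ell$.
\item On the other hand, $a^6-b^6=b\,\poly(B^B)-a\,\poly(A^A)$, whose weighted degree is at most $\ell+4\ell=5\ell$.
\end{itemize}
This contradiction forces $a=b$, so $A\cong B$, and completes (S2).
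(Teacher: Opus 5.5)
Your proposal is correct and follows the paper's proof essentially step for step. It uses the same branch split ($N=4$ versus $N\geq5$, with the $N=5$ tie resolved by the positive-height exponent $\one+A$), the same cross-branch identity $6|C(B)|=6+M|C(A)|$ with the bound $|C(A)|\leq5120$, the same branch recognition via $\Der(C)/R\in\{6\one,\ \one+A\}$, and the same weighted-degree contradiction $\hdeg(a^6-b^6)\geq5\ell+1$ against the upper bound $5\ell$ in the small branch.
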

\begin{proof}
The pure-power branch occurs only at $N=4$. For $N\geq5$, $AP$ is
selected, including the tie at $N=5$, since the exponent $\one+A$
has positive height. Each branch separately recovers $R$. Suppose
a~small input $A$ and a~large input $B$, $M=|B|\geq5$, give the
same $V$. The chains give $S_A=S_BL_B$. Their $\Tow$ equations are
\[
 6R_A=R_BB,\qquad R_B+R_AC(A)=R_BC(B).
\]
Thus $6|C(B)|=6+M|C(A)|$. Since $|C(A)|\leq4^6+4\cdot4^4=5120$
and $|C(B)|>M^6$, this contradicts
$6M^6>6+5120M$ for $M\geq5$. The latter holds already at $M=5$
after division by $M$, and the left side $6M^5$ increases.

For the second strong assertion, recover $C$.
The type $\Der(C)/R$ is $6\one$ in the small branch and $\one+A$
in the large branch. These have different heights, so the branch is
recognized. In the large branch remove one isolated point. In the
small branch, two inputs giving the same $C$ both have cardinality
four and the same height $\ell$. If their component polynomials
$a,b$ differ, $\hdeg(a-b)\geq1$ as in the preceding lemma. Therefore
\[
 \hdeg(a^6-b^6)\geq5\ell+1.
\]
Yet the sum equality gives
$a^6-b^6=b\poly(B^B)-a\poly(A^A)$, of degree at most $5\ell$.
This contradiction proves recovery.
\end{proof}

section[Twelve: cubic to quintic cases]{Twelve occurrences: cubic, quartic, and quintic cases}\label{t:S:cubic}
Except for the polynomial cases just proved, every possible selected
forest in an exceptional sum is nonpure. Table~\ref{t:tab:chains} gives
the chains and signatures used to prove the second strong assertion.
Multiplying a~listed nonpure forest by $A^{\times\alpha}$ changes
$K(D)$ to $\alpha\one+K(D)$ and does not change its chain after the
first forest or its signature. The forests $PH_1$ and $PG$ likewise
have the signatures of $H_1$ and $G$, respectively.
\begingroup\small\renewcommand{\arraystretch}{1.1}
\begin{longtable}{@{}llll@{}}
\toprule $D$ & $K(D)$ & Chain after $D$ & $\sig(D)$\\\midrule\endfirsthead
\toprule $D$ & $K(D)$ & Chain after $D$ & $\sig(D)$\\\midrule\endhead
\endfoot\endlastfoot
$A\,P$ & $\one+A$ & $A$ & $(2,1)$\\
$F$ & $\Ap{2}$ & $\Ap{2}$ & $(2,2)$\\
$\xp{F}{2}$ & $\Ap{2}+\Ap{2}$ & $\Ap{2}$ & $(2,2)$\\
$P$ & $A$ & $A$ & $(2,1)$\\
$G$ & $P$ & $P,\ A$ & $(3,1)$\\
$H_1$ & $\Ap{3}$ & $\Ap{3}$ & $(2,3)$\\
$H_2$ & $A\,P$ & $A\,P,\ A$ & $(3,1)$\\
$H_3$ & $F$ & $F,\ \Ap{2}$ & $(3,2)$\\
$H_4$ & $G$ & $G,\ P,\ A$ & $(4,1)$\\
$U_{1}$ & $\Ap{4}$ & $\Ap{4}$ & $(2,4)$\\
$U_{2}$ & $\Ap{2}\,P$ & $\Ap{2}\,P,\ A$ & $(3,1)$\\
$U_{3}$ & $\xp{P}{2}$ & $\xp{P}{2},\ A$ & $(3,1)$\\
$U_{4}$ & $A\,F$ & $A\,F,\ \Ap{2}$ & $(3,2)$\\
$U_{5}$ & $A\,G$ & $A\,G,\ P,\ A$ & $(4,1)$\\
$U_{6}$ & $H_1$ & $H_1,\ \Ap{3}$ & $(3,3)$\\*
$W_{1}$ & $\Ap{5}$ & $\Ap{5}$ & $(2,5)$\\*
\bottomrule
\caption{The chains needed for the exceptional arguments. A~pure
forest $\Ap k$ has signature $(1,k)$ and $K(\Ap k)=k\one$.}
\label{t:tab:chains}
\end{longtable}\endgroup

The signatures of the two possible selections are distinct in every
remaining exceptional pair; the table also checks this for the
families with offsets. Thus Lemma~\ref{t:L:signature} proves the second
strong assertion as soon as the whole $C$ has been recovered. In what
follows it suffices to prove the first strong assertion by excluding
cross-branch equality of $Z_0$. When both inputs select the same
forest, its fixed chain already recovers $R$.

We shall repeatedly use the following observation. An equality
\begin{equation}\label{t:E:isolated}
 R_B+kR_A=R_BB
\end{equation}
forces $B$ to have an isolated point. Otherwise the least height of
a~component on the right is strictly greater than the least height
of a~component of $R_B$, whereas the left contains $R_B$ as a~summand.
Write $B=\one+B'$, put $m=\comp(B')\geq1$, and cancel $R_B$ as
a~summand in~\eqref{t:E:isolated}. Then $kR_A=R_BB'$. If
$u=\comp(B^B)$, Lemma~\ref{t:L:retract} gives
\begin{equation}\label{t:E:um}
 u\geq(m+2)(m+1)^m\geq6m.
\end{equation}
Here $(m+2)(m+1)\geq6m$ for every positive integer $m$.

\begin{proposition}\label{t:P:cubic}
\index{cubic!comparison!twelve-occurrence audit}
The six sums
\[
 C=\Ap\alpha H_1+\Ap\beta G,\qquad
 \alpha,\beta\geq0,\quad\alpha+\beta\leq2,
 \quad r=2-\alpha-\beta,
\]
the sums $PH_1+G$ and $H_1+PG$ with $r=0$, and the sum
$A+H_1+G$ with $r=0$, give strong forms.
\end{proposition}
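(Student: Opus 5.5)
The plan follows Proposition~\ref{b:P:cubic}, which treated the cases $(\alpha,\beta)\in\{(0,0),(1,0),(0,1)\}$; here we allow $\alpha+\beta\leq2$ and add the variants $PH_1+G$, $H_1+PG$ and $A+H_1+G$. Since the remark before the statement reduces the second strong assertion to signature recognition, only the first strong assertion is needed: cross-branch equality of $Z_0$ must be excluded.

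Call the branch selecting the forest containing $H_1$ the first branch, and the branch containing $G$ the second. By Table~\ref{t:tab:chains}, the first branch has chain $S L^{\times\alpha}L^{\Ap3}\mapsto SL^{\times3}\mapsto S$, with the extra factor $L^{A}$ present for $PH_1$. The second branch has chain $SL^{\times\beta}L^{P}\mapsto SL^A\mapsto SL\mapsto S$, with the extra factor $L^A$ present for $PG$. Suppose $A$ is in the first branch, $B$ in the second, and their $Z_0$ agree. Equality at the first branch's terminal stage gives $S_A=S_BL_B$ and $J_A=J_B+R_B$. Applying $\Tow$ at the next stage and cancelling $J_B$ gives $R_B+3R_A=R_BB$. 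This is \eqref{t:E:isolated} with $k=3$, so $B=\one+B'$, $3R_A=R_BB'$, and $u=\comp(P_B)\geq 6m$ by \eqref{t:E:um}.

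Next apply $\Tow$ to the equality of $Z_0$ itself, multiply by $3$, substitute $3R_A=R_BB'$, and cancel $R_B$. Each $L^{A}$ factor contributes a summand $R\times A$ to the $\Tow$-image. In the generic case this yields
\[
 3P_B\cong 3(1-\beta)\one+\alpha B'+(B'\times A^{\times3})
   +\delta_1(B'\times A)-3\delta_2 B,
\]
where $\delta_1,\delta_2\in\{0,1\}$ flag $PH_1$ and $PG$. The $\delta_2$ term is to be moved to the left side, so that the identity stays a genuine sum identity. For $A+H_1+G$, the extra summand $A$ loses, so it contributes only through $r$ (here $r=0$), and the identity is unchanged.

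Count components in this identity. The branch condition $\height$(first)$\geq\height$(second) gives $p_A\leq N^3+\alpha-\beta$, plus at most $N$ in the $P$ variants. By \eqref{t:E:componentbound} with $c\geq3$ we would have $p\geq N^3+N^2$, which exceeds this bound, so $\comp(A)\leq2$. Likewise $\comp(B')=m$ and $\comp(B)=m+1$. The right side therefore has at most $3+2m+8m+2m$ components, that is, $O(m)$ with a small constant. The left side has $3u\geq 18m$ components, plus the term $3(m+1)$ in the $PG$ case. With $\alpha\leq2$ and the worst constants, the resulting inequality $18m>3+12m$ fails only when $m$ is tiny. I expect the main obstacle to be this small-$m$ bookkeeping, where the generic bound is not enough. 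For those cases I would use the sharper bound $u\geq(m+2)(m+1)^m$, which gives $u\geq 6$ at $m=1$ and $u\geq 36$ at $m=2$. I would also use the fact that $\comp(A)=2$ forces $p_A\geq 2N^2$ (from \eqref{t:E:disconnected}), which is incompatible with a tight branch condition. If needed, the isolated-point counts $\iso$ in the identity give a second equation to close the remaining cases.
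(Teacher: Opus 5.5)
Your argument is the paper's. The two later stages give $R_B+3R_A=R_BB$, hence $B=\one+B'$, $3R_A=R_BB'$ and $u\geq6m$. Three copies of the $\Tow$-image of the $Z_0$ equality, after substitution and cancellation of $R_B$, then give a component equation whose right side is linear in $m$ because $\comp(A)\leq2$. Your flagged identity with $\delta_1,\delta_2$ reproduces the paper's three separate identities.

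Your last paragraph, however, misreads your own inequality. The comparison $18m>3+12m$ is equivalent to $m>1/2$, so it holds for every $m\geq1$ and there are no small cases. The paper's right-hand bound is even $3+10m$, because $\alpha=2$ and $\delta_1=1$ never occur together, and $\delta_2=1$ only lowers the right side. The proof is therefore finished at that point, and the fallback discussion should be dropped.

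One of those fallbacks is in fact false. Since $2N^2<N^3$ for $N\geq4$, the bound $p_A>2N^2$ from \eqref{t:E:disconnected} is perfectly compatible with the branch condition $p_A\leq N^3+2$. So $\comp(A)=2$ cannot be excluded that way, and that route would have failed had it been needed.

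Finally, for $A+H_1+G$ the relevant point is not $r$. The summand $A$ has height $\ell$, strictly below $H_1$ and $G$, so it changes neither $\Max(C)$, hence $Z_0$, nor $\Der(C)$.
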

\begin{proof}
First consider the six sums. Suppose $A$ selects the $H_1$ side and
$B$ the $G$ side, and their $Z_0$ types agree. Selection on the first
side implies $p\leq N^3+\alpha-\beta\leq N^3+2$. If
$a=\comp(A)\geq3$, \eqref{t:E:componentbound} gives
$p\geq N^3+N^2$, a~contradiction. Thus $a\leq2$.

The chains after the first forest are $\Ap3$ on the short side and
$P,B$ on the long side (with the appropriate input). Formula
\eqref{t:E:cross} gives
\[
 R_B+3R_A=R_BB,\qquad
 R_B+R_A(\alpha\one+\Ap3)=R_B(\beta\one+P_B),
\]
where $P_B=B^B$. Hence $B=\one+B'$ and $3R_A=R_BB'$.
Eliminating $R_A$ in the second equation and taking component counts
gives
\[
 3u=3(1-\beta)+m(\alpha+a^3)\leq3+10m.
\]
This contradicts $3u\geq18m$ from~\eqref{t:E:um}.

For $PH_1+G$, selection on the first side gives $p\leq N^3+N$;
for $H_1+PG$ it gives $p\leq N^3-N$. Again $a\leq2$. Their chains
still give $3R_A=R_BB'$ and an isolated point of $B$. For the first
sum the top equation becomes
\[
 3P_B=3\one+B'(A+\Ap3),
\]
so $3u=3+m(a+a^3)\leq3+10m$. For the second sum it becomes
\[
 3P_B+3B'=B'\Ap3,
\]
so $3u=m(a^3-3)\leq5m$. Both contradict~\eqref{t:E:um}.

Finally the summand $A$ in $A+H_1+G$ is strictly below the other two
in height. It affects neither $\Max(C)$ nor $\Der(C)$. The first
strong assertion is therefore the already proved cubic assertion,
and the signature argument still proves the second assertion.
\end{proof}

\begin{proposition}\label{t:P:quartic}
\index{quartic!comparison!twelve-occurrence audit}
The three forms with
\[
 C=\Ap\alpha U_1+\Ap\beta G,\quad
 (\alpha,\beta)=(0,0),(1,0),(0,1),\quad r=1-\alpha-\beta,
\]
are strong.
\end{proposition}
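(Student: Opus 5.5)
The proof will follow the pattern of Proposition~\ref{b:P:quartic} and the cubic Proposition~\ref{t:P:cubic}. The second strong assertion is already handled by signatures. Table~\ref{t:tab:chains} gives $\sig(U_1)=(2,4)$ and $\sig(G)=(3,1)$, and the extra factors $\Ap\alpha$ or $\Ap\beta$ do not change these signatures. So once the whole $C$ has been recovered by~\eqref{t:E:wholeC}, Lemma~\ref{t:L:signature} recovers $A$. Within a single branch the fixed chains $SL^{\times\alpha}L^{\Ap4}\mapsto SL^{\times4}\mapsto S$ and $SL^{\times\beta}L^P\mapsto SL^A\mapsto SL\mapsto S$ recover $R$ by terminal recovery. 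It remains to exclude a cross-branch equality of $Z_0$.

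Suppose $A$ selects the $U_1$ side and $B$ the $G$ side, with isomorphic $Z_0$. Since the chains have lengths $2$ and $3$, formula~\eqref{t:E:cross} (with $\Delta=K(\text{third forest})=\one$ from $SL^A\mapsto SL$) gives
\[
 R_B+4R_A=R_BB,\qquad R_B+R_A(\alpha\one+\Ap4)=R_B(\beta\one+P_B).
\]
By~\eqref{t:E:isolated}, $B=\one+B'$ with $m=\comp(B')\geq1$ and $4R_A=R_BB'$. Multiply the second equation by four, substitute, and cancel $R_B$. This gives
\[
 4\beta\one+4P_B\cong4\one+B'(\alpha\one+\Ap4).
\]
Counting components yields $4u=4(1-\beta)+m(\alpha+a^4)$, where $u=\comp(P_B)$ and $a=\comp(A)$.

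Next I bound $a$. Selection of the $U_1$ side gives $p_A\leq N^4+\alpha-\beta\leq N^4+1$. By~\eqref{t:E:componentbound}, $a\geq4$ would force $p_A\geq N^4+N^3$, so $a\leq3$. The identity therefore reads $4u\leq4+m(1+81)=4+82m$.

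The main obstacle is that~\eqref{t:E:um} only gives $4u\geq24m$, which is too weak, so I will imitate the case split of Proposition~\ref{b:P:quartic}.

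For $m\geq3$, use $u\geq(m+1)^{m+1}+(m+1)^m$. This gives $4u\geq4\cdot4^4\cdot(m+1)/4\cdot\ldots\geq256m>4+82m$, via $(m+1)^{m-1}\geq16$ and $(m+1)^2\geq4m$.

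For $m=1$, $B$ has two components, so $u\geq6$. Then $4u=4(1-\beta)+\alpha+a^4$ with $a\leq3$ must be at least $24$, which forces $a=3$. The values are $85$, $86$, and $81$ for $(\alpha,\beta)=(0,0),(1,0),(0,1)$, none divisible by $4$.

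For $m=2$, $u\geq3^3+3^2=36$, so $4u\geq144$. But $4u\leq4+2(1+81)=168$, and equality requires $a=3$. The values are $4(1-\beta)+2(\alpha+81)\in\{166,168,162\}$. Only $(\alpha,\beta)=(1,0)$ gives $u=42$. For that case I would sharpen the bound: $B$ has three components, one of them isolated, so I would count the components of $P_B$ more carefully, for example $u\geq 27+9+\ldots$ using a noncontractible component together with the other assignments. If a sharper bound fails, the fallback is to use the size relation $4|R_A|=|R_B||B'|$ together with the cardinalities in the top equation, as in Proposition~\ref{t:P:twisted}.

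Either way the cross-branch collision is impossible, and both strong assertions follow.
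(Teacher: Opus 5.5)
Your argument is correct and parallel to the paper's up to the last subcase. The cross-branch identities from \eqref{t:E:cross}, the isolated point of $B$, the count $4u=4(1-\beta)+m(\alpha+a^4)$ with $a\leq3$, and the case $m\geq3$ all match. (Your $m\geq3$ line is garbled; what you mean is $u\geq(m+1)^{m+1}=(m+1)^{m-1}(m+1)^2\geq64m$.) For $m=1,2$ you use the lower bounds $u\geq6$ and $u\geq36$ to force $a=3$ and then reduce modulo four. The paper instead reduces modulo four first, which forces $a=2$ when $\alpha=0$. Both routes are valid and end at the same residual case $(\alpha,\beta)=(1,0)$, $m=2$, $a=3$, $u=42$. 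Note also that it is $4u\geq144$, not equality, that forces $a=3$ there.

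That residual case is a genuine gap: you only say what you would try, and neither suggestion works as stated.
\begin{itemize}
\item The counting you sketch, componentwise-constant assignments plus the extra components from a noncontractible component, gives only $27+9=36<42$. Nothing in your hypotheses pushes the count past $42$. The obstruction is arithmetic, not size.
\item The cardinality fallback of Proposition~\ref{t:P:twisted} worked because the small branch was pinned at $N=4$. Here $|A|$ is unbounded: selection and $a=3$ only give $N^3+N^2\leq p_A\leq N^4+1$. So cardinalities yield no contradiction.
\end{itemize}

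The missing idea is that $\comp(B^B)$ is multiplicative over the source components. Write $B=\one+B_2+B_3$. Then $B^B\cong B^{\one}\times B^{B_2}\times B^{B_3}$. Since $B^{\one}\cong B$ has exactly three components, $u=3\,c_2c_3$ with $c_j=\comp(B^{B_j})$. For connected $B_j$, $B^{B_j}$ is the sum of the three nonempty sets $B_i^{B_j}$, so $c_j\geq3$. Thus $u=42$ forces $c_2c_3=14$, which is not a product of two integers each at least three. This is exactly how the paper closes the case.
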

\begin{proof}
Suppose the first forest is selected for $A$ and the second for $B$.
Then $p\leq N^4+1$, so $a=\comp(A)\leq3$ by
\eqref{t:E:componentbound}. Cross-branch equality gives
$R_B+4R_A=R_BB$. Thus $B=\one+B'$, $m=\comp(B')\geq1$,
and $4R_A=R_BB'$. The other equation gives
\begin{equation}\label{t:E:quarticcount}
 4u=4(1-\beta)+m(\alpha+a^4),\qquad a\in\{1,2,3\}.
\end{equation}
If $m\geq3$, then
\[
 4u\geq4(m+1)^{m+1}\geq256m>4+82m,
\]
contradicting~\eqref{t:E:quarticcount}. We used
$(m+1)^{m-1}\geq16$ and $(m+1)^2\geq4m$.

It remains to consider $m=1,2$. If $\alpha=0$, reduction modulo four
forces $a=2$. For $(\alpha,\beta)=(0,0)$ the possible values of $u$
are $5,9$; for $(0,1)$ they are $4,8$. These contradict the lower
bounds $u\geq6$ when $m=1$ and $u\geq27$ when $m=2$.

For $(\alpha,\beta)=(1,0)$ and $m=1$, the number $1+a^4$ is not
divisible by four. For $m=2$, divisibility forces $a$ odd. The value
$a=1$ gives $u=2$, impossible; $a=3$ gives $u=42$. In that last
case $B$ has three components, one isolated. Exponentiation over
these three components writes $\comp(B^B)$ as a~product of three
integers each at least three, with one factor exactly three. The
remaining two factors would have product $14$, which is impossible
for two integers at least three. This exhausts all cases.
\end{proof}

\begin{proposition}\label{t:P:quintic}
\index{quintic!comparison}
The form with $C=W_1+G$ and $r=0$ is strong.
\end{proposition}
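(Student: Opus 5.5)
The plan follows the pattern of Propositions~\ref{t:P:cubic} and~\ref{t:P:quartic}. I first establish the first strong assertion by excluding cross-branch equality of $Z_0$. The second assertion then comes from signature recognition.

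The two candidate forests are $W_1=A^{\Ap5}$, of height coefficient $N^5$, and $G$, of height coefficient $p$. A tie at $p=N^5$ is resolved by the fixed order, and the argument below allows either resolution. By Table~\ref{t:tab:chains} the chains from $Z_0$ are
\[
 SL^{\Ap5}\longmapsto SL^{\times5}\longmapsto S
 \qquad\text{and}\qquad
 SL^P\longmapsto SL^A\longmapsto SL\longmapsto S .
\]
Within either branch, terminal recovery gives $R$. Suppose $A$ selects $W_1$, $B$ selects $G$, and their $Z_0$ values agree. The terminal equality at stage two gives $S_A=S_BL_B$, hence $J_A=J_B+R_B$. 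Formula~\eqref{t:E:cross} at the two earlier stages then gives
\[
 R_B+5R_A=R_BB,\qquad R_B+R_A\Ap5=R_BP_B .
\]
The first identity has the form~\eqref{t:E:isolated}. It forces $B=\one+B'$ with $m=\comp(B')\geq1$, and it gives $5R_A=R_BB'$. Take five copies of the second identity, substitute, and cancel $R_B$. This yields $5P_B\cong5\one+B'\Ap5$. Counting components, with $a=\comp(A)$ and $u=\comp(P_B)$, gives
\[
 5u=5+ma^5 .
\]

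Next I bound $a$. Selection of $W_1$ for $A$ means $p\leq N^5$. If $a\geq5$, then \eqref{t:E:componentbound} gives $p\geq(N+1)N^{a-1}>N^5$, a contradiction. Hence $1\leq a\leq4$.

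The decisive step, and the one I expect to carry the weight, is the arithmetic contradiction. I first try a congruence modulo five. The identity $5u=5+ma^5$ forces $5\mid ma^5$. Since $5\nmid a$, this requires $5\mid m$, and the small values $m=1,2,3,4$ are excluded at once. For $m\geq5$, the bound~\eqref{t:E:um} gives $u\geq(m+1)^{m+1}$. Then $5u\geq5\cdot6^6>5+1024m$ at $m=5$, and the gap only widens as $m$ grows. This contradicts $5u=5+ma^5\leq5+1024m$. Alternatively, the crude bound already disposes of every $m\geq4$, and the congruence handles $m\leq3$. Either way no collision occurs, and $Z_0$ determines $R$.

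For the second assertion, recover the whole $C$ by~\eqref{t:E:wholeC}. The two possible selected forests $W_1$ and $G$ are nonpure, with signatures $(2,5)$ and $(3,1)$. These are distinct, so Lemma~\ref{t:L:signature} identifies the selected branch and recovers $A$ from $(R,\Der(C))$. This completes the proof of strong reconstruction.
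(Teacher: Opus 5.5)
Your proof is correct and follows essentially the paper's argument. You use the same cross-branch identities, the same bound $a\leq4$ from $p\geq(N+1)N^{a-1}$, and the same component equation $5u=5+ma^5$, which you then dispatch by a congruence modulo five for small $m$ and the growth bound $u\geq(m+1)^{m+1}$ for large $m$. The only cosmetic difference is that your observation that $5\nmid a$ forces $5\mid m$ already removes $m=4$, whereas the paper applies the congruence only for $m\leq3$ and the growth bound from $m=4$ on. Your second assertion, via the distinct signatures $(2,5)$ and $(3,1)$, is exactly the signature-recognition step the paper relies on.
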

\begin{proof}
If $A$ selects $W_1=A^{\Ap5}$, then $p\leq N^5$ and
$a=\comp(A)\leq4$. Equality with a~$G$-branch input $B$ gives
$B=\one+B'$, $5R_A=R_BB'$, and
\[
 5u=5+ma^5,\qquad m=\comp(B')\geq1.
\]
For $m=1,2,3$, reduction modulo five is impossible: $a\in\{1,2,3,4\}$
and $a^5\equiv a\pmod5$. For $m\geq4$,
\[
 5u\geq5(m+1)^{m+1}\geq2500m>5+1024m\geq5+ma^5.
\]
Here $(m+1)^{m-1}\geq125$ and $(m+1)^2\geq4m$.
Thus the branches cannot collide.
\end{proof}

\section[Twelve: remaining pairs]{Twelve occurrences: the remaining exceptional pairs}\label{t:S:remaining}
Throughout this section $A$ denotes an input selecting the first
specified forest and $B$ an input selecting the second. To disprove
cross-branch equality, it is enough to use their $Z_0$ chains. The
second strong assertion is supplied by Lemma~\ref{t:L:signature} and
Table~\ref{t:tab:chains}, as explained above.

\begin{proposition}\label{t:P:Hoffsets}
\index{offset!four-occurrence terms}
For $(i,j)=(1,3),(2,3),(3,4)$, the three forms with
\[
 C=\Ap\alpha H_i+\Ap\beta H_j,\qquad
 (\alpha,\beta)=(0,0),(1,0),(0,1),\qquad r=1-\alpha-\beta,
\]
are strong.
\end{proposition}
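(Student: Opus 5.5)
The plan is to follow Proposition~\ref{b:P:Hpairs} and check that the offsets $\alpha,\beta$ and the root leaves $r$ only enter the top stage of the cross-branch bookkeeping. That stage is either never used or, in the pair $(1,3)$, is still decided by a congruence. Within a branch, the fixed chain from Table~\ref{t:tab:chains} recovers $R$. Multiplying by $\Ap\alpha$ changes only $K(D_0)$ to $\alpha\one+K(D_0)$, so the later chain and the signature are unchanged. The signatures of the two candidate selections are $(2,3)$ versus $(3,2)$, $(3,1)$ versus $(3,2)$, and $(3,2)$ versus $(4,1)$. These are distinct, so Lemma~\ref{t:L:signature} gives (S2) once $C$ is recovered by \eqref{t:E:wholeC}. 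What remains is (S1): excluding $Z_0(A)\cong Z_0(B)$ when $A$ selects $\Ap\alpha H_i$ and $B$ selects $\Ap\beta H_j$.

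First step: show $A$ is connected, so $\comp(A)=1$ and $\iso(A)=0$. If $A$ were disconnected, \eqref{t:E:disconnected} gives $f>p^2$, hence $f>N^3+1$ and $f>Np+1$, and it gives $g\geq N^2f>f+1$. The height offsets of at most $\ell$ coming from $\alpha,\beta$ therefore cannot reverse the selection, and the second forest is selected, a contradiction. I expect this to be the only point needing care with ties. Since all inequalities here are strict with slack, no tie-breaking is required.

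Second step: apply \eqref{t:E:cross} with the chains of Table~\ref{t:tab:chains}. Write $\rho=\comp(R_A)$, $\sigma=\comp(R_B)$, $b=\comp(B)$, $u=\comp(P_B)$, $v=\comp(F_B)$.

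\emph{Pair $(1,3)$.} Here $d_A=2$, $d_B=3$ and $\Delta=2\one$. The stage $i=1$ gives $2R_B+3R_A=R_BB^{\times2}$. The stage $i=0$ gives $2R_B+R_A(\alpha\one+\Ap3)=R_B(\beta\one+F_B)$. Counting components yields $3\rho=\sigma(b^2-2)$ and
\[
 6+(\alpha+1)(b^2-2)=3(\beta+v).
\]
The three offset cases reduce to $3v=b^2+4$, $3v=2b^2+2$, and $3v=b^2+1$. Each would require $b^2\equiv2\pmod3$, which is impossible.

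\emph{Pair $(2,3)$.} Both chains have length three, so $\Delta=0$. The stage $i=2$ gives $R_A=2R_B$. The stage $i=1$ then gives $2R_B+2R_BA=R_BB^{\times2}$, so $B^{\times2}\cong2\one+2A$ and $\iso(B)^2=2$, impossible.

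\emph{Pair $(3,4)$.} Here $\Delta=\one$. The stages $i=2,1$ give $R_B+2R_A=R_BB$ and $R_B+R_A\Ap2=R_BP_B$. Hence $\sigma+2\rho=b\sigma$ and $\sigma+\rho=u\sigma$, so $b\geq2$ and $2u=b+1$. This contradicts $u\geq b^b$.

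In the last two pairs the offsets appear only at $i=0$, which is never used. Thus $Z_0$ determines $R$ in all nine forms, and together with the signature argument both strong assertions hold.
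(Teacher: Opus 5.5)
Your proposal is correct and matches the paper's proof essentially step for step. You prove connectedness of $A$ from $f>p^2$ and $g\geq N^2f$, with the one-unit offsets absorbed. Then \eqref{t:E:cross} gives $3v=(\alpha+1)b^2+4-2\alpha-3\beta$, excluded modulo three; $\Bp2\cong2\one+2A$, excluded by isolated points; and $2u=b+1$, excluded by $u\geq b^b$. Finally, (S2) follows from the distinct signatures. One cosmetic point: $r$ does not enter even the top stage, since $Z_0=S\Max(C)$ is already independent of the root leaves.
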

\begin{proof}
In every case an input selecting the first forest is connected.
Indeed, on disconnected inputs $f>p^2$ makes $H_3$ taller than
$AH_1$ and $AH_2$, and $g\geq N^2f$ makes $H_4$ taller than
$AH_3$. These inequalities are strict even with the indicated
one-unit height offset. Thus $\comp(A)=1$ and $\iso(A)=0$.
Put $\rho=\comp(R_A)$, $\sigma=\comp(R_B)$,
$b=\comp(B)$, $u=\comp(P_B)$, and $v=\comp(F_B)$.

For $(i,j)=(1,3)$, the short chain terminates after $\Ap3$, whereas
the long chain continues through $F_B$ and $\Bp2$. Formula
\eqref{t:E:cross} gives
\[
 2\sigma+3\rho=\sigma b^2,\qquad
 2\sigma+(\alpha+1)\rho=\sigma(\beta+v).
\]
Eliminating $\rho/\sigma$ gives
\[
 3v=(\alpha+1)b^2+4-2\alpha-3\beta.
\]
For the three allowed offsets the right side is, respectively,
$b^2+4$, $2b^2+2$, or $b^2+1$. None is divisible by three, since
a~square is zero or one modulo three.

For $(i,j)=(2,3)$, the chains have equal lengths and give
$S_A=S_B$ and $R_A=2R_B$. The preceding stage gives
$R_A(\one+A)=R_B\Bp2$, independently of the offsets at the initial
stage. Thus
\[
 \Bp2=2\one+2A.
\]
Taking isolated-component counts gives $\iso(B)^2=2$, impossible.

For $(i,j)=(3,4)$, the two stages before termination give
\[
 \sigma+2\rho=b\sigma,\qquad
 \sigma+\rho=u\sigma.
\]
Hence $b\geq2$ and $2u=b+1$, contrary to $u\geq b^b$.
All nine cases are proved.
\end{proof}

\begin{proposition}\label{t:P:double}
\index{double!comparison}
The form with $C=\xp F2+G$ and $r=0$ is strong.
\end{proposition}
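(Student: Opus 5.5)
The plan follows the pattern of the other exceptional pairs. It has two parts: rule out equality of $Z_0$ across the two possible selections, and get the second strong assertion from Lemma~\ref{t:L:signature}. Here $\height(\xp F2)=2N^2\ell$ and $\height(G)=p\ell$. An input $A$ selecting $\xp F2$ therefore satisfies $p\leq2N^2$, ties being resolved by the fixed order; an input $B$ selecting $G$ satisfies $p\geq2N^2$.

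The first step is the connectivity of a first-branch input $A$. By \eqref{t:E:disconnected} in Lemma~\ref{t:L:bounds}, every disconnected $A\in\OO$ has $p>2N^2$ strictly, so it selects $G$ even in the boundary case. Hence $a=\comp(A)=1$ and $\iso(A)=0$.

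Next I would write the two chains. We have $K(\xp F2)=2\Ap2$, so the short chain is $\xp F2,\Ap2$ with signature $(2,2)$. From $Z_0$ it reads $S(L^{\Ap2})^{\times2}\mapsto SL^{\times2}\mapsto S$. The long chain is $G,P,A$ with signature $(3,1)$. Suppose $A$ is short, $B$ is long, and their $Z_0$ types agree. Formula \eqref{t:E:cross} with $d_A=2$ and $\Delta=K(\Bp1)=\one$ gives $S_A=S_BL_B$ and
\[
 R_B+2R_A=R_BB,\qquad R_B+R_A\cdot2\Ap2=R_BP_B .
\]
The observation \eqref{t:E:isolated} then yields $B=\one+B'$ with $m=\comp(B')\geq1$ and $2R_A=R_BB'$. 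Substituting this into the second equation and cancelling the actual factor $R_B$ gives
\[
 P_B\cong\one+B'\times\Ap2 .
\]

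The step that decides the case is the component count. Since $a=1$, the display gives $u=\comp(P_B)=1+m$. On the other hand, $B\in\OO$, so \eqref{t:E:um} gives $u\geq(m+2)(m+1)^m\geq6m>1+m$. This contradiction excludes cross-branch collisions. When both inputs select the same forest, its fixed chain recovers $R$ by terminal recovery; this proves (S1).

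For (S2), recover the whole $C$ from $(R,D_t)$ by \eqref{t:E:wholeC}. The signatures $(2,2)$ and $(3,1)$ are distinct, so Lemma~\ref{t:L:signature} identifies the branch and recovers $A$. I expect the only delicate point to be the boundary $p=2N^2$. There the strict inequality $p>2N^2$ for disconnected inputs is what guarantees $a=1$ regardless of how the tie is broken, so I would check that inequality in Lemma~\ref{t:L:bounds} carefully.
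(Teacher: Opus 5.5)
Your proof is correct and follows the paper's argument in every step: connectivity of the $\xp F2$-input from $p>2N^2$ on disconnected inputs, the cross-branch identities $R_B+2R_A=R_BB$ and $R_B+2R_A\Ap2=R_BP_B$, a component count that contradicts $\comp(P_B)\geq b^b+b^{b-1}$, and signature recognition for (S2). The only difference is cosmetic: the paper counts components in both identities directly to get $u=b$, whereas you first split off the isolated point $B=\one+B'$ and reach the same relation in the form $u=1+m$.
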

\begin{proof}
An input selecting $\xp F2$ satisfies $p\leq2N^2$, so it is connected
by~\eqref{t:E:disconnected}. The chains are
$\xp F2,\Ap2$ and $G,P,A$. Cross-branch equality gives
\[
 R_B+2R_A=R_BB,\qquad
 R_B+2R_A\Ap2=R_BP_B.
\]
Taking component counts and using $\comp(A)=1$ gives $b\geq2$ and
$u=b$, contradicting $u\geq b^b$.
\end{proof}

\begin{proposition}\label{t:P:unit}
\index{unit!comparison}
The form with $C=U_1+H_2$ and $r=0$ is strong.
\end{proposition}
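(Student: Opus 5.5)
The approach follows the pattern of Propositions~\ref{t:P:quartic} and~\ref{t:P:Hoffsets}. The second strong assertion should come directly from Lemma~\ref{t:L:signature}. By Table~\ref{t:tab:chains}, $\sig(U_1)=(2,4)$ and $\sig(H_2)=(3,1)$ are distinct, and every possible selected forest is nonpure. So once the whole $C$ is recovered by~\eqref{t:E:wholeC}, the selected branch and $A$ are determined. Within one branch the fixed chain already recovers $R$. It therefore remains to exclude a cross-branch equality of $Z_0$.

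Let $A$ select $U_1$ (height coefficient $N^4$) and $B$ select $H_2$ (height coefficient $Np$), with $N=|A|$ and $M=|B|\geq4$ by~\eqref{b:E:O-inputs}. The chains are
\[
 SL^{\Ap4}\mapsto SL^{\times4}\mapsto S
 \qquad\text{and}\qquad
 SL^{A\times P}\mapsto SL\,L^A\mapsto SL\mapsto S.
\]
So $d_A=2<d_B=3$, and $\Delta=K(A)=\one$. The terminal equality gives $S_A\cong S_BL_B$ and $J_A\cong J_B+R_B$. Formula~\eqref{t:E:cross} at $i=1$ and $i=0$ then gives
\[
 R_B+4R_A\cong R_B(\one+B),\qquad
 R_B+R_A\Ap4\cong R_B\,(B\times P_B).
\]
Cancelling $R_B$ as a summand in the first identity gives $4R_A\cong R_BB$.

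Rather than count components, which leaves the stubborn residual case $a=2$, $u=5$ for connected $B$, I would take cardinalities. Put $q_A=|R_A|$ and $q_B=|R_B|$. The two identities become $4q_A=Mq_B$ and $q_B+q_AN^4=q_BMp_B$. Eliminating $q_A/q_B$ gives
\[
 M\,(4p_B-N^4)=4 .
\]
Since $M\geq4$ and the second factor is a positive integer, this forces $M=4$ and $4p_B=N^4+1$. Now $B\in\OO$ has four elements, and a noncontractible component needs at least four elements. Hence $B$ is connected with no least or greatest element, that is, $B$ is the four-element crown. Its maximum-height interval is the two-element chain $L_B=\mathbf2$, so $R_B=\Tow(\mathbf2)=\one$.

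The identity $4R_A\cong R_BB$ then reads $4R_A\cong B$. This is impossible, because the left side has at least four components and the crown is connected. Hence the branches cannot collide, which proves the first strong assertion. Together with the signature argument, the form is strong.

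The step I expect to need the most care is the bookkeeping in~\eqref{t:E:cross}. One has to check that the stage-$1$ forest of the $H_2$ chain is $A\times P_B$, with $K=\one+B$, and that the offset $\Delta=\one$ is placed correctly. Once these are right, the divisibility argument is immediate.
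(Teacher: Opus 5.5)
Your proof is correct. Up to the two identities $4R_A\cong R_BB$ and $R_B+R_A\Ap4\cong R_B(B\times P_B)$ it coincides with the paper's. The routes diverge in how the contradiction is extracted.

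The paper never leaves ordered sets. It multiplies the second identity by $4\one$, substitutes the first, and cancels $R_B$, obtaining $4\one+B\Ap4\cong 4BP_B$. Thus $\poly(B)$ divides the nonzero constant $4$ in the signed component-polynomial ring, which contradicts $\hdeg(\poly(B))=\height(B)\geq1$ by \eqref{b:E:weighted-degree}.

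Your relation $M(4p_B-N^4)=4$ is precisely the cardinality evaluation of that identity. Passing to cardinalities discards the degree obstruction, so you replace it by the size bound $|B|\geq4$ from \eqref{b:E:O-inputs} and a small-case identification of $B$. That identification is sound but needs one more sentence than you give. Besides the crown, the N-shaped ordered set is the only connected four-element ordered set without a least or greatest element. It is contractible, because its comparability graph is a path.

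In fact you can skip the crown altogether. Since $N^4\equiv0$ or $1\pmod4$, the equation $4p_B=N^4+1$ is already impossible.

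The paper's argument is uniform: its final step uses only the positive height of $B$. Yours is entirely integer arithmetic and, with the congruence shortcut, equally short.
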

\begin{proof}
No connectedness assumption on $A$ is needed. The chains are
$U_1,\Ap4$ and $H_2,BP_B,B$. Formula~\eqref{t:E:cross} gives
\[
 4R_A=R_BB,\qquad R_B+R_A\Ap4=R_B(BP_B).
\]
Multiply the second equality by the four-element antichain, substitute
the first equality, and cancel $R_B$. This gives
\[
 4\one+B\Ap4=4BP_B.
\]
In the signed component-polynomial ring, $\poly(B)$ therefore divides
the nonzero constant $4$. This contradicts its positive weighted
degree, by~\eqref{b:E:weighted-degree}.
\end{proof}

\begin{proposition}\label{t:P:bulk}
\index{bulk!comparison}
The nine forms with $r=0$ and the sums in Table~\ref{t:tab:bulk} are
strong.
\end{proposition}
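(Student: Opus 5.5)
The plan follows the pattern of the preceding propositions. The second strong assertion should be a bookkeeping check, so the substance is the first assertion: excluding a cross-branch equality of $Z_0$.

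For (S2), I would read the two candidate selections of each sum in Table~\ref{t:tab:bulk} from Table~\ref{t:tab:chains}. The pairs are $H_1,U_6$; $H_2,U_6$; $H_3,U_1$; $H_3,U_2$; $H_3,U_5$; $H_4,U_1$; $H_4,U_2$; $H_4,U_4$; $H_4,U_3$. Their signatures are
\[
\begin{gathered}
(2,3),(3,3);\quad (3,1),(3,3);\quad (3,2),(2,4);\quad (3,2),(3,1);\quad (3,2),(4,1);\\
(4,1),(2,4);\quad (4,1),(3,1);\quad (4,1),(3,2);\quad (4,1),(3,1).
\end{gathered}
\]
They are pairwise distinct within each pair. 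After recovering the whole $C$ by \eqref{t:E:wholeC}, Lemma~\ref{t:L:signature} then gives (S2). Within a single branch, the fixed chain of Lemma~\ref{t:L:forest} already recovers $R$. So (S1) reduces to showing that an input $A$ selecting one forest and an input $B$ selecting the other cannot have isomorphic $Z_0$.

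For each pair, let $A$ be the input on the shorter chain (or either, if the lengths agree). I would write out the identities \eqref{t:E:cross}, with $\Delta=K(E_{d_A}(B))$.

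\emph{Example.} For $H_4$ against $U_1$, the chains are $U_1,\Ap4$ and $H_4,G,P,A$. With $d_A=2$ and $\Delta=B$, formula \eqref{t:E:cross} gives
\[
 R_BB+4R_A=R_BP_B,\qquad R_BB+R_A\Ap4=R_BG_B.
\]

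In every case I would then take connected-component counts, with $\rho=\comp(R_A)$, $\sigma=\comp(R_B)$, $a=\comp(A)$, $b=\comp(B)$, $u=\comp(P_B)$, $v=\comp(F_B)$ and $w=\comp(G_B)$. The ratio $\rho/\sigma$ is then eliminated to obtain a Diophantine relation among $a,b,u,v,w$. In the example this relation is $4(w-b)=a^4(u-b)$.

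That relation is confronted with the inequalities already proved:
\begin{enumerate}
\item $u\geq b^b$ and $w\geq u$, from Lemma~\ref{b:L:bounds};
\item the sharper $u\geq(m+2)(m+1)^m$ of \eqref{t:E:um}, whenever an equation of type \eqref{t:E:isolated} forces $B=\one+B'$;
\item the bounds of Lemma~\ref{t:L:bounds}, which convert the branch condition on $A$ into $a\leq 2$ or $a=1$. For example, selecting $H_3$ over $U_1$ requires $f\leq N^4$, incompatible with $f>p^2$ for disconnected $A$ by \eqref{t:E:disconnected}.
\end{enumerate}
With $a$ bounded, the relation is killed either by size (the left side grows like $b^b$ or $(m+1)^{m+1}$ while the right is linear in $m$ or $b$) or by a congruence, as in Propositions~\ref{t:P:quartic} and~\ref{t:P:Hoffsets}.

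Where the component counts alone are inconclusive, I would fall back on the full ordered-set identities instead of their counts. Multiplying by a suitable antichain and cancelling $R_B$, as in Proposition~\ref{t:P:unit}, should produce an equation in which $\poly(B)$ divides a nonzero integer. That contradicts Lemma~\ref{L:degree-obstruction}. Isolated-point counts, as in the $H_2,H_3$ argument, are the other fallback.

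The step I expect to be the main obstacle is the pairs whose chains have equal length or nearly matching shapes, notably $H_4$ against $U_2$ and $U_3$, and $H_3$ against $U_2$. In these the first equations only give $S_A=S_B$ or $R_A=kR_B$, and the counting relation has small solutions. There I would first try to derive $R_A\cong cR_B$ at the terminal stage. I would then cancel down to an identity between explicit polynomials in $A$ and $B$, such as $\Bp2$ or $BP_B$ against a sum built from $A$. Finally I would compare isolated points or weighted degrees. If a genuine small solution survives, I would bound $N$ using the branch inequality and treat the finitely many cardinalities by the cardinality identity obtained from \eqref{t:E:cross}, as in Proposition~\ref{t:P:twisted}.
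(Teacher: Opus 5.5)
Your plan is the paper's proof. An input selecting the first forest of each row is connected, because the second forest is strictly taller on disconnected inputs (in your example it is an input selecting $U_1$, not $H_3$, that satisfies $f\leq N^4$); the component counts of \eqref{t:E:cross} with $\comp(A)=1$ then give the relations of Table~\ref{t:tab:bulk}, and (S2) follows from the signatures exactly as you list them. None of your fallbacks is needed, since the rows you flag close at once ($b^2=6$ for $U_2+H_3$, $u=3b-2$ against $u\geq b^b+b^{b-1}$ for $U_2+H_4$, $u=2b-1<b^b$ for $U_3+H_4$), and the only bound missing from your list is $\comp(H_1(B))\geq b^{b^3}$, which disposes of $3z=b^3+6$ in the row $H_1+U_6$.
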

\begin{proof}
In each row an input selecting the first forest is connected.
Indeed, on disconnected inputs the second forest is strictly taller.
In row order, the requisite inequalities are
\[
\begin{gathered}
 f>N^4,\quad g>N^4,\quad f>N^2p,\quad g>N^2p,\quad g>p^2,\\
 g>Nf,\quad Ng>f,\quad h_1>N^3,\quad h_1>Np.
\end{gathered}
\]
They follow directly from $p>N^2$, $f>p^2$, $g\geq N^2f$, and
$h_1\geq f^2$.

Put $\theta=\comp(R_A)/\comp(R_B)>0$, $b=\comp(B)$,
$u=\comp(P_B)$, $v=\comp(F_B)$, $w=\comp(G_B)$, and
$z=\comp(H_1(B))$. Apply~\eqref{t:E:cross} with the chains in
Table~\ref{t:tab:chains} and use $\comp(A)=1$. The two relevant
component equations in each row are displayed in Table~\ref{t:tab:bulk};
thus none of the eliminations requires recognizing an unknown
summand. They give the indicated necessary relations.

In the first row, $4v=b^2+6$ contradicts squares modulo four. In the
second row, $u>b$ since $\theta>0$. Also $w\geq u$: evaluation
$P_B\to B$ and the constant-map section, by precomposition, make
$P_B$ a~retract of $G_B$. Consequently $4w=u+3b$ forces $u\leq b$,
a~contradiction. The third row gives the impossible square $b^2=6$.

In the fourth row $b\geq2$. For $b=2$, $u=4$ contradicts
$u\geq b^b+b^{b-1}=6$; for $b\geq3$, $3b-2<b^2\leq b^b$.
The fifth and sixth rows give $b\geq2$ and, respectively,
$u=2b-1<b^b$ and $u=b<b^b$. The seventh gives $b\geq2$ and
$2u=b-1$, again impossible.

In the eighth row $b\geq2$ and $3z=b^3+6$. But
$H_1(B)=B^{\Bp3}$, so assigning a~target component to each source
component gives $z\geq b^{b^3}\geq b^3$. Thus
$3z\geq3b^3>b^3+6$. The last row gives the impossible cube $b^3=6$.
This proves every row.
\end{proof}

\begin{table}[htbp]
\centering\small\renewcommand{\arraystretch}{1.35}
\begin{tabular}{@{}lll@{}}
\toprule
Sum $C$ & Necessary component equations & Consequence\\
\midrule
$U_1+H_3$ & $v=\theta+2,\ b^2=4\theta+2$ & $4v=b^2+6$\\
$U_1+H_4$ & $w=b+\theta,\ u=b+4\theta$ & $4w=u+3b$\\
$U_2+H_3$ & $\theta=2,\ b^2=3\theta$ & $b^2=6$\\
$U_2+H_4$ & $\theta=b-1,\ u=1+3\theta$ & $u=3b-2$\\
$U_3+H_4$ & $\theta=b-1,\ u=1+2\theta$ & $u=2b-1$\\
$U_4+H_4$ & $b=1+2\theta,\ u=1+2\theta$ & $u=b$\\
$H_3+U_5$ & $\theta=u,\ b=1+2\theta$ & $2u=b-1$\\
$H_1+U_6$ & $z=\theta+3,\ b^3=3\theta+3$ & $3z=b^3+6$\\
$H_2+U_6$ & $\theta=3,\ b^3=2\theta$ & $b^3=6$\\
\bottomrule
\end{tabular}
\caption{Nine exceptional pairs. The first listed forest is the
selected forest for the connected input $A$.}
\label{t:tab:bulk}
\end{table}

\begin{proposition}\label{t:P:square}
\index{square!comparison}
The form with $C=U_3+H_3$ and $r=0$ is strong.
\end{proposition}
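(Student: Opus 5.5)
The plan follows the pattern of Propositions~\ref{t:P:Hoffsets} and~\ref{t:P:bulk}. By Table~\ref{t:tab:chains}, $U_3$ has chain $\xp P2,A$ and signature $(3,1)$, while $H_3$ has chain $F,\Ap2$ and signature $(3,2)$. The signatures differ, so once the whole $C$ is recovered by~\eqref{t:E:wholeC}, Lemma~\ref{t:L:signature} gives the second strong assertion. Within a single branch the fixed chain already recovers $R$. So it remains to exclude a cross-branch equality of $Z_0$. By symmetry of the argument, rename the inputs so that $A$ selects $U_3$ and $B$ selects $H_3$.

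First I would note that $A$ is connected. On disconnected inputs, \eqref{t:E:disconnected} gives $f>p^2$, so $H_3$ is strictly taller and $A$ could not select $U_3$. The two interval chains are
\[
 S_AL_A^{A}L_A^{A},\quad S_AL_A,\quad S_A
\]
for $A$, and
\[
 S_BL_B^{\Bp2},\quad S_BL_B^{\times2},\quad S_B
\]
for $B$, each starting after its $Z_0$. They have equal length. Equality at the terminal stage gives $S_A\cong S_B$. Cancelling at the previous stage gives $L_A\cong L_B^{\times2}$, hence $R_A\cong2R_B$. Applying $\Tow$ at $Z_1$ and cancelling $J$ yields $2R_A\times A\cong R_B\times\Bp2$, that is, $\Bp2\cong 4A$. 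Applying $\Tow$ at $Z_0$ similarly yields $R_A\times2P_A\cong R_B\times F_B$, that is,
\[
 F_B\cong 4P_A.
\]

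Next I would solve $\Bp2\cong4A$ in the component-polynomial ring. With $b=\poly(B)$ and $a=\poly(A)$, the polynomial ring over $\mathbb Z$ is a UFD and $2$ is prime in it, so $b^2=4a$ forces $b=2c$ and $a=c^2$. Here $c$ has nonnegative coefficients, so $c=\poly(C)$ for an actual $C$. Thus $B\cong2C$ and $A\cong\xp C2$. Since $A$ is connected, so is $C$, and hence $\xp C2$ is connected. Then $\Bp2\cong4\xp C2$ has four connected components, and the remark on connected exponents gives $B^{\xp C2}\cong2\,C^{\xp C2}$. Put $X=C^{\xp C2}$. Then
\[
 F_B\cong(2X)^{\times4}\cong16\,\xp X4,
 \qquad
 P_A\cong(\xp C2)^{\xp C2}\cong\xp X2.
\]
The displayed identity becomes $16\,\xp X4\cong4\,\xp X2$. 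Comparing cardinalities gives $16|X|^4=4|X|^2$, which is impossible since $|X|\geq1$.

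The step I expect to need the most care is the bookkeeping in the $\Tow$ identities at $Z_1$ and $Z_0$. One must check that the selected intervals are the stated ones, including possible height ties resolved by $\preceq$. One must also check that, after cancelling the common $J$ term, no extra summand from $\Delta$ in~\eqref{t:E:cross} survives; here $\Delta=0$ because the chains have equal length. The polynomial factorization and the final cardinality contradiction are then routine.
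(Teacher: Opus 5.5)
Your route is the paper's: connectedness of $A$ from $f>p^2$, then equal-length chains giving $S_A\cong S_B$ and $R_A\cong 2R_B$. From there you get $\Bp2\cong4A$, the decomposition $B\cong2Q$, $A\cong\xp Q2$, and a final cardinality clash after expressing everything through $X=Q^{A}$.

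One displayed identity is wrong, however. The selected interval at $Z_0$ for $A$ is $S_AL_A^{\xp{P_A}2}$. Its exponent is the Cartesian square $P_A\times P_A$; recall $K(U_3)=\xp P2$ in Table~\ref{t:tab:chains}. So $\Tow(Z_0)\cong J_A+R_A\times\xp{P_A}2$, not $J_A+R_A\times 2P_A$. With $R_A\cong2R_B$ this gives $F_B\cong2\,\xp{P_A}2$, as in the paper, rather than $F_B\cong4P_A$. Correspondingly, your last display should read $16\,\xp X4\cong2\,\xp X4$, not $16\,\xp X4\cong4\,\xp X2$. The corrected identity is still contradictory by cardinality, so this single change repairs the proof. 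As written, though, those two displays do not follow from the chain equalities.

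Your passage from $\Bp2\cong4A$ to $B\cong2C$, $A\cong\xp C2$ is a different but valid argument. You use that $2$ is prime in the integer polynomial ring of component polynomials, and that every nonzero polynomial with nonnegative coefficients is the component polynomial of an actual ordered set. The paper instead uses connectedness of $A$ to get $\comp(B)^2=4$. It writes $B=Q_1+Q_2$ with all four products $Q_iQ_j\cong A$, and applies Cartesian-root uniqueness to $\xp{Q_1}2\cong\xp{Q_2}2$. Your version does not need connectedness at that point, while the paper's stays within the cancellation facts of Lemma~\ref{L:arithmetic-cancellation}. You should also rename your $C$, which collides with the mixed sum $C=U_3+H_3$.
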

\begin{proof}
If $U_3=A^{\xp P2}$ is selected, its input $A$ is connected, since
$f>p^2$ for disconnected inputs. The two chains are
\[
 U_3,\xp P2,A\qquad\text{and}\qquad H_3,F,\Ap2.
\]
Cross-branch equality gives $S_A=S_B$, $R_A=2R_B$, and then
\begin{equation}\label{t:E:squareidentities}
 \Bp2=4A,\qquad F_B=2\xp{P_A}2.
\end{equation}
The first equality implies that $B$ has two components, say $Q_1,Q_2$.
All four product components are isomorphic to $A$. In particular,
$\xp{Q_1}2\cong\xp{Q_2}2$, so Cartesian-root uniqueness gives
$Q_1\cong Q_2=Q$. We have $B=2Q$ and $A=\xp Q2$.

Put $X=Q^A$. Then
$P_A=(\xp Q2)^A=\xp X2$. Since $A$ is connected,
$B^A=(2Q)^A=2X$. Also $\Bp2=4A$, so
\[
 F_B=B^{\Bp2}=B^{4A}=(B^A)^{\times4}=16\xp X4.
\]
The second equality in~\eqref{t:E:squareidentities} instead gives
$F_B=2\xp X4$. Their positive cardinalities contradict each other.
\end{proof}

\begin{proposition}\label{t:P:shift}
\index{shift!comparison}
The form with $C=U_6+H_4$ and $r=0$ is strong.
\end{proposition}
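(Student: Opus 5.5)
We treat the form $E_{0,C}$ with $C=U_6+H_4$ as the earlier exceptional pairs were treated. The second strong assertion is not the difficulty. By Table~\ref{t:tab:chains}, the two possible selected forests are $U_6$, with chain $U_6,H_1,\Ap3$ and signature $(3,3)$, and $H_4$, with chain $H_4,G,P,A$ and signature $(4,1)$. These signatures are distinct. So once the whole $C$ is recovered from $(R,D_t)$ by~\eqref{t:E:wholeC}, Lemma~\ref{t:L:signature} identifies the branch and recovers $A$. When both inputs select the same forest, its fixed chain recovers $R$ through Lemma~\ref{t:L:forest}. It therefore remains only to exclude a cross-branch equality of $Z_0$. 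Here $A$ selects $U_6$ and $B$ selects $H_4$. A height tie is resolved by the fixed order, and the argument below does not depend on which way it is resolved.

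Next we bound $a=\comp(A)$ using the selection condition. Selection of $U_6$ requires $h_1\geq g$. If $a\geq3$, Lemma~\ref{t:L:bounds} gives $g\geq Nh_1>h_1$, which is a contradiction. Hence $a\leq2$. We do not need connectedness of $A$, and I would avoid trying to prove it, since for $a=2$ the available bounds do not obviously order $g$ and $h_1$.

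Now apply~\eqref{t:E:cross}. Here $d_A=3$, $d_B=4$, and $\Delta=K(B)=\one$, so
\[
 R_B+3R_A=R_BB,\qquad R_B+R_A\Ap3=R_BP_B,\qquad R_B+R_AH_1(A)=R_BG_B.
\]
The first identity has the form~\eqref{t:E:isolated}. It therefore gives $B=\one+B'$, with $m=\comp(B')\geq1$ and $3R_A=R_BB'$. Next, take three copies of the second identity, substitute $3R_A=R_BB'$, and cancel the actual factor $R_B$. This yields
\[
 3P_B\cong3\one+B'\times\Ap3.
\]
Counting components gives $3u=3+ma^3\leq3+8m$, where $u=\comp(P_B)$.

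The final step compares this with~\eqref{t:E:um}, which gives $u\geq6m$ and hence $3u\geq18m>3+8m$. This is a contradiction, so the branches cannot collide and (S1) follows. The third cross identity is not needed.

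The only step where I expect possible trouble is the bound $a\leq2$, which relies on $g\geq Nh_1$ for $c\geq3$ from Lemma~\ref{t:L:bounds}. If that bound turned out to be insufficient, I would fall back on the third identity in the same way. It gives $3w=3+m\,\comp(H_1(A))$ with $w=\comp(G_B)\geq u$, and I would then need to control $\comp(H_1(A))$ by the selection inequality.
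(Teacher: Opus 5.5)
Your proof is correct and follows the paper's argument essentially step for step. It uses the same ingredients in the same order: the bound $\comp(A)\leq2$ from $g\geq Nh_1$, the cubic-chain identities giving $B=\one+B'$ and $3P_B\cong3\one+B'\times\Ap3$, the count $3u=3+ma^3\leq3+8m<18m$ against~\eqref{t:E:um}, and the distinct signatures $(3,3)$ and $(4,1)$ for the second strong assertion, so your fallback through the third identity is never needed.
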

\begin{proof}
An input $A$ selecting $U_6=A^{H_1}$ has at most two components.
Otherwise Lemma~\ref{t:L:bounds} gives $g\geq Nh_1>h_1$, so $H_4$
would have greater height. After one reduction, a~cross-branch
equality becomes
\[
 S_A\Max(H_1(A))=S_B\Max(G(B)).
\]
The cubic chain calculation gives $B=\one+B'$,
$3R_A=R_BB'$, and $3P_B=3\one+B'\Ap3$.
Putting $a=\comp(A)\leq2$ and $m=\comp(B')\geq1$ yields
\[
 3u=3+ma^3\leq3+8m<18m,
\]
contrary to~\eqref{t:E:um}. The signatures $(3,3)$ and $(4,1)$
distinguish the second strong assertion as well.
\end{proof}

We have now proved the strong assertion for all 40 rows of
Table~\ref{t:tab:exceptions}. In the 35 remaining nonpure cases it is
important that branch recognition for the second assertion comes
from the whole invariant $\Der(C)/R$ and its successive reductions.
It does not require separating the displayed forests in $C$.

\chapter{The theorem through twelve occurrences}\label{t:S:enumeration}

\begin{theorem}\label{t:T:main}
\index{occurrence!count!twelve: theorem}
Let $t$ be an exponential term with $1\leq|t|\leq12$, and let $A,B$
be finite ordered sets. Then
\[
 t(A)\cong t(B)\quad\Longrightarrow\quad A\cong B.
\]
Every nontrivial such term is strong on $\OO$.
\end{theorem}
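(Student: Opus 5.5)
The proof will be a cumulative assembly, in the same pattern as Theorems~\ref{T:ten} and~\ref{b:T:main}, with no new structural argument. For $|t|\leq11$ both assertions are already Theorem~\ref{b:T:main}; antichains are handled by Lemma~\ref{L:antichains}, and inputs in $\K$ (including mixed pairs, via membership reflection) by Theorem~\ref{b:T:K}. By the normal-form theorem of Chapter~\ref{b:S:normal}, every binary parenthesization with twelve occurrences is isomorphic, for every input, to an interchange normal form, so it suffices to prove that each of the 4,766 normal forms is strong on $\OO$. A strong term is reconstructing on $\OO$: $V_t$ gives $R$, cancellation in $\Der(t(A))\cong R\times D_t(A)$ gives $D_t(A)$, and (S2) gives $A$.

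I would then follow the trichotomy of Proposition~\ref{t:P:classification}. \emph{Uniform roots.} There is either no nontrivial child, and the pure-power clause of Proposition~\ref{b:P:uniform} applies, or exactly one distinct nontrivial child type $u$ with $|u|\leq11$, which is strong by Theorem~\ref{b:T:main}; the uniform extension clause then makes the parent strong. \emph{Regular mixed roots.} Here Proposition~\ref{t:P:classification}, with the comparison matrices verified in that section including every height tie, supplies a fixed selected forest $D_0$ of weight at most eight. It also guarantees that if $D_0$ is pure then every summand is pure. Proposition~\ref{t:P:regular} then applies: Lemma~\ref{t:L:forest} recovers $R$ from $Z_0$, \eqref{t:E:wholeC} recovers $C$, and either polynomial injectivity or the forest lemma recovers $A$. \emph{Exceptional roots.} The 40 rows of Table~\ref{t:tab:exceptions} are each assigned to one of Propositions~\ref{t:P:power}, \ref{t:P:twisted}, \ref{t:P:cubic}, \ref{t:P:quartic}, \ref{t:P:quintic}, \ref{t:P:Hoffsets}, \ref{t:P:double}, \ref{t:P:unit}, \ref{t:P:bulk}, \ref{t:P:square}, \ref{t:P:shift}. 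I would check the assignment row by row: every $(r,C)$ in the table must match the parameter ranges in the cited proposition, for instance that $r=7-k$ in the power cases and that the offsets $(\alpha,\beta)$ are correct.

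The main obstacle is exhaustiveness: showing that the three classes really cover every twelve-occurrence normal form, and that every non-exceptional mixed root has a selection fixed on all of $\OO$. I would argue this through the budget \eqref{t:E:budget} with $n=12$, exactly as in the proof of Proposition~\ref{t:P:classification}. For $m=2$, enumerate weight pairs with sum at most nine and read off the comparison strings in Tables~\ref{t:tab:D4}--\ref{t:tab:D6}, so that each \texttt{?} entry becomes an exceptional row. For $m=3$, only $P+\Ap5$ and $G+H_1$ can occur, each with third summand $A$. For $m\geq4$, no exceptional pair fits the budget. The arithmetic cross-check is that $3068+1658+40=4766$, which agrees with the recurrence \eqref{E:tree-recurrence}, and that the binary multiplicities \eqref{E:multiplicity} sum to the Catalan number $58{,}786$. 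That check confirms that no normal form, and hence no parenthesization, is omitted. Finally, each class yields both strong assertions, and $\K$ is handled separately, so the implication $t(A)\cong t(B)\Rightarrow A\cong B$ holds for all finite ordered sets and every $|t|\leq12$.
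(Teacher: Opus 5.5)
Your proposal is correct and follows the paper's proof essentially step for step. It uses Theorem~\ref{b:T:main} cumulatively through eleven and interchange normalization. It applies the uniform/regular/exceptional trichotomy of Proposition~\ref{t:P:classification}, with Propositions~\ref{b:P:uniform} and~\ref{t:P:regular} and the forty assigned exceptional propositions. It passes from the two strong assertions to reconstruction on $\OO$ by cancelling $R$ in $\Der(t(A))\cong R\times D_t(A)$, and it uses Theorem~\ref{b:T:K} for $\K$ and mixed pairs.

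Your count check $3068+1658+40=4766$ is a bookkeeping audit, not part of the logical argument. Exhaustiveness already follows from the uniform/mixed dichotomy together with the budget analysis.
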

\begin{proof}
The one-occurrence case is the identity. Theorem~\ref{b:T:main}
proves all assertions through eleven. Consider a~twelve-occurrence
term and normalize it under interchange.

If its root is uniform, every nontrivial child has at most eleven
occurrences, so its strong assertion is available. The uniform
extension proposition proves the parent's strong assertion. This
covers 3,068 normal forms.

For a~nonuniform root use~\eqref{t:E:mixed} and the twelve-occurrence
budget in~\eqref{t:E:budget}. Proposition~\ref{t:P:classification}
exhausts all such roots. Its 1,658 regular forms are strong by
Proposition~\ref{t:P:regular}. Its forty exceptional forms are
exactly Table~\ref{t:tab:exceptions}. The proof assigned to each row
has been supplied in Chapters~\ref{t:S:powers}--\ref{t:S:remaining}.
The first strong assertion is proved there by terminal recovery
within a~branch and explicit exclusion of cross-branch collisions.
The second is proved by whole-sum polynomial recovery or by the
signature recognition lemma, without separating displayed summands.
Thus every exceptional form is strong as well.

For any of these terms, compute $V_t$ from the term value and recover
$R$ by the first strong assertion. Cancel $R$ in
$\Der(t(A))=R D_t(A)$ to recover the root exponent. The second
strong assertion then recovers $A$. This proves reconstruction on
$\OO$.

Finally, Theorem~\ref{b:T:K} proves reconstruction on $\K$ and
reflects membership in that class, so there is no unhandled pair with
one input in each class. Interchange normalization covers every binary
parenthesization. The theorem is therefore valid for every finite
input cardinality and every occurrence count from one through twelve.
\end{proof}

\section{The complete cumulative classification}
\index{classification!through twelve occurrences}\index{normal!form!cumulative classification}

\begin{table}[htbp]
\centering\small\setlength{\tabcolsep}{5pt}\renewcommand{\arraystretch}{1.15}
\begin{tabular}{@{}rrrrrr@{}}
\toprule
$n$ & Normal forms & Uniform & Regular & Exceptional & Binary\\\midrule
1&1&---&---&---&1\\

2&1&1&0&0&1\\

3&2&2&0&0&2\\

4&4&4&0&0&5\\

5&9&9&0&0&14\\

6&20&18&2&0&42\\

7&48&41&7&0&132\\

8&115&89&26&0&429\\

9&286&209&77&0&1,430\\

10&719&497&220&2&4,862\\

11&1,842&1,226&605&11&16,796\\

12&4,766&3,068&1,658&40&58,786\\

\bottomrule\end{tabular}
\caption{The complete proof assignment through twelve occurrences.}
\label{tab:cumulative}
\end{table}
\section{Executed enumeration and arithmetic checks}
\index{enumeration!executed}\index{verification!arithmetic checks}\index{audit!enumeration}
The accompanying program generates the 4,766 normal forms by weighted
multisets. Independently, it generates every full binary tree with
twelve leaves, normalizes it, and checks both the complete resulting
set and every individual multiplicity in~\eqref{E:multiplicity}.
It obtains
\[
 \sum\setm{w(T)}{|T|=12}=\frac1{12}\binom{22}{11}=58786.
\]
The proof classification gives the following disjoint totals.
\begin{table}[htbp]
\centering\renewcommand{\arraystretch}{1.2}
\begin{tabular}{@{}lrr@{}}
\toprule
Proof class & Normal forms & Binary parenthesizations\\\midrule
Uniform extension & 3,068 & 33,948\\
Fixed comparison & 1,658 & 24,624\\
Exceptional argument & 40 & 214\\\midrule
Total & 4,766 & 58,786\\\bottomrule
\end{tabular}
\caption{The exhaustive twelve-occurrence classification.}
\label{t:tab:totals}
\end{table}

The program additionally checks the forest lists, every eight-weight
reduction chain, the 38 distinct exceptional pair types, and the
inequality of their two signatures. It records the exact polynomial
certificates described in Chapter~\ref{t:S:classification}. The files
\texttt{normal\_forms.tsv}, \texttt{exceptional\_forms.tsv}, and
\texttt{root\_partitions.tsv} supply the complete finite audit.
The last table is reproduced in Appendix~\ref{app:partitions}; its 56 rows are the
integer partitions of the root weight eleven.

These checks verify finite enumeration, normalization, multiplicities,
arithmetic certificates, and assignment to the proved cases. They
are not a~formal verification of the canonical-coordinate theorem or
of the entire mathematical argument. No finite ordered-set inputs
are enumerated: the proof applies to arbitrary finite input sizes.

\section{What the compositeness of twelve supplies}
\index{occurrence!count!twelve: compositeness}\index{compositeness!twelve}\index{substitution!twelve occurrences}
For uniform substitution $s[t]$, every occurrence of $x$ in $s$ is
replaced by the same term $t$. Then $|s[t]|=|s||t|$. If $s$ and $t$
are reconstructing, so is $s[t]$: an isomorphism
$s(t(A))\cong s(t(B))$ first gives $t(A)\cong t(B)$, then $A\cong B$.
Thus the factorizations $12=2\cdot6=3\cdot4=4\cdot3=6\cdot2$
provide immediate applications of earlier results.
\begin{table}[htbp]
\centering\renewcommand{\arraystretch}{1.15}
\begin{tabular}{@{}rrrr@{}}
\toprule
$|s|$ & $|t|$ & Distinct normal forms $s[t]$ & Binary representatives\\\midrule
2&6&20&354\\3&4&8&160\\4&3&8&121\\6&2&20&728\\\midrule
\multicolumn{2}{l}{Union}&50&1,111\\\bottomrule
\end{tabular}
\caption{Supplementary proofs obtained by proper uniform substitution.
The rows overlap; the union is not their sum.}
\label{t:tab:substitution}
\end{table}
The substitution audit gives 50 distinct normal forms, comprising
16 uniform, 33 regular, and one exceptional form in our main
classification. The exceptional form is $C=H_1+U_6$, for which
Proposition~\ref{t:P:bulk} also gives a~direct proof. The file
\texttt{substitution\_witnesses.tsv} gives the outer and inner term
for every witness. Thus the multiplicative structure of twelve is
useful, but it does not replace the mixed-root analysis of the other
4,716 normal forms.

\section{What has and has not been proved}
\index{scope!theorem through twelve}\index{verification!limitations}
The theorem reconstructs the ordered-set input from the ordered-set
value of a~fixed exponential term. It places no bound on the number
of input elements. It covers all twelve-occurrence normal forms, not
only those obtained by uniform substitution, and all smaller counts
are included by proof.

The book also contains two unbounded classes: all terms on
componentwise contractible inputs, and the polynomial-perturbation
family on arbitrary finite inputs. No statement here asserts ordinary
product closure for reconstructing terms, injectivity of the pair of
operands of an arbitrary exponent, or an unrestricted induction on all
occurrence counts.

\appendix

\chapter{Complete root-partition audits}\label{app:partitions}

A~root partition records child occurrence counts. Repeated parts below
are written with multiplicity. The columns count normal forms, binary
representatives, and the uniform, regular, and exceptional proof
classes. Each table was regenerated from the executed audit.

\section{11 occurrences}\label{b:tab:partitions}

\begingroup\small\renewcommand{\arraystretch}{1.1}
\begin{longtable}{@{}lrrrrr@{}}
\toprule Root partition & Forms & Binary & Uniform & Regular & Exceptional\\\midrule
\endfirsthead
\multicolumn{6}{c}{\textit{Root partitions, continued}}\\
\toprule Root partition & Forms & Binary & Uniform & Regular & Exceptional\\\midrule\endhead
\midrule\multicolumn{6}{r}{\textit{Continued on the next page}}\\\endfoot
\endlastfoot

$10$ & 719 & 4862 & 719 & 0 & 0\\

$9+1$ & 286 & 2860 & 286 & 0 & 0\\

$8+2$ & 115 & 858 & 0 & 115 & 0\\

$8+1^{2}$ & 115 & 1287 & 115 & 0 & 0\\

$7+3$ & 96 & 528 & 0 & 95 & 1\\

$7+2+1$ & 48 & 792 & 0 & 48 & 0\\

$7+1^{3}$ & 48 & 528 & 48 & 0 & 0\\

$6+4$ & 80 & 420 & 0 & 78 & 2\\

$6+3+1$ & 40 & 504 & 0 & 39 & 1\\

$6+2^{2}$ & 20 & 126 & 0 & 20 & 0\\

$6+2+1^{2}$ & 20 & 504 & 0 & 20 & 0\\

$6+1^{4}$ & 20 & 210 & 20 & 0 & 0\\

$5^{2}$ & 45 & 196 & 9 & 30 & 6\\

$5+4+1$ & 36 & 420 & 0 & 35 & 1\\

$5+3+2$ & 18 & 168 & 0 & 18 & 0\\

$5+3+1^{2}$ & 18 & 336 & 0 & 18 & 0\\

$5+2^{2}+1$ & 9 & 168 & 0 & 9 & 0\\

$5+2+1^{3}$ & 9 & 280 & 0 & 9 & 0\\

$5+1^{5}$ & 9 & 84 & 9 & 0 & 0\\

$4^{2}+2$ & 10 & 75 & 0 & 10 & 0\\

$4^{2}+1^{2}$ & 10 & 150 & 4 & 6 & 0\\

$4+3^{2}$ & 12 & 60 & 0 & 12 & 0\\

$4+3+2+1$ & 8 & 240 & 0 & 8 & 0\\

$4+3+1^{3}$ & 8 & 200 & 0 & 8 & 0\\

$4+2^{3}$ & 4 & 20 & 0 & 4 & 0\\

$4+2^{2}+1^{2}$ & 4 & 150 & 0 & 4 & 0\\

$4+2+1^{4}$ & 4 & 150 & 0 & 4 & 0\\

$4+1^{6}$ & 4 & 35 & 4 & 0 & 0\\

$3^{3}+1$ & 4 & 32 & 2 & 2 & 0\\

$3^{2}+2^{2}$ & 3 & 24 & 0 & 3 & 0\\

$3^{2}+2+1^{2}$ & 3 & 120 & 0 & 3 & 0\\

$3^{2}+1^{4}$ & 3 & 60 & 2 & 1 & 0\\

$3+2^{3}+1$ & 2 & 40 & 0 & 2 & 0\\

$3+2^{2}+1^{3}$ & 2 & 120 & 0 & 2 & 0\\

$3+2+1^{5}$ & 2 & 84 & 0 & 2 & 0\\

$3+1^{7}$ & 2 & 16 & 2 & 0 & 0\\

$2^{5}$ & 1 & 1 & 1 & 0 & 0\\

$2^{4}+1^{2}$ & 1 & 15 & 1 & 0 & 0\\

$2^{3}+1^{4}$ & 1 & 35 & 1 & 0 & 0\\

$2^{2}+1^{6}$ & 1 & 28 & 1 & 0 & 0\\

$2+1^{8}$ & 1 & 9 & 1 & 0 & 0\\*

$1^{10}$ & 1 & 1 & 1 & 0 & 0\\*

\midrule Total & 1842 & 16796 & 1226 & 605 & 11\\\bottomrule

\caption{All 42 root partitions at 11 occurrences.}\end{longtable}\endgroup

\section{12 occurrences}\label{t:tab:partitions}

\begingroup\small\renewcommand{\arraystretch}{1.1}
\begin{longtable}{@{}lrrrrr@{}}
\toprule Root partition & Forms & Binary & Uniform & Regular & Exceptional\\\midrule
\endfirsthead
\multicolumn{6}{c}{\textit{Root partitions, continued}}\\
\toprule Root partition & Forms & Binary & Uniform & Regular & Exceptional\\\midrule\endhead
\midrule\multicolumn{6}{r}{\textit{Continued on the next page}}\\\endfoot
\endlastfoot

$11$ & 1842 & 16796 & 1842 & 0 & 0\\

$10+1$ & 719 & 9724 & 719 & 0 & 0\\

$9+2$ & 286 & 2860 & 0 & 286 & 0\\

$9+1^{2}$ & 286 & 4290 & 286 & 0 & 0\\

$8+3$ & 230 & 1716 & 0 & 229 & 1\\

$8+2+1$ & 115 & 2574 & 0 & 115 & 0\\

$8+1^{3}$ & 115 & 1716 & 115 & 0 & 0\\

$7+4$ & 192 & 1320 & 0 & 186 & 6\\

$7+3+1$ & 96 & 1584 & 0 & 95 & 1\\

$7+2^{2}$ & 48 & 396 & 0 & 48 & 0\\

$7+2+1^{2}$ & 48 & 1584 & 0 & 48 & 0\\

$7+1^{4}$ & 48 & 660 & 48 & 0 & 0\\

$6+5$ & 180 & 1176 & 0 & 158 & 22\\

$6+4+1$ & 80 & 1260 & 0 & 78 & 2\\

$6+3+2$ & 40 & 504 & 0 & 39 & 1\\

$6+3+1^{2}$ & 40 & 1008 & 0 & 39 & 1\\

$6+2^{2}+1$ & 20 & 504 & 0 & 20 & 0\\

$6+2+1^{3}$ & 20 & 840 & 0 & 20 & 0\\

$6+1^{5}$ & 20 & 252 & 20 & 0 & 0\\

$5^{2}+1$ & 45 & 588 & 9 & 32 & 4\\

$5+4+2$ & 36 & 420 & 0 & 35 & 1\\

$5+4+1^{2}$ & 36 & 840 & 0 & 35 & 1\\

$5+3^{2}$ & 27 & 168 & 0 & 27 & 0\\

$5+3+2+1$ & 18 & 672 & 0 & 18 & 0\\

$5+3+1^{3}$ & 18 & 560 & 0 & 18 & 0\\

$5+2^{3}$ & 9 & 56 & 0 & 9 & 0\\

$5+2^{2}+1^{2}$ & 9 & 420 & 0 & 9 & 0\\

$5+2+1^{4}$ & 9 & 420 & 0 & 9 & 0\\

$5+1^{6}$ & 9 & 98 & 9 & 0 & 0\\

$4^{2}+3$ & 20 & 150 & 0 & 20 & 0\\

$4^{2}+2+1$ & 10 & 300 & 0 & 10 & 0\\

$4^{2}+1^{3}$ & 10 & 250 & 4 & 6 & 0\\

$4+3^{2}+1$ & 12 & 240 & 0 & 12 & 0\\

$4+3+2^{2}$ & 8 & 120 & 0 & 8 & 0\\

$4+3+2+1^{2}$ & 8 & 600 & 0 & 8 & 0\\

$4+3+1^{4}$ & 8 & 300 & 0 & 8 & 0\\

$4+2^{3}+1$ & 4 & 100 & 0 & 4 & 0\\

$4+2^{2}+1^{3}$ & 4 & 300 & 0 & 4 & 0\\

$4+2+1^{5}$ & 4 & 210 & 0 & 4 & 0\\

$4+1^{7}$ & 4 & 40 & 4 & 0 & 0\\

$3^{3}+2$ & 4 & 32 & 0 & 4 & 0\\

$3^{3}+1^{2}$ & 4 & 80 & 2 & 2 & 0\\

$3^{2}+2^{2}+1$ & 3 & 120 & 0 & 3 & 0\\

$3^{2}+2+1^{3}$ & 3 & 240 & 0 & 3 & 0\\

$3^{2}+1^{5}$ & 3 & 84 & 2 & 1 & 0\\

$3+2^{4}$ & 2 & 10 & 0 & 2 & 0\\

$3+2^{3}+1^{2}$ & 2 & 120 & 0 & 2 & 0\\

$3+2^{2}+1^{4}$ & 2 & 210 & 0 & 2 & 0\\

$3+2+1^{6}$ & 2 & 112 & 0 & 2 & 0\\

$3+1^{8}$ & 2 & 18 & 2 & 0 & 0\\

$2^{5}+1$ & 1 & 6 & 1 & 0 & 0\\

$2^{4}+1^{3}$ & 1 & 35 & 1 & 0 & 0\\

$2^{3}+1^{5}$ & 1 & 56 & 1 & 0 & 0\\

$2^{2}+1^{7}$ & 1 & 36 & 1 & 0 & 0\\

$2+1^{9}$ & 1 & 10 & 1 & 0 & 0\\*

$1^{11}$ & 1 & 1 & 1 & 0 & 0\\*

\midrule Total & 4766 & 58786 & 3068 & 1658 & 40\\\bottomrule

\caption{All 56 root partitions at 12 occurrences.}\end{longtable}\endgroup

\chapter{Exact height-comparison certificates}\label{app:certificates}

This appendix prints every one of the 457 nonnegative polynomial
certificates used for the four twelve-occurrence comparison matrices.
They certify inequalities for all permitted inputs, not merely for a
set of numerical samples. The five certificates with zero constant
term require the tie analysis in Chapter~\ref{t:S:classification}.
\index{arithmetic!certificate}\index{verification!exact polynomial}

\section{How to read a~certificate}
\index{arithmetic!certificate!reading}\index{height!comparison!reading certificates}
Subtract the losing height coefficient from the winning one. Replace
variables in reverse dependency order by a~chosen lower bound plus an
independent nonnegative slack. Write $s_N,s_p,s_f,s_g,s_{h_i},s_{u_i}$
for these slacks. In the tables a~substitution code such as $g_2$ means
use the second bound for $g$ in the following list; it is not a~power
or a~cardinality variable. Every variable that appears in the original
difference is eliminated. The final polynomial has the displayed
nonnegative coefficients. A~positive constant term gives a~strict
comparison. The winner column indicates whether the row or column
forest is selected.

\begin{longtable}{@{}ll@{}}
\toprule Code & Substitution\\\midrule
$N_1$ & $N=4+s_N$\\
$p_1$ & $p=N^2+N+1+s_p$\\
$f_1$ & $f=p+N+s_f$\\
$g_1$ & $g=p+N+s_g$\\
$g_2$ & $g=Np+1+s_g$\\
$(h_1)_1$ & $h_1=f+N-1+s_{h_1}$\\
$(h_2)_1$ & $h_2=p^2+p+s_{h_2}$\\
$(h_2)_2$ & $h_2=f+1+s_{h_2}$\\
$(h_2)_3$ & $h_2=g+1+s_{h_2}$\\
$(h_3)_1$ & $h_3=f+N-1+s_{h_3}$\\
$(h_3)_2$ & $h_3=Ng+1+s_{h_3}$\\
$(h_4)_1$ & $h_4=g+N-1+s_{h_4}$\\
$(h_4)_2$ & $h_4=Nf+1+s_{h_4}$\\*
$(u_i)_1$ & $u_i=p+1+s_{u_i}$, $1\leq i\leq9$\\*
\bottomrule
\caption{The substitutions justified by~\eqref{t:E:certificatebounds}.}
\end{longtable}

\Needspace{13\baselineskip}\section{The weight-4+3 matrix}
\index{height!comparison!weight $4+3$}\index{certificate!matrix!weight $4+3$}

\begingroup\footnotesize\renewcommand{\arraystretch}{1.32}\setlength{\tabcolsep}{3pt}
\begin{longtable}{@{}rrcp{.21\textwidth}p{.54\textwidth}@{}}
\toprule Row & Col. & Winner & Substitutions & Nonnegative difference polynomial\\\midrule
\endfirsthead
\multicolumn{5}{c}{\textit{Exact comparison certificates, continued}}\\
\toprule Row & Col. & Winner & Substitutions & Nonnegative difference polynomial\\\midrule\endhead
\midrule\multicolumn{5}{r}{\textit{Continued on the next page}}\\\endfoot
\endlastfoot

1 & 1 & R & $\text{none}$ & $1$\\

1 & 2 & C & $N_{1}$ & $1+s_{N}$\\

1 & 3 & C & $N_{1}$ & $12+8s_{N}+s_{N}^{2}$\\

1 & 4 & C & $p_{1},N_{1}$ & $17+s_{p}+9s_{N}+s_{N}^{2}$\\

2 & 1 & R & $N_{1}$ & $3+s_{N}$\\

2 & 2 & R & $\text{none}$ & $1$\\

2 & 3 & C & $N_{1}$ & $10+7s_{N}+s_{N}^{2}$\\

2 & 4 & C & $p_{1},N_{1}$ & $15+s_{p}+8s_{N}+s_{N}^{2}$\\

3 & 1 & R & $N_{1}$ & $5+2s_{N}$\\

3 & 2 & R & $N_{1}$ & $3+s_{N}$\\

3 & 3 & C & $N_{1}$ & $8+6s_{N}+s_{N}^{2}$\\

3 & 4 & C & $p_{1},N_{1}$ & $13+s_{p}+7s_{N}+s_{N}^{2}$\\

4 & 1 & R & $N_{1}$ & $14+8s_{N}+s_{N}^{2}$\\

4 & 2 & R & $N_{1}$ & $12+7s_{N}+s_{N}^{2}$\\

4 & 3 & R & $\text{none}$ & $1$\\

4 & 4 & C & $p_{1},N_{1}$ & $4+s_{p}+s_{N}$\\

5 & 1 & R & $p_{1},N_{1}$ & $19+s_{p}+9s_{N}+s_{N}^{2}$\\

5 & 2 & R & $p_{1},N_{1}$ & $17+s_{p}+8s_{N}+s_{N}^{2}$\\

5 & 3 & R & $p_{1},N_{1}$ & $6+s_{p}+s_{N}$\\

5 & 4 & R & $\text{none}$ & $1$\\

6 & 1 & R & $N_{1}$ & $61+48s_{N}+12s_{N}^{2}+s_{N}^{3}$\\

6 & 2 & R & $N_{1}$ & $59+47s_{N}+12s_{N}^{2}+s_{N}^{3}$\\

6 & 3 & R & $N_{1}$ & $48+40s_{N}+11s_{N}^{2}+s_{N}^{3}$\\

7 & 1 & R & $p_{1},N_{1}$ & $81+4s_{p}+57s_{N}+s_{N}\,s_{p}+13s_{N}^{2}+s_{N}^{3}$\\

7 & 2 & R & $p_{1},N_{1}$ & $79+4s_{p}+56s_{N}+s_{N}\,s_{p}+13s_{N}^{2}+s_{N}^{3}$\\

7 & 3 & R & $p_{1},N_{1}$ & $68+4s_{p}+49s_{N}+s_{N}\,s_{p}+12s_{N}^{2}+s_{N}^{3}$\\

7 & 4 & R & $p_{1},N_{1}$ & $63+3s_{p}+48s_{N}+s_{N}\,s_{p}+12s_{N}^{2}+s_{N}^{3}$\\

8 & 1 & R & $f_{1},p_{1},N_{1}$ & $22+s_{f}+s_{p}+10s_{N}+s_{N}^{2}$\\

8 & 2 & R & $f_{1},p_{1},N_{1}$ & $20+s_{f}+s_{p}+9s_{N}+s_{N}^{2}$\\

8 & 3 & R & $f_{1},p_{1},N_{1}$ & $9+s_{f}+s_{p}+2s_{N}$\\

8 & 4 & R & $f_{1},N_{1}$ & $4+s_{f}+s_{N}$\\

9 & 1 & R & $g_{1},p_{1},N_{1}$ & $22+s_{g}+s_{p}+10s_{N}+s_{N}^{2}$\\

9 & 2 & R & $g_{1},p_{1},N_{1}$ & $20+s_{g}+s_{p}+9s_{N}+s_{N}^{2}$\\

9 & 3 & R & $g_{1},p_{1},N_{1}$ & $9+s_{g}+s_{p}+2s_{N}$\\*

9 & 4 & R & $g_{1},N_{1}$ & $4+s_{g}+s_{N}$\\*

\bottomrule\caption{All 35 certified fixed comparisons in the weight-4+3 matrix.}\end{longtable}\endgroup

\Needspace{13\baselineskip}\section{The weight-5+3 matrix}
\index{height!comparison!weight $5+3$}\index{certificate!matrix!weight $5+3$}

\begingroup\footnotesize\renewcommand{\arraystretch}{1.32}\setlength{\tabcolsep}{3pt}
\begin{longtable}{@{}rrcp{.21\textwidth}p{.54\textwidth}@{}}
\toprule Row & Col. & Winner & Substitutions & Nonnegative difference polynomial\\\midrule
\endfirsthead
\multicolumn{5}{c}{\textit{Exact comparison certificates, continued}}\\
\toprule Row & Col. & Winner & Substitutions & Nonnegative difference polynomial\\\midrule\endhead
\midrule\multicolumn{5}{r}{\textit{Continued on the next page}}\\\endfoot
\endlastfoot

1 & 1 & R & $\text{none}$ & $2$\\

1 & 2 & C & $N_{1}$ & $s_{N}$\\

1 & 3 & C & $N_{1}$ & $11+8s_{N}+s_{N}^{2}$\\

1 & 4 & C & $p_{1},N_{1}$ & $16+s_{p}+9s_{N}+s_{N}^{2}$\\

2 & 1 & R & $N_{1}$ & $4+s_{N}$\\

2 & 2 & R & $\text{none}$ & $2$\\

2 & 3 & C & $N_{1}$ & $9+7s_{N}+s_{N}^{2}$\\

2 & 4 & C & $p_{1},N_{1}$ & $14+s_{p}+8s_{N}+s_{N}^{2}$\\

3 & 1 & R & $N_{1}$ & $6+2s_{N}$\\

3 & 2 & R & $N_{1}$ & $4+s_{N}$\\

3 & 3 & C & $N_{1}$ & $7+6s_{N}+s_{N}^{2}$\\

3 & 4 & C & $p_{1},N_{1}$ & $12+s_{p}+7s_{N}+s_{N}^{2}$\\

4 & 1 & R & $N_{1}$ & $15+8s_{N}+s_{N}^{2}$\\

4 & 2 & R & $N_{1}$ & $13+7s_{N}+s_{N}^{2}$\\

4 & 3 & R & $\text{none}$ & $2$\\

4 & 4 & C & $p_{1},N_{1}$ & $3+s_{p}+s_{N}$\\

5 & 1 & R & $p_{1},N_{1}$ & $20+s_{p}+9s_{N}+s_{N}^{2}$\\

5 & 2 & R & $p_{1},N_{1}$ & $18+s_{p}+8s_{N}+s_{N}^{2}$\\

5 & 3 & R & $p_{1},N_{1}$ & $7+s_{p}+s_{N}$\\

5 & 4 & R & $\text{none}$ & $2$\\

6 & 1 & R & $N_{1}$ & $17+9s_{N}+s_{N}^{2}$\\

6 & 2 & R & $N_{1}$ & $15+8s_{N}+s_{N}^{2}$\\

6 & 3 & R & $N_{1}$ & $4+s_{N}$\\

6 & 4 & C & $p_{1}$ & $1+s_{p}$\\

7 & 1 & R & $p_{1},N_{1}$ & $22+s_{p}+10s_{N}+s_{N}^{2}$\\

7 & 2 & R & $p_{1},N_{1}$ & $20+s_{p}+9s_{N}+s_{N}^{2}$\\

7 & 3 & R & $p_{1},N_{1}$ & $9+s_{p}+2s_{N}$\\

7 & 4 & R & $N_{1}$ & $4+s_{N}$\\

8 & 1 & R & $N_{1}$ & $62+48s_{N}+12s_{N}^{2}+s_{N}^{3}$\\

8 & 2 & R & $N_{1}$ & $60+47s_{N}+12s_{N}^{2}+s_{N}^{3}$\\

8 & 3 & R & $N_{1}$ & $49+40s_{N}+11s_{N}^{2}+s_{N}^{3}$\\

9 & 1 & R & $p_{1},N_{1}$ & $82+4s_{p}+57s_{N}+s_{N}\,s_{p}+13s_{N}^{2}+s_{N}^{3}$\\

9 & 2 & R & $p_{1},N_{1}$ & $80+4s_{p}+56s_{N}+s_{N}\,s_{p}+13s_{N}^{2}+s_{N}^{3}$\\

9 & 3 & R & $p_{1},N_{1}$ & $69+4s_{p}+49s_{N}+s_{N}\,s_{p}+12s_{N}^{2}+s_{N}^{3}$\\

9 & 4 & R & $p_{1},N_{1}$ & $64+3s_{p}+48s_{N}+s_{N}\,s_{p}+12s_{N}^{2}+s_{N}^{3}$\\

10 & 1 & R & $f_{1},p_{1},N_{1}$ & $23+s_{f}+s_{p}+10s_{N}+s_{N}^{2}$\\

10 & 2 & R & $f_{1},p_{1},N_{1}$ & $21+s_{f}+s_{p}+9s_{N}+s_{N}^{2}$\\

10 & 3 & R & $f_{1},p_{1},N_{1}$ & $10+s_{f}+s_{p}+2s_{N}$\\

10 & 4 & R & $f_{1},N_{1}$ & $5+s_{f}+s_{N}$\\

11 & 1 & R & $g_{1},p_{1},N_{1}$ & $23+s_{g}+s_{p}+10s_{N}+s_{N}^{2}$\\

11 & 2 & R & $g_{1},p_{1},N_{1}$ & $21+s_{g}+s_{p}+9s_{N}+s_{N}^{2}$\\

11 & 3 & R & $g_{1},p_{1},N_{1}$ & $10+s_{g}+s_{p}+2s_{N}$\\

11 & 4 & R & $g_{1},N_{1}$ & $5+s_{g}+s_{N}$\\

12 & 1 & R & $N_{1}$ & $253+256s_{N}+96s_{N}^{2}+16s_{N}^{3}+s_{N}^{4}$\\

12 & 2 & R & $N_{1}$ & $251+255s_{N}+96s_{N}^{2}+16s_{N}^{3}+s_{N}^{4}$\\

12 & 3 & R & $N_{1}$ & $240+248s_{N}+95s_{N}^{2}+16s_{N}^{3}+s_{N}^{4}$\\

13 & 1 & R & $p_{1},N_{1}$ & $333+16s_{p}+312s_{N}+8s_{N}\,s_{p}+109s_{N}^{2}+s_{N}^{2}\,s_{p}+17s_{N}^{3}+s_{N}^{4}$\\

13 & 2 & R & $p_{1},N_{1}$ & $331+16s_{p}+311s_{N}+8s_{N}\,s_{p}+109s_{N}^{2}+s_{N}^{2}\,s_{p}+17s_{N}^{3}+s_{N}^{4}$\\

13 & 3 & R & $p_{1},N_{1}$ & $320+16s_{p}+304s_{N}+8s_{N}\,s_{p}+108s_{N}^{2}+s_{N}^{2}\,s_{p}+17s_{N}^{3}+s_{N}^{4}$\\

13 & 4 & R & $p_{1},N_{1}$ & $315+15s_{p}+303s_{N}+8s_{N}\,s_{p}+108s_{N}^{2}+s_{N}^{2}\,s_{p}+17s_{N}^{3}+s_{N}^{4}$\\

14 & 1 & R & $p_{1},N_{1}$ & $438+42s_{p}+378s_{N}+s_{p}^{2}+18s_{N}\,s_{p}+123s_{N}^{2}+2s_{N}^{2}\,s_{p}+18s_{N}^{3}+s_{N}^{4}$\\

14 & 2 & R & $p_{1},N_{1}$ & $436+42s_{p}+377s_{N}+s_{p}^{2}+18s_{N}\,s_{p}+123s_{N}^{2}+2s_{N}^{2}\,s_{p}+18s_{N}^{3}+s_{N}^{4}$\\

14 & 3 & R & $p_{1},N_{1}$ & $425+42s_{p}+370s_{N}+s_{p}^{2}+18s_{N}\,s_{p}+122s_{N}^{2}+2s_{N}^{2}\,s_{p}+18s_{N}^{3}+s_{N}^{4}$\\

14 & 4 & R & $p_{1},N_{1}$ & $420+41s_{p}+369s_{N}+s_{p}^{2}+18s_{N}\,s_{p}+122s_{N}^{2}+2s_{N}^{2}\,s_{p}+18s_{N}^{3}+s_{N}^{4}$\\

15 & 1 & R & $f_{1},p_{1},N_{1}$ & $97+4s_{f}+4s_{p}+65s_{N}+s_{N}\,s_{f}+s_{N}\,s_{p}+14s_{N}^{2}+s_{N}^{3}$\\

15 & 2 & R & $f_{1},p_{1},N_{1}$ & $95+4s_{f}+4s_{p}+64s_{N}+s_{N}\,s_{f}+s_{N}\,s_{p}+14s_{N}^{2}+s_{N}^{3}$\\

15 & 3 & R & $f_{1},p_{1},N_{1}$ & $84+4s_{f}+4s_{p}+57s_{N}+s_{N}\,s_{f}+s_{N}\,s_{p}+13s_{N}^{2}+s_{N}^{3}$\\

15 & 4 & R & $f_{1},p_{1},N_{1}$ & $79+4s_{f}+3s_{p}+56s_{N}+s_{N}\,s_{f}+s_{N}\,s_{p}+13s_{N}^{2}+s_{N}^{3}$\\

16 & 1 & R & $g_{1},p_{1},N_{1}$ & $97+4s_{g}+4s_{p}+65s_{N}+s_{N}\,s_{g}+s_{N}\,s_{p}+14s_{N}^{2}+s_{N}^{3}$\\

16 & 2 & R & $g_{1},p_{1},N_{1}$ & $95+4s_{g}+4s_{p}+64s_{N}+s_{N}\,s_{g}+s_{N}\,s_{p}+14s_{N}^{2}+s_{N}^{3}$\\

16 & 3 & R & $g_{1},p_{1},N_{1}$ & $84+4s_{g}+4s_{p}+57s_{N}+s_{N}\,s_{g}+s_{N}\,s_{p}+13s_{N}^{2}+s_{N}^{3}$\\

16 & 4 & R & $g_{1},p_{1},N_{1}$ & $79+4s_{g}+3s_{p}+56s_{N}+s_{N}\,s_{g}+s_{N}\,s_{p}+13s_{N}^{2}+s_{N}^{3}$\\

17 & 1 & R & $(h_{1})_{1},f_{1},p_{1},N_{1}$ & $25+s_{h_{1}}+s_{f}+s_{p}+11s_{N}+s_{N}^{2}$\\

17 & 2 & R & $(h_{1})_{1},f_{1},p_{1},N_{1}$ & $23+s_{h_{1}}+s_{f}+s_{p}+10s_{N}+s_{N}^{2}$\\

17 & 3 & R & $(h_{1})_{1},f_{1},p_{1},N_{1}$ & $12+s_{h_{1}}+s_{f}+s_{p}+3s_{N}$\\

17 & 4 & R & $(h_{1})_{1},f_{1},N_{1}$ & $7+s_{h_{1}}+s_{f}+2s_{N}$\\

18 & 1 & R & $(h_{2})_{1},p_{1},N_{1}$ & $459+s_{h_{2}}+43s_{p}+387s_{N}+s_{p}^{2}+18s_{N}\,s_{p}+124s_{N}^{2}+2s_{N}^{2}\,s_{p}+18s_{N}^{3}+s_{N}^{4}$\\

18 & 2 & R & $(h_{2})_{1},p_{1},N_{1}$ & $457+s_{h_{2}}+43s_{p}+386s_{N}+s_{p}^{2}+18s_{N}\,s_{p}+124s_{N}^{2}+2s_{N}^{2}\,s_{p}+18s_{N}^{3}+s_{N}^{4}$\\

18 & 3 & R & $(h_{2})_{1},p_{1},N_{1}$ & $446+s_{h_{2}}+43s_{p}+379s_{N}+s_{p}^{2}+18s_{N}\,s_{p}+123s_{N}^{2}+2s_{N}^{2}\,s_{p}+18s_{N}^{3}+s_{N}^{4}$\\

18 & 4 & R & $(h_{2})_{1},p_{1},N_{1}$ & $441+s_{h_{2}}+42s_{p}+378s_{N}+s_{p}^{2}+18s_{N}\,s_{p}+123s_{N}^{2}+2s_{N}^{2}\,s_{p}+18s_{N}^{3}+s_{N}^{4}$\\

19 & 1 & R & $(h_{3})_{1},f_{1},p_{1},N_{1}$ & $25+s_{h_{3}}+s_{f}+s_{p}+11s_{N}+s_{N}^{2}$\\

19 & 2 & R & $(h_{3})_{1},f_{1},p_{1},N_{1}$ & $23+s_{h_{3}}+s_{f}+s_{p}+10s_{N}+s_{N}^{2}$\\

19 & 3 & R & $(h_{3})_{1},f_{1},p_{1},N_{1}$ & $12+s_{h_{3}}+s_{f}+s_{p}+3s_{N}$\\

19 & 4 & R & $(h_{3})_{1},f_{1},N_{1}$ & $7+s_{h_{3}}+s_{f}+2s_{N}$\\

20 & 1 & R & $(h_{4})_{1},g_{1},p_{1},N_{1}$ & $25+s_{h_{4}}+s_{g}+s_{p}+11s_{N}+s_{N}^{2}$\\

20 & 2 & R & $(h_{4})_{1},g_{1},p_{1},N_{1}$ & $23+s_{h_{4}}+s_{g}+s_{p}+10s_{N}+s_{N}^{2}$\\

20 & 3 & R & $(h_{4})_{1},g_{1},p_{1},N_{1}$ & $12+s_{h_{4}}+s_{g}+s_{p}+3s_{N}$\\*

20 & 4 & R & $(h_{4})_{1},g_{1},N_{1}$ & $7+s_{h_{4}}+s_{g}+2s_{N}$\\*

\bottomrule\caption{All 78 certified fixed comparisons in the weight-5+3 matrix.}\end{longtable}\endgroup

\Needspace{13\baselineskip}\section{The weight-6+3 matrix}
\index{height!comparison!weight $6+3$}\index{certificate!matrix!weight $6+3$}

\begingroup\footnotesize\renewcommand{\arraystretch}{1.32}\setlength{\tabcolsep}{3pt}
\begin{longtable}{@{}rrcp{.21\textwidth}p{.54\textwidth}@{}}
\toprule Row & Col. & Winner & Substitutions & Nonnegative difference polynomial\\\midrule
\endfirsthead
\multicolumn{5}{c}{\textit{Exact comparison certificates, continued}}\\
\toprule Row & Col. & Winner & Substitutions & Nonnegative difference polynomial\\\midrule\endhead
\midrule\multicolumn{5}{r}{\textit{Continued on the next page}}\\\endfoot
\endlastfoot

1 & 1 & R & $\text{none}$ & $3$\\

1 & 3 & C & $N_{1}$ & $10+8s_{N}+s_{N}^{2}$\\

1 & 4 & C & $p_{1},N_{1}$ & $15+s_{p}+9s_{N}+s_{N}^{2}$\\

2 & 1 & R & $N_{1}$ & $5+s_{N}$\\

2 & 2 & R & $\text{none}$ & $3$\\

2 & 3 & C & $N_{1}$ & $8+7s_{N}+s_{N}^{2}$\\

2 & 4 & C & $p_{1},N_{1}$ & $13+s_{p}+8s_{N}+s_{N}^{2}$\\

3 & 1 & R & $N_{1}$ & $7+2s_{N}$\\

3 & 2 & R & $N_{1}$ & $5+s_{N}$\\

3 & 3 & C & $N_{1}$ & $6+6s_{N}+s_{N}^{2}$\\

3 & 4 & C & $p_{1},N_{1}$ & $11+s_{p}+7s_{N}+s_{N}^{2}$\\

4 & 1 & R & $N_{1}$ & $9+3s_{N}$\\

4 & 2 & R & $N_{1}$ & $7+2s_{N}$\\

4 & 3 & C & $N_{1}$ & $4+5s_{N}+s_{N}^{2}$\\

4 & 4 & C & $p_{1},N_{1}$ & $9+s_{p}+6s_{N}+s_{N}^{2}$\\

5 & 1 & R & $N_{1}$ & $16+8s_{N}+s_{N}^{2}$\\

5 & 2 & R & $N_{1}$ & $14+7s_{N}+s_{N}^{2}$\\

5 & 3 & R & $\text{none}$ & $3$\\

5 & 4 & C & $p_{1},N_{1}$ & $2+s_{p}+s_{N}$\\

6 & 1 & R & $p_{1},N_{1}$ & $21+s_{p}+9s_{N}+s_{N}^{2}$\\

6 & 2 & R & $p_{1},N_{1}$ & $19+s_{p}+8s_{N}+s_{N}^{2}$\\

6 & 3 & R & $p_{1},N_{1}$ & $8+s_{p}+s_{N}$\\

6 & 4 & R & $\text{none}$ & $3$\\

7 & 1 & R & $N_{1}$ & $18+9s_{N}+s_{N}^{2}$\\

7 & 2 & R & $N_{1}$ & $16+8s_{N}+s_{N}^{2}$\\

7 & 3 & R & $N_{1}$ & $5+s_{N}$\\

7 & 4 & C & $p_{1}$ & $s_{p}$\\

8 & 1 & R & $p_{1},N_{1}$ & $23+s_{p}+10s_{N}+s_{N}^{2}$\\

8 & 2 & R & $p_{1},N_{1}$ & $21+s_{p}+9s_{N}+s_{N}^{2}$\\

8 & 3 & R & $p_{1},N_{1}$ & $10+s_{p}+2s_{N}$\\

8 & 4 & R & $N_{1}$ & $5+s_{N}$\\

9 & 1 & R & $N_{1}$ & $29+16s_{N}+2s_{N}^{2}$\\

9 & 2 & R & $N_{1}$ & $27+15s_{N}+2s_{N}^{2}$\\

9 & 3 & R & $N_{1}$ & $16+8s_{N}+s_{N}^{2}$\\

10 & 1 & R & $p_{1},N_{1}$ & $34+s_{p}+17s_{N}+2s_{N}^{2}$\\

10 & 2 & R & $p_{1},N_{1}$ & $32+s_{p}+16s_{N}+2s_{N}^{2}$\\

10 & 3 & R & $p_{1},N_{1}$ & $21+s_{p}+9s_{N}+s_{N}^{2}$\\

10 & 4 & R & $N_{1}$ & $16+8s_{N}+s_{N}^{2}$\\

11 & 1 & R & $p_{1},N_{1}$ & $39+2s_{p}+18s_{N}+2s_{N}^{2}$\\

11 & 2 & R & $p_{1},N_{1}$ & $37+2s_{p}+17s_{N}+2s_{N}^{2}$\\

11 & 3 & R & $p_{1},N_{1}$ & $26+2s_{p}+10s_{N}+s_{N}^{2}$\\

11 & 4 & R & $p_{1},N_{1}$ & $21+s_{p}+9s_{N}+s_{N}^{2}$\\

12 & 1 & R & $N_{1}$ & $63+48s_{N}+12s_{N}^{2}+s_{N}^{3}$\\

12 & 2 & R & $N_{1}$ & $61+47s_{N}+12s_{N}^{2}+s_{N}^{3}$\\

12 & 3 & R & $N_{1}$ & $50+40s_{N}+11s_{N}^{2}+s_{N}^{3}$\\

13 & 1 & R & $p_{1},N_{1}$ & $83+4s_{p}+57s_{N}+s_{N}\,s_{p}+13s_{N}^{2}+s_{N}^{3}$\\

13 & 2 & R & $p_{1},N_{1}$ & $81+4s_{p}+56s_{N}+s_{N}\,s_{p}+13s_{N}^{2}+s_{N}^{3}$\\

13 & 3 & R & $p_{1},N_{1}$ & $70+4s_{p}+49s_{N}+s_{N}\,s_{p}+12s_{N}^{2}+s_{N}^{3}$\\

13 & 4 & R & $p_{1},N_{1}$ & $65+3s_{p}+48s_{N}+s_{N}\,s_{p}+12s_{N}^{2}+s_{N}^{3}$\\

14 & 1 & R & $f_{1},p_{1},N_{1}$ & $24+s_{f}+s_{p}+10s_{N}+s_{N}^{2}$\\

14 & 2 & R & $f_{1},p_{1},N_{1}$ & $22+s_{f}+s_{p}+9s_{N}+s_{N}^{2}$\\

14 & 3 & R & $f_{1},p_{1},N_{1}$ & $11+s_{f}+s_{p}+2s_{N}$\\

14 & 4 & R & $f_{1},N_{1}$ & $6+s_{f}+s_{N}$\\

15 & 1 & R & $g_{1},p_{1},N_{1}$ & $24+s_{g}+s_{p}+10s_{N}+s_{N}^{2}$\\

15 & 2 & R & $g_{1},p_{1},N_{1}$ & $22+s_{g}+s_{p}+9s_{N}+s_{N}^{2}$\\

15 & 3 & R & $g_{1},p_{1},N_{1}$ & $11+s_{g}+s_{p}+2s_{N}$\\

15 & 4 & R & $g_{1},N_{1}$ & $6+s_{g}+s_{N}$\\

16 & 1 & R & $N_{1}$ & $65+49s_{N}+12s_{N}^{2}+s_{N}^{3}$\\

16 & 2 & R & $N_{1}$ & $63+48s_{N}+12s_{N}^{2}+s_{N}^{3}$\\

16 & 3 & R & $N_{1}$ & $52+41s_{N}+11s_{N}^{2}+s_{N}^{3}$\\

17 & 1 & R & $p_{1},N_{1}$ & $85+4s_{p}+58s_{N}+s_{N}\,s_{p}+13s_{N}^{2}+s_{N}^{3}$\\

17 & 2 & R & $p_{1},N_{1}$ & $83+4s_{p}+57s_{N}+s_{N}\,s_{p}+13s_{N}^{2}+s_{N}^{3}$\\

17 & 3 & R & $p_{1},N_{1}$ & $72+4s_{p}+50s_{N}+s_{N}\,s_{p}+12s_{N}^{2}+s_{N}^{3}$\\

17 & 4 & R & $p_{1},N_{1}$ & $67+3s_{p}+49s_{N}+s_{N}\,s_{p}+12s_{N}^{2}+s_{N}^{3}$\\

18 & 1 & R & $f_{1},p_{1},N_{1}$ & $26+s_{f}+s_{p}+11s_{N}+s_{N}^{2}$\\

18 & 2 & R & $f_{1},p_{1},N_{1}$ & $24+s_{f}+s_{p}+10s_{N}+s_{N}^{2}$\\

18 & 3 & R & $f_{1},p_{1},N_{1}$ & $13+s_{f}+s_{p}+3s_{N}$\\

18 & 4 & R & $f_{1},N_{1}$ & $8+s_{f}+2s_{N}$\\

19 & 1 & R & $g_{1},p_{1},N_{1}$ & $26+s_{g}+s_{p}+11s_{N}+s_{N}^{2}$\\

19 & 2 & R & $g_{1},p_{1},N_{1}$ & $24+s_{g}+s_{p}+10s_{N}+s_{N}^{2}$\\

19 & 3 & R & $g_{1},p_{1},N_{1}$ & $13+s_{g}+s_{p}+3s_{N}$\\

19 & 4 & R & $g_{1},N_{1}$ & $8+s_{g}+2s_{N}$\\

20 & 1 & R & $N_{1}$ & $254+256s_{N}+96s_{N}^{2}+16s_{N}^{3}+s_{N}^{4}$\\

20 & 2 & R & $N_{1}$ & $252+255s_{N}+96s_{N}^{2}+16s_{N}^{3}+s_{N}^{4}$\\

20 & 3 & R & $N_{1}$ & $241+248s_{N}+95s_{N}^{2}+16s_{N}^{3}+s_{N}^{4}$\\

21 & 1 & R & $p_{1},N_{1}$ & $334+16s_{p}+312s_{N}+8s_{N}\,s_{p}+109s_{N}^{2}+s_{N}^{2}\,s_{p}+17s_{N}^{3}+s_{N}^{4}$\\

21 & 2 & R & $p_{1},N_{1}$ & $332+16s_{p}+311s_{N}+8s_{N}\,s_{p}+109s_{N}^{2}+s_{N}^{2}\,s_{p}+17s_{N}^{3}+s_{N}^{4}$\\

21 & 3 & R & $p_{1},N_{1}$ & $321+16s_{p}+304s_{N}+8s_{N}\,s_{p}+108s_{N}^{2}+s_{N}^{2}\,s_{p}+17s_{N}^{3}+s_{N}^{4}$\\

21 & 4 & R & $p_{1},N_{1}$ & $316+15s_{p}+303s_{N}+8s_{N}\,s_{p}+108s_{N}^{2}+s_{N}^{2}\,s_{p}+17s_{N}^{3}+s_{N}^{4}$\\

22 & 1 & R & $p_{1},N_{1}$ & $439+42s_{p}+378s_{N}+s_{p}^{2}+18s_{N}\,s_{p}+123s_{N}^{2}+2s_{N}^{2}\,s_{p}+18s_{N}^{3}+s_{N}^{4}$\\

22 & 2 & R & $p_{1},N_{1}$ & $437+42s_{p}+377s_{N}+s_{p}^{2}+18s_{N}\,s_{p}+123s_{N}^{2}+2s_{N}^{2}\,s_{p}+18s_{N}^{3}+s_{N}^{4}$\\

22 & 3 & R & $p_{1},N_{1}$ & $426+42s_{p}+370s_{N}+s_{p}^{2}+18s_{N}\,s_{p}+122s_{N}^{2}+2s_{N}^{2}\,s_{p}+18s_{N}^{3}+s_{N}^{4}$\\

22 & 4 & R & $p_{1},N_{1}$ & $421+41s_{p}+369s_{N}+s_{p}^{2}+18s_{N}\,s_{p}+122s_{N}^{2}+2s_{N}^{2}\,s_{p}+18s_{N}^{3}+s_{N}^{4}$\\

23 & 1 & R & $f_{1},p_{1},N_{1}$ & $98+4s_{f}+4s_{p}+65s_{N}+s_{N}\,s_{f}+s_{N}\,s_{p}+14s_{N}^{2}+s_{N}^{3}$\\

23 & 2 & R & $f_{1},p_{1},N_{1}$ & $96+4s_{f}+4s_{p}+64s_{N}+s_{N}\,s_{f}+s_{N}\,s_{p}+14s_{N}^{2}+s_{N}^{3}$\\

23 & 3 & R & $f_{1},p_{1},N_{1}$ & $85+4s_{f}+4s_{p}+57s_{N}+s_{N}\,s_{f}+s_{N}\,s_{p}+13s_{N}^{2}+s_{N}^{3}$\\

23 & 4 & R & $f_{1},p_{1},N_{1}$ & $80+4s_{f}+3s_{p}+56s_{N}+s_{N}\,s_{f}+s_{N}\,s_{p}+13s_{N}^{2}+s_{N}^{3}$\\

24 & 1 & R & $g_{1},p_{1},N_{1}$ & $98+4s_{g}+4s_{p}+65s_{N}+s_{N}\,s_{g}+s_{N}\,s_{p}+14s_{N}^{2}+s_{N}^{3}$\\

24 & 2 & R & $g_{1},p_{1},N_{1}$ & $96+4s_{g}+4s_{p}+64s_{N}+s_{N}\,s_{g}+s_{N}\,s_{p}+14s_{N}^{2}+s_{N}^{3}$\\

24 & 3 & R & $g_{1},p_{1},N_{1}$ & $85+4s_{g}+4s_{p}+57s_{N}+s_{N}\,s_{g}+s_{N}\,s_{p}+13s_{N}^{2}+s_{N}^{3}$\\

24 & 4 & R & $g_{1},p_{1},N_{1}$ & $80+4s_{g}+3s_{p}+56s_{N}+s_{N}\,s_{g}+s_{N}\,s_{p}+13s_{N}^{2}+s_{N}^{3}$\\

25 & 1 & R & $(h_{1})_{1},f_{1},p_{1},N_{1}$ & $26+s_{h_{1}}+s_{f}+s_{p}+11s_{N}+s_{N}^{2}$\\

25 & 2 & R & $(h_{1})_{1},f_{1},p_{1},N_{1}$ & $24+s_{h_{1}}+s_{f}+s_{p}+10s_{N}+s_{N}^{2}$\\

25 & 3 & R & $(h_{1})_{1},f_{1},p_{1},N_{1}$ & $13+s_{h_{1}}+s_{f}+s_{p}+3s_{N}$\\

25 & 4 & R & $(h_{1})_{1},f_{1},N_{1}$ & $8+s_{h_{1}}+s_{f}+2s_{N}$\\

26 & 1 & R & $(h_{2})_{1},p_{1},N_{1}$ & $460+s_{h_{2}}+43s_{p}+387s_{N}+s_{p}^{2}+18s_{N}\,s_{p}+124s_{N}^{2}+2s_{N}^{2}\,s_{p}+18s_{N}^{3}+s_{N}^{4}$\\

26 & 2 & R & $(h_{2})_{1},p_{1},N_{1}$ & $458+s_{h_{2}}+43s_{p}+386s_{N}+s_{p}^{2}+18s_{N}\,s_{p}+124s_{N}^{2}+2s_{N}^{2}\,s_{p}+18s_{N}^{3}+s_{N}^{4}$\\

26 & 3 & R & $(h_{2})_{1},p_{1},N_{1}$ & $447+s_{h_{2}}+43s_{p}+379s_{N}+s_{p}^{2}+18s_{N}\,s_{p}+123s_{N}^{2}+2s_{N}^{2}\,s_{p}+18s_{N}^{3}+s_{N}^{4}$\\

26 & 4 & R & $(h_{2})_{1},p_{1},N_{1}$ & $442+s_{h_{2}}+42s_{p}+378s_{N}+s_{p}^{2}+18s_{N}\,s_{p}+123s_{N}^{2}+2s_{N}^{2}\,s_{p}+18s_{N}^{3}+s_{N}^{4}$\\

27 & 1 & R & $(h_{3})_{1},f_{1},p_{1},N_{1}$ & $26+s_{h_{3}}+s_{f}+s_{p}+11s_{N}+s_{N}^{2}$\\

27 & 2 & R & $(h_{3})_{1},f_{1},p_{1},N_{1}$ & $24+s_{h_{3}}+s_{f}+s_{p}+10s_{N}+s_{N}^{2}$\\

27 & 3 & R & $(h_{3})_{1},f_{1},p_{1},N_{1}$ & $13+s_{h_{3}}+s_{f}+s_{p}+3s_{N}$\\

27 & 4 & R & $(h_{3})_{1},f_{1},N_{1}$ & $8+s_{h_{3}}+s_{f}+2s_{N}$\\

28 & 1 & R & $(h_{4})_{1},g_{1},p_{1},N_{1}$ & $26+s_{h_{4}}+s_{g}+s_{p}+11s_{N}+s_{N}^{2}$\\

28 & 2 & R & $(h_{4})_{1},g_{1},p_{1},N_{1}$ & $24+s_{h_{4}}+s_{g}+s_{p}+10s_{N}+s_{N}^{2}$\\

28 & 3 & R & $(h_{4})_{1},g_{1},p_{1},N_{1}$ & $13+s_{h_{4}}+s_{g}+s_{p}+3s_{N}$\\

28 & 4 & R & $(h_{4})_{1},g_{1},N_{1}$ & $8+s_{h_{4}}+s_{g}+2s_{N}$\\

29 & 1 & R & $N_{1}$ & $1021+1280s_{N}+640s_{N}^{2}+160s_{N}^{3}+20s_{N}^{4}+s_{N}^{5}$\\

29 & 2 & R & $N_{1}$ & $1019+1279s_{N}+640s_{N}^{2}+160s_{N}^{3}+20s_{N}^{4}+s_{N}^{5}$\\

29 & 3 & R & $N_{1}$ & $1008+1272s_{N}+639s_{N}^{2}+160s_{N}^{3}+20s_{N}^{4}+s_{N}^{5}$\\

30 & 1 & R & $p_{1},N_{1}$ & $1341+64s_{p}+1584s_{N}+48s_{N}\,s_{p}+748s_{N}^{2}+12s_{N}^{2}\,s_{p}+177s_{N}^{3}+s_{N}^{3}\,s_{p}+21s_{N}^{4}+s_{N}^{5}$\\

30 & 2 & R & $p_{1},N_{1}$ & $1339+64s_{p}+1583s_{N}+48s_{N}\,s_{p}+748s_{N}^{2}+12s_{N}^{2}\,s_{p}+177s_{N}^{3}+s_{N}^{3}\,s_{p}+21s_{N}^{4}+s_{N}^{5}$\\

30 & 3 & R & $p_{1},N_{1}$ & $1328+64s_{p}+1576s_{N}+48s_{N}\,s_{p}+747s_{N}^{2}+12s_{N}^{2}\,s_{p}+177s_{N}^{3}+s_{N}^{3}\,s_{p}+21s_{N}^{4}+s_{N}^{5}$\\

30 & 4 & R & $p_{1},N_{1}$ & $1323+63s_{p}+1575s_{N}+48s_{N}\,s_{p}+747s_{N}^{2}+12s_{N}^{2}\,s_{p}+177s_{N}^{3}+s_{N}^{3}\,s_{p}+21s_{N}^{4}+s_{N}^{5}$\\

31 & 1 & R & $p_{1},N_{1}$ & $1761+168s_{p}+1953s_{N}+4s_{p}^{2}+114s_{N}\,s_{p}+870s_{N}^{2}+s_{N}\,s_{p}^{2}+26s_{N}^{2}\,s_{p}+195s_{N}^{3}+2s_{N}^{3}\,s_{p}+22s_{N}^{4}+s_{N}^{5}$\\

31 & 2 & R & $p_{1},N_{1}$ & $1759+168s_{p}+1952s_{N}+4s_{p}^{2}+114s_{N}\,s_{p}+870s_{N}^{2}+s_{N}\,s_{p}^{2}+26s_{N}^{2}\,s_{p}+195s_{N}^{3}+2s_{N}^{3}\,s_{p}+22s_{N}^{4}+s_{N}^{5}$\\

31 & 3 & R & $p_{1},N_{1}$ & $1748+168s_{p}+1945s_{N}+4s_{p}^{2}+114s_{N}\,s_{p}+869s_{N}^{2}+s_{N}\,s_{p}^{2}+26s_{N}^{2}\,s_{p}+195s_{N}^{3}+2s_{N}^{3}\,s_{p}+22s_{N}^{4}+s_{N}^{5}$\\

31 & 4 & R & $p_{1},N_{1}$ & $1743+167s_{p}+1944s_{N}+4s_{p}^{2}+114s_{N}\,s_{p}+869s_{N}^{2}+s_{N}\,s_{p}^{2}+26s_{N}^{2}\,s_{p}+195s_{N}^{3}+2s_{N}^{3}\,s_{p}+22s_{N}^{4}+s_{N}^{5}$\\

32 & 1 & R & $f_{1},p_{1},N_{1}$ & $397+16s_{f}+16s_{p}+360s_{N}+8s_{N}\,s_{f}+8s_{N}\,s_{p}+121s_{N}^{2}+s_{N}^{2}\,s_{f}+s_{N}^{2}\,s_{p}+18s_{N}^{3}+s_{N}^{4}$\\

32 & 2 & R & $f_{1},p_{1},N_{1}$ & $395+16s_{f}+16s_{p}+359s_{N}+8s_{N}\,s_{f}+8s_{N}\,s_{p}+121s_{N}^{2}+s_{N}^{2}\,s_{f}+s_{N}^{2}\,s_{p}+18s_{N}^{3}+s_{N}^{4}$\\

32 & 3 & R & $f_{1},p_{1},N_{1}$ & $384+16s_{f}+16s_{p}+352s_{N}+8s_{N}\,s_{f}+8s_{N}\,s_{p}+120s_{N}^{2}+s_{N}^{2}\,s_{f}+s_{N}^{2}\,s_{p}+18s_{N}^{3}+s_{N}^{4}$\\

32 & 4 & R & $f_{1},p_{1},N_{1}$ & $379+16s_{f}+15s_{p}+351s_{N}+8s_{N}\,s_{f}+8s_{N}\,s_{p}+120s_{N}^{2}+s_{N}^{2}\,s_{f}+s_{N}^{2}\,s_{p}+18s_{N}^{3}+s_{N}^{4}$\\

33 & 1 & R & $g_{1},p_{1},N_{1}$ & $397+16s_{g}+16s_{p}+360s_{N}+8s_{N}\,s_{g}+8s_{N}\,s_{p}+121s_{N}^{2}+s_{N}^{2}\,s_{g}+s_{N}^{2}\,s_{p}+18s_{N}^{3}+s_{N}^{4}$\\

33 & 2 & R & $g_{1},p_{1},N_{1}$ & $395+16s_{g}+16s_{p}+359s_{N}+8s_{N}\,s_{g}+8s_{N}\,s_{p}+121s_{N}^{2}+s_{N}^{2}\,s_{g}+s_{N}^{2}\,s_{p}+18s_{N}^{3}+s_{N}^{4}$\\

33 & 3 & R & $g_{1},p_{1},N_{1}$ & $384+16s_{g}+16s_{p}+352s_{N}+8s_{N}\,s_{g}+8s_{N}\,s_{p}+120s_{N}^{2}+s_{N}^{2}\,s_{g}+s_{N}^{2}\,s_{p}+18s_{N}^{3}+s_{N}^{4}$\\

33 & 4 & R & $g_{1},p_{1},N_{1}$ & $379+16s_{g}+15s_{p}+351s_{N}+8s_{N}\,s_{g}+8s_{N}\,s_{p}+120s_{N}^{2}+s_{N}^{2}\,s_{g}+s_{N}^{2}\,s_{p}+18s_{N}^{3}+s_{N}^{4}$\\

34 & 1 & R & $f_{1},p_{1},N_{1}$ & $522+21s_{f}+46s_{p}+435s_{N}+s_{p}\,s_{f}+s_{p}^{2}+9s_{N}\,s_{f}+19s_{N}\,s_{p}+136s_{N}^{2}+s_{N}^{2}\,s_{f}+2s_{N}^{2}\,s_{p}+19s_{N}^{3}+s_{N}^{4}$\\

34 & 2 & R & $f_{1},p_{1},N_{1}$ & $520+21s_{f}+46s_{p}+434s_{N}+s_{p}\,s_{f}+s_{p}^{2}+9s_{N}\,s_{f}+19s_{N}\,s_{p}+136s_{N}^{2}+s_{N}^{2}\,s_{f}+2s_{N}^{2}\,s_{p}+19s_{N}^{3}+s_{N}^{4}$\\

34 & 3 & R & $f_{1},p_{1},N_{1}$ & $509+21s_{f}+46s_{p}+427s_{N}+s_{p}\,s_{f}+s_{p}^{2}+9s_{N}\,s_{f}+19s_{N}\,s_{p}+135s_{N}^{2}+s_{N}^{2}\,s_{f}+2s_{N}^{2}\,s_{p}+19s_{N}^{3}+s_{N}^{4}$\\

34 & 4 & R & $f_{1},p_{1},N_{1}$ & $504+21s_{f}+45s_{p}+426s_{N}+s_{p}\,s_{f}+s_{p}^{2}+9s_{N}\,s_{f}+19s_{N}\,s_{p}+135s_{N}^{2}+s_{N}^{2}\,s_{f}+2s_{N}^{2}\,s_{p}+19s_{N}^{3}+s_{N}^{4}$\\

35 & 1 & R & $g_{1},p_{1},N_{1}$ & $522+21s_{g}+46s_{p}+435s_{N}+s_{p}\,s_{g}+s_{p}^{2}+9s_{N}\,s_{g}+19s_{N}\,s_{p}+136s_{N}^{2}+s_{N}^{2}\,s_{g}+2s_{N}^{2}\,s_{p}+19s_{N}^{3}+s_{N}^{4}$\\

35 & 2 & R & $g_{1},p_{1},N_{1}$ & $520+21s_{g}+46s_{p}+434s_{N}+s_{p}\,s_{g}+s_{p}^{2}+9s_{N}\,s_{g}+19s_{N}\,s_{p}+136s_{N}^{2}+s_{N}^{2}\,s_{g}+2s_{N}^{2}\,s_{p}+19s_{N}^{3}+s_{N}^{4}$\\

35 & 3 & R & $g_{1},p_{1},N_{1}$ & $509+21s_{g}+46s_{p}+427s_{N}+s_{p}\,s_{g}+s_{p}^{2}+9s_{N}\,s_{g}+19s_{N}\,s_{p}+135s_{N}^{2}+s_{N}^{2}\,s_{g}+2s_{N}^{2}\,s_{p}+19s_{N}^{3}+s_{N}^{4}$\\

35 & 4 & R & $g_{1},p_{1},N_{1}$ & $504+21s_{g}+45s_{p}+426s_{N}+s_{p}\,s_{g}+s_{p}^{2}+9s_{N}\,s_{g}+19s_{N}\,s_{p}+135s_{N}^{2}+s_{N}^{2}\,s_{g}+2s_{N}^{2}\,s_{p}+19s_{N}^{3}+s_{N}^{4}$\\

36 & 1 & R & $(h_{1})_{1},f_{1},p_{1},N_{1}$ & $109+4s_{h_{1}}+4s_{f}+4s_{p}+72s_{N}+s_{N}\,s_{h_{1}}+s_{N}\,s_{f}+s_{N}\,s_{p}+15s_{N}^{2}+s_{N}^{3}$\\

36 & 2 & R & $(h_{1})_{1},f_{1},p_{1},N_{1}$ & $107+4s_{h_{1}}+4s_{f}+4s_{p}+71s_{N}+s_{N}\,s_{h_{1}}+s_{N}\,s_{f}+s_{N}\,s_{p}+15s_{N}^{2}+s_{N}^{3}$\\

36 & 3 & R & $(h_{1})_{1},f_{1},p_{1},N_{1}$ & $96+4s_{h_{1}}+4s_{f}+4s_{p}+64s_{N}+s_{N}\,s_{h_{1}}+s_{N}\,s_{f}+s_{N}\,s_{p}+14s_{N}^{2}+s_{N}^{3}$\\

36 & 4 & R & $(h_{1})_{1},f_{1},p_{1},N_{1}$ & $91+4s_{h_{1}}+4s_{f}+3s_{p}+63s_{N}+s_{N}\,s_{h_{1}}+s_{N}\,s_{f}+s_{N}\,s_{p}+14s_{N}^{2}+s_{N}^{3}$\\

37 & 1 & R & $(h_{2})_{1},p_{1},N_{1}$ & $1845+4s_{h_{2}}+172s_{p}+2010s_{N}+4s_{p}^{2}+s_{N}\,s_{h_{2}}+115s_{N}\,s_{p}+883s_{N}^{2}+s_{N}\,s_{p}^{2}+26s_{N}^{2}\,s_{p}+196s_{N}^{3}+2s_{N}^{3}\,s_{p}+22s_{N}^{4}+s_{N}^{5}$\\

37 & 2 & R & $(h_{2})_{1},p_{1},N_{1}$ & $1843+4s_{h_{2}}+172s_{p}+2009s_{N}+4s_{p}^{2}+s_{N}\,s_{h_{2}}+115s_{N}\,s_{p}+883s_{N}^{2}+s_{N}\,s_{p}^{2}+26s_{N}^{2}\,s_{p}+196s_{N}^{3}+2s_{N}^{3}\,s_{p}+22s_{N}^{4}+s_{N}^{5}$\\

37 & 3 & R & $(h_{2})_{1},p_{1},N_{1}$ & $1832+4s_{h_{2}}+172s_{p}+2002s_{N}+4s_{p}^{2}+s_{N}\,s_{h_{2}}+115s_{N}\,s_{p}+882s_{N}^{2}+s_{N}\,s_{p}^{2}+26s_{N}^{2}\,s_{p}+196s_{N}^{3}+2s_{N}^{3}\,s_{p}+22s_{N}^{4}+s_{N}^{5}$\\

37 & 4 & R & $(h_{2})_{1},p_{1},N_{1}$ & $1827+4s_{h_{2}}+171s_{p}+2001s_{N}+4s_{p}^{2}+s_{N}\,s_{h_{2}}+115s_{N}\,s_{p}+882s_{N}^{2}+s_{N}\,s_{p}^{2}+26s_{N}^{2}\,s_{p}+196s_{N}^{3}+2s_{N}^{3}\,s_{p}+22s_{N}^{4}+s_{N}^{5}$\\

38 & 1 & R & $(h_{3})_{1},f_{1},p_{1},N_{1}$ & $109+4s_{h_{3}}+4s_{f}+4s_{p}+72s_{N}+s_{N}\,s_{h_{3}}+s_{N}\,s_{f}+s_{N}\,s_{p}+15s_{N}^{2}+s_{N}^{3}$\\

38 & 2 & R & $(h_{3})_{1},f_{1},p_{1},N_{1}$ & $107+4s_{h_{3}}+4s_{f}+4s_{p}+71s_{N}+s_{N}\,s_{h_{3}}+s_{N}\,s_{f}+s_{N}\,s_{p}+15s_{N}^{2}+s_{N}^{3}$\\

38 & 3 & R & $(h_{3})_{1},f_{1},p_{1},N_{1}$ & $96+4s_{h_{3}}+4s_{f}+4s_{p}+64s_{N}+s_{N}\,s_{h_{3}}+s_{N}\,s_{f}+s_{N}\,s_{p}+14s_{N}^{2}+s_{N}^{3}$\\

38 & 4 & R & $(h_{3})_{1},f_{1},p_{1},N_{1}$ & $91+4s_{h_{3}}+4s_{f}+3s_{p}+63s_{N}+s_{N}\,s_{h_{3}}+s_{N}\,s_{f}+s_{N}\,s_{p}+14s_{N}^{2}+s_{N}^{3}$\\

39 & 1 & R & $(h_{4})_{1},g_{1},p_{1},N_{1}$ & $109+4s_{h_{4}}+4s_{g}+4s_{p}+72s_{N}+s_{N}\,s_{h_{4}}+s_{N}\,s_{g}+s_{N}\,s_{p}+15s_{N}^{2}+s_{N}^{3}$\\

39 & 2 & R & $(h_{4})_{1},g_{1},p_{1},N_{1}$ & $107+4s_{h_{4}}+4s_{g}+4s_{p}+71s_{N}+s_{N}\,s_{h_{4}}+s_{N}\,s_{g}+s_{N}\,s_{p}+15s_{N}^{2}+s_{N}^{3}$\\

39 & 3 & R & $(h_{4})_{1},g_{1},p_{1},N_{1}$ & $96+4s_{h_{4}}+4s_{g}+4s_{p}+64s_{N}+s_{N}\,s_{h_{4}}+s_{N}\,s_{g}+s_{N}\,s_{p}+14s_{N}^{2}+s_{N}^{3}$\\

39 & 4 & R & $(h_{4})_{1},g_{1},p_{1},N_{1}$ & $91+4s_{h_{4}}+4s_{g}+3s_{p}+63s_{N}+s_{N}\,s_{h_{4}}+s_{N}\,s_{g}+s_{N}\,s_{p}+14s_{N}^{2}+s_{N}^{3}$\\

40 & 1 & R & $(u_{1})_{1},p_{1},N_{1}$ & $19+s_{u_{1}}+s_{p}+9s_{N}+s_{N}^{2}$\\

40 & 2 & R & $(u_{1})_{1},p_{1},N_{1}$ & $17+s_{u_{1}}+s_{p}+8s_{N}+s_{N}^{2}$\\

40 & 3 & R & $(u_{1})_{1},p_{1},N_{1}$ & $6+s_{u_{1}}+s_{p}+s_{N}$\\

40 & 4 & R & $(u_{1})_{1}$ & $1+s_{u_{1}}$\\

41 & 1 & R & $(u_{2})_{1},p_{1},N_{1}$ & $19+s_{u_{2}}+s_{p}+9s_{N}+s_{N}^{2}$\\

41 & 2 & R & $(u_{2})_{1},p_{1},N_{1}$ & $17+s_{u_{2}}+s_{p}+8s_{N}+s_{N}^{2}$\\

41 & 3 & R & $(u_{2})_{1},p_{1},N_{1}$ & $6+s_{u_{2}}+s_{p}+s_{N}$\\

41 & 4 & R & $(u_{2})_{1}$ & $1+s_{u_{2}}$\\

42 & 1 & R & $(u_{3})_{1},p_{1},N_{1}$ & $19+s_{u_{3}}+s_{p}+9s_{N}+s_{N}^{2}$\\

42 & 2 & R & $(u_{3})_{1},p_{1},N_{1}$ & $17+s_{u_{3}}+s_{p}+8s_{N}+s_{N}^{2}$\\

42 & 3 & R & $(u_{3})_{1},p_{1},N_{1}$ & $6+s_{u_{3}}+s_{p}+s_{N}$\\

42 & 4 & R & $(u_{3})_{1}$ & $1+s_{u_{3}}$\\

43 & 1 & R & $(u_{4})_{1},p_{1},N_{1}$ & $19+s_{u_{4}}+s_{p}+9s_{N}+s_{N}^{2}$\\

43 & 2 & R & $(u_{4})_{1},p_{1},N_{1}$ & $17+s_{u_{4}}+s_{p}+8s_{N}+s_{N}^{2}$\\

43 & 3 & R & $(u_{4})_{1},p_{1},N_{1}$ & $6+s_{u_{4}}+s_{p}+s_{N}$\\

43 & 4 & R & $(u_{4})_{1}$ & $1+s_{u_{4}}$\\

44 & 1 & R & $(u_{5})_{1},p_{1},N_{1}$ & $19+s_{u_{5}}+s_{p}+9s_{N}+s_{N}^{2}$\\

44 & 2 & R & $(u_{5})_{1},p_{1},N_{1}$ & $17+s_{u_{5}}+s_{p}+8s_{N}+s_{N}^{2}$\\

44 & 3 & R & $(u_{5})_{1},p_{1},N_{1}$ & $6+s_{u_{5}}+s_{p}+s_{N}$\\

44 & 4 & R & $(u_{5})_{1}$ & $1+s_{u_{5}}$\\

45 & 1 & R & $(u_{6})_{1},p_{1},N_{1}$ & $19+s_{u_{6}}+s_{p}+9s_{N}+s_{N}^{2}$\\

45 & 2 & R & $(u_{6})_{1},p_{1},N_{1}$ & $17+s_{u_{6}}+s_{p}+8s_{N}+s_{N}^{2}$\\

45 & 3 & R & $(u_{6})_{1},p_{1},N_{1}$ & $6+s_{u_{6}}+s_{p}+s_{N}$\\

45 & 4 & R & $(u_{6})_{1}$ & $1+s_{u_{6}}$\\

46 & 1 & R & $(u_{7})_{1},p_{1},N_{1}$ & $19+s_{u_{7}}+s_{p}+9s_{N}+s_{N}^{2}$\\

46 & 2 & R & $(u_{7})_{1},p_{1},N_{1}$ & $17+s_{u_{7}}+s_{p}+8s_{N}+s_{N}^{2}$\\

46 & 3 & R & $(u_{7})_{1},p_{1},N_{1}$ & $6+s_{u_{7}}+s_{p}+s_{N}$\\

46 & 4 & R & $(u_{7})_{1}$ & $1+s_{u_{7}}$\\

47 & 1 & R & $(u_{8})_{1},p_{1},N_{1}$ & $19+s_{u_{8}}+s_{p}+9s_{N}+s_{N}^{2}$\\

47 & 2 & R & $(u_{8})_{1},p_{1},N_{1}$ & $17+s_{u_{8}}+s_{p}+8s_{N}+s_{N}^{2}$\\

47 & 3 & R & $(u_{8})_{1},p_{1},N_{1}$ & $6+s_{u_{8}}+s_{p}+s_{N}$\\

47 & 4 & R & $(u_{8})_{1}$ & $1+s_{u_{8}}$\\

48 & 1 & R & $(u_{9})_{1},p_{1},N_{1}$ & $19+s_{u_{9}}+s_{p}+9s_{N}+s_{N}^{2}$\\

48 & 2 & R & $(u_{9})_{1},p_{1},N_{1}$ & $17+s_{u_{9}}+s_{p}+8s_{N}+s_{N}^{2}$\\

48 & 3 & R & $(u_{9})_{1},p_{1},N_{1}$ & $6+s_{u_{9}}+s_{p}+s_{N}$\\*

48 & 4 & R & $(u_{9})_{1}$ & $1+s_{u_{9}}$\\*

\bottomrule\caption{All 186 certified fixed comparisons in the weight-6+3 matrix.}\end{longtable}\endgroup

\Needspace{13\baselineskip}\section{The weight-5+4 matrix}
\index{height!comparison!weight $5+4$}\index{certificate!matrix!weight $5+4$}

\begingroup\footnotesize\renewcommand{\arraystretch}{1.32}\setlength{\tabcolsep}{3pt}
\begin{longtable}{@{}rrcp{.21\textwidth}p{.54\textwidth}@{}}
\toprule Row & Col. & Winner & Substitutions & Nonnegative difference polynomial\\\midrule
\endfirsthead
\multicolumn{5}{c}{\textit{Exact comparison certificates, continued}}\\
\toprule Row & Col. & Winner & Substitutions & Nonnegative difference polynomial\\\midrule\endhead
\midrule\multicolumn{5}{r}{\textit{Continued on the next page}}\\\endfoot
\endlastfoot

1 & 1 & R & $\text{none}$ & $1$\\

1 & 2 & C & $N_{1}$ & $1+s_{N}$\\

1 & 3 & C & $N_{1}$ & $3+2s_{N}$\\

1 & 4 & C & $N_{1}$ & $12+8s_{N}+s_{N}^{2}$\\

1 & 5 & C & $p_{1},N_{1}$ & $17+s_{p}+9s_{N}+s_{N}^{2}$\\

1 & 6 & C & $N_{1}$ & $59+48s_{N}+12s_{N}^{2}+s_{N}^{3}$\\

1 & 7 & C & $p_{1},N_{1}$ & $79+4s_{p}+57s_{N}+s_{N}\,s_{p}+13s_{N}^{2}+s_{N}^{3}$\\

1 & 8 & C & $f_{1},p_{1},N_{1}$ & $20+s_{f}+s_{p}+10s_{N}+s_{N}^{2}$\\

1 & 9 & C & $g_{1},p_{1},N_{1}$ & $20+s_{g}+s_{p}+10s_{N}+s_{N}^{2}$\\

2 & 1 & R & $N_{1}$ & $3+s_{N}$\\

2 & 2 & R & $\text{none}$ & $1$\\

2 & 3 & C & $N_{1}$ & $1+s_{N}$\\

2 & 4 & C & $N_{1}$ & $10+7s_{N}+s_{N}^{2}$\\

2 & 5 & C & $p_{1},N_{1}$ & $15+s_{p}+8s_{N}+s_{N}^{2}$\\

2 & 6 & C & $N_{1}$ & $57+47s_{N}+12s_{N}^{2}+s_{N}^{3}$\\

2 & 7 & C & $p_{1},N_{1}$ & $77+4s_{p}+56s_{N}+s_{N}\,s_{p}+13s_{N}^{2}+s_{N}^{3}$\\

2 & 8 & C & $f_{1},p_{1},N_{1}$ & $18+s_{f}+s_{p}+9s_{N}+s_{N}^{2}$\\

2 & 9 & C & $g_{1},p_{1},N_{1}$ & $18+s_{g}+s_{p}+9s_{N}+s_{N}^{2}$\\

3 & 1 & R & $N_{1}$ & $5+2s_{N}$\\

3 & 2 & R & $N_{1}$ & $3+s_{N}$\\

3 & 3 & R & $\text{none}$ & $1$\\

3 & 4 & C & $N_{1}$ & $8+6s_{N}+s_{N}^{2}$\\

3 & 5 & C & $p_{1},N_{1}$ & $13+s_{p}+7s_{N}+s_{N}^{2}$\\

3 & 6 & C & $N_{1}$ & $55+46s_{N}+12s_{N}^{2}+s_{N}^{3}$\\

3 & 7 & C & $p_{1},N_{1}$ & $75+4s_{p}+55s_{N}+s_{N}\,s_{p}+13s_{N}^{2}+s_{N}^{3}$\\

3 & 8 & C & $f_{1},p_{1},N_{1}$ & $16+s_{f}+s_{p}+8s_{N}+s_{N}^{2}$\\

3 & 9 & C & $g_{1},p_{1},N_{1}$ & $16+s_{g}+s_{p}+8s_{N}+s_{N}^{2}$\\

4 & 1 & R & $N_{1}$ & $14+8s_{N}+s_{N}^{2}$\\

4 & 2 & R & $N_{1}$ & $12+7s_{N}+s_{N}^{2}$\\

4 & 3 & R & $N_{1}$ & $10+6s_{N}+s_{N}^{2}$\\

4 & 4 & R & $\text{none}$ & $1$\\

4 & 5 & C & $p_{1},N_{1}$ & $4+s_{p}+s_{N}$\\

4 & 6 & C & $N_{1}$ & $46+40s_{N}+11s_{N}^{2}+s_{N}^{3}$\\

4 & 7 & C & $p_{1},N_{1}$ & $66+4s_{p}+49s_{N}+s_{N}\,s_{p}+12s_{N}^{2}+s_{N}^{3}$\\

4 & 8 & C & $f_{1},p_{1},N_{1}$ & $7+s_{f}+s_{p}+2s_{N}$\\

4 & 9 & C & $g_{1},p_{1},N_{1}$ & $7+s_{g}+s_{p}+2s_{N}$\\

5 & 1 & R & $p_{1},N_{1}$ & $19+s_{p}+9s_{N}+s_{N}^{2}$\\

5 & 2 & R & $p_{1},N_{1}$ & $17+s_{p}+8s_{N}+s_{N}^{2}$\\

5 & 3 & R & $p_{1},N_{1}$ & $15+s_{p}+7s_{N}+s_{N}^{2}$\\

5 & 4 & R & $p_{1},N_{1}$ & $6+s_{p}+s_{N}$\\

5 & 5 & R & $\text{none}$ & $1$\\

5 & 7 & C & $p_{1},N_{1}$ & $61+3s_{p}+48s_{N}+s_{N}\,s_{p}+12s_{N}^{2}+s_{N}^{3}$\\

5 & 8 & C & $f_{1},N_{1}$ & $2+s_{f}+s_{N}$\\

5 & 9 & C & $g_{1},N_{1}$ & $2+s_{g}+s_{N}$\\

6 & 1 & R & $N_{1}$ & $16+9s_{N}+s_{N}^{2}$\\

6 & 2 & R & $N_{1}$ & $14+8s_{N}+s_{N}^{2}$\\

6 & 3 & R & $N_{1}$ & $12+7s_{N}+s_{N}^{2}$\\

6 & 4 & R & $N_{1}$ & $3+s_{N}$\\

6 & 5 & C & $p_{1}$ & $2+s_{p}$\\

6 & 6 & C & $N_{1}$ & $44+39s_{N}+11s_{N}^{2}+s_{N}^{3}$\\

6 & 7 & C & $p_{1},N_{1}$ & $64+4s_{p}+48s_{N}+s_{N}\,s_{p}+12s_{N}^{2}+s_{N}^{3}$\\

6 & 8 & C & $f_{1},p_{1},N_{1}$ & $5+s_{f}+s_{p}+s_{N}$\\

6 & 9 & C & $g_{1},p_{1},N_{1}$ & $5+s_{g}+s_{p}+s_{N}$\\

7 & 1 & R & $p_{1},N_{1}$ & $21+s_{p}+10s_{N}+s_{N}^{2}$\\

7 & 2 & R & $p_{1},N_{1}$ & $19+s_{p}+9s_{N}+s_{N}^{2}$\\

7 & 3 & R & $p_{1},N_{1}$ & $17+s_{p}+8s_{N}+s_{N}^{2}$\\

7 & 4 & R & $p_{1},N_{1}$ & $8+s_{p}+2s_{N}$\\

7 & 5 & R & $N_{1}$ & $3+s_{N}$\\

7 & 7 & C & $p_{1},N_{1}$ & $59+3s_{p}+47s_{N}+s_{N}\,s_{p}+12s_{N}^{2}+s_{N}^{3}$\\

7 & 8 & C & $f_{1}$ & $s_{f}$\\

7 & 9 & C & $g_{1}$ & $s_{g}$\\

8 & 1 & R & $N_{1}$ & $61+48s_{N}+12s_{N}^{2}+s_{N}^{3}$\\

8 & 2 & R & $N_{1}$ & $59+47s_{N}+12s_{N}^{2}+s_{N}^{3}$\\

8 & 3 & R & $N_{1}$ & $57+46s_{N}+12s_{N}^{2}+s_{N}^{3}$\\

8 & 4 & R & $N_{1}$ & $48+40s_{N}+11s_{N}^{2}+s_{N}^{3}$\\

8 & 6 & R & $\text{none}$ & $1$\\

8 & 7 & C & $p_{1},N_{1}$ & $19+4s_{p}+9s_{N}+s_{N}\,s_{p}+s_{N}^{2}$\\

8 & 9 & C & $g_{2},p_{1},N_{1}$ & $20+s_{g}+4s_{p}+9s_{N}+s_{N}\,s_{p}+s_{N}^{2}$\\

9 & 1 & R & $p_{1},N_{1}$ & $81+4s_{p}+57s_{N}+s_{N}\,s_{p}+13s_{N}^{2}+s_{N}^{3}$\\

9 & 2 & R & $p_{1},N_{1}$ & $79+4s_{p}+56s_{N}+s_{N}\,s_{p}+13s_{N}^{2}+s_{N}^{3}$\\

9 & 3 & R & $p_{1},N_{1}$ & $77+4s_{p}+55s_{N}+s_{N}\,s_{p}+13s_{N}^{2}+s_{N}^{3}$\\

9 & 4 & R & $p_{1},N_{1}$ & $68+4s_{p}+49s_{N}+s_{N}\,s_{p}+12s_{N}^{2}+s_{N}^{3}$\\

9 & 5 & R & $p_{1},N_{1}$ & $63+3s_{p}+48s_{N}+s_{N}\,s_{p}+12s_{N}^{2}+s_{N}^{3}$\\

9 & 6 & R & $p_{1},N_{1}$ & $21+4s_{p}+9s_{N}+s_{N}\,s_{p}+s_{N}^{2}$\\

9 & 7 & R & $\text{none}$ & $1$\\

9 & 9 & C & $g_{2}$ & $s_{g}$\\

10 & 1 & R & $f_{1},p_{1},N_{1}$ & $22+s_{f}+s_{p}+10s_{N}+s_{N}^{2}$\\

10 & 2 & R & $f_{1},p_{1},N_{1}$ & $20+s_{f}+s_{p}+9s_{N}+s_{N}^{2}$\\

10 & 3 & R & $f_{1},p_{1},N_{1}$ & $18+s_{f}+s_{p}+8s_{N}+s_{N}^{2}$\\

10 & 4 & R & $f_{1},p_{1},N_{1}$ & $9+s_{f}+s_{p}+2s_{N}$\\

10 & 5 & R & $f_{1},N_{1}$ & $4+s_{f}+s_{N}$\\

10 & 8 & R & $\text{none}$ & $1$\\

11 & 1 & R & $g_{1},p_{1},N_{1}$ & $22+s_{g}+s_{p}+10s_{N}+s_{N}^{2}$\\

11 & 2 & R & $g_{1},p_{1},N_{1}$ & $20+s_{g}+s_{p}+9s_{N}+s_{N}^{2}$\\

11 & 3 & R & $g_{1},p_{1},N_{1}$ & $18+s_{g}+s_{p}+8s_{N}+s_{N}^{2}$\\

11 & 4 & R & $g_{1},p_{1},N_{1}$ & $9+s_{g}+s_{p}+2s_{N}$\\

11 & 5 & R & $g_{1},N_{1}$ & $4+s_{g}+s_{N}$\\

11 & 6 & R & $g_{2},p_{1},N_{1}$ & $22+s_{g}+4s_{p}+9s_{N}+s_{N}\,s_{p}+s_{N}^{2}$\\

11 & 7 & R & $g_{2}$ & $2+s_{g}$\\

11 & 9 & R & $\text{none}$ & $1$\\

12 & 1 & R & $N_{1}$ & $252+256s_{N}+96s_{N}^{2}+16s_{N}^{3}+s_{N}^{4}$\\

12 & 2 & R & $N_{1}$ & $250+255s_{N}+96s_{N}^{2}+16s_{N}^{3}+s_{N}^{4}$\\

12 & 3 & R & $N_{1}$ & $248+254s_{N}+96s_{N}^{2}+16s_{N}^{3}+s_{N}^{4}$\\

12 & 4 & R & $N_{1}$ & $239+248s_{N}+95s_{N}^{2}+16s_{N}^{3}+s_{N}^{4}$\\

12 & 6 & R & $N_{1}$ & $192+208s_{N}+84s_{N}^{2}+15s_{N}^{3}+s_{N}^{4}$\\

13 & 1 & R & $p_{1},N_{1}$ & $332+16s_{p}+312s_{N}+8s_{N}\,s_{p}+109s_{N}^{2}+s_{N}^{2}\,s_{p}+17s_{N}^{3}+s_{N}^{4}$\\

13 & 2 & R & $p_{1},N_{1}$ & $330+16s_{p}+311s_{N}+8s_{N}\,s_{p}+109s_{N}^{2}+s_{N}^{2}\,s_{p}+17s_{N}^{3}+s_{N}^{4}$\\

13 & 3 & R & $p_{1},N_{1}$ & $328+16s_{p}+310s_{N}+8s_{N}\,s_{p}+109s_{N}^{2}+s_{N}^{2}\,s_{p}+17s_{N}^{3}+s_{N}^{4}$\\

13 & 4 & R & $p_{1},N_{1}$ & $319+16s_{p}+304s_{N}+8s_{N}\,s_{p}+108s_{N}^{2}+s_{N}^{2}\,s_{p}+17s_{N}^{3}+s_{N}^{4}$\\

13 & 5 & R & $p_{1},N_{1}$ & $314+15s_{p}+303s_{N}+8s_{N}\,s_{p}+108s_{N}^{2}+s_{N}^{2}\,s_{p}+17s_{N}^{3}+s_{N}^{4}$\\

13 & 6 & R & $p_{1},N_{1}$ & $272+16s_{p}+264s_{N}+8s_{N}\,s_{p}+97s_{N}^{2}+s_{N}^{2}\,s_{p}+16s_{N}^{3}+s_{N}^{4}$\\

13 & 7 & R & $p_{1},N_{1}$ & $252+12s_{p}+255s_{N}+7s_{N}\,s_{p}+96s_{N}^{2}+s_{N}^{2}\,s_{p}+16s_{N}^{3}+s_{N}^{4}$\\

14 & 1 & R & $p_{1},N_{1}$ & $437+42s_{p}+378s_{N}+s_{p}^{2}+18s_{N}\,s_{p}+123s_{N}^{2}+2s_{N}^{2}\,s_{p}+18s_{N}^{3}+s_{N}^{4}$\\

14 & 2 & R & $p_{1},N_{1}$ & $435+42s_{p}+377s_{N}+s_{p}^{2}+18s_{N}\,s_{p}+123s_{N}^{2}+2s_{N}^{2}\,s_{p}+18s_{N}^{3}+s_{N}^{4}$\\

14 & 3 & R & $p_{1},N_{1}$ & $433+42s_{p}+376s_{N}+s_{p}^{2}+18s_{N}\,s_{p}+123s_{N}^{2}+2s_{N}^{2}\,s_{p}+18s_{N}^{3}+s_{N}^{4}$\\

14 & 4 & R & $p_{1},N_{1}$ & $424+42s_{p}+370s_{N}+s_{p}^{2}+18s_{N}\,s_{p}+122s_{N}^{2}+2s_{N}^{2}\,s_{p}+18s_{N}^{3}+s_{N}^{4}$\\

14 & 5 & R & $p_{1},N_{1}$ & $419+41s_{p}+369s_{N}+s_{p}^{2}+18s_{N}\,s_{p}+122s_{N}^{2}+2s_{N}^{2}\,s_{p}+18s_{N}^{3}+s_{N}^{4}$\\

14 & 6 & R & $p_{1},N_{1}$ & $377+42s_{p}+330s_{N}+s_{p}^{2}+18s_{N}\,s_{p}+111s_{N}^{2}+2s_{N}^{2}\,s_{p}+17s_{N}^{3}+s_{N}^{4}$\\

14 & 7 & R & $p_{1},N_{1}$ & $357+38s_{p}+321s_{N}+s_{p}^{2}+17s_{N}\,s_{p}+110s_{N}^{2}+2s_{N}^{2}\,s_{p}+17s_{N}^{3}+s_{N}^{4}$\\

15 & 1 & R & $f_{1},p_{1},N_{1}$ & $96+4s_{f}+4s_{p}+65s_{N}+s_{N}\,s_{f}+s_{N}\,s_{p}+14s_{N}^{2}+s_{N}^{3}$\\

15 & 2 & R & $f_{1},p_{1},N_{1}$ & $94+4s_{f}+4s_{p}+64s_{N}+s_{N}\,s_{f}+s_{N}\,s_{p}+14s_{N}^{2}+s_{N}^{3}$\\

15 & 3 & R & $f_{1},p_{1},N_{1}$ & $92+4s_{f}+4s_{p}+63s_{N}+s_{N}\,s_{f}+s_{N}\,s_{p}+14s_{N}^{2}+s_{N}^{3}$\\

15 & 4 & R & $f_{1},p_{1},N_{1}$ & $83+4s_{f}+4s_{p}+57s_{N}+s_{N}\,s_{f}+s_{N}\,s_{p}+13s_{N}^{2}+s_{N}^{3}$\\

15 & 5 & R & $f_{1},p_{1},N_{1}$ & $78+4s_{f}+3s_{p}+56s_{N}+s_{N}\,s_{f}+s_{N}\,s_{p}+13s_{N}^{2}+s_{N}^{3}$\\

15 & 6 & R & $f_{1},p_{1},N_{1}$ & $36+4s_{f}+4s_{p}+17s_{N}+s_{N}\,s_{f}+s_{N}\,s_{p}+2s_{N}^{2}$\\

15 & 7 & R & $f_{1},N_{1}$ & $16+4s_{f}+8s_{N}+s_{N}\,s_{f}+s_{N}^{2}$\\

15 & 8 & R & $f_{1},p_{1},N_{1}$ & $75+3s_{f}+3s_{p}+55s_{N}+s_{N}\,s_{f}+s_{N}\,s_{p}+13s_{N}^{2}+s_{N}^{3}$\\

16 & 1 & R & $g_{1},p_{1},N_{1}$ & $96+4s_{g}+4s_{p}+65s_{N}+s_{N}\,s_{g}+s_{N}\,s_{p}+14s_{N}^{2}+s_{N}^{3}$\\

16 & 2 & R & $g_{1},p_{1},N_{1}$ & $94+4s_{g}+4s_{p}+64s_{N}+s_{N}\,s_{g}+s_{N}\,s_{p}+14s_{N}^{2}+s_{N}^{3}$\\

16 & 3 & R & $g_{1},p_{1},N_{1}$ & $92+4s_{g}+4s_{p}+63s_{N}+s_{N}\,s_{g}+s_{N}\,s_{p}+14s_{N}^{2}+s_{N}^{3}$\\

16 & 4 & R & $g_{1},p_{1},N_{1}$ & $83+4s_{g}+4s_{p}+57s_{N}+s_{N}\,s_{g}+s_{N}\,s_{p}+13s_{N}^{2}+s_{N}^{3}$\\

16 & 5 & R & $g_{1},p_{1},N_{1}$ & $78+4s_{g}+3s_{p}+56s_{N}+s_{N}\,s_{g}+s_{N}\,s_{p}+13s_{N}^{2}+s_{N}^{3}$\\

16 & 6 & R & $g_{1},p_{1},N_{1}$ & $36+4s_{g}+4s_{p}+17s_{N}+s_{N}\,s_{g}+s_{N}\,s_{p}+2s_{N}^{2}$\\

16 & 7 & R & $g_{1},N_{1}$ & $16+4s_{g}+8s_{N}+s_{N}\,s_{g}+s_{N}^{2}$\\

16 & 9 & R & $g_{1},p_{1},N_{1}$ & $75+3s_{g}+3s_{p}+55s_{N}+s_{N}\,s_{g}+s_{N}\,s_{p}+13s_{N}^{2}+s_{N}^{3}$\\

17 & 1 & R & $(h_{1})_{1},f_{1},p_{1},N_{1}$ & $24+s_{h_{1}}+s_{f}+s_{p}+11s_{N}+s_{N}^{2}$\\

17 & 2 & R & $(h_{1})_{1},f_{1},p_{1},N_{1}$ & $22+s_{h_{1}}+s_{f}+s_{p}+10s_{N}+s_{N}^{2}$\\

17 & 3 & R & $(h_{1})_{1},f_{1},p_{1},N_{1}$ & $20+s_{h_{1}}+s_{f}+s_{p}+9s_{N}+s_{N}^{2}$\\

17 & 4 & R & $(h_{1})_{1},f_{1},p_{1},N_{1}$ & $11+s_{h_{1}}+s_{f}+s_{p}+3s_{N}$\\

17 & 5 & R & $(h_{1})_{1},f_{1},N_{1}$ & $6+s_{h_{1}}+s_{f}+2s_{N}$\\

17 & 8 & R & $(h_{1})_{1},N_{1}$ & $3+s_{h_{1}}+s_{N}$\\

18 & 1 & R & $(h_{2})_{1},p_{1},N_{1}$ & $458+s_{h_{2}}+43s_{p}+387s_{N}+s_{p}^{2}+18s_{N}\,s_{p}+124s_{N}^{2}+2s_{N}^{2}\,s_{p}+18s_{N}^{3}+s_{N}^{4}$\\

18 & 2 & R & $(h_{2})_{1},p_{1},N_{1}$ & $456+s_{h_{2}}+43s_{p}+386s_{N}+s_{p}^{2}+18s_{N}\,s_{p}+124s_{N}^{2}+2s_{N}^{2}\,s_{p}+18s_{N}^{3}+s_{N}^{4}$\\

18 & 3 & R & $(h_{2})_{1},p_{1},N_{1}$ & $454+s_{h_{2}}+43s_{p}+385s_{N}+s_{p}^{2}+18s_{N}\,s_{p}+124s_{N}^{2}+2s_{N}^{2}\,s_{p}+18s_{N}^{3}+s_{N}^{4}$\\

18 & 4 & R & $(h_{2})_{1},p_{1},N_{1}$ & $445+s_{h_{2}}+43s_{p}+379s_{N}+s_{p}^{2}+18s_{N}\,s_{p}+123s_{N}^{2}+2s_{N}^{2}\,s_{p}+18s_{N}^{3}+s_{N}^{4}$\\

18 & 5 & R & $(h_{2})_{1},p_{1},N_{1}$ & $440+s_{h_{2}}+42s_{p}+378s_{N}+s_{p}^{2}+18s_{N}\,s_{p}+123s_{N}^{2}+2s_{N}^{2}\,s_{p}+18s_{N}^{3}+s_{N}^{4}$\\

18 & 6 & R & $(h_{2})_{1},p_{1},N_{1}$ & $398+s_{h_{2}}+43s_{p}+339s_{N}+s_{p}^{2}+18s_{N}\,s_{p}+112s_{N}^{2}+2s_{N}^{2}\,s_{p}+17s_{N}^{3}+s_{N}^{4}$\\

18 & 7 & R & $(h_{2})_{1},p_{1},N_{1}$ & $378+s_{h_{2}}+39s_{p}+330s_{N}+s_{p}^{2}+17s_{N}\,s_{p}+111s_{N}^{2}+2s_{N}^{2}\,s_{p}+17s_{N}^{3}+s_{N}^{4}$\\

18 & 8 & R & $(h_{2})_{2}$ & $1+s_{h_{2}}$\\

18 & 9 & R & $(h_{2})_{3}$ & $1+s_{h_{2}}$\\

19 & 1 & R & $(h_{3})_{1},f_{1},p_{1},N_{1}$ & $24+s_{h_{3}}+s_{f}+s_{p}+11s_{N}+s_{N}^{2}$\\

19 & 2 & R & $(h_{3})_{1},f_{1},p_{1},N_{1}$ & $22+s_{h_{3}}+s_{f}+s_{p}+10s_{N}+s_{N}^{2}$\\

19 & 3 & R & $(h_{3})_{1},f_{1},p_{1},N_{1}$ & $20+s_{h_{3}}+s_{f}+s_{p}+9s_{N}+s_{N}^{2}$\\

19 & 4 & R & $(h_{3})_{1},f_{1},p_{1},N_{1}$ & $11+s_{h_{3}}+s_{f}+s_{p}+3s_{N}$\\

19 & 5 & R & $(h_{3})_{1},f_{1},N_{1}$ & $6+s_{h_{3}}+s_{f}+2s_{N}$\\

19 & 6 & R & $(h_{3})_{2},g_{1},p_{1},N_{1}$ & $37+s_{h_{3}}+4s_{g}+4s_{p}+17s_{N}+s_{N}\,s_{g}+s_{N}\,s_{p}+2s_{N}^{2}$\\

19 & 7 & R & $(h_{3})_{2},g_{1},N_{1}$ & $17+s_{h_{3}}+4s_{g}+8s_{N}+s_{N}\,s_{g}+s_{N}^{2}$\\

19 & 8 & R & $(h_{3})_{1},N_{1}$ & $3+s_{h_{3}}+s_{N}$\\

19 & 9 & R & $(h_{3})_{2},g_{1},p_{1},N_{1}$ & $76+s_{h_{3}}+3s_{g}+3s_{p}+55s_{N}+s_{N}\,s_{g}+s_{N}\,s_{p}+13s_{N}^{2}+s_{N}^{3}$\\

20 & 1 & R & $(h_{4})_{1},g_{1},p_{1},N_{1}$ & $24+s_{h_{4}}+s_{g}+s_{p}+11s_{N}+s_{N}^{2}$\\

20 & 2 & R & $(h_{4})_{1},g_{1},p_{1},N_{1}$ & $22+s_{h_{4}}+s_{g}+s_{p}+10s_{N}+s_{N}^{2}$\\

20 & 3 & R & $(h_{4})_{1},g_{1},p_{1},N_{1}$ & $20+s_{h_{4}}+s_{g}+s_{p}+9s_{N}+s_{N}^{2}$\\

20 & 4 & R & $(h_{4})_{1},g_{1},p_{1},N_{1}$ & $11+s_{h_{4}}+s_{g}+s_{p}+3s_{N}$\\

20 & 5 & R & $(h_{4})_{1},g_{1},N_{1}$ & $6+s_{h_{4}}+s_{g}+2s_{N}$\\

20 & 6 & R & $(h_{4})_{1},g_{2},p_{1},N_{1}$ & $24+s_{h_{4}}+s_{g}+4s_{p}+10s_{N}+s_{N}\,s_{p}+s_{N}^{2}$\\

20 & 7 & R & $(h_{4})_{1},g_{2},N_{1}$ & $4+s_{h_{4}}+s_{g}+s_{N}$\\

20 & 8 & R & $(h_{4})_{2},f_{1},p_{1},N_{1}$ & $76+s_{h_{4}}+3s_{f}+3s_{p}+55s_{N}+s_{N}\,s_{f}+s_{N}\,s_{p}+13s_{N}^{2}+s_{N}^{3}$\\*

20 & 9 & R & $(h_{4})_{1},N_{1}$ & $3+s_{h_{4}}+s_{N}$\\*

\bottomrule\caption{All 158 certified fixed comparisons in the weight-5+4 matrix.}\end{longtable}\endgroup

\chapter{The reproducible audit and manuscript provenance}\label{app:audit}

\section{Files and execution}
\index{audit!files}\index{audit!execution}\index{verification!files}
The clean distribution contains the single source
\texttt{AA\_Monograph.tex}, its compiled PDF, a~README, and an
\texttt{audit} folder. That folder contains the two original standalone
verifiers, their regenerated result tables, and a~cumulative audit that
normalizes every binary parenthesization at every count from one
through twelve.

The commands are \texttt{python3 verify\_eleven.py --output-dir results\_11}
and \texttt{python3 n12\_verify.py --output-dir results\_12}, run from
the audit folder. The additional cumulative audit is run with
\texttt{python3 verify\_all.py}. Python~3.9 or later suffices, and no
third-party package or network connection is used. Assertions are part
of the audit; do not run it with Python's optimization option.
\index{verification!scope}\index{audit!reproducible}

The eleven-occurrence verifier uses the basic comparison classification
of Chapter~\ref{b:S:classification}. The twelve-occurrence verifier uses
the sharper classification of Chapter~\ref{t:S:classification}.
Their different proof assignments are intentional. The cumulative
record uses the first classification through eleven and the second
at twelve, exactly as in Table~\ref{tab:cumulative}.

\section{What each check establishes}
\index{verification!scope of checks}\index{audit!scope}
The weighted-multiset generator builds every unordered rooted tree of
the required weight. A~second recursive generator builds full binary
trees without using that construction. Normalizing those binary trees
checks equality of the two complete normal-form sets and every
individual multiplicity in~\eqref{E:multiplicity}. Root-partition
counts provide an additional aggregate check.

The proof classifier checks that every normal form has an assigned
uniform, regular, or named exceptional argument. The forest audit
checks its fixed reductions and the exceptional signature differences.
The arithmetic engine performs exact integer-polynomial substitutions
and tests every coefficient of the resulting difference. Appendix~
\ref{app:certificates} prints the resulting certificates. The five
non-strict certificates are accompanied by mathematical tie proofs.

These checks do not prove the canonical-coordinate theorem, verify
that every informal argument is correct in a~formal proof assistant,
or enumerate ordered-set inputs of any fixed cardinality. Those are
different tasks. The mathematical proof applies to inputs of arbitrary
finite cardinality through its structural arguments.

\section{Source consolidation}
\index{manuscript!provenance!source consolidation}\index{source!consolidation}
The supplied material consisted of the foundational joint manuscript,
the manuscripts treating five through eleven occurrences, a~general
induction note, and the separate twelve-occurrence archive. The
consolidation uses the self-contained cumulative strong-induction
argument through eleven and incorporates the foundational $A^A$
reconstruction directly. It adds the unbounded polynomial-perturbation
result from the induction note and all twelve-occurrence proofs.

Repeated preliminary sections have been replaced by the common
constructions of Part~I. References to the earlier manuscript sequence
have been replaced by internal theorem, chapter, and equation
references. The row-by-row mixed-root tables at six through nine,
the cumulative normal-form catalogue, and the printed exact certificate
appendix were generated for this consolidated exposition. Original
user archives were left unchanged.

The original eleven- and twelve-occurrence audit programs are retained
with their mathematical scope statements. Their output directories
in this distribution were regenerated, rather than merely copied from
the supplied results. A~separate cumulative audit records the checks
for every occurrence count.

\backmatter
\printindex

\begin{thebibliography}{9}
\bibitem{Farley}
J.\,D. Farley,
Does the endomorphism poset $P^P$ determine whether a~finite poset $P$
is connected? An issue Duffus raised in 1978.
\emph{Mathematica Bohemica} \textbf{148} (2023), 435--446.
DOI: 10.21136/MB.2022.0010-22.

\bibitem{FarleyPreprint}
J.\,D. Farley,
An issue raised in 1978 by a~then-future editor-in-chief of the journal
\emph{Order}: Does the endomorphism poset of a~finite connected poset
tell us that the poset is connected?
Preprint, arXiv:2005.03255v1, 2020.

\bibitem{Hashimoto}
J. Hashimoto,
On direct product decomposition of partially ordered sets.
\emph{Annals of Mathematics} (2) \textbf{54} (1951), 315--318.

\bibitem{McKenzie}
R. N. McKenzie,
Arithmetic of finite ordered sets: Cancellation of exponents, II.
\emph{Order} \textbf{17} (2000), 309--332.
DOI: 10.1023/A:1006449213916.
\bibitem{McKenzieZigzag}
R. N. McKenzie,
The zig-zag property and exponential cancellation of ordered sets.
\emph{Order} \textbf{20} (2003), 185--221.
\end{thebibliography}
\end{document}